\documentclass[12pt, reqno]{amsart}
\setcounter{tocdepth}{1}
\usepackage{amsmath}
\usepackage{amssymb}
\usepackage{epsfig}
\usepackage{graphicx}
\usepackage{enumitem}
\usepackage{color}
\usepackage{fullpage}
\definecolor{shadecolor}{gray}{0.875}
\usepackage{amscd}
\usepackage{comment}
\usepackage{adjustbox}
\usepackage{multirow}

\numberwithin{equation}{section}

\newcommand{\brian}[1]{{\color{blue} \sf $\clubsuit\clubsuit\clubsuit$ Brian: [#1]}}
\newcommand{\sho}[1]{{\color{red} \sf $\clubsuit\clubsuit\clubsuit$ Sho: [#1]}}
\newcommand{\eric}[1]{{\color{cyan} \sf $\clubsuit\clubsuit\clubsuit$ Eric: [#1]}}

\input xy
\xyoption{all}

\calclayout
\allowdisplaybreaks[3]

\theoremstyle{plain}
\newtheorem{prop}{Proposition}[section]

\newtheorem{theo}[prop]{Theorem}
\newtheorem{coro}[prop]{Corollary}

\newtheorem{lemm}[prop]{Lemma}

\theoremstyle{definition}
\newtheorem{defi}[prop]{Definition}

\newtheorem{conj}[prop]{Conjecture}

\newtheorem{rema}[prop]{Remark}

\newtheorem{exam}[prop]{Example}

\newtheorem{clai}[prop]{Claim}

\newcounter{enumi_saved}
\setcounter{enumi_saved}{0}

\def\cO{{\mathcal O}}

\def\cU{{\mathcal U}}

\def\bC{{\mathbb C}}

\def\bP{{\mathbb P}}

\def\Eff{\overline{\mathrm{Eff}}}

\def\Nef{\mathrm{Nef}}

\def\Span{\mathrm{Span}}

\def\Sym{\mathrm{Sym}}

\def\codim{\mathrm{codim}}

\def\Rat{\operatorname{Rat}}

\makeatother
\makeatletter

\author{Roya Beheshti}
\address{Department of Mathematics \\
Washington University in St.~Louis \\
St. Louis, MO \, \, 63130}
\email{beheshti@wustl.edu}

\author{Brian Lehmann}
\address{Department of Mathematics \\
Boston College  \\
Chestnut Hill, MA \, \, 02467}
\email{lehmannb@bc.edu}

\author{Eric Riedl}
\address{Department of Mathematics \\
University of Notre Dame  \\
255 Hurley Hall \\
Notre Dame, IN 46556}
\email{eriedl@nd.edu}

\author{Sho Tanimoto}
\address{Graduate School of Mathematics, Nagoya University, Furocho Chikusa-ku, Nagoya, 464-8602, Japan}
\email{sho.tanimoto@math.nagoya-u.ac.jp}

\title[Moduli spaces of rational curves on Fano threefolds]{Moduli spaces of rational curves on Fano threefolds}

\begin{document}
\subjclass[2010]{14H10}

\begin{abstract}
We prove several classification results for the components of the moduli space of rational curves on a smooth Fano threefold. In particular, we prove a conjecture of Batyrev on the growth of the number of components as the degree increases. The key to our approach is Geometric Manin's Conjecture which predicts the number of components parameterizing free curves.
\end{abstract}

\maketitle

\section{Introduction}
While lines and conics on Fano threefolds have been extensively studied due to their role in the classification theory, much less is known about rational curves of higher degree.   Let $X$ denote a smooth Fano threefold and let $\overline{\Rat}(X)$ denote the union of the components of $\overline{M}_{0,0}(X)$ that generically parametrize stable maps with irreducible domain. An important open question is to classify all components of $\overline{\Rat}(X)$.

In this paper we develop a theoretical framework which reduces the classification problem to understanding the components in a finite degree range. As an application, we prove a conjecture of Batyrev that the number of components of the space of rational curves on a Fano threefold is bounded by a polynomial in the degree of the curve.

\begin{theo} \label{theo:maintheorem4}
Let $X$ be a smooth Fano threefold.  There is a polynomial $P(d)$ such that the number of components of $\overline{\Rat}(X)$ parameterizing rational curves of anticanonical degree $\leq d$ is bounded above by $P(d)$.
\end{theo}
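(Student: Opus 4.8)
\emph{Strategy of proof.} The plan is to partition the components of $\overline{\Rat}(X)$ according to whether the general parametrized curve is free, to bound the free contribution directly using Geometric Manin's Conjecture, and to handle the non-free contribution by a Noetherian induction on dimension.

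First I would dispose of the components parametrizing free curves. A component $M$ of $\overline{\Rat}(X)$ has free general member if and only if the curves it parametrizes dominate $X$; otherwise they sweep out a proper closed subvariety. For the free components, Geometric Manin's Conjecture --- the central input advertised in the abstract --- asserts that for every numerical class $\alpha\in N_1(X)_{\bZ}$ with $-K_X\cdot\alpha$ sufficiently large, the number of components of $\overline{\Rat}(X)$ parametrizing free curves of class $\alpha$ is bounded by a constant independent of $\alpha$, while the classes with $-K_X\cdot\alpha$ below that threshold contribute only finitely many components in total. Since $-K_X$ is ample and $\overline{\mathrm{Eff}}_1(X)$ is a rational polyhedral cone of dimension $\rho=\rho(X)$, the number of effective integral classes with $-K_X\cdot\alpha\le d$ is $O(d^{\rho})$; hence the free components of anticanonical degree $\le d$ number $O(d^{\rho})$, which is polynomial.

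It remains to bound the non-free components, and here I would argue by induction on $\dim X$, the case of curves being trivial. A non-free component $M$ parametrizes curves sweeping out a subvariety $Y_M\subsetneq X$ of dimension $\le 2$. Running over all non-free $M$ and taking Stein factorizations of the universal curves, the $Y_M$ fit into finitely many irreducible families $\{\cY_b\}_{b\in B}$ of subvarieties of $X$ of dimension $\le 2$; moreover each such family may be taken \emph{not} to dominate $X$ with two-dimensional general fibre, since otherwise the member curves would already move in a family dominating $X$ and $M$ would not be a component with non-free general member. Passing to a resolution of the total space of such a family realizes the relevant curves as rational curves on a variety of dimension $\le 2$ fibred over $B$; by the inductive hypothesis the components of rational curves of bounded anticanonical degree on the generic fibre number polynomially, and the passage from the generic fibre to the whole family, together with the finitely many special fibres (to which induction applies again), costs only a further polynomial factor controlled by the relative Picard rank $\rho(\cY/B)$. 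Summing the finitely many families yields a polynomial bound on the non-free components, completing the induction; for the base case $\dim=2$ one combines the same free/non-free dichotomy with the classical description of families of rational curves on surfaces.

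The main obstacle is the structural input underlying both the reduction of the classification problem to a finite degree range promised in the introduction and the inductive step above: a \emph{movable bend-and-break} statement guaranteeing that, beyond a bounded anticanonical degree, every non-free component is either pulled back from a lower-dimensional subvariety as above, or is the closure of the smoothings of a reducible curve obtained by attaching a fixed bounded-degree free curve to a curve of strictly smaller degree, and --- crucially --- that this ``attach-and-smooth'' operation produces only boundedly many genuinely new components per degree and per numerical class, so that the induction terminates with a polynomial rather than an exponential bound. Making the families $\{\cY_b\}$ of finite type and controlling the special fibres and the gluing corrections uniformly in $d$ is the delicate part; the remainder is bookkeeping around Geometric Manin's Conjecture and lattice-point counting in $\overline{\mathrm{Eff}}_1(X)$.
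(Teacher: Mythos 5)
Your overall dichotomy (dominant/free components versus components sweeping out a proper subvariety) is the same as the paper's, but both halves of your argument have genuine gaps. For the free components you invoke ``Geometric Manin's Conjecture'' as asserting that the number of components representing a fixed class $\alpha$ is bounded by a constant. That statement is not available: in the paper it is precisely the part that remains conjectural (Conjecture \ref{conj:uniquecomponent}), and the theorem you are asked to prove \emph{is} the upper-bound half of Geometric Manin's Conjecture, so citing the conjecture is circular. What the paper actually does is prove Movable Bend-and-Break (Theorem \ref{theo:maintheorem3}) --- every free component of anticanonical degree $\geq 6$ contains a stable map breaking into two free curves of smaller degree --- and then feed this into the inductive counting mechanism of \cite[Theorem 5.13]{LTCompos}, which converts ``every high-degree free component is obtained by gluing two lower-degree free components and smoothing'' into a polynomial bound on the number of such components. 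You correctly identify this as ``the main obstacle,'' but you do not supply it; it is the substance of the proof, not bookkeeping.

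For the non-dominant components, your claim that the swept-out surfaces $Y_M$ ``fit into finitely many irreducible families'' which ``may be taken not to dominate $X$'' is not justified and is the second place where a real input is missing. A priori there could be a positive-dimensional (even dominant) family of surfaces each carrying a non-dominant component of $\overline{\Rat}(X)$; ruling this out is exactly what Theorem \ref{theo:HJ} (Hacon--Jiang boundedness: the union of subvarieties with larger $a$-invariant is a proper closed subset) together with Theorem \ref{theo:ainvanddeformations} accomplishes. The paper then uses the explicit classification of Theorem \ref{theo:higherainv} --- each such surface is a contractible divisor of type E1--E5 or a ruled surface swept out by $-K_X$-lines --- to bound the components on each of the finitely many surfaces, using Bourqui's result for Hirzebruch (toric) surfaces and the triviality of rational curves on ruled surfaces over curves of positive genus. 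Your proposed ``Noetherian induction on dimension'' with a generic-fibre argument does not substitute for this: it neither establishes the finiteness of the families of surfaces nor controls components supported on special fibres, and ``the classical description of families of rational curves on surfaces'' is not a uniform polynomial bound without the specific results the paper cites.
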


Our approach to the classification problem is motivated by Geometric Manin's Conjecture.  Just as Manin's Conjecture for rational points predicts the asymptotic growth rate of the number of rational points of bounded height, Geometric Manin's Conjecture predicts the growth rate of the dimension and number of components of $\overline{\Rat}(X)$ parameterizing degree $d$ curves as the degree $d$ grows large.  By analogy with the rational point version, we must distinguish two types of components: the \emph{accumulating components} which do not contribute to the counting function, and the \emph{Manin components} which do contribute.

\subsection{Accumulating components}
There are two key perspectives on how to identify accumulating components for Fano threefolds.  First, \cite{LST18} gives a conjectural description of the exceptional set in Manin's Conjecture based on the negativity properties of the canonical divisor.  Second, one can identify accumulating components based on their  pathological geometric behavior -- for example, they could have higher than the expected dimension or the evaluation map on the universal family could fail to have connected fibers.

In Theorem \ref{theo:classifymanincomponents} and Theorem \ref{theo:pathologicalisnegligible} we will show that these two approaches match up perfectly (with one possible exception). In particular, using techniques from the Minimal Model Program we can explicitly identify all families of curves which exhibit pathological geometric behavior.  Our first result identifies the families which have larger than expected dimension, improving the implicit description of \cite{LTCompos}:

\begin{theo} \label{theo:maintheorem1}
Let $X$ be a smooth Fano threefold.  Let $M$ be a component of $\overline{\Rat}(X)$ such that the curves parametrized by $M$ sweep out a proper subvariety $Y \subsetneq X$.  Then either:
\begin{itemize}
\item $Y$ is swept out by a family of $-K_{X}$-lines, or
\item $Y$ is an exceptional divisor for a birational contraction on $X$.
\end{itemize}
\end{theo}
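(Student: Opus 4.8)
The plan is to stratify according to $\dim Y\in\{1,2\}$ and to treat the surface case by combining a dimension estimate for $M$ with the classification of extremal contractions on smooth threefolds.

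First I would dispose of $\dim Y=1$. In that case every curve parametrized by $M$ has the same image, a fixed rational curve $Y$, and $M$ parametrizes the degree-$k$ covers $\mathbb{P}^{1}\to Y$ for some $k\geq 1$; parametrizing these as stable maps gives $\dim M\leq 2k-2$, while the expected-dimension lower bound for a component of $\overline{M}_{0,0}(X)$ on a threefold gives $\dim M\geq -K_{X}\cdot C=k(-K_{X}\cdot Y)$. Hence $k(-K_{X}\cdot Y)\leq 2k-2$, forcing $k\geq 2$ and $-K_{X}\cdot Y=1$. But a $-K_{X}$-line on a smooth Fano threefold has normal bundle of degree $-1$, so it always moves in a positive-dimensional family; then the multiple covers of the fixed line $Y$ form a proper boundary subfamily of the multiple covers of the nearby lines, and $M$ cannot be a component. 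So $\dim Y=1$ does not occur.

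Now assume $\dim Y=2$. Let $C$ be a general member of $M$ with image $\bar C\subseteq Y$; after reducing to the case that $C\to\bar C$ is birational (components whose general member is a genuine multiple cover are handled separately), choose a resolution $\nu\colon\tilde Y\to Y$, set $\mu\colon\tilde Y\to X$, and let $\tilde C\subseteq\tilde Y$ be the strict transform. Since the curves of $M$ sweep out $Y$ but not $X$, near $C$ the family $M$ agrees with the family of deformations of $\tilde C$ inside $\tilde Y$; as $\tilde C$ is a general member of a covering family of rational curves on the smooth surface $\tilde Y$ its pulled-back tangent bundle is globally generated, the family is unobstructed, and $\dim M=-K_{\tilde Y}\cdot\tilde C-1$. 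Since $M$ is a component of $\overline{M}_{0,0}(X)$ we also have $\dim M\geq -K_{X}\cdot\bar C$, and combining these with the projection formula gives
\begin{equation*}
(K_{\tilde Y}+\mu^{*}(-K_{X}))\cdot\tilde C=-(-K_{\tilde Y}\cdot\tilde C)+(-K_{X}\cdot\bar C)\leq -1<0.
\end{equation*}
Because $-K_{X}|_{Y}$ is ample, $\mu^{*}(-K_{X})$ is nef and big on $\tilde Y$, so a pseudoeffective class cannot be negative on the covering curve $\tilde C$; hence $K_{\tilde Y}+\mu^{*}(-K_{X})$ is not pseudoeffective, i.e.\ $a(Y,-K_{X}|_{Y})>1=a(X,-K_{X})$. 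This is the sharpening of the implicit description in \cite{LTCompos}.

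Finally I would pass from $a(Y,-K_{X}|_{Y})>1$ to the dichotomy. Adjunction for the Cartier divisor $Y\subseteq X$ gives $K_{\tilde Y}+\mu^{*}(-K_{X})=\nu^{*}N_{Y/X}+\Delta$ with $\Delta$ the ($\nu$-exceptional) discrepancy divisor, where $\Delta\geq 0$ when $Y$ is normal with canonical singularities but may acquire negative (conductor) coefficients otherwise. If $\Delta\geq 0$, the displayed inequality forces $N_{Y/X}\cdot\bar C=\nu^{*}N_{Y/X}\cdot\tilde C<0$; since $Y$ is effective, every irreducible curve whose class is proportional to $[\bar C]$ lies on $Y$, and decomposing $[\bar C]$ into the ($K_{X}$-negative, as $X$ is Fano) extremal rays of $\overline{\mathrm{NE}}(X)$ produces an extremal ray $R$ with $Y\cdot R<0$ all of whose curves lie on $Y$. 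Its contraction $X\to Z$ is then not of fiber type — otherwise the contracted curves would cover $X$ — and, since a smooth projective threefold has no small extremal contractions, it is divisorial; its exceptional divisor is an irreducible divisor contained in the irreducible divisor $Y$, hence equal to $Y$, so $Y$ is an exceptional divisor for a birational contraction of $X$. When $\Delta$ is not effective the general curve $\bar C$ must meet the non-canonical (e.g.\ non-normal) locus of $Y$, and one analyzes this situation directly to conclude that $Y$ is instead swept out by a family of $-K_{X}$-lines. I expect this last case — the interaction of the sweeping family with the singular locus of $Y$, together with the ``one possible exception'' flagged in the introduction and the reduction handling multiple-cover components — to be the main obstacle; the rest is a dimension count plus Mori's classification of extremal contractions.
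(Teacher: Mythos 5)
Your first half (the dimension count showing $a(Y,-K_{X})>1$ for a non-dominant component, and the disposal of $\dim Y=1$) is essentially the content of the cited result \cite[Theorem 1.1]{LTCompos} (Theorem \ref{theo:ainvanddeformations}), and the contraction argument you give when $Y$ is normal with canonical singularities matches the second half of the paper's Theorem \ref{theo:higherainv}: antiampleness of $Y|_{Y}$ produces a $Y$-negative extremal ray whose contraction is divisorial with exceptional divisor $Y$. The deferred reduction to the birational case is a minor, fixable omission (when the general member is an $m$-fold cover one gets $-K_{\widetilde{Y}}\cdot\widetilde{C}\geq m(-K_{X}\cdot\bar{C}-2)+3$, which still forces $a(Y,-K_{X})>1$ unless $\bar{C}$ is a line, in which case you are done anyway).

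The genuine gap is exactly where you flag it: the case in which $\Delta$ fails to be effective, which is not a degenerate exception but the entire ``swept out by $-K_{X}$-lines'' branch of the theorem. The example in Section 4 of a genus $12$, Picard rank $1$ Fano threefold shows that non-contractible surfaces with excess families of rational curves do occur (there $Y$ is ample, so $Y\cdot\bar{C}>0$ and your adjunction identity forces $\Delta\cdot\widetilde{C}$ to be very negative); your proposal offers no mechanism for producing lines in that situation. The paper's key idea, which your proposal never introduces, is to work at the actual threshold $a=a(Y,-K_{X})$ rather than at $a(X,-K_{X})=1$ and to split on the Iitaka dimension of $K_{\widetilde{Y}}-a\,\phi^{*}K_{X}$: when $\kappa=1$ a general fiber $C$ of the Iitaka fibration satisfies $-\phi^{*}K_{X}\cdot C=(-K_{\widetilde{Y}}\cdot C)/a=2/a<2$, so the fibers map to $-K_{X}$-lines sweeping out $Y$; when $\kappa=0$ (the adjoint rigid case) \cite[Lemma 5.3]{LTJAG} shows the normalization automatically has canonical singularities with $K_{\widehat{Y}}\sim a\,\nu^{*}K_{X}$, which is precisely what legitimizes the adjunction-and-contraction step. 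In other words, the correct dichotomy is by $\kappa\in\{0,1\}$, and the effectivity of the discrepancy is a \emph{consequence} of $\kappa=0$ rather than an independent case hypothesis; without the Iitaka fibration there is no visible source for the family of lines, so the proposal as written does not close.
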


Our second result identifies the dominant families of rational curves such that the evaluation map for the universal family fails to have connected fibers. 

\begin{theo} \label{theo:maintheorem2}
Let $X$ be a smooth Fano threefold such that $-K_X$ is very ample. Let $M$ be a component of $\overline{\Rat}(X)$ and let $\mathcal{C} \to M$ denote the corresponding component of $\overline{M}_{0,1}(X)$.  Suppose that the evaluation map $ev: \mathcal{C} \to X$ is dominant but a general fiber is not irreducible.  Then either:
\begin{itemize}
\item $M$ parametrizes a family of stable maps whose images are $-K_{X}$-conics, or 
\item $M$ parametrizes a family of curves contracted by a del Pezzo fibration $\pi: X \to Z$.
\end{itemize}
\end{theo}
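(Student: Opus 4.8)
The plan is to reduce to the case where the general member of $M$ is a free curve, dichotomize according to the splitting type of its pulled‑back tangent bundle, and then use algebraic integrability of foliations together with the Minimal Model Program to extract the del Pezzo fibration.

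\emph{Reductions.} Since $ev$ is dominant, the curves parametrized by $M$ sweep out $X$, so a general member $f\colon\mathbb{P}^{1}\to X$ is a free curve. If $f$ is a multiple cover of a curve moving in a smaller family $M'$, then either the image curve is a $-K_{X}$-conic (the first alternative) or one transfers the problem to $M'$; so we may assume $f$ is birational onto its image, and hence, for a general such map on a smooth threefold, an immersion. Write $f^{*}T_{X}\cong\mathcal{O}(a_{1})\oplus\mathcal{O}(a_{2})\oplus\mathcal{O}(a_{3})$ with $a_{1}\ge a_{2}\ge a_{3}\ge 0$, $a_{1}\ge 2$, and $a_{1}+a_{2}+a_{3}=d:=-K_{X}\cdot C$. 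It is convenient to organize the discussion around the Stein factorization $\mathcal{C}\xrightarrow{\,g\,}W\xrightarrow{\,h\,}X$ of $ev$; the hypothesis says $\deg h\ge 2$, while $g$ has connected fibers and the curves of $M$ lift to a dominant family of rational curves on $W$.

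\emph{Forcing $a_{3}=0$.} The heart of the argument is a connectedness criterion: if the general member of $M$ is \emph{very free}, i.e.\ $a_{3}\ge 1$, then $ev$ has connected (indeed rationally connected) general fibers. I would prove this by a movable bend‑and‑break argument — degenerating a curve of $M$ through a general point $x$, keeping the marked point over $x$, into a chain or comb of free curves, and showing that any two such degenerations of curves through $x$ lie in one connected component of $ev^{-1}(x)$ — or by invoking the corresponding statement already available for Fano threefolds. Granting this, disconnectedness of $ev$ forces $a_{3}=0$, so $f^{*}T_{X}\cong\mathcal{O}(a_{1})\oplus\mathcal{O}(a_{2})\oplus\mathcal{O}$. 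If $d=2$ this forces $(a_{1},a_{2},a_{3})=(2,0,0)$ and $C$ is a $-K_{X}$-conic, which is the first alternative.

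\emph{Producing the del Pezzo fibration.} Assume now $d\ge 3$ and $a_{3}=0$. A dimension count using $f^{*}T_{X}$ twisted down at two general points of $\mathbb{P}^1$ shows that the curves of $M$ through two general points of $X$ form a family of dimension strictly larger than $d-4$; since domination would force exactly $d-4$, the curves of $M$ through a general point $x$ cannot dominate $X$, and since $d\ge 3$ they cannot all lie on a fixed curve, so they sweep out a surface $S_{x}\ni x$. The canonical positive part $\mathcal{O}(a_{1})\oplus\mathcal{O}(a_{2})\subset f^{*}T_{X}$ contains $df(T_{\mathbb{P}^{1}})$, is independent of the general curve through a general point, and globalizes to a saturated rank‑two subsheaf $\mathcal{F}\subset T_{X}$ having the surfaces $S_{x}$ as integral leaves; thus $\mathcal{F}$ is a foliation with $\mathcal{F}|_{C}=\mathcal{O}(a_{1})\oplus\mathcal{O}(a_{2})$ ample. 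Since $X$ is rationally connected, algebraic integrability of $\mathcal{F}$ in the style of Bogomolov–McQuillan and Kebekus–Sol\'a Conde–Toma shows the leaves are algebraic and rationally connected; running the MMP and using that $X$ is a smooth Fano threefold, one identifies $\mathcal{F}$ with the relative tangent sheaf of a del Pezzo fibration $\pi\colon X\to Z$. Indeed the closure $S$ of a general leaf is then a fiber, so $S|_{S}=0$ and $-K_{S}=-K_{X}|_{S}$ is ample, i.e.\ $S$ is a del Pezzo surface, and the curves of $M$ lie in the fibers of $\pi$ — the second alternative.

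\emph{Main obstacle.} The decisive and most delicate step is the connectedness criterion for very free families used to rule out $a_{3}\ge 1$: controlling bend‑and‑break degenerations through a fixed point, keeping the pieces free, and verifying that distinct chains are connected inside $\overline{M}_{0,1}(X)$, is exactly where sharper low‑degree information about Fano threefolds must be used. Two subsidiary points also need care: checking that the foliation $\mathcal{F}$ comes from a genuine morphism $X\to Z$ rather than only a birational model (here the classification of extremal contractions of Fano threefolds is the natural tool), and the bookkeeping for the multiple‑cover components set aside at the outset.
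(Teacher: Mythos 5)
Your overall architecture is sensible and the initial reductions are essentially fine (in fact, for a dominant component of $\overline{\Rat}(X)$ whose general member is an $r$-fold cover of a curve of degree $e$, comparing $\dim M \geq re$ with $\dim M \leq e + 2r - 2$ forces $e = 2$, so every multiple-cover component already lands in the conic alternative and no ``transfer to $M'$'' is needed). But the proof has a genuine gap exactly at the step you yourself flag as decisive: the claim that a very free family has connected evaluation fibers. Neither of your two suggested routes works. Bend-and-break does not give connectedness --- identifying two different degeneration chains inside $ev^{-1}(x)$ is precisely the monodromy problem to be solved, and Movable Bend-and-Break is itself a theorem of this paper, proved only in degree $\geq 6$ and by independent means --- and there is no prior result for Fano threefolds to invoke. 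In the paper this step is Theorem \ref{theo:iitakadim0case}: by \cite[Proposition 5.15]{LTCompos} a disconnected evaluation map factors through an $a$-cover $f: Y \to X$, the very free case corresponds to $\kappa(K_Y - f^*K_X) = 0$, and ruling that out occupies most of Section \ref{sect:acovers}, using the relative MMP on $Y$, Kawakita's classification of terminal divisorial contractions (Lemmas \ref{lemm:exceptionaltype} and \ref{lemm:ordinarytype}), and, for the six Fano threefolds carrying an E5 divisor, explicit fundamental-group computations of complements of the surfaces swept out by lines. Your sketch does not substitute for any of this.

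The same unproved statement also undermines your foliation step. You assert that the positive part $\mathcal{O}(a_1)\oplus\mathcal{O}(a_2) \subset f^*T_X$ is independent of the choice of general curve through a general point and therefore globalizes to a saturated rank-two subsheaf of $T_X$. For a non-minimal family this is not automatic: a priori the curves of $M$ through a general $x$ could sweep out several distinct surfaces $S_x^{(1)}, \dots, S_x^{(k)}$ --- a web rather than a fibration --- which is exactly the geometry an adjoint-rigid $a$-cover would produce, and then there is no well-defined distribution. The paper sidesteps this by working on the $a$-cover itself: when $\kappa(K_Y-f^*K_X)=1$, \cite[Lemma 5.3]{LTJAG} and adjunction give $S_z|_{S_z}\sim 0$, so two general such surfaces are disjoint and Totaro's criterion produces the morphism $\rho: X \to B$ whose fibers are del Pezzo surfaces (Theorem \ref{theo:iitakadim1case}). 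If you wish to keep the foliation language, you still owe an argument that the leaf closure through a general point is unique and that the leaves are fibers of an actual morphism rather than members of a multi-valued family; both points reduce to the same adjunction and rigidity computation.
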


\subsection{Manin components}

Manin components are the components of $\overline{\Rat}(X)$ which are included in the counting function.  Our main tool for counting Manin components is the following result.

\begin{theo}[Movable Bend-and-Break] \label{theo:maintheorem3}
Let $X$ be a smooth Fano threefold.  Let $M$ be a component of $\overline{\Rat}(X)$ that generically parametrizes free curves.  Suppose that the general curve $C$ parametrized by $M$ has anticanonical degree $\geq 6$.  Then $M$ contains a stable map of the form $f: Z \to X$ where $Z$ has two components and the restriction of $f$ to each component realizes this component as a free curve on $X$.
\end{theo}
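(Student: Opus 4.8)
The plan is to prove the statement by a degeneration argument inside $\overline{M}_{0,0}(X)$, combined with a case analysis based on the geometry of the degenerate stable maps that can occur. Starting from a general free curve $C$ parametrized by $M$ of anticanonical degree $e = -K_X \cdot C \geq 6$, I would attach a general free curve of low anticanonical degree at a general point of $C$ — concretely, pick a free curve $C'$ (a $-K_X$-line or $-K_X$-conic exists through a general point of $X$ since $X$ is Fano of dimension $3$) meeting $C$ transversally at one point, giving a stable map $f_0\colon Z_0 \to X$ with two components whose union lies in some component of $\overline{M}_{0,0}(X)$. The key point is that attaching a free curve to a free curve produces a \emph{smoothable} stable map whose smoothing is again free (this is standard deformation theory: $H^1$ of the pulled-back tangent sheaf vanishes when each component is free and they meet at one point), so this broken map lies in a component $M$ again, by irreducibility considerations one checks it lies in the \emph{same} component $M$ we started with — or more carefully, we run the following iteration. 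The real goal is to show $M$ itself contains a \emph{balanced} break: a stable map $Z \to X$ with two components, each of which is free.

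The main tool I would invoke is Mori's Bend-and-Break together with the theory developed in \cite{LTCompos} (and the dimension/structure results Theorems \ref{theo:maintheorem1} and \ref{theo:maintheorem2} above). First, since $C$ is free of anticanonical degree $\geq 6$, the component $M$ has dimension $\geq e + (n-3) = e \geq 6 > \dim X$, so the curves in $M$ through a general point of $X$ move in a family of dimension $\geq e - \dim X \geq e - 3 \geq 3$. Fixing two general points $p, q \in X$, the sublocus of $M$ parametrizing curves through both $p$ and $q$ has dimension $\geq e - 2\dim X + \text{something}$; more precisely, via the evaluation map $\mathrm{ev}_2 \colon \overline{M}_{0,2}(X) \to X \times X$ restricted over $M$, the general fiber has dimension $\geq e - 6 \geq 0$, and whenever it is positive-dimensional I can apply Bend-and-Break to a one-parameter family of curves through $p$ and $q$ to produce a reducible degeneration. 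The hard part is controlling this degeneration: a priori the limit could have a contracted component, or a component that is not free, or could break into many pieces one of which is a multiple cover of a non-free curve.

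Here is where Theorems \ref{theo:maintheorem1} and \ref{theo:maintheorem2} do the heavy lifting. Since $C$ is a general member of a \emph{dominant} component $M$ (free curves sweep out $X$), the family is not of one of the pathological types; combined with the explicit classification of families of small degree (lines and conics) on Fano threefolds, I would argue that the degenerate stable map $f\colon Z \to X$ produced by Bend-and-Break has image a connected curve with no contracted components whose components each deform to cover $X$ — i.e. each component, after possibly passing to the reduced image and using that a component of $\overline{\Rat}(X)$ whose general member is non-free must sweep out a proper subvariety or have small degree, is itself free. The numerical bound $e \geq 6$ is exactly what is needed so that after breaking off a component of anticanonical degree $\geq 1$, the remaining component has anticanonical degree $\geq 1$ and total degree splits as $a + b = e$ with $a, b \geq 1$; to guarantee \emph{freeness} of both pieces (not just effectivity) one needs enough room, and the threshold $6 = 2 \cdot 3$ reflects that a free curve through a point needs anticanonical degree $\geq \dim X = 3$ to have the expected positive-dimensional deformation space fixing that point.

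The step I expect to be the genuine obstacle is ruling out that the limit breaks off a \emph{non-free} component while the rest is free — for instance a configuration where the "moving" behavior is concentrated in one component and the other is rigid or sweeps out a surface. To handle this I would: (i) use that $M$ is a component whose general member is free, so by semicontinuity a general point of $M$ near the boundary still has the expected normal bundle splitting; (ii) invoke Theorem \ref{theo:maintheorem1} to say that if a component of the limit sweeps out a proper subvariety $Y \subsetneq X$, then $Y$ is either covered by $-K_X$-lines or is a birational-contraction exceptional divisor — and in either case a careful local analysis of gluing (a free curve cannot limit to a chain all of whose pieces live on such a $Y$, since the original curve $C$ is not contained in $Y$) forces at least the "other" component to be a free curve meeting $Y$, after which one re-runs the argument on the lower-degree free piece; and (iii) induct on $e$, the base of the induction being handled by a direct check in the degree range $6 \leq e \leq $ some explicit bound where the classification of components is finite (as in Theorem \ref{theo:maintheorem4}'s reduction). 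The conclusion is that after finitely many steps one lands on a two-component stable map in $M$ with both components free, as claimed.
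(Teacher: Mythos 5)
Your starting point (Mori's Bend--and--Break applied to a one-parameter subfamily of curves through general points) is the same as the paper's, but there is a genuine gap at exactly the step you flag as the obstacle, and the idea that closes it is missing from your sketch. Imposing two general points only cuts $M$ down by codimension $4$, and the resulting reducible limit is essentially uncontrolled: it can have many components, and nothing in your steps (i)--(iii) prevents a limit of the form $C_{1}\cup\ell\cup C_{2}$ with $C_{1},C_{2}$ free and $\ell$ a non-free line in a contractible divisor --- a configuration that \emph{actually occurs} (for the degree-$5$ moving curves on $\mathbb{P}_{\mathbb{P}^{2}}(\mathcal{O}\oplus\mathcal{O}(2))$ the only available break is of this form, which is why the threshold is $6$ and not $5$; your heuristic ``$6=2\cdot 3$ because a free curve through a point needs degree $\geq\dim X$'' is incorrect, since free conics exist). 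Theorem \ref{theo:maintheorem1} tells you \emph{where} non-free components live but gives no mechanism to exclude them from the limit. The paper's key device is to impose, in addition to the maximal number $a+1$ of general points, incidences with $b-a-1$ general members of a basepoint-free family of curves obtained as complete intersections pulled back from the contraction of all E3, E4, E5 and $(-1,-1)$-type E1 divisors (Lemma \ref{lemm:deformationcount}, Proposition \ref{prop:generalcurveclassification}). This cuts the subfamily down to dimension one, pins each free component of the limit to a finite set (forcing the free components to be pairwise disjoint), and forces every non-free connector to meet a general auxiliary curve --- which it cannot do if it lies in a contractible divisor. That is what reduces the possible degenerations to the short list of Proposition \ref{prop:generalcurveclassification} and yields the theorem outright when $X$ has no E5 divisor; the E5 cases still require the separate induction of Proposition \ref{prop:e5contractioninduction} and a case-by-case analysis of the six relevant threefolds, none of which your inductive scheme substitutes for.

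Two further omissions: (a) when the normal bundle is $\mathcal{O}(a)\oplus\mathcal{O}(b)$ with $a=0$, the curve passes through only one general point, so your two-point Bend--and--Break does not directly apply; the paper instead takes the second point general on the surface swept out by deformations through the first point. (b) Components whose general member has restricted tangent bundle with only one positive summand are multiple covers of free conics (Proposition \ref{prop:onesummand}); these are free but must be broken by a separate, explicit argument (Proposition \ref{prop:mbbmultiplecover}), and your degeneration scheme does not cover them.
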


Using this theorem, we can apply the inductive approach pioneered by \cite{HRS04} to classify free curves on $X$.  Suppose we identify the (finitely many) components of $\overline{\Rat}(X)$ which parametrize free curves of anticanonical degree $\leq 5$.  Then we can classify all components of $\overline{\Rat}(X)$ parametrizing free curves of anticanonical degree $\geq 6$ by repeatedly gluing the low degree free curves and studying the resulting components. 

Note that the gluing and smoothing operation defines a binary product on the set of components of $\overline{\Rat}(X)$ whose evaluation maps are dominant and have connected fibers.  In many situations one can use Movable Bend-and-Break (Theorem~\ref{theo:maintheorem3}) to show that this monoid is finitely generated.  In this way the irreducibility of the moduli space of rational curves associated to each nef class can be reduced to showing the irreducibility of moduli spaces in low degree.  We demonstrate this technique in a couple examples in Section \ref{sec:examples}.

\begin{exam}
Let $X \subset \mathbb{P}^{4}$ be a smooth quartic threefold.  In Section \ref{sect:quarticthreefold} we will show that in every degree $d \geq 3$ there is a unique component of $\overline{\Rat}(X)$ of degree $d$ that parametrizes birational maps onto free curves.  All other components of $\overline{\Rat}(X)$ with $d \geq 3$ parametrize multiple covers of lines or conics.  (Note that our results do not address the components of $\overline{M}_{0,0}(X)$ which are not contained in $\overline{\Rat}(X)$.)
When $X$ is general, such a statement has been obtained in \cite{LTJAG}.
\end{exam}

\begin{exam}
Let $X$ be the blow-up of $\mathbb{P}_{\mathbb{P}^{2}}(\mathcal{O} \oplus \mathcal{O}(2))$ along a quartic curve in a minimal moving section.  In Section \ref{sect:2E5contractions} we will show that every nef curve class on $X$ is represented by a unique component of $\overline{\Rat}(X)$ and that every pseudo-effective curve class on $X$ is represented by at most one component of $\overline{\Rat}(X)$.  Since $X$ is the unique Fano threefold which admits two E5 contractions, it is in some sense the ``most difficult'' Fano threefold to handle using our techniques.
\end{exam}

\subsection{Comparison to previous work}
There are some examples of Fano threefolds for which the components of $\overline{\Rat}(X)$ were calculated previously.  Lines and conics on Fano threefolds are well studied due to their relations to the classification theory, and there is an extensive literature initiated by works of Iskovskikh \cite{Isk77}, \cite{FanoII}, and \cite{Isk79}. See \cite{iskov} and \cite{KPS18} for more information and references. For rational curves of higher degree, cubic threefolds were addressed in \cite{CS09} and intersections of two quadrics were handled by \cite{Cas04}.  \cite{LTCompos} and \cite{LTJAG} proved some partial results for Fano threefolds of Picard rank $1$.  There are more general results for homogeneous varieties (\cite{Thomsen98}, \cite{KP01}) and for toric varieties (\cite{Bourqui12}) which can be applied to Fano threefolds of the appropriate type.  

There are two technical advances which distinguish the results of this paper from previous work.  First, we fully describe the behavior of the $a$ and $b$ invariants for Fano threefolds with $-K_X$ very ample. By appealing to the Minimal Model Program for threefolds and the classification results of \cite{Kawakita05}, we give a complete classification of the generically finite maps $f: Y \to X$ such that $a(Y,-f^{*}K_{X}) \geq a(X,-K_{X})$ and $-K_X$ is very ample, finishing a project initiated in \cite{LTT18} and \cite{LT16}.  This classification allows us to leverage the theory of Geometric Manin's Conjecture to classify pathological components of $\overline{\Rat}(X)$.  (Our work can also be used to describe the conjectural exceptional set in Manin's Conjecture for a Fano threefold defined over a number field which is proposed in \cite{LST18}.)

Second, we give a general and conceptual proof of Movable Bend-and-Break for Fano threefolds.  In particular our work clarifies and greatly extends the ad hoc approach used to prove a special case in \cite{LTJAG}.  To prove Movable Bend-and-Break for a family of free curves, the key idea is to impose as many point and curve incidences as possible on a one-dimensional subfamily.  Using the geometry of these incidence correspondences we are able to constrain the possible outcomes of Mori's Bend-and-Break Lemma.

$ $ \\

\noindent
{\bf Acknowledgements:}
The authors thank Kento Fujita for information on Mori cones of curves on Fano threefolds. The authors also would like to thank Masayuki Kawakita for answering our questions regarding terminal divisorial contractions. The authors thank Kento Fujita, Masayuki Kawakita, and Tony V\'arilly-Alvarado for their interest in this work. The authors also would like to thank Eric Jovinelly for pointing out a mistake in an earlier version of this paper.

This project was started at the SQuaRE workshop ``Geometric Manin's Conjecture in characteristic $p$'' at the American Institute of Mathematics. The authors would like to thank AIM for the generous support.

Brian Lehmann was supported by NSF grant 1600875. Eric Riedl was supported by NSF grant 1945144.  Sho Tanimoto was partially supported by Inamori Foundation, by JSPS KAKENHI Early-Career Scientists Grant number 19K14512, by JSPS Bilateral Joint Research Projects Grant number JPJSBP120219935, by MEXT Japan, Leading Initiative for Excellent Young Researchers (LEADER), and by JST FOREST program Grant number JPMJFR212Z.

\section{Preliminaries}

We work over an algebraically closed field $k$ of characteristic $0$.
A variety is an integral separated scheme of finite type over $k$.
Let $X$ be a projective variety defined over $k$. Let $N^1(X)$ be the space of $\mathbb R$-Cartier divisors modulo numerical equivalence and let $N_1(X)$ be the dual space of $\mathbb R$-1-cycles modulo numerical equivalence. We denote the nef cone and the pseudo-effective cone of divisors by
\[
\Nef^1(X), \quad \Eff^1(X)
\]
and we denote the nef cone and the pseudo-effective cone of curves by
\[
\Nef_1(X), \quad \Eff_1(X).
\]

\subsection{Mori's Bend-and-Break Lemma}

We will use the following version of Mori's Bend-and-Break Lemma.

\begin{lemm}[\cite{LT19} Lemma 4.1]\label{bend-and-break}
\label{lemm:BandB}
Let $\pi : S\to  C$ be a morphism from a smooth projective surface to a smooth projective curve such that a general fiber of $\pi$ is isomorphic to $\mathbb P^1$. Suppose that we have a morphism $f : S \to X$ to a projective variety such that (i) the image of $f$ is $2$-dimensional, and (ii) there are two sections $C_1$, $C_2$ of $\pi$ contracted to distinct points $x_1$ and $x_2$ on $X$ respectively. Then there exists a singular fiber $F$ of $\pi$ and two components $F_1$ and $F_2$ of $F$ such that (a) both $F_1$ and $F_2$ are not contracted by $f$, and (b) the image of $F_i$ contains $x_i$.
\end{lemm}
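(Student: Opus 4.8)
\emph{Proof strategy.} The plan is to read the statement off the numerical geometry of the ruled surface $S$; this is the surface-theoretic form of Mori's two-point bend-and-break. Fix an ample divisor $H$ on $X$ and set $D = f^{*}H$, a nef divisor on $S$. Since $f$ has two-dimensional image and $\dim S = 2$, it is generically finite onto its image, so $D^{2} > 0$; since $C_{1}$ and $C_{2}$ are contracted, $D \cdot C_{1} = D \cdot C_{2} = 0$; and since $C_{1}, C_{2}$ are irreducible with $f(C_{1}) = x_{1} \neq x_{2} = f(C_{2})$, they are disjoint. Finally a general fibre $F$ of $\pi$ is a $\mathbb{P}^{1}$ meeting each $C_{i}$ in one point, and these two points map to the distinct points $x_{1}, x_{2}$, so $f|_{F}$ is nonconstant and $D \cdot F > 0$.

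First I would run a partial relative minimal model program, repeatedly contracting any $(-1)$-curve that lies in a fibre of $\pi$ and is contracted by $f$. Each such contraction lets $\pi$ and $f$ descend, carries $C_{1}, C_{2}$ to disjoint sections (an $f$-contracted curve cannot meet both, else $x_{1} = x_{2}$), and preserves the two-dimensionality of $f(S)$; after finitely many steps no fibre of $\pi$ contains an $f$-contracted $(-1)$-curve, and I rename the data $S, \pi, f, C_{1}, C_{2}$. I claim $\pi$ now has a reducible fibre. Otherwise $S$ is a $\mathbb{P}^{1}$-bundle over $C$, so $N^{1}(S)$ is spanned by $F$ and any section and one may write $C_{1} \equiv C_{2} + mF$; then $C_{1}^{2} = C_{1}\cdot C_{2} + m$ and $C_{2}^{2} = C_{1}\cdot C_{2} - m$, so one of $C_{1}^{2}, C_{2}^{2}$ is $\geq 0$. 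But $C_{1}, C_{2}$ are nonzero in $N^{1}(S)$ (they meet $F$) and lie in $D^{\perp}$, which is negative definite by the Hodge Index Theorem since $D^{2} > 0$ -- a contradiction. The same argument gives $C_{1}^{2} < 0$ and $C_{2}^{2} < 0$, which I record for later.

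Let $\Phi$ be a reducible fibre of $\pi$: a tree of $\mathbb{P}^{1}$'s meeting in nodes, whose leaves are $(-1)$-curves and so, by the previous step, are not contracted by $f$; also some component of $\Phi$ is not contracted since $D \cdot \Phi > 0$. The section $C_{i}$ meets $\Phi$ in one point, on a unique component $G_{i}$, and $x_{i} \in f(G_{i})$. If $G_{i}$ is contracted by $f$, it is an interior component, and I replace it by a component of $\Phi$ adjacent to -- but not inside -- the connected component $B_{i}$ of the $f$-contracted locus of $\Phi$ through $G_{i}$: such a component is not contracted (else it would lie in $B_{i}$) and still has $x_{i}$ in its image, as the node joining it to $B_{i}$ maps to $x_{i}$. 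This produces components $F_{1}, F_{2}$ of $\Phi$, not contracted by $f$, with $x_{i} \in f(F_{i})$; pulling back through the contractions of the previous paragraph yields such components inside a fibre of the original $\pi$. It remains to arrange $F_{1} \neq F_{2}$.

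This last point is the crux, and where $C_{1}^{2}, C_{2}^{2} < 0$ are used. Realize $S$ as a blow-up $\rho : S \to \bar S$ of a $\mathbb{P}^{1}$-bundle over $C$ and write $C_{i}$ as the strict transform of a section $\bar C_{i}$ of $\bar S$. By the computation above, $\bar S$ has at most one negative section; since $C_{1}^{2}, C_{2}^{2} < 0$, at least one of $C_{1}, C_{2}$ -- say $C_{2}$ -- acquires its negative self-intersection only after blowing up a point along it, and a short calculation using $C_{1} \cdot C_{2} = 0$ shows that one may take such a point to lie off $C_{1}$. The fibre of $\pi$ over that point then has $C_{1}$ and $C_{2}$ meeting distinct components; carrying out the extraction of the previous paragraph for this $\Phi$ gives $F_{1} \neq F_{2}$. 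The main obstacle is precisely this final bookkeeping -- keeping track of infinitely near centres, and checking, when a section meets an $f$-contracted component, that the two extracted non-contracted components can be taken distinct -- while the rest is routine Hodge-index surface geometry.
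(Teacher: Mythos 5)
The opening of your argument is sound: $D=f^{*}H$ is nef with $D^{2}>0$, the Hodge Index Theorem gives $C_{1}^{2},C_{2}^{2}<0$, disjointness gives $C_{1}\cdot C_{2}=0$, and the $\mathbb{P}^{1}$-bundle computation correctly forces a reducible fibre. (A minor false claim along the way: the leaves of a reducible fibre need not be $(-1)$-curves --- e.g.\ the fibre $\tilde F+\tilde E_{1}+2E_{2}$ obtained by two infinitely near blow-ups has $(-2)$-curves at both ends --- but your extraction step does not actually rely on this.) The genuine gap is exactly where you flag it: arranging $F_{1}\neq F_{2}$. Your partial relative MMP only removes $f$-contracted $(-1)$-curves, so fibres may still contain $f$-contracted components of self-intersection $\leq -2$, and then your extraction can return the same component twice: if $C_{1}$ and $C_{2}$ meet distinct components $G_{1}\neq G_{2}$ of a fibre but each $G_{i}$ lies in a contracted blob whose only non-contracted neighbour is one and the same component $E$ (a chain $G_{1}-E-G_{2}$ with $G_{1},G_{2}$ contracted), the output is $F_{1}=F_{2}=E$. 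Your final paragraph does not address this: locating one blown-up centre on $\bar C_{2}$ off $\bar C_{1}$ produces a fibre in which $C_{1},C_{2}$ meet distinct components, but says nothing about which of those components, or of their neighbours, are contracted by $f$. So the crux is asserted, not proved.

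The standard way to close this (and, as far as I can tell, the route of \cite{LT19}, whose proof is not reproduced in this paper) is to first contract \emph{all} $f$-contracted fibre components at once, e.g.\ via the Stein factorization of $(f,\pi)\colon S\to X\times C$. On the resulting normal fibred surface no component of any fibre is contracted by the induced map to $X$, so the component met by (the image of) $C_{i}$ automatically serves as $F_{i}$, and one only needs these two components to be distinct for some fibre; their strict transforms then give the required components upstairs. If instead they coincided for every fibre, the class $N=C_{1}-C_{2}$ would be orthogonal to every fibre component while also lying in the span of the fibre components (the horizontal parts cancel), so Zariski's lemma would force $N$ to be a multiple of the fibre class and hence $N^{2}=0$, contradicting $N^{2}=C_{1}^{2}+C_{2}^{2}<0$. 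This single lattice-theoretic step replaces all of the blow-up bookkeeping you describe as the main obstacle.
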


\subsection{Classification results}
\label{subsec:classification}

For later analysis, we need the classification of divisorial contractions for smooth projective threefolds
developed by Mori in \cite{Mori82}:

\begin{theo}[\cite{Mori82}] \label{theo:moriclassification}
Let $X$ be a smooth projective threefold and $f : X \rightarrow Y$ be a $K_X$-negative divisorial contraction. Let $E$ be the exceptional divisor on $X$. Then $f$ and $E$ are described by one of following list of possibilities:
\begin{itemize}
\item E1: $f$ is the blow up along a smooth projective curve in a smooth projective threefold $Y$. The divisor $E$ is a ruled surface and any fiber $C$ of $f|_{E}$ satisfies $-K_X \cdot C = 1$;
\item E2: $f$ is the blow up at a smooth point of a smooth projective threefold $Y$. The polarized surface $(E, -K_X|_E)$ is isomorphic to $(\mathbb P^2, \mathcal O(2))$;
\item E3: $f$ is the blow up at an ordinary double point of $Y$ which is locally analytically isomorphic to $x^2 + y^2 + z^2 + w^2 = 0$ in $\mathbb A^4$. The polarized surface $(E, -K_X|_E)$ is isomorphic to $(Q, \mathcal O(1, 1))$ where $Q$ is a smooth quadric surface;
\item E4: $f$ is the blow up of a point in $Y$ which is locally analytically isomorphic to $x^2 + y^2 + z^2 + w^3 = 0$. The polarized surface $(E, -K_X|_E)$ is isomorphic to $(Q, \mathcal O(1))$ where $Q$ is a quadric cone;
\item E5: $f$ is the blow up of a point in $Y$ which is locally analytically isomorphic to the quotient of $\mathbb A^3$ by the involution $(x, y, z) \mapsto (-x, -y, -z)$. The polarized surface $(E, -K_X|_E)$ is isomorphic to $(\mathbb P^2, \mathcal O(1))$.
\end{itemize}
\end{theo}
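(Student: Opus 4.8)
The plan is to recover Mori's analysis of $K_X$-negative extremal rays, organized according to $\dim f(E)$. Since $f$ is a birational $K_X$-negative contraction of relative Picard number one, the curves it contracts span a single extremal ray $R$ and $f = \mathrm{cont}_R$; note that every irreducible curve contracted by $f$ lies in the exceptional divisor $E$, which has dimension $2$. First I would fix a rational curve $C$ with $[C]\in R$ of minimal anticanonical degree $\ell := -K_X\cdot C$, which exists by the theory of extremal rays (ultimately Mori's bend-and-break). The estimate $\dim_{[C]}\Mor(\bP^1, X)\geq -K_X\cdot C + \dim X = \ell + 3$ together with $\dim E = 2$ both bounds $\ell$ and rigidly constrains the fibration $f|_E\colon E\to f(E)$. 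As $E$ is contracted, $\dim f(E)\in\{0,1\}$, splitting the proof into type E1 ($\dim f(E)=1$) and the range E2--E5 ($f(E)$ a point).

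\textbf{Case $\dim f(E)=1$.} A general fiber $F$ of $f|_E$ is then a smooth rational curve. Parameter counting (the $F$'s are parametrized by the curve $f(E)$, so $\dim_{[F]}\Mor(\bP^1,X)=\dim f(E)+\dim\mathrm{Aut}(\bP^1)=4$, the expected value) forces $H^1(F, N_{F/X})=0$, hence $N_{F/X}\cong\cO\oplus\cO(-1)$; in particular $-K_X\cdot F = 1$, and the same holds for every fiber $C$ of $f|_E$. From here I would show that $f|_E$ has no multiple or reducible fibers (using $f$-ampleness of $-K_X$ and the normal bundle computation), that $Z:=f(E)$ is a smooth curve, that $Y$ is smooth along $Z$, and finally --- by comparing $X$ with $\Bl_Z Y$ through a local computation along $Z$ --- that $f$ is exactly this blow-up. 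This is E1.

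\textbf{Case $f(E) = \{q\}$.} Here $\cO_E(E)$ is anti-ample, $-K_X|_E$ is ample, $E$ is swept out by the minimal rational curves (of bounded degree), and adjunction gives $K_E = (K_X+E)|_E$. I would analyze the polarized surface $(E,-K_X|_E)$ under these constraints: it is one of $(\bP^2,\cO(2))$, $(Q,\cO(1,1))$ with $Q$ a smooth quadric, $(Q,\cO(1))$ with $Q$ a quadric cone, or $(\bP^2,\cO(1))$, with $\cO_E(E)$ equal to $\cO(-1)$, $\cO(-1,-1)$, $\cO(-1)$, $\cO(-2)$ respectively. The remaining step is to reconstruct the complete local ring $\widehat{\cO}_{Y,q}$ from the formal neighbourhood of $E$ in $X$: contracting $E$ identifies $(Y,q)$ with, respectively, a smooth point (E2), an ordinary double point $x^2+y^2+z^2+w^2=0$ (E3), the terminal point $x^2+y^2+z^2+w^3=0$ (E4), or the quotient $\bA^3/(\pm 1)$ (E5).

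\textbf{Main obstacle.} Granting the Cone Theorem and the standard deformation theory of rational curves, the delicate point is the end of the second case --- pinning down the analytic type of the terminal singularity of $Y$ at $q$ from infinitesimal data along $E$ alone and excluding every other normal form. Absent those inputs, the genuinely hard part is Mori's original production of the extremal rational curve $C$ and the control of its deformations via reduction to characteristic $p$.
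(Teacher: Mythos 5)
The paper does not prove this theorem: it is quoted from Mori's classification \cite{Mori82} of $K_X$-negative extremal contractions on smooth projective threefolds, so there is no in-paper argument to compare yours against. Your outline is a faithful reconstruction of the architecture of Mori's original proof --- the dichotomy on $\dim f(E)$, the use of an extremal rational curve of minimal degree together with the estimate $\dim_{[C]}\Mor(\mathbb{P}^1,X)\ge -K_X\cdot C+3$, the normal-bundle computation $N_{F/X}\cong\mathcal{O}\oplus\mathcal{O}(-1)$ forcing $-K_X\cdot F=1$ in the fiber-over-a-curve case, and the adjunction bookkeeping $K_E=(K_X+E)|_E$ that pins down $\mathcal{O}_E(E)$ as $\mathcal{O}(-1)$, $\mathcal{O}(-1,-1)$, $\mathcal{O}(-1)$, $\mathcal{O}(-2)$ in cases E2--E5. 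All of that numerology checks out.

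However, as written this is a roadmap rather than a proof: the two steps that carry essentially all of the content are announced but not argued. First, in the point case you assert that the polarized surface $(E,-K_X|_E)$, with $-K_X|_E$ ample and $\mathcal{O}_E(E)$ anti-ample, must be one of exactly four types; a priori $E$ is only an irreducible surface and could be badly singular, and Mori's proof that $E$ is normal with the stated structure (via the geometry of the minimal rational curves sweeping out $E$ and a classification of the resulting polarized surfaces) is the heart of the matter. Second, recovering the analytic germ $(Y,q)$ --- distinguishing the ordinary double point from $x^2+y^2+z^2+w^3=0$ and from $\mathbb{A}^3/\pm1$ --- requires computing $\widehat{\mathcal{O}}_{Y,q}$ via the formal completion along $E$, using the filtration by the sheaves $\mathcal{O}_E(-nE)$, and identifying the resulting ring; the sentence ``contracting $E$ identifies $(Y,q)$ with\ldots'' is the conclusion, not an argument. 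Similarly, in case E1 the claims that $f|_E$ has irreducible reduced fibers, that $Z=f(E)$ and $Y$ are smooth, and that $X\cong\Bl_ZY$ each need the deferred local analysis. You have correctly flagged these as the hard points yourself, so the proposal should be read as an accurate plan for Mori's proof with its decisive steps still owed.
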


Given a contractible divisor $E$ on a smooth Fano threefold $X$, we will say that $E$ has type E1, E2, E3, E4, or E5 according to the classification above.  The following lemma allows us to easily work with contractible divisors.

\begin{lemm} \label{lemm:nooverlap}
Let $X$ be a smooth Fano threefold.  Let $E_{1}$ and $E_{2}$ be distinct contractible divisors which have E2, E3, E4, E5 type or which have E1 type and are isomorphic to $\mathbb{P}^{1} \times \mathbb{P}^{1}$ with normal bundle $\mathcal{O}(-1,-1)$.  Then $E_{1} \cap E_{2} = \varnothing$.
\end{lemm}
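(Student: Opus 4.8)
The plan is to argue by contradiction: suppose $E_1 \cap E_2 \neq \varnothing$ and derive a contradiction from the structure of the contracted divisors listed in Theorem~\ref{theo:moriclassification}. The key observation is that each of the divisors in question is, as a polarized surface with respect to $-K_X$, quite rigid: it has Picard rank one (E2, E4, E5), Picard rank two with a specific structure (E3, and E1 with $E \cong \mathbb{P}^1 \times \mathbb{P}^1$ and $N_{E/X} \cong \mathcal{O}(-1,-1)$), and more importantly each such $E_i$ has a very negative normal bundle. I would first record, for each type, the intersection numbers $E_i^2 \cdot D$ and $E_i^3$ in terms of $-K_X$ and the generators of $N^1(E_i)$, using the explicit description of $(E_i, -K_X|_{E_i})$ and the fact that $N_{E_i/X} = \mathcal{O}_{E_i}(E_i)$ is computable from the contraction (e.g. for E5, $E \cong \mathbb{P}^2$ with $\mathcal{O}_E(E) \cong \mathcal{O}(-2)$; for the relevant E1 case $\mathcal{O}_E(E) \cong \mathcal{O}(-1,-1)$; etc.).

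The main step is then the following: if $E_1 \cap E_2 \neq \varnothing$, then the scheme-theoretic intersection contains a curve $\Gamma$ (since two divisors on a threefold meet in pure dimension $1$ if they meet at all). Restricting to $E_1$, the curve $\Gamma$ is an effective divisor on $E_1$ with $\mathcal{O}_{E_1}(\Gamma) = \mathcal{O}_{E_1}(E_2)$; but $E_2$ is itself contracted by a birational morphism $g: X \to Y'$, so $E_2$ — and hence its restriction to $E_1$, which is a sum of $g$-exceptional curves — is covered by curves $C$ with $-K_X \cdot C \in \{0, 1\}$ and in fact with $E_2 \cdot C < 0$. On the other hand, computing $E_2 \cdot C$ for $C \subset E_1$ a curve using the known normal bundle / polarization data of $E_1$, I expect to find $E_2 \cdot C \geq 0$ for every curve $C$ on $E_1$, because the classes of curves on $E_1$ that can possibly be $E_1 \cap E_2$ are constrained (they are restrictions of ample-ish or at least effective classes coming from $X$) and the only curves with $E_2 \cdot C < 0$ would have to lie in $E_2$. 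Running this for each pair of types (a finite check, exploiting that E2–E5 divisors have Picard rank one so there is essentially one curve class to test, and the $\mathbb{P}^1\times\mathbb{P}^1$ E1 case has two rulings to test) yields the contradiction. An alternative and perhaps cleaner route for several cases: if $E_1$ and $E_2$ meet, consider the birational contraction $f_1: X \to Y_1$ of $E_1$; since $E_1 \cap E_2 \neq \varnothing$, the image $f_1(E_2)$ is still a divisor, and $(f_1)_* E_2 = f_1(E_2)$ has self-intersection numbers that are forced to be inconsistent with $Y_1$ being $\mathbb{Q}$-factorial terminal unless $f_1|_{E_2}$ is an isomorphism onto its image, which is incompatible with $E_1$ being covered by curves having negative intersection with $E_1$ that would have to meet $E_2$.

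The step I expect to be the main obstacle is the bookkeeping of normal bundles and intersection numbers in the mixed cases — for instance when $E_1$ is of type E1 ($\mathbb{P}^1 \times \mathbb{P}^1$ with $\mathcal{O}(-1,-1)$) and $E_2$ is of type E5 ($\mathbb{P}^2$ with $\mathcal{O}(-2)$) — where one must identify precisely which curve classes on $E_1$ could equal $E_1 \cap E_2$ and then verify the sign of $E_2 \cdot C$. Here the adjunction formula on $E_i$, namely $K_{E_i} = (K_X + E_i)|_{E_i}$, pins down $-K_X|_{E_i}$ from the listed polarization and hence pins down $E_i|_{E_i}$; feeding this into $E_2 \cdot C = \deg \mathcal{O}_{E_1}(E_2)|_C$ and comparing with the requirement $E_2 \cdot C' < 0$ for the contracted curves $C' \subset E_2$ should close each case, but the argument must be organized so that one does not have to separately treat all $\binom{5}{2}$ pairs. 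I would handle this by first proving the general principle ``a contracted divisor $E_2$ of one of these types satisfies $E_2 \cdot C \geq 0$ for every irreducible curve $C \not\subset E_2$,'' which follows because $-E_2$ is $f_2$-ample and $C$ maps to a curve or point under $f_2$; this immediately gives that if $\Gamma = E_1 \cap E_2$ is a curve then $E_2 \cdot \Gamma \geq 0$ and symmetrically $E_1 \cdot \Gamma \geq 0$, while the negativity of both normal bundles $N_{E_i/X}$ forces $E_i \cdot \Gamma < 0$ for a curve $\Gamma$ spanning $N_1(E_i)$ appropriately, and a short case analysis of which effective classes $\Gamma$ can occupy in $N_1(E_1)$ (respectively $N_1(E_2)$) completes the contradiction.
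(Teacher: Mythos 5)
Your overall strategy is the right one and is essentially the paper's: take an irreducible curve $\Gamma \subset E_1 \cap E_2$ and derive contradictory signs for $E_2 \cdot \Gamma$. But the step you single out as the clean way to ``complete the contradiction'' is invalid as stated. You propose the principle that $E_2 \cdot C \geq 0$ for every irreducible curve $C \not\subset E_2$ and then say this ``immediately gives'' $E_2 \cdot \Gamma \geq 0$. It does not: $\Gamma$ is a component of $E_1 \cap E_2$ and is therefore contained in $E_2$, so the principle does not apply to it --- and curves inside $E_2$ are precisely the ones for which $E_2 \cdot C$ can be (and here is) negative, by the anti-ampleness of $\mathcal{O}_{E_2}(E_2)$. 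The same circularity appears in your middle paragraph, where you justify $E_2 \cdot C \geq 0$ for curves on $E_1$ by noting that ``the only curves with $E_2 \cdot C < 0$ would have to lie in $E_2$'': but $\Gamma$ does lie in $E_2$, so this observation gives nothing.

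The missing ingredient --- which is the entire content of the paper's two-line proof --- is a property of the surfaces $E_1$ appearing in the hypothesis: each of $\mathbb{P}^2$, the smooth quadric, the quadric cone, and $\mathbb{P}^1 \times \mathbb{P}^1$ has the property that every curve on it moves in a family sweeping out the surface (equivalently, every nonzero effective divisor class on it is nef). Hence $\Gamma$ deforms inside $E_1$ to curves not contained in $E_2$, which forces $E_2 \cdot \Gamma \geq 0$; dually, $E_2|_{E_1}$ is a nonzero effective, hence nef, class on $E_1$, so $E_2 \cdot \Gamma \geq 0$ for the curve $\Gamma \subset E_1$. This is exactly the inequality you need, it requires no pairwise bookkeeping of normal bundles (only the per-surface fact above), and it contradicts $E_2 \cdot \Gamma < 0$. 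Your middle paragraph gestures at this via the effectivity of $E_2|_{E_1}$, and if you carry out that computation on each surface type the argument closes; but as written the logical weight is placed on the non-containment principle, which fails for $\Gamma$. (A minor, non-load-bearing slip: the contracted curves of an E2 divisor have $-K_X \cdot C = 2$, not $\leq 1$.)
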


The proof is well-known; for example, in \cite{MM83} it is proved for primitive Fano threefolds.

\begin{proof}
Suppose that $E_{1} \cap E_{2}$ is not empty.  Then the intersection must have dimension $1$.  Let $C$ denote an irreducible curve lying in the intersection.  Since both divisors have the property that every curve in the surface deforms in a dominant family, we see that $C$ deforms to sweep out $E_{1}$ and also deforms to sweep out $E_{2}$.  This implies that $C$ is a nef curve class on $X$.  However, we should also have $E_{1} \cdot C < 0$ due to the antiampleness of $E_{1}|_{E_{1}}$, giving a contradiction.
\end{proof}

Smooth Fano threefolds of Picard rank $1$ are classified by Fano-Iskovskih in \cite{Isk77} and \cite{FanoII}. The classification of smooth Fano threefolds of Picard rank $\geq 2$ is given by Mori-Mukai in \cite{MM81}, \cite{MM83}, and \cite{MM03}.  Finally, the computation of extremal rays for smooth Fano threefolds has been carried out in \cite{Mat95}, \cite{MM04}, and \cite[Section 10.4]{Fuj16}.  According to these results, the following list classifies all smooth Fano threefolds which admit an E5 type divisorial contraction. 

\begin{theo} \label{theo:e5classification}
Let $X$ be a smooth Fano threefold admitting an E5 type divisorial contraction.
Then $X$ is one of the following threefolds:
\begin{itemize}
\item Picard rank $2$ case:
\begin{enumerate}
\item the blow up of $\mathbb{P}^{3}$ along a smooth cubic plane curve;
\item $\mathbb{P}_{\mathbb{P}^{2}}(\mathcal{O} \oplus \mathcal{O}(2))$.
\setcounter{enumi_saved}{\value{enumi}}
\end{enumerate}
\item Picard rank $3$ case
\begin{enumerate}
\setcounter{enumi}{\value{enumi_saved}}
\item the blow up of $\mathbb P^3$ along the disjoint union of a plane cubic in a plane $P$ and a point not on $P$;
\item the blow up of $\mathbb{P}^{1} \times \mathbb{P}^{2}$ along a conic in a fiber of the first projection map;
\item the blow up of $\mathrm{Bl}_{\mathrm{pt}}\mathbb{P}^{3}$ along a line contained in the exceptional divisor;
\item the blow-up of $\mathbb{P}_{\mathbb{P}^{2}}(\mathcal{O} \oplus \mathcal{O}(2))$ along a quartic curve $Z$ contained in a minimal moving section $D$ of the projective bundle.
\end{enumerate}
\end{itemize}
Moreover the first five varieties admit a unique E5 divisorial contraction while the last one admits exactly two.
\end{theo}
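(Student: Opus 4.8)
The plan is to read the list off from the classification of smooth Fano threefolds together with the tabulations of their extremal contractions cited above; the real content is organizing the search efficiently and matching each abstract entry to the concrete birational model in the statement.

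First I would record the constraints that an E5 ray imposes. If $f\colon X\to Y$ is an E5 contraction of the smooth Fano threefold $X$ with exceptional divisor $E\cong\mathbb P^2$, then adjunction together with $-K_X|_E\cong\mathcal O_{\mathbb P^2}(1)$ forces $N_{E/X}\cong\mathcal O_{\mathbb P^2}(-2)$; hence a line $\ell\subset E$ satisfies $-K_X\cdot\ell=1$ and $E\cdot\ell=-2$, so $X$ carries a two-dimensional family of $-K_X$-lines sweeping out $E$. Moreover $f$ contracts a single $K_X$-negative extremal ray, so $Y$ is a $\mathbb Q$-factorial terminal Fano threefold with $\rho(Y)=\rho(X)-1$ (one checks $-K_Y$ is ample using that the only curve classes $\gamma$ with $E\cdot\gamma<0$ are the contracted ones); in particular $\rho(X)\ge 2$. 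Conversely, on a smooth Fano threefold any $\mathbb P^2\subset X$ with normal bundle $\mathcal O(-2)$ spans an extremal ray whose contraction is E5 (a short cone-theorem argument), so E5 divisors are exactly such planes and can be located by normal bundle computations.

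Next I would go through the classification by Picard rank. For $\rho(X)=2$, comparing Mori--Mukai's list \cite{MM81} with the extremal ray data of \cite{Mat95}, \cite{MM04}, \cite[Section~10.4]{Fuj16}, exactly two entries have an E5 ray: the blow-up of $\mathbb P^3$ along a smooth plane cubic $C\subset P$, where the E5 divisor is $\widetilde P$ with $N_{\widetilde P/X}\cong N_{P/\mathbb P^3}(-C)\cong\mathcal O_{\mathbb P^2}(1)\otimes\mathcal O_{\mathbb P^2}(-3)\cong\mathcal O_{\mathbb P^2}(-2)$; and $\mathbb P_{\mathbb P^2}(\mathcal O\oplus\mathcal O(2))$, whose E5 divisor is the section with normal bundle $\mathcal O(-2)$. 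In each case the other extremal ray is not of type E5, so the E5 contraction is unique. For $\rho(X)=3$, the same comparison with \cite{MM83} leaves precisely the four entries (3)--(6), and in each a normal bundle computation on the evident copies of $\mathbb P^2$ pins down the E5 ray(s) and shows that contracting it produces one of the Picard rank $2$ varieties above (or $\mathrm{Bl}_{\mathrm{pt}}\mathbb P^3$). Entries (3)--(5) carry a unique E5 ray; entry (6) carries two, since blowing up the quartic $Z$ inside the minimal moving section $D\cong\mathbb P^2$, which has $N_{D/P}\cong\mathcal O_{\mathbb P^2}(2)$, makes $\widetilde D$ a plane with $N_{\widetilde D/X}\cong\mathcal O_{\mathbb P^2}(2)\otimes\mathcal O_{\mathbb P^2}(-4)\cong\mathcal O_{\mathbb P^2}(-2)$, while the original $\infty$-section (disjoint from $Z$) remains an E5 divisor, giving two disjoint E5 divisors, consistent with Lemma~\ref{lemm:nooverlap}. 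Finally, for $\rho(X)\ge 4$ one checks from \cite{MM83}, \cite{MM03} and the extremal ray tables that no entry admits an E5 ray.

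The main obstacle is the entry-by-entry cross-referencing of the Mori--Mukai tables against the extremal ray computations in the $\rho(X)\ge 3$ range, and then verifying that each abstract entry carrying an E5 ray coincides with the stated birational model. The single subtlest point is the count for entry (6): showing it admits two E5 contractions rather than one, which requires exhibiting the two disjoint copies of $\mathbb P^2$ and checking that both acquire normal bundle $\mathcal O(-2)$ after the blow-up.
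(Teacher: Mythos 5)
Your proposal is correct and follows essentially the same route as the paper, which gives no independent proof but simply reads the list off from the Mori--Mukai classification together with the extremal-ray computations of \cite{Mat95}, \cite{MM04}, and \cite[Section 10.4]{Fuj16}. The normal-bundle computations you supply (in particular $N_{E/X}\cong\mathcal O_{\mathbb P^2}(-2)$ and the count of two disjoint E5 planes in case (6)) are accurate and consistent with how the paper uses this theorem later, e.g.\ in Section \ref{sect:2E5contractions}.
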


We will also need the following results about Fano threefolds which follows directly from the classification.

\begin{theo}[\cite{Isk77}] \label{theo:notbpf}
Let $X$ be a smooth Fano threefold such that $|-K_X|$ is not base point free.
Then $X$ is isomorphic to one of the following threefolds:
\begin{enumerate}
\item $V_1$ which is the double cover of the Veronese cone $W_4 \subset \mathbb P^6$ whose branch locus is a smooth intersection of $W_4$ and a cubic hypersurface not passing through the vertex of the cone;
\item the blow up of $V_1$ along a smooth elliptic curve which is a complete intersection of two members of $|-\frac{1}{2}K_{V_1}|$;
\item $\mathbb P^1 \times S_1$ where $S_1$ is a del Pezzo surface of degree $1$.
\end{enumerate}
\end{theo}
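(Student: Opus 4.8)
This is in essence a statement about the classification of smooth Fano threefolds, and my plan is to reduce base-point-freeness of $|-K_X|$ to a question on a K3 surface, translate that into the presence of a very low degree curve on $X$, and then extract the list from the classification.

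For the reduction I would argue as follows. By Riemann--Roch and Kodaira vanishing, $h^0(X,-K_X)=\tfrac12(-K_X)^3+3\geq 4$. One first checks that $|-K_X|$ has no fixed component --- this is standard for smooth Fano threefolds and can also be read off the classification --- so that $\mathrm{Bs}\,|-K_X|$ has codimension at least two. By Bertini a general member $S\in|-K_X|$ is then irreducible and smooth away from $\mathrm{Bs}\,|-K_X|$, and by adjunction $K_S\sim 0$, so $S$ is a K3 surface with at worst Du Val singularities. Since $H^1(X,\mathcal O_X)=0$, the restriction $H^0(X,-K_X)\to H^0(S,L)$, $L:=(-K_X)|_S$, is surjective with one-dimensional kernel, and hence $\mathrm{Bs}\,|-K_X|=\mathrm{Bs}\,|L|$, with $L$ nef and big and $L^2=(-K_X)^3$.

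Next I would invoke the classical description of linear systems on K3 surfaces (Saint-Donat, Mayer): for a nef $L$ with $L^2>0$, the system $|L|$ is base-point-free except in a short list of degenerate configurations --- for instance $L\cdot E=1$ for some genus-one curve $E$, or $L$ is $2$-divisible with small square, or $|L|$ has a fixed rational curve --- and in each such case $X$ carries a rational or elliptic curve of anticanonical degree $\leq 2$ in a constrained position. In particular $(-K_X)^3$ is forced to be small, which pins down the Fano index and the shape of the Mori cone, so only finitely many numerical types remain.

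Finally I would match these against the classification (\cite{FanoI}, \cite{FanoII}, \cite{MM81}). In Picard rank one the surviving case is the del Pezzo threefold of degree one, the double cover $V_1$ of the Veronese cone, whose half-anticanonical system already has a base point. In Picard rank $\geq 2$ I would run the Minimal Model Program, using Mori's classification of divisorial contractions (Theorem~\ref{theo:moriclassification}) and the structure of Mori fiber spaces together with adjunction on a general fiber $F$ (so that $(-K_X)|_F=-K_F$): base-point-freeness can then fail only when $X$ admits a del Pezzo fibration whose general fiber has degree one --- forcing $X=\mathbb P^1\times S_1$ --- or when $X$ is the blow-up of $V_1$ along an anticanonical elliptic curve through the half-anticanonical base point. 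I expect the main obstacle to be precisely this last step: the bookkeeping across the entire Fano threefold classification, determining for which varieties the offending low-degree curve actually lies on the general anticanonical surface, and verifying that the base point persists on that one blow-up. Everything else in the argument is formal.
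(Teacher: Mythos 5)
The paper does not actually prove this statement: it is quoted from \cite{FanoI} as a piece of the classification (``follows directly from the classification''), so your attempt is measured against the literature rather than against an argument in the text. Your skeleton --- restrict to a general anticanonical member, reduce to Saint-Donat's base-point criterion on a K3, extract a low-degree curve of genus at most one, then classify --- is the standard route to such results, but several of the load-bearing steps are missing or wrong. First, Bertini only gives smoothness of the general $S\in|-K_X|$ \emph{away} from $\mathrm{Bs}\,|-K_X|$; the assertion that $S$ has at worst Du Val singularities along the base locus is precisely Shokurov's general-elephant theorem, a substantial input you cannot get for free, and without it you cannot pass to a minimal resolution that is a K3 and invoke Saint-Donat. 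Second, the deduction that ``$(-K_X)^3$ is forced to be small'' is false: Saint-Donat's criterion for a nef and big $L$ says $|L|$ has base points if and only if $L\sim kE+\Gamma$ with $E^2=0$, $\Gamma^2=-2$, $E\cdot\Gamma=1$, $k\geq 2$, and then $L^2=2k-2$ is unbounded. What you actually obtain is an elliptic pencil $|E|$ on $S$ with $-K_X\cdot E=1$; letting $S$ vary, this yields a covering family of genus-$\leq 1$ curves of anticanonical degree $1$ on $X$, and it is \emph{that} object one must classify (compare the paper's Lemma \ref{lemm:linesarerational}), not a bounded anticanonical degree. The finiteness of numerical types does not follow as claimed.

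Two further concrete problems. The claim that a del Pezzo fibration of degree one forces $X\cong\mathbb{P}^1\times S_1$ is contradicted by item (2) of the statement itself: the blow-up of $V_1$ along $D_1\cap D_2$ with $D_i\in|-\frac12 K_{V_1}|$ carries exactly such a fibration (the resolved pencil spanned by $D_1,D_2$), so this step must be replaced by a genuine classification of Fano threefolds admitting degree-one del Pezzo fibrations, together with a check that the fiberwise base point globalizes (surjectivity of $H^0(X,-K_X)\to H^0(F,-K_F)$). Finally, your Picard-rank-one conclusion is inconsistent with your own method: a base point of $|-\frac12 K_{V_1}|$ does not produce one of $|-K_{V_1}|$, and in fact $|-K_{V_1}|$ \emph{is} base point free --- its anticanonical map is exactly the double cover of the Veronese cone. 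Within your framework this is forced, since Saint-Donat requires a curve $E$ with $-K_X\cdot E=1$, impossible when $-K_X$ is divisible by $2$ in $\mathrm{Pic}(X)$. (The theorem as stated survives only because an ``only if'' is not falsified by listing an extra variety.) So the approach is the right one in outline, but the steps that would actually pin down the list are precisely the ones that are incorrect or unproved.
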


\begin{theo}[\cite{iskov}]
\label{theo:notveryample}
Let $X$ be a smooth Fano threefold such that $|-K_X|$ is base point free, but not very ample.
Then $X$ is isomorphic to one of the following threefolds:
\begin{enumerate}
\item the double cover of $\mathbb P^3$ ramified along a smooth sextic surface;
\item the double cover of a quadric hypersurface $Q \subset \mathbb P^4$ ramified along a smooth intersection of $Q$ and a quartic hypersurface;
\item the double cover of $\mathbb P^1 \times \mathbb P^2$ ramified along a smooth hypersurface of bidegree $(2, 4)$;
\item the blow up of $V_2$ along an elliptic curve which is a complete intersection of two members of $|-\frac{1}{2}K_{V_2}|$, and;
\item $\mathbb P^1 \times S_2$ where $S_2$ is a smooth del Pezzo surface of degree $2$. 
\end{enumerate}
\end{theo}

\subsection{Deformation theory for stable maps of genus $0$} \label{sect:deftheory}

Let $X$ be a smooth projective variety.  We denote the coarse moduli space of stable maps of genus $0$ on $X$ by $\overline{M}_{0,0}(X)$.  In this section we will use deformation theory to study $\overline{M}_{0,0}(X)$ following \cite{BM96}, \cite{BF97}, and \cite{GHS03}.  We will be interested in the components of $\overline{M}_{0,0}(X)$ which generically parametrize maps with irreducible domain; we denote the union of such components by $\overline{\Rat}(X)$.
 
Let $f : Z \to X$ be a stable map of genus $0$.
Assume that $f$ is an immersion on an open neighborhood of each nodal point of $Z$. Then the deformation theory of $f$ is controlled by the normal sheaf $N_{f/X}$ which fits into an exact sequence
\[
0 \to \mathcal{H}om_{\mathcal{O}_{C}}(K,\mathcal{O}_{C}) \to N_{f/X} \to \mathcal{E}xt^{1}_{\mathcal{O}_{C}}(Q,\mathcal{O}_{C}) \to 0
\]
where $K$ and $Q$ are respectively the kernel and cokernel of $f^{*}\Omega^{1}_{X} \to \Omega^{1}_{C}$.   Precisely, the space of first-order deformations of $f$ is $H^{0}(Z,N_{f/X})$ and the obstruction space is $H^{1}(Z,N_{f/X})$.  We will often be in a situation where $f$ is an immersion so that $Q=0$.

\begin{prop}[\cite{GHS03} Lemma 2.6]\label{prop:GHS}
Let $X$ be a smooth projective variety.  Suppose that $f: Z \to X$ is a stable map which is an immersion on an open neighborhood of each node of $Z$.  
Suppose that $Z_0$ is an irreducible component of $Z$ which meets components $Z_1, \dots, Z_c$ at points $p_1, \dots, p_c$.  Letting $f_{0} := f|_{Z_{0}}$ we obtain a short exact sequence
\[ 0 \to N_{f_0/X} \to N_{f/X}|_{Z_0} \to \bigoplus_i k(p_i) \to 0 , \]
where the quotient $N_{f/X}|_{Z_0} \to k(p_i)$ is given by the tangent direction of $Z_i$ at $p_i$.
\end{prop}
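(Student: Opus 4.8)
The plan is to prove the statement locally at each $p_{i}$ and then patch. Away from the points $p_{1},\dots,p_{c}$ the curve $Z_{0}$ is an open subscheme of $Z$ on which $f$ restricts to $f_{0}$, so there $N_{f_{0}/X}$ and $N_{f/X}|_{Z_{0}}$ are canonically identified. Hence it suffices to produce, near each $p_{i}$, an injection $N_{f_{0}/X}\hookrightarrow N_{f/X}|_{Z_{0}}$ which restricts to this canonical identification on the punctured neighborhood, has cokernel $k(p_{i})$, and whose quotient map is the one described in the statement; since the $p_{i}$ are distinct and the construction is canonical and local, gluing these yields the asserted sequence with $\bigoplus_{i}k(p_{i})$ as cokernel.

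For the local analysis at a fixed $p_{i}$, choose formal coordinates with $\widehat{\mathcal{O}}_{Z,p_{i}}\cong k[[s,t]]/(st)$, $Z_{0}=\{t=0\}$ and $Z_{i}=\{s=0\}$. The presentation $0\to\mathcal{O}_{Z}\xrightarrow{(t,s)}\mathcal{O}_{Z}^{\oplus 2}\to\Omega^{1}_{Z}\to 0$ near $p_{i}$ shows two things: $\Omega^{1}_{Z}$ has projective dimension $1$ at $p_{i}$, and $\mathrm{Tor}_{1}^{\mathcal{O}_{Z}}(\Omega^{1}_{Z},\mathcal{O}_{Z_{0}})=0$ by a direct check, so that $\Omega^{1}_{Z}|_{Z_{0}}\cong\Omega^{1}_{Z_{0}}\oplus k(p_{i})$ near $p_{i}$, with the torsion summand generated by the image of $dt$, i.e. by the covector dual to the tangent direction $v_{i}$ of $Z_{i}$ at $p_{i}$. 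Since $f$ is an immersion near $p_{i}$ we have $Q=0$ there (in the notation of Section~\ref{sect:deftheory}), so $f^{*}\Omega^{1}_{X}\twoheadrightarrow\Omega^{1}_{Z}$ and $N_{f/X}=\mathcal{H}om_{\mathcal{O}_{Z}}(K,\mathcal{O}_{Z})$ near $p_{i}$, where $K=\ker(f^{*}\Omega^{1}_{X}\to\Omega^{1}_{Z})$; being a first syzygy of a module of projective dimension $1$, $K$ is locally free of rank $\dim X-1$ near $p_{i}$, and hence so is $N_{f/X}$ there. Restricting $0\to K\to f^{*}\Omega^{1}_{X}\to\Omega^{1}_{Z}\to 0$ to $Z_{0}$ (exact, by the $\mathrm{Tor}$ vanishing) and composing with the two projections of $\Omega^{1}_{Z}|_{Z_{0}}$ gives an exact sequence of $\mathcal{O}_{Z_{0}}$-modules
\[
0\to K|_{Z_{0}}\to N^{\vee}_{f_{0}/X}\xrightarrow{\ \langle\,\cdot\,,\,v_{i}\rangle\ }k(p_{i})\to 0 ,
\]
where $N^{\vee}_{f_{0}/X}=\ker(f_{0}^{*}\Omega^{1}_{X}\to\Omega^{1}_{Z_{0}})$ is the conormal sheaf of $f_{0}$ near $p_{i}$ and the surjection is evaluation against $v_{i}$; it is surjective precisely because $v_{i}$ is not tangent to $Z_{0}$, and that transversality is itself forced by unramifiedness at $p_{i}$ — if the two branches shared a tangent direction, $f^{*}\Omega^{1}_{X}\to\Omega^{1}_{Z}$ would fail to be surjective at the node.

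To finish, dualize over the smooth curve $Z_{0}$. Since $\mathcal{H}om_{\mathcal{O}_{Z_{0}}}(k(p_{i}),\mathcal{O}_{Z_{0}})=0$ and $\mathcal{E}xt^{1}_{\mathcal{O}_{Z_{0}}}(k(p_{i}),\mathcal{O}_{Z_{0}})\cong k(p_{i})$, applying $\mathcal{H}om_{\mathcal{O}_{Z_{0}}}(-,\mathcal{O}_{Z_{0}})$ to the displayed sequence yields
\[
0\to N_{f_{0}/X}\to\mathcal{H}om_{\mathcal{O}_{Z_{0}}}(K|_{Z_{0}},\mathcal{O}_{Z_{0}})\to k(p_{i})\to 0 .
\]
Because $K$ is locally free near $p_{i}$, forming $\mathcal{H}om$ commutes with restriction to $Z_{0}$, so $\mathcal{H}om_{\mathcal{O}_{Z_{0}}}(K|_{Z_{0}},\mathcal{O}_{Z_{0}})\cong N_{f/X}|_{Z_{0}}$ near $p_{i}$; this is the desired sequence, and on the punctured neighborhood it restricts to the canonical identification, so the local constructions patch. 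It then remains to match the connecting quotient map $N_{f/X}|_{Z_{0}}\to k(p_{i})$ with ``the tangent direction of $Z_{i}$ at $p_{i}$'': this is a bookkeeping exercise with the dualization above, the conceptual content being that a local section of $N_{f/X}|_{Z_{0}}$ fails to lie in $N_{f_{0}/X}$ exactly when the corresponding first-order deformation of $f$ slides the node $p_{i}$ along $f(Z_{i})$, the rate of sliding (a multiple of $v_{i}$) being its image in $k(p_{i})$. The one genuine subtlety — and the step I would treat most carefully — is the local freeness of $K$, equivalently of $N_{f/X}$, at the node: that is where both the nodal structure of $Z$ and the immersion hypothesis at $p_{i}$ are used essentially, while everything else is manipulation of exact sequences of sheaves on a smooth curve.
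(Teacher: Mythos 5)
Your proof is correct. Note that the paper does not prove this statement at all --- it is quoted as \cite[Lemma 2.6]{GHS03} --- so there is no in-paper argument to compare against; your write-up is essentially the standard local-at-the-node computation that underlies the cited lemma. The key points are all in place and justified: the presentation $\mathcal{O}_{Z}\xrightarrow{(t,s)}\mathcal{O}_{Z}^{\oplus 2}\to\Omega^{1}_{Z}$ at a node, the resulting $\mathrm{Tor}_{1}$-vanishing so that restricting the conormal sequence to $Z_{0}$ stays exact, the identification $\Omega^{1}_{Z}|_{Z_{0}}\cong\Omega^{1}_{Z_{0}}\oplus k(p_{i})$ with torsion generated by $dt$, the local freeness of $K=\ker(f^{*}\Omega^{1}_{X}\to\Omega^{1}_{Z})$ at the node (correctly flagged as the place where the immersion hypothesis is essential, via projective dimension one and Schanuel), and the final dualization over the smooth curve $Z_{0}$ using $\mathcal{E}xt^{1}(k(p_{i}),\mathcal{O}_{Z_{0}})\cong k(p_{i})$. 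Your observation that the immersion hypothesis at the node already forces the two branches to be transverse in $X$ is also correct and is exactly why the quotient map is surjective and is given by pairing against the tangent direction of $Z_{i}$. The only deferred step, matching the connecting map with evaluation against $v_{i}$, is transparent in your chosen coordinates since the torsion summand is spanned by $dt$, the covector dual to $v_{i}=\partial_{t}$; I would simply make that one sentence explicit rather than calling it bookkeeping.
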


We will mainly be interested in stable maps whose normal sheaves are positive along every component.

\begin{prop} \label{prop:vanishingofH1}
Let $X$ be a smooth projective variety.  Let $f: Z \to X$ be a genus $0$ stable map such that $f$ is an immersion. Suppose that $N_{f/X}|_{Z_i}$ is globally generated for every component $Z_{i}$ of $Z$. Then $f$ is a smooth point of the Kontsevich moduli space and there are deformations of $f$ which have irreducible domain and which map birationally onto free curves in $X$.
\end{prop}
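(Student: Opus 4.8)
The plan is to deduce everything from two properties of the normal sheaf of $f$. The first is that $N_{f/X}$ is \emph{locally free} on $Z$: since $f$ is an immersion, near a node of $Z$ the two branches of $Z$ map to two analytic branches meeting transversally in $X$, so $f(Z)$ is locally a complete intersection and its conormal sheaf is locally free, hence so is $N_{f/X}$. The second is that
\[
H^{1}(Z, N_{f/X}) = 0 \qquad \text{and} \qquad N_{f/X} \text{ is globally generated on } Z .
\]
Granting these, the vanishing of the obstruction space $H^{1}(Z,N_{f/X})$ shows via the deformation theory recalled in Section~\ref{sect:deftheory} that $[f]$ is a smooth point of the Kontsevich space $\overline{M}_{0,0}(X)$, lying on a unique irreducible component $M$; it then remains to exhibit the desired deformations of $f$ inside $M$.

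To prove the two properties I would induct on the number of components, establishing the stronger claim that for \emph{every} connected subcurve $W \subseteq Z$ the sheaf $N_{f/X}|_{W}$ is locally free, globally generated, and has $H^{1}(W, N_{f/X}|_{W}) = 0$. The base case $W = Z_{i}$ is immediate since $N_{f/X}|_{Z_{i}}$ is globally generated on $Z_{i} \cong \mathbb{P}^{1}$ by hypothesis. For the inductive step, since $Z$ has arithmetic genus $0$ its dual graph is a tree; hence a subcurve $W$ with more than one component has a component $Z_{0}$ that is a leaf of the dual tree of $W$, meeting $W' := \overline{W \setminus Z_{0}}$ in a single node $p$. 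Tensoring the exact sequence $0 \to \mathcal{O}_{W} \to \mathcal{O}_{Z_{0}} \oplus \mathcal{O}_{W'} \to k(p) \to 0$ with the locally free sheaf $N_{f/X}|_{W}$ and taking cohomology, the vanishing of $H^{1}(Z_{0}, N_{f/X}|_{Z_{0}})$ (global generation on $\mathbb{P}^{1}$) and of $H^{1}(W', N_{f/X}|_{W'})$ (inductive hypothesis) gives $H^{1}(W, N_{f/X}|_{W}) = 0$. Global generation of $N_{f/X}|_{W}$ then follows: a prescribed value at a point of $Z_{0}$ (resp.\ of $W'$) lifts to a section over $Z_{0}$ (resp.\ over $W'$), which can be matched at $p$ with a section of the complementary piece by its global generation, and the two glue to a global section by the same Mayer--Vietoris sequence together with the vanishing just proved. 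Taking $W = Z$ completes the argument.

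For the deformations, first observe that a general section of $N_{f/X}$ yields a first-order deformation of $f$ that simultaneously smooths every node of $Z$ (this uses the local structure of $N_{f/X}$ at a node of an immersion and the global generation of $N_{f/X}$), and this deformation is unobstructed because $H^{1}(Z,N_{f/X}) = 0$; hence $M$ contains stable maps $f' \colon \mathbb{P}^{1} \to X$ with irreducible domain. For a general such $f'$ I claim $(f')^{*}T_{X}$ is globally generated. First, $N_{f'/X}$ is globally generated: choosing a one-parameter smoothing family $\pi \colon \mathcal{Z} \to B$ with a map $F \colon \mathcal{Z} \to X$ whose central fiber is $f$ and whose general fiber is $f'$, the relative normal sheaf $N_{F/X}$ is locally free, hence flat over $B$, the sheaf $R^{1}\pi_{*} N_{F/X}$ vanishes near the central point of $B$ by upper semicontinuity (as $H^{1}$ vanishes there), so $\pi_{*} N_{F/X}$ is locally free and compatible with base change near that point, and the surjectivity of $\pi^{*}\pi_{*} N_{F/X} \to N_{F/X}$ — whose locus of failure on $B$ is closed — passes from the central fiber, where we have just proved it, to the general fiber. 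Then from the exact sequence $0 \to T_{\mathbb{P}^{1}} \to (f')^{*}T_{X} \to N_{f'/X} \to 0$ the bundle $(f')^{*}T_{X}$ is an extension of the globally generated bundles $N_{f'/X}$ and $\mathcal{O}_{\mathbb{P}^{1}}(2)$, hence globally generated (on $\mathbb{P}^{1}$ an extension of globally generated bundles is globally generated); thus $f'$ is free. Finally $f'$ is birational onto its image because $f$, being an immersion, is unramified and hence generically injective on each component, and being unramified is an open condition that therefore holds on the general fiber of the family.

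The step I expect to be the main obstacle is the last one, and the reason is that the hypothesis supplies global generation of $N_{f/X}$ only after restriction to each component; this does \emph{not} force the individual maps $f|_{Z_{i}}$ to be free, since by Proposition~\ref{prop:GHS} the sheaf $N_{f|_{Z_{i}}/X}$ is a proper subsheaf of $N_{f/X}|_{Z_{i}}$, smaller by precisely the twist at the attaching nodes. One must therefore glue first and then exploit that, on the whole curve $Z$, those same attaching nodes contribute \emph{positively} — which is exactly what makes $N_{f/X}$ globally generated on all of $Z$ in the inductive step — and then transport this positivity through the node-smoothing, which is where the flatness and semicontinuity of the previous paragraph enter. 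Keeping straight which normal sheaf is in play at each stage, and that $N_{f/X}$ is locally free so that the Mayer--Vietoris sequences are exact, is the essential bookkeeping.
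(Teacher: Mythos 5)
Your proof is correct. The paper in fact omits the proof of this proposition entirely (``Since this result is well-known we omit the proof''), and what you have written is precisely the standard argument being invoked: local freeness of $N_{f/X}$ from the immersion hypothesis, Mayer--Vietoris along the dual tree to get $H^1(Z,N_{f/X})=0$ and global generation from the component-wise hypothesis, smoothing of all nodes via a general section, and transport of positivity to the smoothed curve. The only places stated loosely are the identification of the node-smoothing direction with a one-dimensional quotient of $N_{f/X}\otimes k(p)$ and the base-change compatibility of the relative normal sheaf, both of which are standard; you also correctly isolate the one genuinely delicate point, namely that the hypothesis concerns $N_{f/X}|_{Z_i}$ rather than $N_{f|_{Z_i}/X}$, which is exactly why the proposition does not require each $f|_{Z_i}$ to be free.
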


Since this result is well-known we omit the proof.
We next recall some deformation theoretic properties of stable maps which will be familiar to experts. 

\begin{prop}
\label{prop:transverseintersection}
Let $X$ be a smooth variety with $D$ an arbitrary divisor on $X$ and let $C \subset X$ be a free rational curve that is general in its deformation class. Then $C$ is not tangent to $D$ at any point of intersection.
\end{prop}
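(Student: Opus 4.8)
The plan is to interpret the locus of tangencies as the image in the moduli space of a divisor inside the universal curve, and to deduce via generic smoothness in characteristic zero that a general fiber of this divisor is a reduced subscheme of $\mathbb{P}^{1}$; reducedness of the pullback of $D$ is precisely transversality. Let $M$ denote the component of $\overline{\Rat}(X)$ containing $[C]$, with universal family $\pi\colon\mathcal{C}\to M$ and evaluation $ev\colon\mathcal{C}\to X$, and let $M^{\circ}\subseteq M$ be the dense open locus of free maps with irreducible domain, with $\mathcal{C}^{\circ}=\pi^{-1}(M^{\circ})$. We may assume $D$ is a prime divisor: a point of $C$ lying on two components of $D$ lies in $\Sing(D_{\mathrm{red}})$, which has codimension $\geq 2$, and tangency of $C$ to $D$ at a smooth point of $D_{\mathrm{red}}$ is tangency to the unique component through that point. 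We may also assume the general member of $M$ meets $D$, as otherwise there is nothing to prove; since the general member is free and $ev$ is dominant, $D\subsetneq X$ implies the members contained in $D$ form a proper closed subfamily, so a general free curve of $M$ meets $D$ but is not contained in it.

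The one structural input used repeatedly is that for a free curve $f$ the evaluation map $ev$ is smooth at every point of the fiber $\mathcal{C}_{[f]}$: its differential is surjective because $f^{*}T_{X}$ is globally generated, and $\mathcal{C}$ is smooth there because $H^{1}(\mathbb{P}^{1},f^{*}T_{X})=0$. Hence $ev|_{\mathcal{C}^{\circ}}\colon\mathcal{C}^{\circ}\to U:=ev(\mathcal{C}^{\circ})$ is a smooth morphism onto a dense open $U\subseteq X$, of relative dimension $\dim M+1-\dim X$. First I would record that a general free curve of $M$ avoids any fixed closed $Z\subseteq X$ of codimension $\geq 2$: the part of $ev^{-1}(Z)$ lying over $M^{\circ}$ has dimension at most $\dim Z+(\dim M+1-\dim X)\leq\dim M-1$, so it does not dominate $M$, while the part lying over $M\setminus M^{\circ}$ maps into a proper closed subset of $M$. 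Applying this to $Z=\Sing(D)$, a general member of $M$ meets $D$ only along its smooth locus $D^{\mathrm{sm}}$.

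Next I would introduce the scheme-theoretic preimage $\mathcal{E}:=(ev|_{\mathcal{C}^{\circ}})^{-1}(D^{\mathrm{sm}})$. Since $D$ is Cartier on the smooth variety $X$ and $D^{\mathrm{sm}}$ is a smooth subscheme, and $ev|_{\mathcal{C}^{\circ}}$ is a smooth morphism, $\mathcal{E}$ is a smooth divisor in $\mathcal{C}^{\circ}$ — nonempty because a general free curve meets $D^{\mathrm{sm}}$ — hence of pure dimension $\dim M$ with pairwise disjoint irreducible components, and for $b\in M^{\circ}$ the scheme-theoretic fiber $\mathcal{E}_{b}$ is exactly $f_{b}^{*}D^{\mathrm{sm}}$ as a subscheme of $\mathbb{P}^{1}$. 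Now consider $\pi|_{\mathcal{E}}\colon\mathcal{E}\to M^{\circ}$. A component of $\mathcal{E}$ not dominating $M^{\circ}$ would have positive-dimensional general fiber, necessarily all of some $\mathcal{C}_{b}\cong\mathbb{P}^{1}$, forcing the free curve $f_{b}$ into $D$; such a component therefore maps into the proper closed locus of members contained in $D$ and contributes nothing over a general point. Every remaining component dominates $M^{\circ}$ and is thus generically finite over it, and since $\mathcal{E}$ is smooth (in particular reduced) and we work in characteristic zero, generic smoothness shows that over a dense open $M'\subseteq M^{\circ}$ the fiber of each such component is a reduced finite scheme. As the components of $\mathcal{E}$ are disjoint, $\mathcal{E}_{b}$ is a reduced subscheme of $\mathbb{P}^{1}$ for every $b\in M'$.

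Shrinking $M'$ so that the codimension-$2$ estimate above also holds on it, for $b\in M'$ the curve $C=f_{b}(\mathbb{P}^{1})$ is not contained in $D$, meets $D$ only along $D^{\mathrm{sm}}$, and satisfies $f_{b}^{*}D=f_{b}^{*}D^{\mathrm{sm}}=\mathcal{E}_{b}$, which is reduced; hence at every point of $C\cap D$ the curve $C$ crosses $D$ transversally, which is the assertion. I expect the only real obstacle to be the verification that $\mathcal{E}=ev^{-1}(D^{\mathrm{sm}})$ is genuinely smooth and reduced — this is exactly where the \emph{smoothness}, not mere dominance, of $ev$ over the free locus is essential — together with correctly isolating and discarding the spurious components of $\mathcal{E}$ supported over the locus of curves accidentally contained in $D$; everything else is a dimension count or a direct application of generic smoothness.
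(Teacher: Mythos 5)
Your proof is correct and rests on the same two pillars as the paper's: the evaluation map is smooth at every point of a free curve (global generation of $f^{*}T_{X}$ for surjectivity of the differential, vanishing of $H^{1}$ for smoothness of the source), combined with generic smoothness in characteristic zero. The packaging differs slightly — you apply generic smoothness to the projection $ev^{-1}(D^{\mathrm{sm}}) \to M$ and read off non-tangency from reducedness of the fiber $f^{*}D$, whereas the paper applies it to the restriction $R \to D$ of the evaluation map on the horizontal part of the reduced preimage of $D$ and then runs a tangent-space computation at the intersection point — but the substance is the same.
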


\begin{proof}
The family of free curves gives a base $M$, a flat morphism $\pi: \mathcal{C} \to M$ whose general fiber is 
$\mathbb{P}^1$, and a morphism $F:  \mathcal{C} \to X$ sending a general fiber of $\pi$ birationally onto a curve in $X$. After replacing $M$ and $\mathcal{C}$ by open subsets and taking a base change over $M$, we may assume that $M$ and $\mathcal{C}$ are smooth and that $F^{-1}(D)$ with its reduced structure is the sum of a finite union $R$ of sections of $\pi$ and a $\pi$-vertical divisor $S$.  By generic smoothness, if $p$ is the intersection of $R$ with a general fiber $C$ then $p$ is a smooth point  of $F|_R: R \to D$, and therefore $T_{D,f(p)}$ is in the image of 
$T_{\mathcal{C},p} \to T_{X,f(p)}$. So $F$ is not smooth at $p$ if and only if 
the image of $C$ is tangent to $D$ at $F(p)$. On the other hand, since $f := F|_C$ is a free morphism, for any point $q$ on $C$ the tangent space to the fiber of $F$ through $q$ is $H^0(C,N_{f,X}(-q))$.  This has the expected dimension so $F$ is smooth at any point $q$ of $C$. Therefore, the image of $C$ is not tangent to $D$ at $f(p)$. 
\end{proof}

\begin{prop} \label{prop:veryfreeavoidsfibers}
Let $X$ be a smooth threefold carrying a smooth divisor $D$ which admits a fibration $\phi: D \to \bP^1$. Let $C$ be a  very free curve on $X$ that is general in its deformation class. Then $C$ meets each fiber of $\phi$ in at most one point.
\end{prop}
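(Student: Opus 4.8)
The plan is to transport the question onto the moduli space of the deformation family of $C$ and reduce it to a dimension count, exploiting the fact that a very free family imposes the expected codimension under point‑incidence conditions.

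Let $M$ be the component of $\overline{\Rat}(X)$ containing $[C]$, with universal curve $\pi\colon\mathcal C\to M$ and evaluation $F\colon\mathcal C\to X$, and set $e=-K_X\cdot C$, so $\dim M=e$ (and $e\geq 4$, since a very free immersed rational curve on a threefold has $\mathcal O(2)\hookrightarrow f^*T_X$, forcing ample normal bundle). I may assume $C\cdot D>0$ and $C\not\subseteq D$, for otherwise $C$ meets $D$, hence every fiber of $\phi$, in no points. After shrinking $M$ to a dense open and passing to an étale cover, I may assume $\mathcal C$ and $M$ are smooth, that $F$ is a smooth morphism on a dense open $\mathcal C^{\circ}$ with $F(\mathcal C^{\circ})$ open in $X$ (this is where ampleness of the normal bundle is used, to kill the relevant $H^1$'s), that the general member is smooth and — by Proposition~\ref{prop:transverseintersection} — transverse to $D$, crossing it in $k:=C\cdot D$ distinct points, and that $F^{-1}(D)_{\mathrm{red}}=R_1\sqcup\cdots\sqcup R_k$ is a disjoint union of sections of $\pi$. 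Composing with $\phi$ gives morphisms $\gamma_i:=\phi\circ F\circ R_i\colon M\to\bP^1$, and a general member of $M$ meets each fiber of $\phi$ in at most one point if and only if $\gamma_i\neq\gamma_j$ for $i\neq j$. So it suffices to rule out $\gamma_1=\gamma_2$.

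The first preliminary I would establish, by comparing a naive fiber dimension with the fiber dimension of the smooth morphism $F$, is that each $F\circ R_i\colon M\to D$ is dominant (in particular no $\gamma_i$ is constant): a general member of $M$ crosses $D$ only at smooth points of $F$, so $\bigcup_i\overline{F(R_i(M))}=D$, and if some $F\circ R_i$ failed to dominate $D$ its image would be a point or a curve, producing over a general point $y$ of that image a family of curves through $y$ of dimension $\geq e-1$, whereas $\{C\in M:y\in C\}$ has dimension $e-2$ for $y$ in the image of the smooth morphism $F|_{\mathcal C^{\circ}}$. Now assume $\gamma_1=\gamma_2$; fix a general $t_0\in\bP^1$ and put $N:=\gamma_1^{-1}(t_0)=\gamma_2^{-1}(t_0)$, so $\dim N=e-1$, the general member of $N$ is still transverse to $D$ with distinct crossing points, and (the same dimension comparison) $F\circ R_1$ and $F\circ R_2$ both map $N$ dominantly onto the fixed curve $F_{t_0}:=\phi^{-1}(t_0)$. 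By construction every $[C]\in N$ meets $F_{t_0}$ in the two distinct points $F(p_1)$, $F(p_2)$, so $N$ lies in the locus of members of $M$ meeting the fixed curve $F_{t_0}$ in at least two points. That locus admits a map to $\mathrm{Sym}^2F_{t_0}$ (dimension $2$) recording the pair of incidence points, whose fibers are the families of curves in $M$ through two prescribed points of $F_{t_0}$; since $M$ is very free and $t_0$ is general, two general points of $F_{t_0}$ should impose independent incidence conditions, so each such fiber has dimension $e-4$ and the locus has dimension at most $e-2$, contradicting $\dim N=e-1$.

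The step I expect to be the main obstacle is precisely this last claim: that two general points of a general fiber $F_{t_0}$ cut out the expected codimension $6$ in $\overline M_{0,2}(X)|_M$ — equivalently that $F_{t_0}\times F_{t_0}$ is not contained in the locus of $X\times X$ over which the two‑pointed evaluation map has excess fibers. Making this rigorous is where the genericity of $t_0$ (the fibers of $\phi$ sweep out $D$, so two general points of a general $F_t$ form a general point of $D\times_{\bP^1}D$) must be combined with very freeness through the same "smooth evaluation morphism versus naive count" comparison used above. An alternative I would try if that is awkward is to push the argument of the previous paragraph one step further: show that $(F\circ R_1,F\circ R_2)$ maps $N$ into $F_{t_0}\times F_{t_0}$ with fibers of dimension exceeding the expected $e-4$, and extract the contradiction there, where the genericity of $t_0$ is again the key input that prevents $F_{t_0}$ from behaving like a special curve for the family.
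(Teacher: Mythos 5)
Your strategy is essentially the paper's: both arguments bound the incidence locus of curves meeting a fiber of $\phi$ twice by fibering it over the space of pairs of points lying in a common fiber and computing the fiber dimension as $h^0(N_{C/X}(-p-q))$. The paper does this in one step, mapping $\cU=\{(p,q,C_0)\}$ to $D\times D$ with image inside the $3$-dimensional $D\times_{\bP^1}D$ and fibers of dimension $\dim B-4$; your version slices over a fixed general $t_0\in\bP^1$ first and compares $\dim N=e-1$ with $2+(e-4)$, which is the same count.

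The step you flag as the main obstacle is not actually an obstacle, and your proposed fix (genericity of $t_0$, or of the pair of points in $F_{t_0}\times F_{t_0}$) is looking in the wrong place. Since $C_0$ is very free on a threefold, $N_{C_0/X}\cong\mathcal{O}(a)\oplus\mathcal{O}(b)$ with $a,b\geq 1$, so $H^1(N_{C_0/X}(-p-q))=0$ for \emph{any} two points $p,q\in C_0$; hence the space of deformations of $C_0$ through $p$ and $q$ is smooth of dimension $h^0(N_{C_0/X}(-p-q))=e-4$ at $[C_0]$. Consequently every fiber of your map to $\Sym^{2}F_{t_0}$ has dimension exactly $e-4$ along the locus of very free curves, with no need to know that $F_{t_0}\times F_{t_0}$ avoids a jumping locus of the two-pointed evaluation map, and the count $\dim N\leq 2+(e-4)=e-2<e-1$ finishes the proof. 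This pointwise $h^1$-vanishing is precisely how the paper concludes. (The auxiliary machinery of the sections $R_i$, the maps $\gamma_i$, and the dominance of $F\circ R_i$ onto $D$ is harmless but not needed for the contradiction.)
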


\begin{proof}
By \cite[II.3.14 Theorem]{Kollar} such a curve $C$ is embedded in $X$.  Let $B$ be an open set of the moduli space of curves parametrizing irreducible deformations of $C$ which are embedded in $X$. Suppose that every general deformation of $C$ meets some fiber of $\phi$ in at least two points.  Consider the family $\cU$ of tuples $(p,q,C_0)$ such that $C_{0}$ is a smooth very free curve that is a deformation of $C$ and $p,q$ are distinct points in of $C_{0}$ which both lie in the same fiber of $\phi$.  We have a map $\pi_1: \cU \to D \times D$.    This map cannot dominate since $p$ and $q$ must lie in the same fiber of $\phi$.

We claim that in this situation $\cU$ cannot dominate $B$. By Proposition \ref{prop:transverseintersection}, we see that a general curve $C$ will meet $D$ transversely at every point. Since $C$ is very free, the dimension of $B$ is $h^0(C,N_{C/X})$, while the dimension of the fiber of $\pi_1$ containing $C$ is $h^0(C,N_{C/X}(-p-q)) = h^0(C,N_{C/X}(-2)) = \dim B - 4$. Since the image of $\pi_1$ is at most 3-dimensional, it follows that $\dim \cU < \dim B$. Thus, a general curve $C$ will not meet $D$ multiple times in a fiber of $\phi$.
\end{proof}

\begin{prop} \label{prop:imageisnodal}
Let $X$ be a smooth projective variety of dimension $n$ equipped with a dominant morphism $\pi: X \to Z$ such that $\dim(Z) \geq 2$ and $\pi$ is smooth away from a codimension $2$ subset $V \subset X$.  Let $C$ be a general member of a family of very free curves on $X$.  Then $\pi|_{C}$ is birational and its image is a curve with at worst nodes in $Z$.
\end{prop}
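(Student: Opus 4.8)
\medskip

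The plan is to make a sequence of incidence–dimension estimates over the given family $M$ of very free curves, in the same spirit as Propositions~\ref{prop:transverseintersection} and~\ref{prop:veryfreeavoidsfibers}. Write $\pi_M\colon\cC\to M$ for the universal curve, $F\colon\cC\to X$ for the evaluation and $G=\pi\circ F$. I first collect the routine reductions for a general $C$. Such a $C$ is embedded in $X$ by \cite[II.3.14 Theorem]{Kollar}; and since $V$ has codimension $\geq 2$ while the curves of $M$ through a fixed point form a subfamily of dimension $\dim M-(n-1)$, the locus of curves of $M$ meeting $V$ has dimension $\leq(n-2)+\dim M-(n-1)<\dim M$, so a general $C$ avoids $V$ and $\pi$ is smooth along it. Also $\pi|_C$ is non-constant: were $C$ contained in a fibre $\Phi$ of $\pi$, then (since $\Phi$ is smooth along $C\subset X\setminus V$) $N_{C/X}$ would have the trivial quotient bundle $N_{\Phi/X}|_C\cong\cO_C^{\oplus\dim Z}$, contradicting the ampleness of $N_{C/X}$. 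Consequently $(\pi|_C)^{*}T_Z$ is a quotient bundle of the ample bundle $F|_C^{*}T_X$ on $\bP^1$, hence is itself ample, so $\pi|_C\colon\bP^1\to Z$ is a free, in fact very free, rational curve.

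\medskip

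For birationality, suppose $\pi|_C$ has degree $\geq 2$ onto its image for a general $C$, and set $\cU=\{(p,q,[C]):[C]\in M,\ p\neq q\in C,\ G(p)=G(q)\}$. Projecting to $M$, a general fibre is the set of pairs of distinct points of $C$ identified by $\pi$; this is finite over the image curve $\pi(C)$, so $\dim\cU=\dim M+1$. On the other hand $(p,q,[C])\mapsto(F(p),F(q))$ maps $\cU$ into the component $W$ of $X\times_Z X$ dominating $Z$, of dimension $2n-\dim Z$; this map is dominant because the two–pointed evaluation $\cC\times_M\cC\to X\times X$ is dominant for a family of very free curves, hence meets the closed subset $W$ in a dense open, and its generic fibre over $W$ is a generic fibre of that two–pointed evaluation, of dimension $\dim M+2-2n$. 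Thus $\dim\cU=(2n-\dim Z)+(\dim M+2-2n)=\dim M+2-\dim Z$. Comparing the two computations forces $\dim Z=1$, contradicting $\dim Z\geq 2$; hence $\pi|_C$ is birational onto its image $\gamma:=\pi(C)$.

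\medskip

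It remains to show $\gamma$ is nodal, i.e.\ that $\pi|_C$ is an immersion and that only finitely many, mutually transverse, pairs of points of $C$ are identified, with no point having three preimages. For the immersion statement I run the analogue of Proposition~\ref{prop:transverseintersection}: a ramification point of $G$ on a fibre $C$ of $\pi_M$ is a point $x\in C\setminus V$ at which $T_xC$ lies in the linear subspace $\bP((T_{X/Z})_x)\subset\bP(T_xX)$ of codimension $\dim Z$, and for general $C$ this locus is finite since $\pi|_C$ is non-constant. Using that the curves of $M$ through a general point $x$ realize every tangent direction at $x$ (the subfamily with a prescribed tangent direction has dimension $\leq h^0(N_{C/X}(-2x))=\dim M-2(n-1)$, which forces the tangent–direction map to be dominant), the ramification incidence $\cR\subset\cC$ satisfies
\[
\dim\cR\ \leq\ n+\dim\bP((T_{X/Z})_x)+\bigl(\dim M-2(n-1)\bigr)\ =\ \dim M+1-\dim Z\ \leq\ \dim M-1 ,
\]
so $\cR\to M$ is not dominant and the general $\pi|_C$ is an immersion. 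The transversality of the identifications and the absence of triple points follow from the same template applied to the incidence varieties of pairs, resp.\ triples, of points of $C$ with a common image in $Z$, again using $\dim Z\geq 2$. Alternatively, one shows that $[C]\mapsto[\pi|_C]$ dominates a component of $\overline{M}_{0,0}(Z)$ generically parametrizing free curves — so that $\gamma$ is the image of a \emph{general} free rational curve on $Z$ — and invokes the standard fact that such a curve is an immersion with nodal image; combined with birationality this gives the claim.

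\medskip

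The main obstacle is the third step: turning the incidence estimates into rigorous statements requires knowing that $M$ is in sufficiently general position with respect to $\pi$, i.e.\ that the relevant incidence varieties (tangency of $C$ to fibres, and the double and triple loci over $Z$) have the expected dimension — equivalently, that the tangent directions realized along curves of $M$ meet $\bP(T_{X/Z})$ in the expected codimension $\dim Z$, or that $[C]\mapsto[\pi|_C]$ dominates its component of $\overline{M}_{0,0}(Z)$. The birationality argument, by contrast, uses only the automatic dominance of the one– and two–pointed evaluation maps, and there the hypothesis $\dim Z\geq 2$ enters in a clean and essential way.
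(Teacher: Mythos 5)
Your core computation --- the incidence count showing that the locus of curves whose tangent direction at some point lies in $\ker d\pi$ has codimension $\geq \dim Z-1$ in $M$, hence that $\pi|_C$ is unramified for general $C$ --- is exactly the paper's proof, down to the bound $\dim M+1-\dim Z$. Where you genuinely diverge is birationality. You prove it by a separate double-point incidence count ($\dim\cU=\dim M+1$ versus $\dim\cU\leq(2n-\dim Z)+(\dim M+2-2n)$), whereas the paper gets it for free from the immersion statement: if $\pi|_C$ had degree $\geq 2$ onto its image it would factor through a degree $\geq 2$ map $\bP^1\to\bP^1$ (the normalization of the image), which ramifies by Riemann--Hurwitz, producing a point where $d(\pi|_C)$ vanishes --- contradicting the first step. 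So your second step is valid but unnecessary; and note that within it you do not actually need the map $\cU\to W$ to be dominant (the justification you give for dominance is the weakest point of that step): the inequality $\dim(\mathrm{image})\leq\dim W$ together with the bound on fibres of the two-pointed evaluation at very free points, where $H^1(f^*T_X(-p-q))=0$ makes the evaluation smooth, already yields $\dim Z\leq 1$.

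On the issue you flag in your last paragraph: you are right that ``unramified and birational'' does not formally imply ``nodal image'' without further counts controlling tangential double points and triple points, and that those counts need more positivity (e.g.\ $H^1(f^*T_X(-p_1-p_2-p_3))=0$ is not automatic for a minimal very free curve). But this is not a defect of your write-up relative to the paper: the paper's own proof makes the identical leap, asserting nodality directly from the unramifiedness of $\pi|_C$. In the paper's applications the images are plane curves of degree at most $3$, for which an immersed birational rational curve is automatically nodal, so nothing is lost there.
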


\begin{proof}
Let $n$ denote the dimension of $X$ and let $M$ be an irreducible component of $\overline{\Rat}(X)$ whose general points correspond to very free curves.
For any $p$ in $X$ and any tangent direction at $p$, if there is an irreducible very free curve through $p$ in our family with the given tangent direction then the  locus in $M$ parametrizing such curves has codimension $2(n-1)$.  
Since $C$ is a general very free curve, we may assume that it does not intersect $V$. For any $p \in X \backslash V$ the locus of 
very free irreducible curves through $p$ whose tangent direction at $p$ is in the kernel of $T_X|_p \to \pi^*T_Z|_p$ is of codimension $ \geq 2(n-1)-( n- {\rm{rank }} \; T_{\pi,p} - 1) = n-1+ \dim Z$ in $M$. Hence the locus of curves in $X \backslash V$  whose tangent direction at a point goes to $0$  under $\pi$ has codimension $\geq \dim(Z) - 1$.
This shows that the image of a general curve parametrized by $M$ has at worst nodes. 

To show $\pi|_C$ is birational for a general $C$ parametrized by $M$, note that if $\pi|_C$ is not birational, then there should be a point at which $\pi|_C$ is not an immersion, so the result follows from the argument above. 
\end{proof}

\section{The invariants in Geometric Manin's Conjecture} \label{sec:invariants}

As mentioned in the introduction, Geometric Manin's Conjecture predicts that the behavior of rational curves on a Fano variety $X$ is controlled by two invariants, the $a$-invariant and the $b$-invariant.  
The primary input into Manin's Conjecture is the $a$-invariant.

\begin{defi}[the $a$-invariant]
Let $X$ be a smooth projective variety and let $L$ be a big and nef $\mathbb Q$-divisor on $X$. Then we define the Fujita invariant (or the $a$-invariant) by
\[
a(X, L) = \inf\{t \in \mathbb R \mid tL + K_X \in \Eff^1(X)\}.
\]
By \cite{BDPP}, $a(X, L) > 0$ if and only if $X$ is uniruled. When $L$ is nef but not big, we formally set $a(X, L) = \infty$.

When $X$ is singular, we pick a resolution $\beta: \widetilde{X} \rightarrow X$ and define the $a$-invariant by
\[
a(X, L) : = a(\widetilde{X}, \beta^*L).
\]
\cite[Proposition 2.7]{HTT15} shows that this definition does not depend on the the choice of $\beta$.
\end{defi}

Using the boundedness of singular Fano varieties (\cite{birkar16} and \cite{birkar16b}), \cite{HJ16} established the following theorem:

\begin{theo}[\cite{HJ16} Theorem 1.1 and \cite{LTCompos} Theorem 3.3] \label{theo:HJ}
Let $X$ be a smooth uniruled projective variety and let $L$ be a big and nef $\mathbb Q$-divisor on $X$. Let $V$ be the union of all subvarieties $Y$ such that $a(Y, L) > a(X, L)$. Then $V$ is a proper closed subset of $X$ and each component $V_i$ of $V$ satisfies $a(V_i, L) > a(X, L)$.
\end{theo}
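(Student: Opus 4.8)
Write $a := a(X,L)$. The plan is to proceed in two stages: first, to show that the subvarieties $Y \subseteq X$ with $a(Y,L) > a$ are parametrized by a scheme of finite type; and second, to deduce the closedness of $V$ and the assertion about its components from this boundedness.

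\emph{Reductions.} Since $\mathbf{B}_{+}(L)$ is a proper closed subset of $X$ and $L$ restricts to a nef but not big divisor on each of its components $W$, one has $a(W,L) = \infty > a$; hence $\mathbf{B}_{+}(L)$ may be absorbed into $V$, and we may restrict attention to $Y$ with $Y \not\subseteq \mathbf{B}_{+}(L)$, for which $L|_Y$ is big and nef. Fixing a resolution $\mu \colon Y' \to Y$ and setting $L' := \mu^{*}(L|_Y)$, we have $L'$ big and nef, $a(Y',L') = a(Y,L) > a$, and $Y'$ uniruled (by \cite{BDPP}, as the $a$-invariant is positive).

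\emph{Boundedness, the main step.} Running a Minimal Model Program for $Y'$ with scaling of $L'$ (after a suitable perturbation), one arrives at a Mori fiber space $g \colon \overline{Y} \to S$; here the hypothesis $a(Y,L) > a$ enters decisively, for it guarantees that the program cannot terminate with a minimal model and, more precisely, forces the terminal scaling value to equal $a(Y,L) > a$, so that the general fiber $F$ of $g$ is a klt Fano variety with terminal singularities on which the strict transform $\overline{L}$ of $L'$ restricts to $\tfrac{1}{a(Y,L)}(-K_{F})$. By Birkar's boundedness of Fano varieties (\cite{birkar16}, \cite{birkar16b}), such $F$ lie in a bounded family, so $(-K_{F})^{\dim F}$ is bounded and hence $(\overline{L}|_{F})^{\dim F} = a(Y,L)^{-\dim F}(-K_{F})^{\dim F} < a^{-\dim F}(-K_{F})^{\dim F}$ is bounded too. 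Treating the base $S$ inductively on dimension and comparing $L'$-volumes through the steps of the MMP, one bounds the volume $(L|_Y)^{\dim Y}$ in terms of $(X,L)$ alone; since subvarieties of $X$ of bounded $L$-degree form a bounded family, the $Y$ with $a(Y,L) > a$ fit into a family $\pi \colon \mathcal{Y} \to T$ with $T$ of finite type. I expect this step---extracting a log Fano fibration from the mere inequality $a(Y,L) > a$ and transporting the resulting numerical bounds back to $X$ through the MMP---to be the main obstacle.

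\emph{Closedness and the components.} The function $t \mapsto a(\mathcal{Y}_{t},L)$ is constructible on $T$. Stratifying $T$ and arguing by Noetherian induction, the strata on which this function is generically $> a$ contribute their closed images in $X$, while on each remaining stratum the locus where the function exceeds $a$ is a proper closed subset, to which the same procedure is applied; this exhibits $V$ as a finite union of closed subvarieties, hence as a closed subset. It is proper, for otherwise $X$ would be swept out by proper subvarieties with $a(\cdot,L) > a$; but the $a$-invariant of the general member of a family of subvarieties dominating a given variety is at most the $a$-invariant of that variety, which would force the absurd inequality $a(X,L) \geq a(Y,L) > a$. Finally, each component $V_{i}$ of $V$ is swept out by a subfamily of $\mathcal{Y}$ all of whose members $Y$ satisfy $a(Y,L) > a$; applying the same comparison with $X$ replaced by $V_{i}$ yields $a(V_{i},L) \geq a(Y,L) > a$, as claimed.
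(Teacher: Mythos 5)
This theorem is not proved in the paper: it is imported from the literature, cited as \cite{HJ16} Theorem 1.1 together with \cite{LTCompos} Theorem 3.3, with the only indication of method being the remark that it rests on Birkar's boundedness of singular Fano varieties. Your sketch correctly reconstructs the strategy of those cited proofs --- reduction to $L|_Y$ big and nef, boundedness of the subvarieties with larger $a$-invariant via an MMP terminating in a log Fano fibration controlled by \cite{birkar16}, \cite{birkar16b}, and then closedness, properness, and the statement on components via constructibility of the $a$-invariant in families and the monotonicity $a(W,L)\geq a(Y,L)$ for a general member $Y$ of a family dominating $W$ --- so there is nothing to compare beyond noting that you have identified the right ingredients and, candidly, left the genuinely hard boundedness step at the level of a plan, exactly where the cited papers do the real work.
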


The secondary input into Manin's Conjecture is the $b$-invariant.

\begin{defi}[the $b$-invariant] \label{defi:binvariant}
Let $X$ be a smooth uniruled projective variety and let $L$ be a big and nef $\mathbb{Q}$-divisor on $X$.  Let $\mathcal{F}_{X}$ denote the face of $\Nef_{1}(X)$ consisting of curve classes with vanishing intersection against $K_{X} + a(X,L)L$.   We define
\begin{equation*}
b(X,L) = \dim(\Span (\mathcal{F}_{X})).
\end{equation*}
When $L$ is nef but not big, we formally set $b(X, L) = \infty$.

When $X$ is singular, we pick a resolution $\beta: \widetilde{X} \rightarrow X$ and define the $b$-invariant by
\[
b(X, L) : = b(\widetilde{X}, \beta^*L).
\]
\cite[Proposition 2.10]{HTT15} shows that this definition does not depend on the choice of $\beta$.
\end{defi}

We will also need the following refinement of the $b$-invariant.

\begin{defi}
Let $X$ be a smooth uniruled projective variety and let $L$ be a big and nef $\mathbb{Q}$-divisor on $X$.  Suppose that $f: Y \to X$ is a generically finite dominant morphism from a smooth projective variety $Y$ satisfying $a(Y,f^{*}L) = a(X,L)$.  As in Definition \ref{defi:binvariant} let $\mathcal{F}_{X}$ denote the face of $\Nef_{1}(X)$ perpendicular to $K_{X} + a(X,L)L$ and let $\mathcal{F}_{Y}$ denote the face of $\Nef_{1}(Y)$ perpendicular to $K_{Y} + a(Y,f^{*}L)f^{*}L$.  By \cite[Lemma 4.25]{LST18} the pushforward $f_{*}$ takes $\mathcal{F}_{Y}$ into $\mathcal{F}_{X}$.  We say that $f$ is face contracting if the map $f_{*}: \mathcal{F}_{Y} \to \mathcal{F}_{X}$ is not injective.
\end{defi}

\section{Non-dominant families of rational curves}

In this section we will analyze non-dominant families of rational curves on a smooth Fano threefold $X$.  The first step is to classify the subvarieties $Y$ such that $a(Y,-K_{X}) > a(X,-K_{X})$.  Their structure is given by the following theorem:

\begin{theo} \label{theo:higherainv}
Let $X$ be a smooth Fano threefold.  Suppose that $Y$ is a $2$-dimensional subvariety of $X$ satisfying $a(Y,-K_{X}) > a(X,-K_{X})$.  Let $\psi: \widetilde{Y} \to Y$ denote a resolution of singularities and let $\phi$ denote the the composition of $\psi$ with the inclusion map to $X$.
\begin{enumerate}
\item If $\kappa(K_{\widetilde{Y}} - a(Y,-K_{X})\phi^{*}K_{X}) = 1$ then $Y$ is swept out by $-K_{X}$-lines.
\item If $\kappa(K_{\widetilde{Y}} - a(Y,-K_{X})\phi^{*}K_{X}) = 0$ then $Y$ is a contractible divisor of type E1, E2, E3, E4, or E5. Moreover if $Y$ has type E1 then $(Y, -K_X|_Y)$ is isomorphic to $(\mathbb P^1\times\mathbb P^1, \mathcal O(1,1))$ and both rulings correspond to E1 contractions. 
\end{enumerate}
\end{theo}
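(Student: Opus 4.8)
The plan is to reduce everything to the smooth surface $\widetilde Y$ together with the big and nef divisor $L := -\phi^{*}K_{X}$ on it, and then run the classical surface–theoretic analysis of the adjoint divisor $\Delta := K_{\widetilde Y} + aL$, where $a := a(Y,-K_{X}) = a(\widetilde Y, L)$. First I would record the elementary facts that $a(X,-K_{X}) = 1$ (since $-K_{X}$ is ample, $t(-K_{X}) + K_{X}$ is pseudoeffective precisely for $t \ge 1$), so the hypothesis reads $a > 1$, and that $\Delta$ lies on the boundary of $\Eff^{1}(\widetilde Y)$ because $a$ is the infimum in the definition of the $a$-invariant; in particular $\Delta$ is not big, so $\kappa(\Delta) \le 1$. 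Since $a > 0$ forces $\widetilde Y$ to be uniruled, I would also argue that $\kappa(\Delta) \ge 0$, so that precisely the two cases of the statement occur.

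In the case $\kappa(\Delta) = 1$: after a harmless further blow‑up the Iitaka fibration of $\Delta$ is a morphism $g : \widetilde Y' \to B$ onto a smooth curve, and a general fibre descends to a curve $F$ on $\widetilde Y$ with $F^{2} = 0$ and $\Delta \cdot F = 0$ (the defining property of the Iitaka fibration, read on the curve $F$). Adjunction gives $K_{\widetilde Y}\cdot F = 2g(F) - 2$, whence $a\,(L\cdot F) = 2 - 2g(F)$. A general fibre is not contracted by $\phi$ (its image cannot be a point, since the exceptional locus of the resolution $\psi$ is a curve and hence cannot contain the general member of a family sweeping out $\widetilde Y$), so $L \cdot F = -K_{X}\cdot\phi_{*}F \ge 1$; combined with $a > 1$ this forces $g(F) = 0$, $L\cdot F = 1$ and $a = 2$, so that $\phi|_{F}$ is birational onto a $-K_{X}$‑line. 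As $F$ varies over $B$ these lines sweep out $Y$, which is conclusion (1).

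In the case $\kappa(\Delta) = 0$: the positive part of the Zariski decomposition of $\Delta$ is numerically trivial, so $\Delta \equiv N$ with $N$ an effective divisor whose support has negative definite intersection form and $\kappa(N)=0$. Writing $-K_{\widetilde Y} \equiv aL - N$ with $a > 1$, I would deduce that $-K_{\widetilde Y}$ is big, hence $\widetilde Y$ is a smooth rational surface, and that the adjunction class $K_{\widetilde Y} + L$ — which computes, up to the resolution correction, the restriction of $N_{Y/X}$ to $\widetilde Y$ — satisfies $-(K_{\widetilde Y} + L) \equiv (a-1)L - N$ with $(a-1)L$ big and nef, i.e.\ $Y$ has very negative normal bundle. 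The main step is then to promote this numerical negativity into an honest contraction: I would show that $Y$ is the exceptional divisor of a $K_{X}$‑negative extremal divisorial contraction $X \to X'$ (by producing a $K_{X}$‑negative extremal ray all of whose curves lie on $Y$, or by running the MMP on $X$ and checking that $Y$ itself is contracted), and then apply Mori's classification, Theorem~\ref{theo:moriclassification}, to conclude that $Y$ has type E1–E5. Finally, for the E1 refinement: an E1 exceptional divisor is a $\mathbb{P}^{1}$‑bundle $Y \to B$ on which the fibre class $C$ has $L\cdot C = -K_{X}\cdot C = 1$ and $K_{Y}\cdot C = -2$, forcing $a = 2$; writing $\Delta = K_{Y} + 2L$ and using that $L = -K_{X}|_{Y}$ is ample and integral, a direct computation on the ruled surface $Y$ shows that $\kappa(\Delta) = 0$ (rather than $\kappa(\Delta) = 1$, the behaviour that lands in case (1)) forces $B = \mathbb{P}^{1}$ and $Y = \mathbb{P}^{1}\times\mathbb{P}^{1}$ with $L = \mathcal{O}(1,1)$; adjunction then gives $N_{Y/X} = \mathcal{O}(-1,-1)$, so both rulings are contractible, each by an E1 contraction.

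The main obstacle is the contractibility step in the $\kappa(\Delta) = 0$ case — passing from the numerical negativity of $K_{\widetilde Y} + L$ on the resolution to an actual birational contraction of $Y$ on the Fano threefold $X$. This requires comparing the adjoint‑theoretic data on $\widetilde Y$ with the normal bundle of $Y \subset X$ with care when $Y$ is singular (so that $\widetilde Y \ne Y$, as happens for type E4 where $Y$ is a quadric cone), and then invoking a contraction criterion together with the threefold MMP. A secondary technical point is establishing $\kappa(\Delta) \ge 0$ at the outset, which rules out the a priori possibility $\kappa(\Delta) = -\infty$.
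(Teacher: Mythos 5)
Your Case (1) argument is essentially the paper's: both compute the anticanonical degree of a general Iitaka fiber and use $a>1$ to force it to be a line (your adjunction computation $aL\cdot F = 2-2g(F)$ is just a slightly more explicit version of the paper's $-\phi^{*}K_{X}\cdot C = 2/a$). The problem is Case (2), where you yourself flag ``the contractibility step'' as the main obstacle and then do not carry it out: you offer only the alternatives ``producing a $K_{X}$-negative extremal ray all of whose curves lie on $Y$, or running the MMP on $X$ and checking that $Y$ itself is contracted,'' which is a restatement of the goal rather than an argument. The difficulty is real. Your numerical data lives on the resolution $\widetilde{Y}$, but contractibility requires negativity of $\mathcal{O}_{X}(Y)|_{Y}$ on the actual surface $Y$, which may be non-normal (and is singular in the E4 case); the passage from $K_{\widetilde{Y}}$ to $K_{Y}$ involves discrepancies and a conductor term that a priori enter with the wrong sign, so ``$-(K_{\widetilde Y}+L)$ big and nef up to resolution correction'' does not by itself give $Y|_{Y}$ antiample. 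The paper closes exactly this gap by invoking \cite[Lemma 5.3]{LTJAG}: adjoint rigidity with $a>1$ forces the normalization $\widehat{Y}$ to have only canonical singularities with $K_{\widehat{Y}}\sim a\,\nu^{*}K_{X}$, whence $K_{Y}\sim aK_{X}|_{Y}$ as Cartier divisors and, by adjunction, $Y|_{Y}=(a-1)K_{X}|_{Y}$ is antiample; the cone theorem then hands you the extremal ray and the divisorial contraction. Without this (or an equivalent) input your proof does not go through.

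Two smaller points. First, the deduction ``$-K_{\widetilde{Y}}\equiv aL-N$ with $L$ big, hence $-K_{\widetilde{Y}}$ is big'' is not valid as stated: subtracting an effective divisor from a big one need not preserve bigness, so the rationality of $\widetilde{Y}$ and the final list E1--E5 (in particular ruling out E1 divisors ruled over positive-genus curves and Hirzebruch surfaces $\mathbb{F}_{n}$, $n\geq 1$, in the adjoint-rigid case) need an independent source. The paper gets all of this from the classification of adjoint-rigid surface pairs with $a>1$ in \cite[Proposition 1.3]{horing10} --- the pair $(Y,-K_{X}|_{Y})$ is birationally $(\mathbb{P}^{2},\mathcal{O}(1))$, $(\mathbb{P}^{2},\mathcal{O}(2))$, or $(Q,\mathcal{O}(1))$ with $Q$ a quadric --- and then matches these against Theorem \ref{theo:moriclassification}; your direct ruled-surface computation for the E1 refinement is plausible but would need to be written out, and citing the classification is the cleaner route. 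Second, your worry about $\kappa(\Delta)\geq 0$ is unnecessary for the statement as given, since the theorem only asserts conclusions in the two cases $\kappa=0,1$.
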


\begin{proof}
\textbf{Case 1:} $\kappa(K_{\widetilde{Y}} - a(Y,-K_{X})\phi^{*}K_{X}) = 1$.  Let $\rho: \widetilde{Y} \to B$ denote the Iitaka fibration for $K_{\widetilde{Y}} - a(Y,-K_{X})\phi^{*}K_{X}$.  A general fiber $C$ satisfies
\begin{equation*}
-\phi^{*}K_{X} \cdot C = \frac{-K_{\widetilde{Y}} \cdot C}{a(Y,-K_{X})} = \frac{2}{a(Y,-K_{X})}.
\end{equation*} 
Since $a(Y,-K_{X}) > a(X,-K_{X}) = 1$ we conclude that the image of $C$ in $X$ is a $-K_{X}$-line that sweeps out $Y$.

\textbf{Case 2:} $\kappa(K_{\widetilde{Y}} - a(Y,-K_{X})\phi^{*}K_{X}) = 0$.  Let $\nu: \widehat{Y} \to Y$ be the normalization map.  Since $a(Y,-K_{X}) > 1$ and the pair is adjoint rigid, \cite[Lemma 5.3]{LTJAG} shows that $\widehat{Y}$ has only canonical singularities and that $K_{\widehat{Y}} \sim_{\mathbb Q} a(Y,-K_{X})\nu^{*}K_{X}$. 
Using the classification of \cite[Proposition 1.3]{horing10}, we see that the pair $(\widehat{Y},-\nu^*K_{X})$ is isomorphic to either $(\mathbb{P}^{2},\mathcal{O}(1))$, $(\mathbb{P}^{2},\mathcal{O}(2))$, or $(Q,\mathcal{O}(1)|_{Q})$ where $Q$ is a smooth or a singular quadric hypersurface in $\mathbb{P}^{3}$. 

First suppose that $Y$ is normal so that $\nu$ is an isomorphism.  This implies that $K_Y \sim_{\mathbb Q} a(Y, -K_X)K_X|_Y$. By adjunction we conclude that $Y|_Y$ is antiample. So there is an extremal ray of $\overline{\mathrm{Eff}}_1(X)$ such that $Y \cdot R < 0$. Then this extremal ray defines a divisorial contraction contracting $Y$. We can identify the type of contraction by comparing against the classification of exceptional divisors in Theorem \ref{theo:moriclassification}.  If $(Y,-K_{X}|_{Y})$ is isomorphic to $(\mathbb{P}^{2},\mathcal{O}(1))$ the extremal contraction will have type E5.  If $(Y,-K_{X}|_{Y})$ is isomorphic to $(Q,\mathcal{O}(1))$ the extremal contraction will have type E1, E3, or E4, and furthermore in the case of an E1 contraction $Q$ must be smooth so that both rulings correspond to E1 contractions.  If $(Y,-K_{X}|_{Y})$ is isomorphic to $(\mathbb{P}^{2},\mathcal{O}(2))$ the extremal contraction will have type E2. 

It only remains to show that $Y$ cannot be non-normal.  We separate the argument into two cases depending upon whether or not $-K_{X}$ is very ample.  First suppose that $-K_X$ is very ample.  Note that the map $\nu: \widehat{Y} \to Y$ must be defined by a sublinear series of $|-\nu^{*}K_{X}|$.  If $(\widehat{Y},-\nu^*K_{X})$ is isomorphic to $(\mathbb{P}^{2},\mathcal{O}(1))$, then no strict sublinear series will define a birational map and we conclude that $Y = \widehat{Y}$ is normal.  If $(\widehat{Y},-\nu^*K_{X})$ is isomorphic to $(Q,\mathcal{O}(1))$ then again no strict sublinear series can define a birational map and we conclude that $Y = \widehat{Y}$ is normal.

Finally suppose that $(\widehat{Y},-\nu^*K_{X})$ is isomorphic to $(\mathbb{P}^{2},\mathcal{O}(2))$.  Let $V \subset |\mathcal{O}(2)|$ denote the sublinear series defining the map $\nu: \widehat{Y} \to Y$.  We may concentrate on the case when $3 \leq \dim(V) \leq 4$.  First suppose that $V$ is $4$-dimensional.  Thus $Y$ is isomorphic to a projection of the $2$-Veronese surface from a point $p \in \mathbb P^5$.  Since $\nu$ is a morphism, $p$ cannot be on $\widehat{Y}$.  Moreover, if $p$ is not on the secant variety of $\widehat{Y}$ then the projection defines an isomorphism so that $Y$ is normal.  If instead $p$ lies on the secant variety of $\widehat{Y}$, then $Y$ is a singular surface of degree $4$ which is singular along a line $\ell$. Then the preimage $C = \nu^{-1}(\ell)$ is a line on $\mathbb P^2$ and $C \to \ell$ is a degree $2$ cover. The surface $Y$ can be realized as the complete intersection of two quadrics in $\mathbb P^4$ so that $K_Y^2 = 4$ and $K_Y$ is antiample on $Y$. Thus we conclude that $K_{\widehat{Y}}+C \sim \nu^*K_Y$. By adjunction, we conclude that $Y|_Y$ is trivial. Then $|Y|$ defines a pencil $\rho : X \to \mathbb P^1$ and this must be a del Pezzo fibration of degree $4$. Let $q \in \mathbb P^1$ be the point corresponding to $Y$.  By the semicontinuity theorem $\rho_*\mathcal O(-K_X)$ is locally free of rank $5$ in a neighborhood $U$ of $q$ so that we have an embedding
\[
\rho^{-1}(U) \hookrightarrow \mathbb P(\rho_*\mathcal O(-K_X)|_U).
\]
Thus after shrinking $U$ there exist four quadrics $Q_1, Q_2, Q_3, Q_4$ such that $\rho^{-1}(U)$ is defined by
\[
f(s)Q_1(x_0 \cdots, x_4) = Q_2(x_0, \cdots, x_4), \quad g(s)Q_3(x_0 \cdots, x_4) = Q_4(x_0, \cdots, x_4)
\]
where $s =0$ corresponds to $q$ and $f, g$ are polynomials in $s$ such that $f(0) = g(0) = 0$. Note that $s = Q_2 = Q_4 =0$ is a complete intersection of two quadrics which is singular along a line $\ell$.  By arguing as in \cite[Lemma 3.1]{LTJAG}, one can prove that $\rho^{-1}(U)$ is singular. But this contradicts with the fact that $X$ is smooth.

Next we consider the case when the sublinear system $V$ is $3$-dimensional. Then $Y$ is the image of a projection of the $2$-Veronese surface from a line.
In particular $Y$ is a quartic surface.

We claim that $Y$ is isomorphic to a hyperplane section of a quartic threefold which is smooth along $Y$.  Since $K_Y$ is trivial it follows from adjunction that $Y|_Y$ is very ample. It follows from Kodaira vanishing that
\[
0 \to H^0(X, \mathcal O_{X}) \to H^0(X, \mathcal O_{X}(Y)) \to H^0(Y, \mathcal O_{Y}(Y|_Y)) \to 0
\]
is exact. We conclude that $|Y|$ is base point free and $h^0(X, \mathcal O(Y)) = 5$. Furthermore $Y^3 = 4$, hence we conclude that $|Y|$ defines a birational morphism $f : X \to X'$ to a quartic hypersurface $X'$ in $\mathbb P^4$. Note that there exists a Zariski open neighborhood $U$ of $Y \subset X'$ such that $f|_{f^{-1}(U)} : f^{-1}(U) \to U$ is bijective. Let $p \in Y \subset X'$. If this is a smooth point of $Y$, then it is a smooth point of $X'$. This means that $f$ is an isomorphism in a neighborhood of $p$. On the other hand, if $p$ is a singular point of $Y$, then the tangent space of $Y$ at $p$ is the tangent space of $X$ at $p$. Since $Y|_Y$ is very ample, we conclude that $|Y|$ is very ample in a neighborhood of $p$. Thus after shrinking $U$ we may conclude that $f|_{f^{-1}(U)} : f^{-1}(U) \to U$ is an isomorphism.

The proof of \cite[Lemma 6.14]{LTT18} shows that every hyperplane section of a quartic threefold which is smooth along that hyperplane section is normal.  Thus we conclude our assertion when $-K_X$ is very ample.

\

Next we discuss the case when $-K_X$ is not very ample. Here we use the classification of Fano threefolds whose anticanonical linear series are not very ample in Theorems~\ref{theo:notbpf} and \ref{theo:notveryample}.  For each threefold we show that there is no non-normal surface $Y \subset X$ such that $(Y,-K_{X}|_{Y})$ is adjoint rigid with $a$-invariant $> 1$.

First let us assume that $X$ is $V_1$. In this case it is proved in \cite[Proposition 6.11]{LTT18} that there is no surface with higher $a$-invariant.

Next let us assume that $X$ is the blow up of $V_1$ along a complete intersection of two members of 
$|-\frac{1}{2}K_{V_1}|$.  We denote by $H$ the pullback of the ample generator on $V_{1}$ and by $E$ the exceptional divisor.  For any divisor $j(H-E) + kE$ we have
\[
(-K_X)^2(j(H-E) + kE) = (2H-E)^2(jH + (k-j)E) = j + 2k. 
\]
If we have a divisor $Y$ such that $a(Y, -K_X) > 1$ and $(Y, -K_X|_Y)$ is adjoint rigid, then we must have $(-K_X)^2 \cdot Y = 1, 2$ or $4$. If we write $Y \sim j(H-E) + kE$, then we must have
\begin{equation*}
(j,k) \in \{ (1,0), (2,0), (4,0), (0,1), (2,1), (0,2) \}.
\end{equation*}
Since $Y$ is integral, we must have $j = 1$ and $k= 0$, or $j = 0, 2$ and $k = 1$. 
When $j = 1$ and $k = 0$, $|H -E|$ defines a del Pezzo fibration $\rho : X \to \mathbb P^1$ of degree $1$, and it follows from \cite[Lemma 6.9]{LTT18} that $Y$ is normal.  When $j = 0$ and $k = 1$, $Y$ is equal to $E$ so that $Y$ is not rational. This contradicts with the fact that $(Y, -K_X|_Y)$ is adjoint rigid. When $j = 2$ and $k = 1$, $Y \sim 2H-E\sim -K_X$. 
We show that any integral member of $|-K_{V_1}|$ is normal by mimicking \cite[Lemma 3.3]{LTJAG}.  Recall that $V_1$ is a subscheme of $\mathbb{P}(1,1,1,2, 3)$ defined by an equation
\[
f_6(x_0, x_1, x_2) + f_4(x_0, x_1, x_2)y + y^3 = z^2
\]
and a member $Y \in |-K_{V_1}|$ is defined by
\[
f_2 = cy.
\]
Let us assume that $c = 0$ (since the case $c \neq 0$ is easier).  Then we may assume that $f_2 = x_0^2 + x_1^2 + x_2^2$. Let $C$ be a $1$-dimensional component of the singular locus of $Y$.  Since the Jacobian of the two equations defining $Y$ has rank $1$ along $C$ we obtain a map $C \to \mathbb{P}(1, 5)$ recording the linear relation between the two rows.  If this map is surjective, then the gradient of the equation defining $V_{1}$ will vanish identically at some point of $C$, contradicting the smoothness of $V_{1}$.  If it is not surjective, then there is some constant $a$ such that along $C$ we have $ax_i^5 = \partial (f_6 + f_4y)/\partial x_i$ for every $i$.  
O the other hand we must have $2x_i  \partial (f_6 + f_4y)/\partial x_j = 2x_j  \partial (f_6 + f_4y)/\partial x_i$ for any $i, j$ which implies that $2ax_i x_j^5 = 2ax_jx_i^5$ along $C$. This is impossible unless $a = 0$. But this means that $X$ is singular, a contradiction.

Next let us assume that $X$ the blow up of $V_2$.  This can be proved just like the case of the blow up of $V_1$.

Next let us discuss the case $X = \mathbb P^1 \times S_1$ where $S_1$ is a degree $1$ del Pezzo surface. We denote the projections by $p_1 : X \to \mathbb P_1$ and $p_2 : X \to S_1$. We denote the pullback of the hyperplane class from $\mathbb P^1$ by $h$. Let $Y \subset X$ be a divisor such that $a(Y, -K_X) >1$ and $(Y, -K_X|_Y)$ is adjoint rigid. Then there exists an effective class $\alpha$ on $S_1$ such that $Y \sim ah + p_2^*\alpha$ with $a \geq 0$. Then we have
\[
(-K_X)^2 \cdot Y = a + (-4K_{S_{1}} \cdot \alpha).
\]
Since we must have $(-K_X)^2 \cdot Y = 1, 2$, or $4$ and $Y$ is integral, we conclude that either $a = 1$ and $\alpha = 0$ or $a = 0$ and $-K_{S_{1}} \cdot \alpha = 1$.  The former case $Y$ is a fiber of $p_1$ and thus normal.  In the latter case $(Y, -K_X|_Y)$ is not adjoint rigid. Thus we conclude our assertion.

Next let us assume that $X = \mathbb P^1 \times S_2$ where $S_2$ is a degree $2$ del Pezzo surface.  This can be proved just like the case of $\mathbb P^1 \times S_1$.

Next let us assume that $X$ is the double cover of $\mathbb P^3$ ramified along a smooth sextic.
Suppose $Y$ is a divisor such that $a(Y, -K_X) >1$ and $(Y, -K_X|_Y)$ is adjoint rigid. Since $(-K_{X})|_{Y}^{2} \leq 4$ we have either $Y \in |-K_X|$ or $|-2K_X|$. In the former case, \cite[Lemma 6.6]{LTT18} shows that $Y$ is normal. In the latter case, one may prove that $Y$ is normal by mimicking the argument of \cite[Lemma 3.3]{LTJAG} (as in the case of $V_{1}$). 

Next let us assume that $X$ is the double cover of a quadric hypersurface.
Suppose $Y$ is a divisor such that $a(Y, -K_X) >1$ and $(Y, -K_X|_Y)$ is adjoint rigid.  Since $(-K_{X})|_{Y}^{2} \leq 4$  we must have $Y \in |-K_X|$. However, arguing as in \cite[Lemma 3.3]{LTJAG} (as in the case of $V_{1}$) one can prove that $Y$ is normal.  

Finally let us assume that $X$ is the double cover of $\mathbb P^1 \times \mathbb P^2$ branched over a smooth hypersurface of degree $(2,4)$.  Let $H_1$ be the pullback of the hyperplane class from $\mathbb P^1$ and $H_2$ be the pullback of the hyperplane class from $\mathbb P^2$. Then we have $-K_X \sim H_1 + H_2$. Suppose $Y$ is a divisor such that $a(Y, -K_X) > 1$ and $(Y, -K_X|_Y)$ is adjoint rigid.  One can write $Y \sim jH_1 + kH_2$ where $j, k$ are non-negative integers. Then we have
\[
(-K_X)^2 \cdot Y = 2j + 4k.
\]
Since $Y$ is integral, we conclude that either $j = 1$ and $k = 0$ or $j = 0$ and $k = 1$.
When $j = 1$ and $k = 0$, $Y$ is a fiber of a degree $2$ del Pezzo fibration $\pi_1 : X \to \mathbb P^1$. Let us show that every fiber of $\pi_1$ is normal. We introduce coordinates $(s:t)$ for $\mathbb P^1$ and $(x:y:z)$ for $\mathbb P^2$. Then there exist degree $4$ homogenous polynomials $f, g, h$ in $x, y, z$ such that $X$ is a GIT quotient of a hypersurface defined by
\[
\begin{pmatrix}s & t \end{pmatrix} \begin{pmatrix}f & h \\ h & g \end{pmatrix}\begin{pmatrix}s \\ t \end{pmatrix} = w^2,
\]
where the group action of $\mathbb G_m^2$ is given by
\[
(t_1, t_2) \cdot (s,t, x,y,z w) \mapsto (t_1s,t_1t, t_2x,t_2y,t_2z, t_1t_2^2w).
\]
Assume that the fiber corresponding to $(s:t) = (1:0)$ is singular in codimension $1$ and let $C$ denote a $1$-dimensional irreducible component of the singular locus.  Since such a curve must lie in the ramification locus of the double cover mapping this fiber to $\mathbb{P}^{2}$, we see that the equation $f$ defining the branch divisor is non-reduced. The gradient of the above equation along $Y$ is
\[
(2sf, 2sh, s^2\partial f/\partial x, s^2\partial f/\partial y, s^2\partial f/\partial z, 2w).
\]
Along $C$, we have $f = \partial f/\partial x = \partial f/\partial y = \partial f/\partial  z = w =0$. Then $C$ meets with the locus defined by $h = 0$, but this contradicts with the fact that $X$ is smooth. Thus we conclude that $Y$ is normal.

When $j = 0$ and $k = 1$, $Y$ is the pullback of a line $\ell$ on $\mathbb P^2$ via the conic bundle $\pi_2 : X \to \mathbb P^2$. If $\pi_2$ admits a non-reduced conic, then one can prove that $X$ must be singular. Thus we conclude that any fiber of $\pi_2$ is either a smooth conic or the union of two distinct lines. This implies that if the discriminant locus of $\pi_2$ contains a line, then the pullback of that line is reducible. However, this contradicts with the fact that all effective divisors on $X$ are nef. Thus we conclude that the discriminant locus of $\pi_2$ does not contain a line. Thus it follows from \cite[Lemma 7.2]{LTT18} that $Y$ is normal. 
Thus our assertion follows.
\end{proof}

In order to translate this result to a theorem about rational curves, we will need a result from \cite{LTCompos}.

\begin{theo}[\cite{LTCompos} Theorem 1.1] \label{theo:ainvanddeformations}
Let $X$ be a smooth weak Fano variety. As in Theorem \ref{theo:HJ} let $V$ denote the union of all subvarieties $Y$ such that $a(Y, -K_X) > a(X, -K_X)$. Then any component $M$ of $\overline{\Rat}(X)$ parametrizing a non-dominant family of rational curves will parametrize rational curves in $V$.
\end{theo}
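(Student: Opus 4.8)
The plan is to reduce the statement to a single positivity fact on a resolution of the subvariety swept out by $M$, and to extract that positivity from the hypothesis that $M$ is a \emph{component} (i.e.\ a maximal family) rather than an arbitrary family of rational curves. Since $M$ is irreducible, its curves sweep out an irreducible closed subset $Y\subsetneq X$ with $\dim Y<\dim X$, and it suffices to prove $a(Y,-K_X)>a(X,-K_X)$: then $Y$ is one of the subvarieties whose union is $V$, so $Y\subseteq V$ and every curve parametrized by $M$ lies in $V$. First I would record that $a(X,-K_X)=1$, because $-K_X$ is big and nef so $t(-K_X)+K_X=(t-1)(-K_X)$ lies in $\Eff^1(X)$ exactly when $t\geq 1$, and dispose of the case where $-K_X|_Y$ is not big (there $a(Y,-K_X)=\infty$ by convention, so there is nothing to prove). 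Otherwise, fixing a resolution $\psi:\widetilde Y\to Y$ with induced morphism $\phi:\widetilde Y\to X$, one has $a(Y,-K_X)=a(\widetilde Y,\phi^*(-K_X))$ and $\phi^*(-K_X)$ is big and nef, so the target becomes: $K_{\widetilde Y}+\phi^*(-K_X)$ is \emph{not} pseudoeffective on $\widetilde Y$. By the duality between $\Eff^1(\widetilde Y)$ and the cone of movable curves \cite{BDPP}, it is then enough to produce a movable curve class $\beta$ on $\widetilde Y$ with $(K_{\widetilde Y}+\phi^*(-K_X))\cdot\beta<0$.

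To produce $\beta$, I would lift the family to $\widetilde Y$. A general stable map $f$ in $M$ has image not contained in the exceptional locus of $\psi$, so it lifts to a genus $0$ stable map $\widetilde f$ to $\widetilde Y$ with $\phi\circ\widetilde f=f$; let $\widetilde M'$ be a component of $\overline{\Rat}(\widetilde Y)$ through $[\widetilde f]$. As $f$ varies these lifts sweep out $\widetilde Y$, so $\widetilde M'$ parametrizes a dominant family; hence its general member is a free curve (see \cite{Kollar}) and $\dim\widetilde M'=-K_{\widetilde Y}\cdot\beta+\dim\widetilde Y-3$, where $\beta$ is the pushforward of the fundamental class of a general domain, a numerical class which is constant on $\widetilde M'$. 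The assignment $[f]\mapsto[\widetilde f]$ is injective where it is defined (one recovers $f$ as $\phi\circ\widetilde f$), so $\dim\widetilde M'\geq\dim M$. On the other hand, the dimension of $\overline{M}_{0,0}(X)$ at the general point of $M$ is at least the expected dimension $-K_X\cdot f_*[\mathrm{dom}(f)]+\dim X-3$, and since $\phi_*\widetilde f_*=f_*$ on fundamental classes we have $-K_X\cdot f_*[\mathrm{dom}(f)]=\phi^*(-K_X)\cdot\beta$. Chaining the two bounds,
\begin{equation*}
-K_{\widetilde Y}\cdot\beta+\dim\widetilde Y-3\;=\;\dim\widetilde M'\;\geq\;\dim M\;\geq\;\phi^*(-K_X)\cdot\beta+\dim X-3,
\end{equation*}
so $-K_{\widetilde Y}\cdot\beta\geq\phi^*(-K_X)\cdot\beta+(\dim X-\dim Y)\geq\phi^*(-K_X)\cdot\beta+1$. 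Therefore $(K_{\widetilde Y}+\phi^*(-K_X))\cdot\beta\leq-1<0$, and $\beta$ is movable since it is a positive multiple of the class of a curve moving in a covering family; this is the class needed above, and the argument concludes.

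I expect the main obstacles to be two compatibility points in the passage to $\widetilde Y$. First, one must genuinely check that intersection numbers transport correctly: that anticanonical degrees are constant along $M$ and along $\widetilde M'$, that $\phi_*\widetilde f_*=f_*$, and — crucially for components of $\overline{\Rat}(X)$ that generically parametrize multiple covers of lower degree curves — that the multiple-cover factor appears on both sides of the displayed inequality and cancels, so that the bound is not lost. Second, one must justify that a dominant family of rational curves on the smooth surface $\widetilde Y$ has free general member, so that $\dim\widetilde M'$ \emph{equals} the expected dimension rather than merely being bounded below by it; this is where a Bertini/nodality argument for the general member and the standard freeness criterion enter. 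The remaining ingredient, the lower bound $\dim M\geq$ expected dimension, is automatic since it holds at every point of $\overline{M}_{0,0}(X)$.
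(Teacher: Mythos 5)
The paper does not prove this statement; it is quoted from \cite{LTCompos} (Theorem 1.1), and your argument is essentially the proof given there: resolve the swept-out subvariety $Y$, lift the family, and compare the actual dimension of the lifted (dominant, hence generically free) family on $\widetilde{Y}$ with the expected-dimension lower bound on $X$, picking up the strict gap from $\codim Y \geq 1$ to get a movable class negative against $K_{\widetilde{Y}}+\phi^{*}(-K_{X})$. The details all check out — the only small remark is that the freeness of the general member of the dominant lifted family on the smooth $\widetilde{Y}$ is the standard characteristic-zero generic smoothness statement (\cite[II.3.11]{Kollar}) rather than a Bertini/nodality argument, and the duality you invoke only needs the elementary fact that a class moving in a covering family pairs non-negatively with every pseudo-effective divisor.
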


By combining Theorem \ref{theo:higherainv} and Theorem \ref{theo:ainvanddeformations} we obtain Theorem \ref{theo:maintheorem1}.

\subsection{Low degree curves with higher than expected dimension}
We will also need some more precise information about low degree curves which deform more than expected.  We will focus on $-K_{X}$-lines and $-K_{X}$-conics, i.e.~irreducible rational curves of anticanonical degree $1$ or $2$.

\begin{lemm}
\label{lemm:lines}
Let $X$ be a smooth Fano threefold.  Suppose that there is a family of $-K_X$-lines which has dimension $>1$.  Then these lines sweep out a contractible divisor $Y$ of E5 type.
\end{lemm}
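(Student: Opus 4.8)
The strategy is to apply Theorem~\ref{theo:higherainv} to identify which surface $Y$ swept out by such a family of lines can carry more than a $1$-dimensional family of $-K_X$-lines, and then to extract the E5 conclusion. First I would observe that if a family of $-K_X$-lines has dimension $>1$, the lines cannot be dominant: a dominant family of rational curves of anticanonical degree $1$ on a threefold has dimension exactly $\dim X + (-K_X \cdot C) - 3 = 1$ by the standard dimension count for free curves, and any larger family would force these lines to be non-free, hence (by Theorem~\ref{theo:ainvanddeformations}) to sweep out a proper subvariety. So the lines sweep out a proper closed subset, and since a $>1$-dimensional family of curves cannot sweep out a curve, it sweeps out a surface $Y \subsetneq X$.

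Next I would invoke Theorem~\ref{theo:ainvanddeformations}: the component of $\overline{\Rat}(X)$ parametrizing this non-dominant family must parametrize curves contained in the locus $V$ of subvarieties with $a(Y',-K_X) > a(X,-K_X) = 1$. Hence $Y$ (or at least some component of $V$ containing the general line) is a $2$-dimensional subvariety with $a(Y,-K_X) > 1$, and Theorem~\ref{theo:higherainv} applies: either $\kappa(K_{\widetilde Y} - a(Y,-K_X)\phi^* K_X) = 1$ and $Y$ is swept out by $-K_X$-lines (the generic case), or this Iitaka dimension is $0$ and $Y$ is a contractible divisor of type E1--E5 (with the E1 case being $\mathbb{P}^1\times\mathbb{P}^1$ with $\mathcal{O}(1,1)$). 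The key point will then be to rule out every possibility except E5 by examining the numerical class of the lines on each type of surface.

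In the Iitaka-dimension-$1$ case, the line $C$ moves in the fibers of $\rho:\widetilde Y \to B$, so on the surface $\widetilde Y$ the curve $C$ has $C^2 = 0$ (a fiber class moving in a base-point-free pencil); then the family of such lines on $Y$ has dimension exactly $1$ (it is parametrized by $B$, a curve), contradicting the hypothesis that the family has dimension $>1$. Among the contractible-divisor cases, I would go through Theorem~\ref{theo:moriclassification}: for type E1 with $(Y,-K_X|_Y) \cong (\mathbb{P}^1\times\mathbb{P}^1, \mathcal{O}(1,1))$, the $-K_X$-lines are the two rulings, each forming a $1$-dimensional family — again dimension $1$, not $>1$. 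For E2, $(Y,-K_X|_Y)\cong(\mathbb{P}^2,\mathcal{O}(2))$, so a $-K_X$-line would be a curve of $\mathcal{O}(2)$-degree $1$, i.e. a line has $\mathcal{O}(2)$-degree $2$, so there are no $-K_X$-lines in $Y$ at all. Similarly for E3, $(Q,\mathcal{O}(1,1))$ on a smooth quadric: $-K_X$-lines are the rulings, each a $1$-dimensional family; and for E4, $(Q,\mathcal{O}(1))$ on a quadric cone, the lines through the vertex form a $1$-dimensional family. Only for E5, where $(Y,-K_X|_Y)\cong(\mathbb{P}^2,\mathcal{O}(1))$, is a $-K_X$-line an actual line in $\mathbb{P}^2$, and these move in a $2$-dimensional family. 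Hence the hypothesis forces $Y$ to be of E5 type.

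\textbf{Main obstacle.} The delicate step is the bookkeeping in the contractible-divisor cases: I must make sure that on each surface type the "lines" referred to in the E1/E3/E4 cases — the rulings or the lines through a cone vertex — genuinely give only a $1$-dimensional family of curves on $X$, and that no other curve class on those surfaces has $-K_X$-degree $1$ while moving in a bigger family. This amounts to checking intersection numbers $C^2$ and $(-K_X|_Y)\cdot C$ on each of $\mathbb{P}^2$, $\mathbb{P}^1\times\mathbb{P}^1$, and the quadric cone with the stated polarizations, and confirming that a class with $(-K_X|_Y)\cdot C = 1$ has $C^2 \le 0$ (hence at most a $1$-dimensional family) in every case except E5. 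A secondary subtlety is ensuring the general line of the given family actually lies on a single surface $Y$ of the type produced by Theorem~\ref{theo:higherainv}, rather than in a union of lower-dimensional pieces of $V$ — but since a $>1$-dimensional irreducible family sweeps out an irreducible surface, this component of $V$ is forced to be two-dimensional and Theorem~\ref{theo:higherainv} applies directly to it.
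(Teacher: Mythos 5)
Your overall architecture is sound and genuinely different from the paper's. The paper first pins down $a(Y,-K_X)$: it applies Theorem \ref{theo:HJ} to a resolution of $Y$ to get $a(Y,-K_X) \ge a(\ell,-K_X) = 2$, uses H\"oring's classification to reduce to $a = 2$ or $3$, kills $a=2$ with the single inequality $0 \le (K_{\widetilde Y} - 2\phi^*K_X)\cdot\widetilde\ell = 1-d$, and then observes that $a=3$ forces $(Y,-K_X|_Y)$ to be birationally $(\mathbb P^2,\mathcal O(1))$, which is adjoint rigid, so Theorem \ref{theo:higherainv} gives E5 directly. You instead branch on the Iitaka dimension via Theorem \ref{theo:higherainv} and eliminate E1--E4 by inspecting degree-one curves on each model surface from Theorem \ref{theo:moriclassification}; that case check is correct ($\mathcal O(2)$ on $\mathbb P^2$ admits no curves of degree one, the rulings of $\mathbb P^1\times\mathbb P^1$ and of the quadric cone are pencils, and only $(\mathbb P^2,\mathcal O(1))$ carries a two-dimensional family of degree-one curves). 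Your reduction to a non-dominant family and the remark that the swept surface is a component of $V$ are also fine.

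The one real gap is your Iitaka-dimension-one case. You assert that ``the line $C$ moves in the fibers of $\rho$,'' but Theorem \ref{theo:higherainv}(1) only says that the Iitaka fibers are $-K_X$-lines; it does not say that every family of lines on $Y$ consists of Iitaka fibers, and ruling out a second, larger family of lines on such a $Y$ is precisely the content of the lemma in this case, so the assertion begs the question. To close it you need the paper's intersection computation in some form: the general Iitaka fiber $F$ is a rational curve with $-K_{\widetilde Y}\cdot F = 2$ and $-\phi^*K_X\cdot F = 2/a(Y,-K_X)$ a positive integer, forcing $a(Y,-K_X)=2$; then $\widetilde\ell$ nef and $K_{\widetilde Y}-2\phi^*K_X$ pseudo-effective give $-K_{\widetilde Y}\cdot\widetilde\ell \le 2$, whence $d = -K_{\widetilde Y}\cdot\widetilde\ell - 1 \le 1$ (and incidentally $(K_{\widetilde Y}-2\phi^*K_X)\cdot\widetilde\ell=0$, so $\widetilde\ell$ really is an Iitaka fiber). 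Without this step the $\kappa=1$ branch is not handled; with it, your proof goes through. A related minor point: you only establish $a(Y,-K_X)>1$ rather than $\ge 2$, which suffices to invoke Theorem \ref{theo:higherainv} but means the integrality argument above (or the paper's appeal to Theorem \ref{theo:HJ} on a resolution of $Y$) is genuinely needed in that branch.
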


If $\ell$ is a $-K_{X}$-line whose parameter space has dimension $>1$ we will say that $\ell$ is a line of E5 type.  This result was established in \cite[Ch. III Proposition 2.1]{Isk79} for non-hyperelliptic Fano threefolds; we give a general proof using $a$-invariants.

\begin{proof}
Let $M$ be a component of the Hilbert scheme parametrizing $-K_X$-lines and let $d = \dim M$.  Since any $-K_X$-line $\ell$ satisfies $a(\ell, -K_X) = 2 > 1 = a(X, -K_X)$, by Theorem \ref{theo:HJ} the lines parametrized by $M$ sweep out a surface $Y$.  By applying Theorem \ref{theo:HJ} to a resolution of $Y$ we see that $a(Y, -K_X) \geq a(\ell, -K_{X} ) = 2$.  By \cite[Proposition 1.3]{horing10} we have only two possibilities: either $a(Y, -K_X) = 2$ or $3$. 

First suppose that $a(Y, -K_X) = 2$.  Let $\phi : \widetilde{Y} \to Y$ be a resolution and let $\widetilde{\ell}$ be the strict transform of a general line $\ell$. Then we have
\[
-K_{\widetilde{Y}} \cdot \widetilde{\ell} - 1 = d.
\] 
On the other hand since $\widetilde{\ell}$ is a nef class on $\widetilde{Y}$ we must have
\[
0 \leq (-2\phi^*K_X + K_{\widetilde{Y}}) \cdot \widetilde{\ell} = 1-d.
\]
Thus we conclude that $d = 1$, showing that in this case $M$ cannot have larger than the expected dimension.

Thus under our assumptions $a(Y, -K_X) = 3$.  \cite[Proposition 1.3]{horing10} shows that $(Y,-K_{X}|_{Y})$ is birationally equivalent to $(\mathbb{P}^{2},\mathcal{O}(1))$, and in particular must be adjoint rigid.  Theorem \ref{theo:higherainv} shows that $Y$ is a contractible divisor and it must have E5 type.
\end{proof}

We will need to strengthen this result to include non-rational curves:

\begin{lemm} \label{lemm:linesarerational}
Let $X$ be a smooth Fano threefold.  Suppose that there is a family of curves $C$ of anticanonical degree $1$ and dimension $\geq 2$ which sweeps out a surface $Y$.  Then the curves are rational and $Y$ is a contractible divisor of E5 type.
\end{lemm}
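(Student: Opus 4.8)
The plan is to reduce the statement of Lemma~\ref{lemm:linesarerational} to Lemma~\ref{lemm:lines} by first showing that a family of anticanonical-degree-$1$ curves sweeping out a surface must in fact be a family of rational curves, i.e.\ of $-K_X$-lines. The key point is that a curve $C$ with $-K_X\cdot C = 1$ is as close to minimal as possible, so there is very little room for it to be singular or of positive genus. Concretely, I would take a general member $C$ of the family and consider its strict transform $\widetilde C$ on a resolution $\phi:\widetilde Y\to Y$ of the swept-out surface. Since the family has dimension $\geq 2$ and sweeps out the $2$-dimensional $Y$, the class $\widetilde C$ moves in a positive-dimensional linear system on $\widetilde Y$ whose general member is irreducible; in particular $\widetilde C$ is nef on $\widetilde Y$ and $\widetilde C^2\geq 0$.

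The main work is a genus/degree estimate. First I would argue that $Y$ is not uniruled by a lower-degree family (anticanonical degree $1$ is already minimal, so any subfamily would have to be the same lines), hence $a(Y,-K_X)$ is defined and, by applying Theorem~\ref{theo:HJ} to $\widetilde Y$ together with the fact that any curve of degree $1$ through a general point forces $a(Y,-K_X)\geq 2$ exactly as in the proof of Lemma~\ref{lemm:lines}, we get $a(Y,-K_X)\in\{2,3\}$ by \cite[Proposition 1.3]{horing10}. Now adjunction on $\widetilde Y$ gives $2p_a(\widetilde C)-2 = (K_{\widetilde Y}+\widetilde C)\cdot\widetilde C$. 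Writing $K_{\widetilde Y} = a(Y,-K_X)\phi^*K_X + (\text{effective/appropriate correction})$ — more precisely using $-K_{\widetilde Y}\cdot\widetilde C \ge a(Y,-K_X)\,(-\phi^*K_X\cdot\widetilde C) = a(Y,-K_X)$ in the adjoint-rigid case and the dimension count $-K_{\widetilde Y}\cdot\widetilde C - 1 = \dim\text{(family)} \ge 2$, i.e.\ $-K_{\widetilde Y}\cdot\widetilde C\geq 3$ — I would combine these with $\widetilde C^2\geq 0$ to pin down $p_a(\widetilde C)$. The same linear-algebra computation as in Lemma~\ref{lemm:lines} (the inequality $0\leq (-2\phi^*K_X+K_{\widetilde Y})\cdot\widetilde C$) rules out the $a(Y,-K_X)=2$ case entirely, so $a(Y,-K_X)=3$, $(Y,-K_X|_Y)$ is birational to $(\mathbb P^2,\mathcal O(1))$, and in this case $\widetilde C$ is (the strict transform of) a line in $\mathbb P^2$, forcing $p_a(\widetilde C)=0$: the general $C$ is rational.

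Once the general member of the family is known to be rational, the family is (an open subset of) a family of $-K_X$-lines of dimension $\geq 2 > 1$, and Lemma~\ref{lemm:lines} applies verbatim to conclude that the lines sweep out a contractible divisor $Y$ of type E5.

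\textbf{Main obstacle.} The delicate point is the genus bound: I need to rule out the possibility that a general $C$ in the family has positive arithmetic genus (or is singular) while still having anticanonical degree $1$ and moving in a $2$-dimensional family. The clean way to do this is to push everything onto the resolution $\widetilde Y$, where nefness of $\widetilde C$, the adjunction formula, and the numerical identity $K_{\widetilde Y}\equiv a(Y,-K_X)\phi^*K_X$ (valid up to the correction terms controlled by \cite[Lemma 5.3]{LTJAG} in the adjoint-rigid E5 situation) together leave no room for genus. The subtlety is handling the non-adjoint-rigid a priori case $a(Y,-K_X)=2$ and the comparison between $K_{\widetilde Y}$ and $\phi^*K_Y$ carefully enough that the degree-$1$ constraint really does bite; this is exactly where the inequality $0\leq(-2\phi^*K_X+K_{\widetilde Y})\cdot\widetilde C$ from the proof of Lemma~\ref{lemm:lines} does the heavy lifting and forces a contradiction with $\dim(\text{family})\geq 2$ unless we are in the E5 case.
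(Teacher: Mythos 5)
There is a genuine gap, and it sits exactly at the point you flag as the ``delicate point.'' Your route to rationality is circular: to get $a(Y,-K_X)\geq 2$ ``exactly as in the proof of Lemma~\ref{lemm:lines}'' you need Theorem~\ref{theo:HJ} applied to the curves of the family, i.e.\ you need $a(C,-K_X|_C)=2$ for a general member $C$. But for an integral curve $C$ with $-K_X\cdot C=1$ one has $a(C,-K_X|_C)=2-2g(C)$, which is $\leq 0$ as soon as $g(C)\geq 1$; the value $2$ \emph{is} the assertion that $C\cong\mathbb{P}^1$, which is what you are trying to prove. Consequently the inputs you want to use for the genus bound --- $a(Y,-K_X)\in\{2,3\}$, adjoint rigidity, and the identity $K_{\widetilde Y}\equiv a(Y,-K_X)\phi^*K_X$ from \cite[Lemma 5.3]{LTJAG} --- are all unavailable until rationality is known. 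Note also that ``anticanonical degree $1$ implies rational'' is simply false for Fano threefolds ($\mathbb{P}^1\times S_1$ carries elliptic curves of anticanonical degree $1$, namely the members of $|-K_F|$ in the fibers $F\cong S_1$), so any correct argument must genuinely exploit the hypothesis that the family has dimension $\geq 2$ and sweeps out a \emph{surface}; your sketch never isolates where that hypothesis bites for the genus bound.

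The paper's proof takes a completely different and much more elementary route. If $|-K_X|$ is base point free, the associated morphism sends $C$ to a curve of degree $1$ in projective space, i.e.\ a line, and since $-K_X\cdot C=1$ the map $C\to\text{line}$ is birational, so $C$ is rational; one then invokes the classification (Theorem~\ref{theo:notbpf}) of the three Fano threefolds with non--base-point-free anticanonical system and checks each by hand (this is where the elliptic curves on $\mathbb{P}^1\times S_1$ and on cone fibers of the degree-$1$ del Pezzo fibration are ruled out: they either do not exist, are visibly rational, or form a dominant family rather than one sweeping out a surface). Once rationality is in hand, the reduction to Lemma~\ref{lemm:lines} goes through as you say. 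If you want an intrinsic argument avoiding the classification, the viable mechanism is not the $a$-invariant but rather: Hodge index on $\widetilde Y$ applied to the nef and big class $-\phi^*K_X$ and the moving class $\widetilde C$ with $(-\phi^*K_X)\cdot\widetilde C=1$ forces $\widetilde C^2=1$, and then $\dim\mathrm{Hilb}\leq h^0\bigl(\mathcal{O}_{\widetilde C}(\widetilde C)\bigr)\leq 1$ for a degree-$1$ line bundle on an integral curve of positive arithmetic genus, contradicting the assumed dimension $\geq 2$. That argument is not the one you proposed, so as written the proof does not go through.
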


\begin{proof}
By Lemma \ref{lemm:lines} it suffices to show that any such family of curves will parametrize rational curves.  

If $|-K_{X}|$ is basepoint free, then the image of $C$ under the corresponding morphism will be a line in projective space, hence rational.  Furthermore, $C$ will be taken birationally onto its image under this map since $-K_{X} \cdot C = 1$.  Thus $C$ will be rational.

We only need to address the Fano threefolds for which $|-K_{X}|$ is not basepoint free.  The three types are listed in Theorem \ref{theo:notbpf}:
\begin{enumerate}
\item If $X = V_{1}$ then the anticanonical divisor has index $2$ and thus there are no curves of anticanonical degree $1$.
\item Suppose $X$ is the blow up of $V_{1}$ along a smooth elliptic curve which is the intersection of two members of $|-\frac{1}{2}K_{V_{1}}|$.  If $\phi$ denotes the blow-up and $E$ denotes the exceptional divisor then we can write $-K_{X} = -\phi^{*}\frac{1}{2}K_{V_{1}} + (-\phi^{*}\frac{1}{2}K_{V_{1}} - E)$ where both terms are nef.  Thus the curves of anticanonical degree $1$ admit the following descriptions:
\begin{enumerate}
\item Curves with $-\phi^{*}\frac{1}{2}K_{V_{1}} \cdot C = 1$ and $(-\phi^{*}\frac{1}{2}K_{V_{1}} - E) \cdot C = 0$.  These curves are contracted by the morphism $g: X \to \mathbb{P}^{1}$ whose general fiber is a del Pezzo surface of degree $1$.  By \cite[Lemma 6.9]{LTT18} every fiber of $g$ is irreducible and normal and is thus a Gorenstein del Pezzo surface.  The fibers which are not smooth either have canonical singularities or are isomorphic to a cone over an elliptic curve.  In the latter case, the only curves of anticanonical degree $1$ in the fiber are the lines in the cone which are rational. For all other fibers, any curve of anticanonical degree $1$ will either be a $(-1)$-curve or an element of $|-K_{F}|$.  Curves of the first type will be rational.  Curves of the second type will form a dominant $2$-dimensional family on $X$.  In particular, there is no subfamily of dimension $>1$ which sweeps out a surface $Y$.
\item Curves with $-\phi^{*}\frac{1}{2}K_{V_{1}} \cdot C = 0$ and $(-\phi^{*}\frac{1}{2}K_{V_{1}} - E) \cdot C = 1$.  These are the rational curves contracted by the birational map to $V_{1}$.
\end{enumerate}
\item If $X = \mathbb{P}^{1} \times S_{1}$, then the curves of anticanonical degree $1$ are the $(-1)$-curves and the curves in the anticanonical linear series in some fiber of the map to $\mathbb{P}^{1}$.  Curves of the first type are rational.  The total parameter space for curves of the second type has dimension $2$ and this family sweeps out all of $X$.  Thus there is no subfamily of dimension $>1$ which sweeps out a surface $Y$.
\end{enumerate}
\end{proof}

We will also need a statement concerning the geometry of families of $-K_{X}$-lines of dimension $1$.

\begin{lemm} \label{lemm:linesfromruled}
Let $X$ be a smooth Fano threefold.  Suppose that $M$ is a family of $-K_{X}$-lines on $X$ which has dimension $1$.  There is a ruled surface $\pi: S \to C$ equipped with a morphism $f: S \to X$ such that the fibers of $\pi$ map birationally to the lines parametrized by $M$.
\end{lemm}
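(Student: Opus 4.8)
The plan is to produce the ruled surface as the normalization of the universal family over $M$ and to check that this normalization is already a ruled surface in the classical sense (a $\mathbb{P}^1$-bundle over a smooth curve).

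\textbf{Step 1: Set up the universal family.} Let $M$ be the given $1$-dimensional component parametrizing $-K_X$-lines. By passing to the normalization of $M$ we may assume $M$ is a smooth projective curve; there is a universal family $p: \mathcal{U} \to M$ whose fiber over a general point is the corresponding line $\ell \subset X$, together with the evaluation morphism $f: \mathcal{U} \to X$. A priori $\mathcal{U}$ is a (possibly singular, possibly non-normal) surface, and the general fiber of $p$ is $\mathbb{P}^1$.

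\textbf{Step 2: Resolve to a ruled surface.} Let $\nu: S' \to \mathcal{U}$ be a resolution of singularities and let $\pi' = p \circ \nu: S' \to M$; after replacing $M$ by an open subset over which things are nice and then compactifying, and after repeatedly blowing down $\pi'$-vertical $(-1)$-curves, we obtain a smooth projective surface $\pi: S \to C$ over a smooth projective curve $C$ (a birational model of $M$) which is a minimal ruled surface, i.e.\ a $\mathbb{P}^1$-bundle. The composite $S \to \mathcal{U} \to X$ gives a morphism $f: S \to X$. Since each $-K_X$-line $\ell$ is irreducible of anticanonical degree $1$, the general fiber of $\pi$ is not contracted by $f$ and maps birationally (indeed isomorphically, since degree $1$) onto the corresponding line; the contracted $(-1)$-curves we blew down were $\pi$-vertical so they do not interfere with this. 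Thus the general fiber of $\pi$ maps birationally onto a line parametrized by $M$, which is exactly the desired conclusion.

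\textbf{Step 3: Match the birational models.} The one point requiring a little care is that after the birational surgery in Step 2, the fibers of $\pi$ should still be in bijective (generic) correspondence with the curves parametrized by $M$, i.e.\ the base $C$ should dominate $M$ (and we may arrange $C = M$ after normalizing $M$). This follows because the generic fiber of $\mathcal{U} \to M$ is already irreducible — it is a line $\ell$ — so no splitting of fibers occurs over the generic point, and the resolution/minimalization only affects finitely many fibers and the vertical structure, not the base. Hence a general fiber of $\pi: S \to C$ maps birationally to a general line in the family $M$.

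The main obstacle is purely bookkeeping: making sure that the blow-downs used to reach a minimal ruled surface only contract $\pi$-vertical curves (so that the generic fiber is undisturbed) and that no component of a reducible fiber of $\mathcal{U} \to M$ gets mistakenly promoted to a ``fiber'' of $\pi$. Since all of this happens over a proper closed subset of the base while the generic behavior is controlled by the irreducibility of a single line, this is routine — indeed, as the excerpt notes for analogous statements, the result is standard and a detailed verification can be omitted or compressed.
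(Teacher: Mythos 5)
There is a genuine gap in Step~2, and it is precisely the point that carries the mathematical content of the lemma. You pass from the resolution $S'$ of the universal family to a relatively minimal model $S$ by blowing down $\pi'$-vertical $(-1)$-curves, and then assert that ``the composite $S \to \mathcal{U} \to X$ gives a morphism $f: S \to X$.'' But $S$ is obtained from $S'$ by blowing \emph{down}, so there is no map $S \to \mathcal{U}$; the evaluation map is defined on $S'$, and it descends to $S$ only if every curve contracted by $S' \to S$ is also contracted by the evaluation map. This is not automatic: in a reducible fiber $F$ of $\pi'$ exactly one component $F_0$ maps onto the corresponding line (so is \emph{not} contracted by $f$), and if the only available $(-1)$-curve in $F$ were $F_0$ itself, contracting it would destroy the morphism to $X$ and leave only a rational map. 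The issue is not, as you say, whether the contracted curves are $\pi$-vertical (they always are), but whether they are $f$-exceptional.

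The paper closes this gap with a multiplicity argument: it takes $S$ minimal in the sense that no $(-1)$-curve contracted by $f$ remains, and then shows no reducible fiber can survive. Indeed, in a reducible fiber all components other than $F_0$ are $f$-contracted, hence by minimality none of them is a $(-1)$-curve; so $F_0$ would be the unique $(-1)$-curve in the fiber, and by \cite[Lemma 4.3]{LT19} the unique $(-1)$-curve in a reducible fiber of a rational fibration must appear with multiplicity $\geq 2$, contradicting the fact that $F_0$ has multiplicity $1$ (it maps birationally onto a reduced member of the family). This forces every fiber to be irreducible and reduced, so $\pi: S \to C$ is a $\mathbb{P}^1$-bundle on which $f$ is everywhere defined and every fiber maps birationally to a line. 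Your write-up dismisses exactly this step as ``routine bookkeeping,'' but without the multiplicity lemma (or an equivalent argument guaranteeing that every reducible fiber contains an $f$-contracted $(-1)$-curve) the construction does not produce a morphism $f: S \to X$, only a rational map.
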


\begin{proof}
Let $C$ be the normalization of the reduced curve underlying $M$ and let $S$ be a minimal resolution of the base-change of the one-pointed family over $M$ to $C$.  Note that $S$ is equipped with an evaluation map $f: S \to X$ that maps each fiber of $\pi: S \to C$ onto a line parametrized by $M$.  More precisely, for every fiber $F$ of $\pi$ there is a unique component $F_{0}$ of multiplicity $1$ in $F$ which is mapped birationally onto a line parametrized by $M$ and every other component of $F$ is contracted by $f$.  Since $S$ is minimal, we may assume that no $(-1)$-curve on $S$ is contracted by $f$.

Suppose that the map $\pi: S \to C$ has a reducible fiber $F$.  Since all but one component of $F$ is contracted by $f$ we see that the unique component $F_{0}$ of $F$ must be a $(-1)$-curve and this must be the only $(-1)$-curve in $F$.  But by \cite[Lemma 4.3]{LT19} $F_{0}$ must have multiplicity $\geq 2$ in $F$.  This contradicts the construction above.  We conclude that every fiber of $\pi$ is irreducible and has multiplicity $1$ so that $\pi: S \to C$ is a $\mathbb{P}^{1}$-bundle.
\end{proof}

The situation for conics is very similar.

\begin{lemm}
\label{lemm:conics}
Let $X$ be a smooth Fano threefold.  Suppose that there is a family of $-K_X$-conics which has dimension $> 2$. Then these conics sweep out a contractible surface $Y$ which has E1, E3, E4, or E5 type. Moreover if $Y$ has E1 type then $(Y, -K_X|_Y)$ is isomorphic to $(\mathbb P^1\times\mathbb P^1, \mathcal O(1,1))$ and both rulings correspond to E1 contractions. 
\end{lemm}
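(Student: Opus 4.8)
\emph{The argument parallels that of Lemma~\ref{lemm:lines}.} Write $M$ for the parameter space of the family, so $\dim M > 2$. The first step is to show that the conics do not dominate $X$. Suppose they did; passing to the associated family of parametrized maps would produce a component of $\mathrm{Hom}(\bP^1,X)$ with dominant evaluation map $\bP^1\times M'\to X$. By generic smoothness, together with the identification of the tangent space to $\mathrm{Hom}$ at $[f]$ with $H^0(\bP^1,f^*T_X)$, a general such $f$ would be a free immersion. Since $f^*T_X$ has rank $3$ and degree $-K_X\cdot C=2$, is globally generated, and contains $T_{\bP^1}=\mathcal O(2)$ as a subbundle, necessarily $f^*T_X\cong\mathcal O(2)\oplus\mathcal O\oplus\mathcal O$; hence $h^0(f^*T_X)=5$, the family of maps has dimension $5$, and $M$ has dimension $5-3=2$, contradicting the hypothesis. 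Therefore the conics sweep out a surface $Y\subsetneq X$.

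\emph{Next I would bound $a(Y,-K_X)$ from below and invoke Theorem~\ref{theo:higherainv}.} Let $\psi:\widetilde Y\to Y$ be a resolution and $\phi:\widetilde Y\to X$ the composition with $Y\hookrightarrow X$, so $\phi^*(-K_X)$ is big and nef and $a(Y,-K_X)=a(\widetilde Y,\phi^*(-K_X))$ is defined. The strict transforms $\widetilde C$ of the conics form a family of dimension $\dim M$ covering $\widetilde Y$. Since a family of rational curves on the smooth surface $\widetilde Y$ has dimension at most $-K_{\widetilde Y}\cdot\widetilde C-1$, we get $-K_{\widetilde Y}\cdot\widetilde C\geq\dim M+1\geq 4$; by adjunction and $p_a(\widetilde C)\geq 0$ this forces $\widetilde C^2\geq 2$, so $\widetilde C$ is a covering curve of positive self-intersection, hence nef. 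Pairing the pseudo-effective divisor $K_{\widetilde Y}+a(Y,-K_X)\phi^*(-K_X)$ with the nef class $\widetilde C$, and using $\phi^*(-K_X)\cdot\widetilde C=-K_X\cdot C=2$, gives $2\,a(Y,-K_X)\geq -K_{\widetilde Y}\cdot\widetilde C\geq 4$, so $a(Y,-K_X)\geq 2>1=a(X,-K_X)$ and Theorem~\ref{theo:higherainv} applies to $Y$.

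\emph{Then I would run its dichotomy and prune the outcomes.} Suppose we were in case~(1), so $N:=K_{\widetilde Y}+a(Y,-K_X)\phi^*(-K_X)$ has Iitaka dimension $1$ with Iitaka fibration $\rho:\widetilde Y\to B$. As in the proof of Theorem~\ref{theo:higherainv}, a general fiber $\widetilde\ell$ satisfies $\phi^*(-K_X)\cdot\widetilde\ell=2/a(Y,-K_X)$; since $\phi^*(-K_X)\cdot\widetilde\ell$ is a positive integer and $a(Y,-K_X)\geq 2$, we must have $a(Y,-K_X)=2$. Writing the Zariski decomposition $N=P+Z$, one has $\kappa(P)=\kappa(N)=1$, so $P$ is a nonzero nef class with $P^2=0$; as $\widetilde C^2\geq 2>0$, the Hodge index theorem forces $P\cdot\widetilde C>0$, and $Z\cdot\widetilde C\geq 0$ since a general $\widetilde C$ is not a component of $Z$, whence $N\cdot\widetilde C>0$. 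But then $-K_{\widetilde Y}\cdot\widetilde C=2\phi^*(-K_X)\cdot\widetilde C-N\cdot\widetilde C=4-N\cdot\widetilde C<4$, contradicting $-K_{\widetilde Y}\cdot\widetilde C\geq 4$. So we are in case~(2): $Y$ is a contractible divisor of type E1, E2, E3, E4, or E5, and if of type E1 then $(Y,-K_X|_Y)\cong(\bP^1\times\bP^1,\mathcal O(1,1))$ with both rulings giving E1 contractions. Finally, type E2 is impossible: there $(Y,-K_X|_Y)\cong(\bP^2,\mathcal O(2))$, so every $-K_X$-conic in $Y$ is a line in $\bP^2$, and these form only a $2$-dimensional family. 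This leaves types E1, E3, E4, E5, as claimed.

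\emph{Expected main obstacle.} The delicate point is the elimination of case~(1): ruling out that a surface swept out by $-K_X$-lines also carries a $(>2)$-dimensional family of $-K_X$-conics requires combining the Iitaka-fibration structure furnished by Theorem~\ref{theo:higherainv} (to pin down $a(Y,-K_X)=2$) with the Hodge-index estimate $N\cdot\widetilde C>0$. The non-dominance step and the lower bound on $a(Y,-K_X)$ are routine variants of the corresponding arguments for $-K_X$-lines, and the exclusion of E2 is immediate.
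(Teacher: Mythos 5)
Your proof is correct and follows essentially the same strategy as the paper's: reduce to the surface $Y$ swept out by the conics, bound $a(Y,-K_X)$ by pairing the nef covering curve class $\widetilde C$ against the pseudo-effective adjoint divisor, and then invoke Theorem~\ref{theo:higherainv} together with the classification of contractible divisors. The only organizational difference is that you case-split on the Iitaka dimension (eliminating $\kappa=1$ via the integrality of $\phi^*(-K_X)\cdot\widetilde\ell$ and a Hodge-index estimate) and exclude E2 by a direct dimension count, whereas the paper enumerates the possible values $a(Y,-K_X)\in\{3/2,2,3\}$ via \cite[Proposition 1.3]{horing10} and rules out the non-adjoint-rigid case by comparing fiber degrees; these are equivalent in substance.
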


Thus if $C$ is a conic whose parameter space has dimension $>2$ we will say that $C$ is a conic of E1, E3, E4, or E5 type depending on the type of the surface swept out by deformations of $C$.  This result was established in many cases by \cite[Ch. III Proposition 1.3 and Proposition 3.3]{Isk79}; we give a general proof using the $a$-invariant.

\begin{proof}
Let $M$ be a component of the Hilbert scheme parametrizing these conics and let $d = \dim M$.  Since $M$ has larger than the expected dimension, the conics parametrized by $M$ sweep out a surface $Y$.  By Theorem \ref{theo:ainvanddeformations} we have $a(Y, -K_X) > 1$. By \cite[Proposition 1.3]{horing10} we have only three possibilities: $a(Y, -K_X) = 3/2$, $2$, or $3$.

First suppose that $a(Y,-K_{X}) = 3/2$.  Let $\phi : \widetilde{Y} \to Y$ be a resolution and $\widetilde{C}$ be the strict transform of a general deformation of $C$.  Since the family of deformations of $\widetilde{C}$ has the expected dimension on $\widetilde{Y}$ we must have
\[
-K_{\widetilde{Y}} \cdot \widetilde{C} - 1 = d
\]
and $d \geq 3$.  On the other hand since $\widetilde{C}$ is nef on $\widetilde{Y}$ we must have
\[
0 \leq \left(\frac{-3}{2}\phi ^*K_X + K_{\widetilde{Y}} \right) \cdot \widetilde{C} = 3-1-d.
\]
This gives a contradiction, showing that this case cannot happen.

Second suppose that $a(Y, -K_X) = 2$.  Let $\phi  : \widetilde{Y} \to Y$ be a resolution and $\widetilde{C}$ be the strict transform of a general deformation of $C$. Then we must have
\[
-K_{\widetilde{Y}} \cdot \widetilde{C} - 1 = d
\]
and $d \geq 3$.  On the other hand since $\widetilde{C}$ is nef on $\widetilde{Y}$ we must have
\[
0 \leq (-2\phi ^*K_X + K_{\widetilde{Y}}) \cdot \widetilde{C} = 4-1-d.
\]
Thus in this case $d=3$.  Suppose that $(Y,-K_{X})$ is not adjoint rigid.  Since we have verified that $\widetilde{C}$ has vanishing intersection against $K_{\widetilde{Y}} -2\phi ^{*}K_{X}$ it must be a general fiber for the canonical fibration for $K_{\widetilde{Y}} -2\phi ^*K_X$.  However such a fiber $F$ must satisfy $-\phi ^*K_X \cdot F = 1$, yielding a contradiction.

Thus if $a(Y, -K_X) = 2$ then $(Y,-K_{X})$ must be adjoint rigid.  By \cite[Proposition 1.3]{horing10} this pair must be birationally equivalent to $(Q,\mathcal{O}(1)|_{Q})$ where $Q$ is a (possibly singular) quadric in $\mathbb{P}^{3}$.  By Theorem \ref{theo:higherainv} we see that $Y$ is contractible, and thus must have E1, E3 or E4 type.  

Finally suppose that $a(Y,-K_{X}) = 3$.  Arguing just as in Lemma \ref{lemm:lines} we see that $Y$ is an E5 divisor.  In this case $M$ parametrizes the family of conics on $Y$.
\end{proof}

Although Lemma \ref{lemm:lines} and Lemma \ref{lemm:conics} show that any family of $-K_{X}$-lines or conics that has higher-than-expected dimension sweeps out a contractible divisor, the following example shows that the analogous statement is not true for higher degree rational curves.  (Rather, the correct analogue is that a family of degree $d$ rational curves that moves in dimension $\geq 2d-1$ will sweep out one of the exceptional divisors in Lemma \ref{lemm:conics}.  The argument is essentially the same as the proof of Lemma \ref{lemm:conics}.)

\begin{exam}
\cite[Proposition 5.4.4]{KPS18} gives several examples of Fano threefolds of Picard rank 1 and genus 12 for which the Hilbert scheme of lines is a union of rational curves. Considering the universal family of lines over the desingularization of one of the irreducible components of the Hilbert scheme, 
we get a Hirzebruch surface $\mathbb{F}_r$ and a universal map $f: \mathbb{F}_r \to X$ such that $f$ sends the fibers of $\pi: \mathbb{F}_r \to \mathbb{P}^1$  isomorphically onto lines in $X$. Let $F$ denote the class of a fiber of $\pi$ and $C$ denote the class of the section with negative self intersection. Then $f^*(-K_X) = C+ mF$ for some $m \geq r$. Pick any $b  \geq m$. There are sections of $\pi$ whose class is $C+bF$, so the moduli space of rational curves in $\mathbb{F}_r$ with class $C+bF$  is non-empty and has therefore the expected dimension
\begin{align*}
-K_{\mathbb{F}_r} \cdot (C+bF) - 1 & = (2C+(2+r)F) \cdot (C+bF) -1 \\
& =  -2r+2b+2+r-1 \\
& =2b+1-r
\end{align*}
as a family of curves of $\mathbb{F}_{r}$. But the anticanonical degree of the image of these curves in $X$ is 
$-f^*K_X \cdot (C+bF)= -r+b+m< 2b+1-r$. The image of $\mathbb{F}_r$ in $X$ is a surface $S$ which is not contractible since $X$ is of Picard rank 1, but $S$ is swept out 
by a family  of rational curves of degree $b+m - r$ which has higher-than-expected dimension. 
\end{exam}

\section{$a$-covers} \label{sect:acovers}

\begin{defi}
Let $X$ be a smooth projective uniruled variety and let $L$ be a big and nef $\mathbb{Q}$-divisor on $X$.  An $a$-cover of $(X,L)$ is a generically finite dominant morphism $f: Y \to X$ from a projective variety $Y$ satisfying $a(Y,f^{*}L) = a(X,L)$.
\end{defi}

Definition \ref{defi:manincomponent} shows that $a$-covers play a key role in the description of the exceptional set in Geometric Manin's Conjecture.  In this section our goal is to completely classify the $a$-covers of Fano threefolds.  Note that $a(X,-K_{X}) = 1$, so any $a$-cover will have $a$-invariant $1$ as well.  We will separate the argument into cases based on the Iitaka dimension of $K_{Y} - f^{*}K_{X}$.

\subsection{Iitaka dimension 2}

\begin{lemm} \label{lemm:iitakadim2case}
Let $X$ be a smooth Fano threefold.  Let $f : Y \rightarrow X$ be an $a$-cover such that $Y$ is smooth and $\kappa(K_{Y}-f^{*}K_{X}) = 2$.
After applying a resolution we may assume that the Iitaka fibration for $K_{Y}-f^*K_X$ is a morphism $\pi: Y \rightarrow Z$.  Then a general fiber of $\pi$ maps birationally under $f$ to a $-K_{X}$-conic.

If $p: \mathcal{C} \to M$ denotes the one-pointed family over the component of $\overline{\Rat}(X)$ parametrizing these conics on $X$, then $\pi$ is birationally equivalent to a base change of $p$ over a rational map $Z \dashrightarrow M$ and $f$ factors rationally through the evaluation map for $\mathcal{C}$.  
\end{lemm}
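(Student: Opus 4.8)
The plan is to first extract numerical information about a general fiber $C$ of $\pi$, then pin down its geometry as a conic, and finally upgrade the resulting fiberwise isomorphisms into a birational identification via the universal property of the one-pointed family. Since $f$ is an $a$-cover, $a(Y,-f^{*}K_{X})=a(X,-K_{X})=1$, so $D:=K_{Y}-f^{*}K_{X}$ is pseudo-effective with $\kappa(D)=2$ and $\pi$ its Iitaka fibration. The key first point is that the restriction of $D$ to a general fiber $C$ has Iitaka dimension $0$; as $C$ is a curve this forces $D\cdot C=0$, i.e.\ $K_{Y}\cdot C=f^{*}K_{X}\cdot C$. Because $C$ is a general (hence smooth) fiber of a fibration between smooth varieties, $\pi$ is smooth along $C$ and $N_{C/Y}\cong\mathcal{O}_{C}^{\oplus 2}$, so adjunction gives $K_{Y}\cdot C=2g(C)-2$. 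Combined with the nefness of $-f^{*}K_{X}$ this yields $f^{*}(-K_{X})\cdot C=2-2g(C)\ge 0$, hence $g(C)\le 1$.

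Next I would rule out the two undesired possibilities. If $g(C)=1$ then $f^{*}(-K_{X})\cdot C=0$, so $f$ contracts the general fiber of $\pi$; since these fibers sweep out a dense subset of $Y$, this would force $f$ to factor rationally through $\pi$, contradicting that $f$ is dominant onto the threefold $X$ while $\dim Z=2$. Thus $C\cong\mathbb{P}^{1}$ and $f^{*}(-K_{X})\cdot C=2$, so either $f|_{C}$ is a $2:1$ cover of a $-K_{X}$-line or $f|_{C}$ is birational onto a $-K_{X}$-conic. To exclude the first case I would argue by dimension count: the images $f(C)$, as $C$ ranges over general fibers, sweep out a dense subset of $f(Y)=X$, so the associated family of $-K_{X}$-lines has dimension at least $3-1=2$; but by Lemma~\ref{lemm:lines} any family of $-K_{X}$-lines of dimension $>1$ sweeps out an E5 contractible divisor, which is a surface, a contradiction. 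This establishes the first assertion of the lemma: the general fiber of $\pi$ maps birationally onto a $-K_{X}$-conic $D_{C}:=f(C)$.

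For the second assertion, the conics $\{D_{C}\}$ form an irreducible family (it is dominated by a dense open subset of $Z$), hence lie in a unique component $M\subset\overline{\Rat}(X)$, and we obtain a rational map $g\colon Z\dashrightarrow M$ sending a general $z$ to $[f(\pi^{-1}(z))]$. For general $z$, the morphism $f|_{\pi^{-1}(z)}\colon \pi^{-1}(z)\to D_{C}$ is birational from a smooth rational curve, hence factors as an isomorphism $\pi^{-1}(z)\xrightarrow{\sim}\widetilde{D}_{C}$ onto the normalization followed by $\widetilde{D}_{C}\to X$; but $\widetilde{D}_{C}$ with this morphism is precisely the fiber of $p\colon\mathcal{C}\to M$ over $g(z)$ together with $ev$. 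Assembling these isomorphisms over a dense open subset of $Z$ — equivalently, invoking the universal property of $\overline{M}_{0,1}(X)$ applied to the family $\pi^{-1}(U)\to X$ over $Z$ — produces a rational map $\Phi\colon Y\dashrightarrow Z\times_{M}\mathcal{C}$ commuting with the maps to $Z$ and satisfying $ev\circ\Phi=f$. Since $\Phi$ is injective on a general fiber of $\pi$ and $\dim(Z\times_{M}\mathcal{C})=\dim Z+1=\dim Y$, the map $\Phi$ is birational, which exhibits $\pi$ as birationally a base change of $p$ along $g$ and shows that $f$ factors rationally through $ev$.

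The step I expect to be the genuine obstacle is the exclusion of the double-cover-of-a-line case, as this is where the specific geometry of Fano threefolds enters through Lemma~\ref{lemm:lines}; a close second is making sure the assembled map $\Phi$ is an honest rational morphism and not merely a fiberwise bijection, which is exactly what the universal property of the one-pointed moduli space provides. The remaining points — the Iitaka-dimension-zero restriction property, triviality of $N_{C/Y}$ for a general fiber, and the adjunction computation — are standard.
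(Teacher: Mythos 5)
Your proof is correct, and the first half takes a genuinely different route from the paper's. The paper runs the $(K_{Y}-f^{*}K_{X})$-MMP to reach a model $Y'$ with a conic-bundle structure $\pi':Y'\to Z$, reads off that a general fiber is a $-K_{Y'}$-conic with $(K_{Y'}-\phi_{*}f^{*}K_{X})\cdot C'=0$, and then excludes the multiple-cover possibility by the one-line observation that a dominant family of rational curves on a smooth variety has anticanonical degree at least $2$, so the image cannot be a line. You instead work directly on the resolved Iitaka fibration, using Iitaka's theorem ($\kappa(D|_{C})=0$ forces $\deg D|_{C}=0$), triviality of the normal bundle of a general fiber, and adjunction to get $-f^{*}K_{X}\cdot C=2-2g(C)$, ruling out $g=1$ by generic finiteness of $f$; this avoids the MMP entirely and is arguably more self-contained. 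Your exclusion of the $2{:}1$-cover-of-a-line case via Lemma~\ref{lemm:lines} is valid but heavier than needed -- the dominant family of lines it would produce is already impossible because free curves satisfy $-K_{X}\cdot C\geq 2$. For the second assertion both arguments are essentially the same; the only caveat is your appeal to ``the universal property of $\overline{M}_{0,1}(X)$'': the coarse moduli space does not corepresent families, so one should (as the paper does) route the universal property through an open subset of the Hilbert scheme, or through the moduli stack. This is a cosmetic fix and does not affect the argument.
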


\begin{proof}
By running the $(K_{Y} - f^{*}K_{X})$-MMP we obtain a birational contraction $\phi: Y \dashrightarrow Y'$ and a morphism $\pi': Y' \to Z$ where $Z$ has dimension $2$.  Furthermore we know that $K_{Y'} - \phi_{*}f^{*}K_{X}$ is numerically trivial along a general fiber of $\pi'$.  As in the statement we replace $Y$ by a birational model which resolves the rational map to $Z$.  Since a general fiber $C'$ of $\pi'$ is a $-K_{Y'}$-conic that avoids the locus where $\phi^{-1}$ is not a morphism, we conclude that the strict transform $C$ of such a curve on $Y$ is a $-K_{Y}$-conic contracted by $\pi$.  Furthermore, we have
\begin{equation*}
(K_{Y} - f^{*}K_{X}) \cdot C = (K_{Y'} - \phi_{*}f^{*}K_{X}) \cdot C' = 0
\end{equation*}
so that $f_{*}C$ has anticanonical degree $2$.  Since any dominant family of curves has anticanonical degree at least $2$, $f_{*}C$ cannot be a multiple curve, and we deduce that $f$ maps $C$ birationally onto its image.

Since the morphism $\pi$ yields a family of stable maps over an open subset of $Z$, we obtain a dominant rational map $Z \dashrightarrow M$.  Since a dense open subset of $M$ can be embedded in the Hilbert scheme (and thus satisfies a universal property), the restriction of $\pi$ to a non-empty open subset of $Z$ is obtained by base change from the universal family over an open subset of $M$.
\end{proof}

\subsection{Iitaka dimension 1}

Let $X$ be a smooth Fano threefold.  Suppose that $X$ admits a morphism $\pi: X \to B$ with connected fibers to some curve $B$.  Then the general fiber of $\pi$ is a del Pezzo surface.  If we choose a finite map $T \to B$ and set $Y = X \times_{B} T$, then the induced $f: Y \to X$ is an $a$-cover such that $\kappa(K_{Y}-f^{*}K_{X}) = 1$.  Conversely, we will show that when $-K_{X}$ is very ample every $a$-cover $f: Y \to X$ such that $\kappa(K_{Y}-f^{*}K_{X}) = 1$ comes from this construction. 

\begin{theo} \label{theo:iitakadim1case}
Let $X$ be a smooth Fano threefold such that $-K_X$ is very ample. Let $f : Y \rightarrow X$ be an $a$-cover such that $Y$ is smooth and $\kappa(K_{Y}-f^{*}K_{X}) = 1$.
After applying a resolution we may assume that the Iitaka fibration for $K_Y-f^*K_X$ is a morphism $\pi: Y \rightarrow Z$. Let $Y_z$ be a general fiber of $\pi$ and $S_z$ be the image of $Y_z$ on $X$. Then $S_{z}$ is normal and  the linear system $|S_z|$ defines a del Pezzo fibration $\rho : X \rightarrow B$ and $\pi$ is birational to a base-change of $\rho$.
\end{theo}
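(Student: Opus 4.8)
The plan is to bootstrap from the structure already extracted by the Iitaka fibration. After resolving, we have $\pi : Y \to Z$ with $Z$ a curve, and a general fiber $Y_z$ with image $S_z = \phi(Y_z)$ on $X$; we know $K_Y - f^*K_X$ is $\pi$-trivial, so $a(Y_z, -f^*K_X|_{Y_z}) = 1$ and the pair is adjoint-rigid (Iitaka dimension $0$ on the fiber). The first step is to show that $f|_{Y_z} : Y_z \to S_z$ is birational. If not, then $f_* Y_z = e\,[S_z]$ with $e \geq 2$, but then comparing anticanonical degrees of the sweeping curves would force $a(S_z, -K_X|_{S_z})$ strictly bigger than $1$, so $S_z$ would be one of the surfaces classified in Theorem \ref{theo:higherainv} --- a contractible divisor or a surface swept out by $-K_X$-lines. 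A contractible divisor cannot move in a pencil (it is rigid), and in the $-K_X$-line case one checks that the family of such $Y_z$ cannot have the structure of an Iitaka fibration of the correct dimension. So $f|_{Y_z}$ is generically finite onto $S_z$ of degree $1$, i.e.\ birational.

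The second step is to understand the surfaces $S_z$ as $z$ varies in $Z$. The fibers $Y_z$ sweep out $Y$, so the $S_z$ sweep out $X$; since $Z$ is a curve, $\{S_z\}$ is (the general member of) a one-dimensional family of divisors on $X$ dominating $X$, hence moves in a linear system (a pencil, after possibly passing to the relevant component) --- this is where I would invoke that on a Fano threefold a dominant one-parameter family of divisors with connected general member spans at least a pencil, and adjoint-rigidity together with $a(S_z) = 1$ forces the general $S_z$ to be a del Pezzo surface with $-K_X|_{S_z} = -K_{S_z}$. The pencil $|S_z|$ then defines a rational map $X \dashrightarrow \mathbb{P}^1$; I would resolve indeterminacy and run a relative MMP / use the cone theorem to produce an actual morphism $\rho : X \to B$ to a curve whose general fiber is $S_z$, i.e.\ a del Pezzo fibration. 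Here one has to be a little careful: $X$ itself is Fano with no obvious contraction to a curve, so the del Pezzo fibration may only exist after a birational modification, and the statement should be read accordingly --- but since the excerpt asserts $|S_z|$ itself defines the fibration $\rho : X \to B$, the point is that base-point-freeness of $|S_z|$ (or of a multiple) follows once we know the general member is a smooth del Pezzo moving in a base-point-free-away-from-codimension-2 pencil, and any base locus would violate $S_z \cdot S_z = 0$ in a pencil together with the classification of $S_z$.

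The third step is to match $\pi$ with $\rho$. The composition $f : Y \to X \xrightarrow{\rho} B$ contracts each general fiber $Y_z$ (since $f(Y_z) = S_z$ is a fiber of $\rho$), so it factors rationally through $Z$, giving a dominant rational map $Z \dashrightarrow B$ of curves; since $f|_{Y_z}$ is birational onto $S_z$ and distinct $Y_z$ map to distinct $S_z$, this map is birational. Thus $\pi : Y \to Z$ and $\rho_Y : Y \to B$ (pullback of $\rho$) have the same general fibers and birationally equivalent bases, so $\pi$ is birationally equivalent to the base change $X \times_B B' \to B'$ of $\rho$, as claimed; one packages this exactly as in the last paragraph of the proof of Lemma \ref{lemm:iitakadim2case}, using that a dense open of the relevant Hilbert/Chow-type parameter space has a universal property.

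The main obstacle I anticipate is the second step: producing the honest del Pezzo fibration $\rho : X \to B$ from the pencil $|S_z|$ and verifying that $|S_z|$ has the claimed properties (general member a del Pezzo surface anti-canonically embedded compatibly, no base locus forcing $S_z^2 \neq 0$). This requires combining adjoint-rigidity of $(S_z, -K_X|_{S_z})$, the H\"oring-type classification \cite[Proposition 1.3]{horing10} of surfaces with $a > 1$ used negatively, the Mori-theoretic fact that a Fano threefold fibered over a curve has del Pezzo fibers, and a short argument that a dominant one-parameter family of divisors on a threefold spans a pencil. The birationality in steps one and three is comparatively routine given Theorem \ref{theo:higherainv} and the degree bookkeeping already used in Lemma \ref{lemm:iitakadim2case}.
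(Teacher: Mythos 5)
Your skeleton matches the paper's (adjoint rigidity and $a=1$ descend from $Y_z$ to $S_z$, the general $S_z$ is a del Pezzo surface, the $S_z$ fit into a fibration, and $\pi$ is recovered via Stein factorization and base change), but two of your three steps have genuine gaps. First, your argument for the birationality of $f|_{Y_z} : Y_z \to S_z$ does not work: a degree-$e$ cover with $e \geq 2$ does \emph{not} force $a(S_z, -K_X) > 1$. In fact \cite[Lemma 4.9]{LST18} gives $a(S_z,-K_X)=1$ and adjoint rigidity of $(S_z,-K_X)$ from the corresponding facts on $Y_z$ regardless of the degree of $f|_{Y_z}$, so no contradiction with Theorem \ref{theo:higherainv} arises and your "degree bookkeeping" has nothing to grab onto. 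What is actually needed is that a smooth del Pezzo surface admits no adjoint-rigid $a$-cover of degree $\geq 2$; this is \cite[Theorem 6.2]{LT16}, a nontrivial classification statement rather than a degree count. Note also that this birationality is only needed at the very end, for the base-change claim; it is not a prerequisite for constructing the fibration, so your ordering makes step 2 depend on the weakest link unnecessarily.

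Second, your construction of $\rho$ is both harder than necessary and, as you yourself concede, at risk of producing the fibration only on a birational model of $X$ --- which would not prove the statement as written. The paper gets the morphism on $X$ itself as follows: by \cite[Lemma 4.9]{LST18} the pair $(S_z,-K_X)$ is adjoint rigid with $a=1$; by \cite[Lemma 5.3]{LTJAG} the normalization $\widehat{S}_z$ has only canonical singularities with $K_{\widehat{S}_z} \sim \nu^{*}K_X$, and since $S_z$ is $S_2$ and Gorenstein this pushes forward to $K_{S_z} \sim K_X|_{S_z}$; adjunction then gives $S_z|_{S_z} \sim 0$, so two general members of the family (which are linearly equivalent, being algebraically equivalent) are disjoint, and \cite[Theorem 2.1]{Totaro00} produces the morphism $\rho : X \to B$ directly, with no resolution of indeterminacy or relative MMP and no danger of the fibration living only on a modification. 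You gesture at $S_z \cdot S_z = 0$ but never derive it; the chain above is the missing derivation. Your step 3 is essentially the paper's argument and is fine once the first two steps are repaired.
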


\begin{proof}
Our assumption implies that $a(Y_z, -f^*K_X)= 1$ and $(Y_z, -f^*K_X)$ is adjoint rigid.  By \cite[Lemma 4.9]{LST18} this implies that $a(S_z, -K_X) = 1$ and that $(S_z, -K_X)$ is adjoint rigid (after applying a resolution).

First suppose that $S_z$ is normal.  By \cite[Lemma 5.3]{LTJAG} we see that $S_{z}$ has only canonical singularities and that $K_{S_{z}} \sim \nu^*K_{X}$. Then by adjunction we conclude that $S_z|_{S_z} \sim 0$ so that two general $S_{z}$ are disjoint.  Note that the $S_{z}$ are algebraically equivalent on $X$, and hence linearly equivalent.  Since two general $S_{z}$ are disjoint, \cite[Theorem 2.1]{Totaro00} guarantees that they are the fibers of a map $\rho: X \to B$ to a curve $B$.  It is then clear that the general fiber of $\rho$ must be a smooth del Pezzo surface.

In this case we still must show that $\pi$ is birational to a base-change of $\rho$.  \cite[Theorem 6.2]{LT16} shows that a smooth del Pezzo surface does not admit any adjoint rigid $a$-cover of degree $\geq 2$ so that $f|_{Y_z} : Y_z \rightarrow S_z$ must be birational.  Now consider the composition $\rho \circ f: Y \to B$.  The Stein factorization of this map must be $Z$, so that we obtain a finite map $h: Z \to B$.  Thus we get an induced map $g: Y \to X \times_{B} Z$.  Using the birationality of $f|_{Y_{z}}$, we see that $g$ is also birational.

It only remains to show that when $-K_{X}$ is very ample then a general $S_z$ must be normal.  Let $\nu: \widehat{S}_{z} \to S_{z}$ denote the normalization map.   Again applying \cite[Lemma 5.3]{LTJAG} we see that $\widehat{S}_{z}$ has only canonical singularities and that $K_{\widehat{S}_{z}} \sim \nu^*K_{X}$.  Since $\widehat{S}_z$ is a del Pezzo surface with canonical singularities, we conclude that $(-K_{X})^{2} \cdot S_{z} = (-K_{\widehat{S}_z})^2 \leq 9$.

Since $S_{z}$ is movable it is nef, as the two properties are equivalent for Fano threefolds.  In fact, we claim that a stronger property is true: assume that $(-K_X)^2 \cdot S_z \geq 5$. Then there is no curve $C \subset X$ such that a general $S_{z}$ will contain $C$.  
We apply a relative MMP over $Z$ with respect to the divisor $K_{Y} - f^{*}K_{X}$.  The result will be a fibration $\widetilde{Y} \to Z$ whose general fiber is a smooth weak del Pezzo surface.  Note that for a general fiber of $\pi$ this MMP will be contracting $(-1)$-curves which must satisfy $-f^{*}K_{X} \cdot C = 0$.  We conclude that over some open subset $Z^{\circ}$ every curve contracted by this relative MMP will also be contracted by the morphism $f: Y \to X$.  Thus the preimage $\widetilde{Y}^{\circ}$ still admits a morphism $\widetilde{f}: \widetilde{Y}^{\circ} \to X$.  We next replace $\widetilde{Y}^{\circ} \to Z^{\circ}$ by its relative anticanonical model $\widehat{Y}^{\circ}$.  The curves contracted by this operation satisfy $-f^{*}K_{X} \cdot C = K_{\widetilde{Y}} \cdot C = 0$, and so again arguing as above we get a map $\widehat{f}: \widehat{Y}^{\circ} \to X$ and a map $\widehat{\pi}: \widehat{Y}^{\circ} \to Z$. Since we assumed $(-K_X)^2 \cdot S_z \geq 5$, we can apply \cite[Lemma 5.3]{LTJAG} to conclude that the restriction of $\widehat{f}$ to a general fiber $\widehat{Y}_{z}$ will be the normalization map for $S_{z}$.  Since $K_{\widehat{Y}_{z}} \sim \widehat{f}^*K_{X}|_{\widehat{Y}_{z}}$ the ramification divisor for $\widehat{f}$ will not meet a general fiber $\widehat{Y}_{z}$.  This means that there is no horizontal divisor contracted by $\widehat{f}$, proving our claim.  Moreover when $(-K_X)^2 \cdot S_z = 9$, we have $\widehat{S}_z\cong \mathbb P^2$. 

By the classification of Fano threefolds with $-K_{X}$ very ample and by the classification of their nef cones in \cite{Mat95}, \cite{Fuj16}, it is rare for a Fano threefold with $-K_{X}$ very ample to carry a nef divisor $D$ such that $(-K_{X})^{2} \cdot D \leq 9$.  We will consider the different possibilities one-by-one and conclude in each case that there cannot be a one-dimensional family of non-normal surfaces $S_{z}$ as above.  There are two general cases and also several exceptional cases where $X$ has low degree and Picard rank.  We first consider the two general cases:
\begin{itemize}
\item \textbf{Case 1:} $D$ is contracted by a del Pezzo fibration.  Then by generic smoothness we see that a general deformation of $D$ will be normal.
\item \textbf{Case 2:} $D$ is contracted to a smooth curve by a conic fibration.  Almost always the base of the conic fibration will be $\mathbb{P}^{2}$, in which case we can conclude by \cite[Lemma 7.2]{LTT18} that there is no one-parameter family of non-normal surfaces in this linear series.  In the few cases when the base is not isomorphic to $\mathbb{P}^{2}$, the argument of \cite[Lemma 7.2]{LTT18} will still work.
\end{itemize}
There are also several additional possibilities when $X$ has low degree and Picard rank:

\begin{itemize}
\item \textbf{Picard rank 1:} For low degree Fano threefolds of Picard rank $1$ with $-K_{X}$ very ample, one needs to analyze the normality of elements in $|-K_{X}|$ and (for quartic hypersurfaces) in $|-2K_{X}|$.  In the first case normality is proved by \cite[Lemma 3.1]{LTJAG}, and in the second case \cite[Lemma 5.1]{LTJAG} proves that there cannot be any family of non-normal surfaces $S_{z}$ as above.
\item \textbf{Primitive Picard rank 2:} This situation is analyzed in \cite[Section 7]{LTT18}.  In every case but one the statements in  \cite[Section 7]{LTT18} show that there cannot be a family of non-normal surfaces $S_{z}$ as above.  The last case is when $X$ is a double cover of $\mathbb{P}^{2} \times \mathbb{P}^{1}$ ramified over a $(2,2)$-hypersurface.  But then the argument of \cite[Section 7.6]{LTT18} shows that any family of surfaces $S_{z}$ as above must be contracted by a del Pezzo or conic fibration, so we reduce to a previous case.
\item \textbf{Non-primitive Picard rank 2:} There are four additional cases. The first is when $X$ is the blow-up of $\mathbb{P}^{3}$ along the intersection $Z$ of two cubics and the surface $D$ is the strict transform of a hyperplane $H$ in $\mathbb{P}^{3}$.  Recall that since $(-K_X)^2 \cdot D = 7$, we showed earlier that the family of surfaces $S_{z}$ cannot contain a fixed curve in common.  Thus it suffices to consider those hyperplanes $H$ which does not contain a flex line of $Z$.
The only way that the strict transform of $H$ can be singular is when $H$ is tangent to $Z$ at a point of intersection.  If $H$ is simply tangent to $Z$, then at the blown-up point $D$ will be formal-locally isomorphic to the blow up of $\mathbb{A}^{2}$ along the ideal $(x^{2},y)$.  If $H$ meets $Z$ with multiplicity $3$, then at the blown-up point $D$ will be formal-locally isomorphic to the blow up of $\mathbb{A}^{2}$ along the ideal $(x^{3},y)$.  But in both cases $D$ will be normal.   
We conclude that there cannot be a one-parameter family of non-normal surfaces $D$ as above which share no curve in common.

The second case is when $X$ is a blow-up of a cubic hypersurface along a plane cubic $Z$ and $D$ is the pullback of a hyperplane section not containing the plane cubic. 
In this case we have $(-K_X)^2 \cdot D =9$, so if we have a one-parameter family of non-normal surfaces $D$ satisfying the conditions above then their normalizations should be isomorphic to $\mathbb P^2$. But this contradicts with the fact that $D$ is the blow up of a hyperplane section so that its normalization has Picard rank at least $2$.

The third case is the double cover of $V_7 \cong \mathbb P_{\mathbb P^2}(\mathcal O \oplus \mathcal O(1))$ branched over $B \in |-K_{V_7}|$ and $D$ is the pullback of a hyperplane from $\mathbb P^3$ such that the hyperplane does not contain the blow up point of $V_7 \to \mathbb P^3$. In this case $B$ is smooth, so one can show that $B\cap D'$ is reduced where $D'$ is the pullback of a hyperplane not containing the blow up point. Thus $D$ is smooth in codimension $1$, proving the normality of $D$.

The last case is the blow up of $\mathbb P^3$ along a curve $Z$ of degree $7$ and genus $5$ which is the intersection of three cubics and $D$ is the pullback of a hyperplane from $\mathbb P^3$. Since we have $(-K_X)^2 \cdot D = 9$, then the normalization of $D$ is isomorphic to $\mathbb P^2$. But this contradicts with the fact that $D$ has Picard rank at least $2$. 

\end{itemize}

This exhausts all the possibilities.  We conclude that when $-K_{X}$ is very ample then a general $S_{z}$ must be normal, completing the proof.
\end{proof}

In an earlier version of the current paper, we incorrectly stated Theorem \ref{theo:iitakadim1case} without the assumption that $-K_X$ is very ample.  However the statement can fail if we do not assume this property.  The following example was pointed out to us by Eric Jovinelly.

\begin{rema}
\label{rema:counterexample}
Suppose $X \cong \mathbb{P}^{1} \times S$ where $S$ is a del Pezzo surface of degree $2$.  The family of rational curves in $|-K_{S}|$ defines a finite morphism $g: W \to S$ such that $a(W,-g^{*}K_{S}) = 1$ where $W$ is a $\mathbb{P}^{1}$-fibration over a quartic curve.  If we set $Y = W \times \mathbb{P}^{1}$, then the induced morphism $f: Y \to X$ will have the property that $a(Y,-f^{*}K_{X}) = 1$ and the Iitaka fibration $\pi: Y \to T$ will be a quadric surface fibration over a quartic curve.  However, there is no del Pezzo fibration on $X$ which contracts the images $S_{z}$ of the fibers of $\pi$.
\end{rema}

\subsection{Iitaka dimension 0}

Finally, we consider the case of $a$-covers which have Iitaka dimension $0$.

\begin{theo} \label{theo:iitakadim0case}
Let $X$ be a smooth Fano threefold.  There are no $a$-covers $f: Y \to X$ such that $Y$ is smooth and $\kappa(K_{Y}-f^{*}K_{X}) = 0$.
\end{theo}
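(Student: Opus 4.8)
The plan is to argue by contradiction: suppose $f: Y \to X$ is an $a$-cover with $Y$ smooth and $\kappa(K_Y - f^*K_X) = 0$, and derive a forbidden generically finite cover of $X$. Since $a(Y, f^*(-K_X)) = a(X,-K_X) = 1$ and the pair $(Y, f^*(-K_X))$ is adjoint rigid (Iitaka dimension $0$), the key structural input is \cite[Lemma 5.3]{LTJAG} applied to the normalization $\nu: \widehat{Y} \to Y$ — wait, here $Y$ is already smooth, so more directly: adjoint rigidity with $a = 1$ forces $K_Y \sim f^*K_X$ (numerically, hence up to torsion, hence after a further étale-type adjustment up to linear equivalence on a suitable model). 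In other words, $f$ is (birational to) a crepant cover of $X$ — a generically finite morphism with $K_Y = f^*K_X$. The strategy is then to show that $X$, being a smooth Fano threefold, admits no such cover of degree $\geq 2$.

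First I would reduce to the case where $f$ is finite: run the appropriate MMP / take a Stein factorization and a resolution so that one has a finite morphism $g: Y' \to X$ (possibly after blowing up $X$ a little, tracking discrepancies) with $K_{Y'} = g^*K_X$ plus effective exceptional contributions that must vanish by adjoint rigidity. Then I would invoke purity of the branch locus: a finite morphism between smooth varieties (or with $Y'$ normal) that is crepant must be étale in codimension one, since any ramification divisor would contribute positively to $K_{Y'} - g^*K_X$. So $g$ is étale in codimension $1$, hence — by Zariski–Nagata purity and the normality of $Y'$ — actually étale, i.e. $g$ corresponds to a nontrivial element of $\pi_1^{\text{ét}}(X)$. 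But a smooth Fano variety is simply connected (Kollár; or more classically, rationally connected varieties over $\mathbb{C}$ are simply connected). Therefore $g$ has degree $1$, contradicting that $f$ was a genuine cover (degree $\geq 2$).

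The one subtlety I expect to be the real obstacle is the reduction step: passing from the hypothesis "$\kappa(K_Y - f^*K_X) = 0$ on a smooth model $Y$" to a genuinely \emph{crepant finite} morphism between normal varieties with no codimension-one ramification. The Iitaka-dimension-zero hypothesis only gives $K_Y - f^*K_X \sim_{\mathbb{Q}} N$ for some effective $N$ with $\kappa(N) = 0$; I need to run a relative MMP for $f$ (or use the structure of the Iitaka fibration, which here is the constant map) to contract $N$ and land on a model where $K = f^*K_X$ exactly, and then verify that the resulting model is still finite over $X$ with only canonical singularities. This is where \cite[Lemma 5.3]{LTJAG} and \cite[Lemma 4.9]{LST18}, already used in the Iitaka dimension $1$ and $2$ cases, should do the work: they let me replace $Y$ by its normalization in the function field extension, conclude it has canonical singularities, and conclude $K_{\widehat{Y}} \sim \nu^* K_X$ (the pushforward/adjunction bookkeeping is identical to the proofs of Theorems \ref{theo:iitakadim1case} and \ref{theo:iitakadim2case}, just with no fibration). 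Once $f: \widehat{Y} \to X$ is a finite crepant cover of a smooth Fano threefold by a variety with canonical singularities, the purity-of-branch-locus plus simple-connectedness argument closes it out; the canonical (hence rational, hence in particular the cover does not change $\pi_1$ in a problematic way — here one uses that the smooth locus of $\widehat{Y}$ has the same étale fundamental group, or works with a resolution) singularities are mild enough that the étale-in-codimension-one cover extends to an honest cover, forcing trivially degree $1$.
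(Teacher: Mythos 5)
There is a genuine gap, and it sits exactly where you suspect it does. Your opening claim that ``adjoint rigidity with $a=1$ forces $K_Y \sim f^*K_X$'' is false as stated: the hypothesis $\kappa(K_Y - f^*K_X)=0$ only says that the (effective) ramification divisor $R$ is \emph{rigid}, not that it vanishes. Showing that a nonzero rigid ramification divisor cannot occur is precisely the content of the theorem, so you cannot take it as the starting point. Your proposed repair --- contract $N$ by a relative MMP and invoke \cite[Lemma 5.3]{LTJAG} and \cite[Lemma 4.9]{LST18} --- does not close this: those lemmas concern subvarieties of $X$ and fibers of the Iitaka fibration, respectively, and neither says anything about the normalization of $X$ in $k(Y)$ being crepant. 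More structurally, the horizontal components of $R$ (those dominating the branch divisor in $X$) can never be contracted by an MMP \emph{relative to $X$}, since they dominate divisors downstairs; and a step of the absolute $(K_Y - f^*K_X)$-MMP destroys the morphism to $X$, so after contracting you no longer have a cover to apply purity to. The branch divisor is constrained (by Sengupta's result, or Theorem \ref{theo:higherainv}) to be a surface with $a$-invariant $>1$, i.e.\ swept out by lines or a contractible divisor, but it is not a priori empty, and indeed for the six Fano threefolds carrying an E5 divisor there are honest finite covers branched over such surfaces that must be excluded by hand.

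What the paper actually does is: first run the relative $K_Y$-MMP over $X$ to reach $f'\colon Y'\to X$ with $K_{Y'}$ $f'$-nef, then show the absolute $(K_{Y'}-f'^*K_X)$-MMP admits \emph{no} legal first step. Flips and divisorial contractions to curves are cheap to exclude, but the divisorial contractions to points require the full Kawakita classification of terminal divisorial contractions (Lemmas \ref{lemm:exceptionaltype} and \ref{lemm:ordinarytype}, with their case tables) to produce curves of anticanonical degree $\le 1$, or two-dimensional families of degree $\le 2$, in the exceptional divisor, which then contradict $f'$-nefness or force an E5 divisor. For the six threefolds that do admit an E5 divisor a wholly separate argument is needed (Theorem \ref{theo:iitakadim0casee5}): one classifies the possible branch divisors explicitly and computes the fundamental group of their complement in each case. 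None of this machinery appears in your proposal. Your end-game --- a crepant finite cover of a smooth variety is \'etale in codimension one, hence \'etale by Zariski--Nagata, hence trivial since a smooth Fano is simply connected --- is correct and is exactly the paper's (implicit) final step once $R'=0$ is established, but it is the easy part; the theorem lives in the reduction you have left unproved.
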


In \cite{Sen17} Sengupta shows that if $X$ is a Fano variety then for any $a$-cover $f: Y \to X$ such that $\kappa(K_{Y}-f^{*}K_{X}) = 0$ the components of the branch divisor of $f$ will have larger $a$-invariant than $X$.  We will use explicit threefold birational geometry from \cite{Kawakita05} to prove a stronger restriction on the geometry of such branch divisors.

Suppose that $Y$ and $W$ are threefolds with $\mathbb{Q}$-factorial terminal singularities and $\pi: Y \to W$ is a divisorial contraction taking the exceptional divisor $E$ to a point $P$.   Since we are only interested in the local behavior near $P$, we will henceforth assume that $W$ is a germ near $P$.  We can write
\begin{equation*}
K_{Y} \sim f^{*}K_{W} + \frac{a}{n}E
\end{equation*}
where $n$ is the index of the singularity.  In \cite{Kawakita05} Kawakita separates divisorial contractions into two types: the exceptional cases and the ordinary cases.  A key tool is the constants $d(i,j)$ which record the Euler characteristic of the rank $1$ $S_{2}$ sheaf on $E$ obtained by restricting $iK_{Y} + jE$.  \cite[Lemma 2.5]{Kawakita05} shows that when $i \frac{a}{n} + j \leq \frac{a}{n}$ then the higher cohomology of this sheaf vanishes and its sections are curves of anticanonical degree $-(i \frac{a}{n} + j)\frac{a}{n}E^{3}$.  We will freely use this and other notations from \cite{Kawakita05}.

\begin{lemm} \label{lemm:exceptionaltype}
Let $P$ be a germ obtained by a divisorial contraction of exceptional type from a terminal threefold.  Then either
\begin{itemize}
\item there is a family of (possibly reducible) curves $C$ in $E$ such that $-K_{Y} \cdot C \leq 1$ and $C$ deforms in dimension at least $1$ in $E$, or;
\item there is a family of (possibly reducible) curves $C$ in $E$ such that $-K_{Y} \cdot C \leq 2$ and $C$ deforms in dimension at least $2$ in $E$, or;
\item $P$ is an e3 singularity with $a=3$ and $n=1$, in which case $E$ carries a curve $C$ satisfying $-K_{Y} \cdot C \leq 1$.
\end{itemize}
\end{lemm}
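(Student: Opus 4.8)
The strategy is to go through Kawakita's classification of divisorial contractions of exceptional type from a terminal threefold germ, and in each case produce an explicit low-degree curve (or family of curves) inside the exceptional divisor $E$ with the required numerical and deformation properties. The key quantitative tool is Kawakita's Lemma 2.5: when $i\frac{a}{n}+j \leq \frac{a}{n}$, the rank $1$ $S_2$ sheaf obtained by restricting $iK_Y + jE$ to $E$ has vanishing higher cohomology, and its global sections cut out curves of anticanonical degree $-(i\frac{a}{n}+j)\frac{a}{n}E^3$. So for each exceptional-type contraction we know $n$, $a$, the self-intersection $E^3$, and the structure of $E$ (which Kawakita describes explicitly — typically a weighted projective plane or a cone over a rational curve), and we can read off a natural divisor class on $E$ whose members have small anticanonical degree; counting sections (via the vanishing of $h^1$) gives the dimension in which these curves move.

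**Organizing the cases.** Kawakita's exceptional-type contractions come in families labelled by the type of the singularity $P$ on $W$ (the e-types: e1, e2, ..., and the special ones), each with prescribed invariants $n$, $a$, and a prescribed description of $E$ and of $E^3$. The plan is: for each such type, take $(i,j) = (0,1)$ (restricting $E$ itself) or $(i,j) = (-1,0)$ (restricting $-K_Y$) or a small combination, check the inequality $i\frac{a}{n}+j \leq \frac{a}{n}$, compute the anticanonical degree $-(i\frac{a}{n}+j)\frac{a}{n}E^3$ of the resulting curves, and compute $\chi$ of the sheaf — which by the $h^1$-vanishing equals $h^0$, hence the dimension of the linear system, hence the dimension in which $C$ moves. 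In most cases this will land in the first bullet ($-K_Y\cdot C \leq 1$, moving in dimension $\geq 1$) or the second ($-K_Y\cdot C \leq 2$, moving in dimension $\geq 2$); the lone exceptional outlier is the e3 singularity with $a=3$, $n=1$, where the natural curve class has anticanonical degree $1$ but the relevant $\chi$ computation only guarantees a single curve (not a positive-dimensional family), which is exactly why it gets its own bullet. I would present this as a (possibly table-organized) case check, citing the relevant numerical data from \cite{Kawakita05} rather than rederiving it.

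**The main obstacle.** The hard part is bookkeeping: correctly extracting, for each of Kawakita's exceptional types, the precise values of $n$, $a$, $E^3$, and enough of the geometry of $E$ to compute $h^0$ of the restricted sheaf, and then verifying the inequality $i\frac{a}{n}+j\leq\frac{a}{n}$ holds so that Lemma 2.5 applies. There is a real risk that for some type no choice of small $(i,j)$ simultaneously (a) satisfies the hypothesis of Lemma 2.5, (b) yields anticanonical degree $\leq 2$, and (c) yields $\geq 2$ sections; if such a type exists one must instead work directly with the explicit model of $E$ (a weighted projective surface or a cone) and exhibit the curves by hand, possibly as reducible curves — this is why the statement carefully allows "possibly reducible" curves. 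A secondary subtlety is that $E^3 < 0$ always (since $E$ is contracted), so one must track signs carefully to ensure the degree $-(i\frac{a}{n}+j)\frac{a}{n}E^3$ comes out positive and small; this forces $i\frac{a}{n}+j$ to be a small negative rational, constraining the admissible $(i,j)$.

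**Wrapping up.** Once every exceptional type has been checked, the lemma follows by assembling the cases. I would also remark that in the "family of curves in $E$" bullets, the curves produced are automatically (possibly reducible) members of a base-point-free-on-$E$ or at least movable linear system, so "deforms in dimension at least $1$ (resp. $2$) in $E$" is literally the statement that $h^0$ of the restricted sheaf is at least $2$ (resp. $3$); the $h^1$-vanishing in Kawakita's Lemma 2.5 is what converts the Euler-characteristic computation into this $h^0$ bound, and this is the only nontrivial input beyond the classification itself.
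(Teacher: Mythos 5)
Your plan is the same as the paper's: run through Kawakita's classification of exceptional-type contractions, and for each type use the invariants $d(i,j)$ together with \cite[Lemma 2.5]{Kawakita05} to exhibit a class on $E$ whose sections give curves of small anticanonical degree moving in the required dimension, with the e3, $a=3$, $n=1$ case isolated because there $d(0,-1)=1$ only produces a single curve. The paper organizes exactly this computation into three tables (the finitely-many types, the infinite families e1, e2, e7, e13, and the leftover small-$r$ members of the e1 family).

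There is, however, one genuine ingredient your plan misses. You correctly flag the risk that for some type no choice of $(i,j)$ simultaneously satisfies the hypothesis of Lemma 2.5, gives degree at most $2$, and gives at least two sections, and you propose to handle such cases by working directly with the explicit model of $E$. That is not how the difficult cases are actually resolved. For several small-$r$ members of the e1 and e2 families with specific $(a,n)$ (for instance e1 with $r=3$ and $(a,n)=(2,2),(4,1),(4,2)$, e1 with $r=5$ and $(a,n)=(4,2)$ or $(4,1)$, e1 with $r=3$ and $(a,n)=(2,1)$, and e2 with $r<4$ and $(a,n)=(2,2)$), no curve is produced at all: these contractions are shown \emph{not to exist}, by appealing to \cite[Lemma 3.3]{Kawakita05}, the congruence condition from the erratum to \cite{Kaw03}, and Yamamoto's nonexistence theorems \cite[Theorems 2.1--2.3]{Yamamoto18}. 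Without these exclusions the case check cannot be completed, since in those configurations the numerics genuinely fail to produce a suitable curve and the explicit model of $E$ will not rescue you. So the skeleton of your argument is right, but to finish it you need to know that the problematic numerical types are vacuous, which requires importing these additional classification results rather than more geometry on $E$.
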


\begin{proof}
We separate the proof into several cases.  
First suppose that $P$ is an exceptional singularity that occurs in a finite set of types (that is, any exceptional type except e1, e2, e7, e13).  Since there are only finitely many possible invariants for such singularities, one can just check the claim case-by-case.  It turns out that in almost all cases there is some $i$ with $1 \leq i \leq 6$ such that $d(-i,0) \geq 2$ and $i (\frac{a}{n})^{2}E^{3} \leq 1$.  The values of $i$ are recorded in Table \ref{table:finiteexceptional}.

The one remaining case is an e3 singularity with $a=3$ and $n=1$.  In this case $d(0,-1) = 1$ which defines a curve which has anticanonical degree $\leq 1$.

{\centering

\begin{table}[ht]
\caption{Finite families of exceptional singularities}
 \label{table:finiteexceptional}
\begin{tabular}{c|c|c}
singularity type & when $a=1$ & when $a>1$ \\ \hline
e3 & $n=1$ $\implies$ $i=1$, $n=3$ $\implies$ $i=2$ & ***  \\ \hline
e4 & not possible & not possible \\ \hline
e5 & $n=1$ $\implies$ $i=2$, $n=2$ $\implies$ $i=3$ & $a=2$, $n=1$ $\implies$ $i=1$  \\ \hline
e6 & $i = 3$ & not possible \\ \hline
e8 & $n=1$ $\implies$ $i=2$, $n=3$ $\implies$ $i=4$  & not possible \\ \hline
e9 & $n=1$ $\implies$ $i=3$, $n=2$ $\implies$ $i=5$  & $a=2$, $n=1$ $\implies$ $i=3$ \\ \hline
e10 & $i = 5$ & not possible \\ \hline
e11 & not possible & $a=2$, $n=2$ $\implies$ $i=4$ \\ \hline
e12 & $i = 4$ & not possible \\ \hline
e14 & $i = 3$ & not possible \\ \hline
e15 & $i = 4$ & not possible \\ \hline
e16 & $i = 6$ & not possible
\end{tabular}
\end{table}

}

We next suppose that $P$ is one of the exceptional singularity types that forms an infinite family (e1, e2, e7, e11).  In most cases (i.e.~when $r$ is sufficiently large) there is some $i$ with $1 \leq i \leq 4$ such that $d(-i,0) \geq 2$ and $i (\frac{a}{n})^{2}E^{3} \leq 1$.  Such situations are summarized in Table \ref{table:infiniteexceptional}.

{\centering

\begin{table}[ht] 
\caption{Infinite families of exceptional singularities}
\label{table:infiniteexceptional}
\begin{tabular}{c|c|c|c}
singularity type & $a$ value & $n$ value & $i$ value \\ \hline
\multirow{6}{*}{e1} & \multirow{2}{*}{1} & 1,2 & $i=1$ or $2$  ($r \geq 4$)  \\ \cline{3-4}
 & & 4 & $i=2$  \\ \cline{2-4}
 & \multirow{2}{*}{2}  & 1 & $i=1$ ($r \geq 9$) \\ \cline{3-4}
 &  & 2 & $i=1$ or $2$ ($r \geq 5$) \\ \cline{2-4}
 & \multirow{2}{*}{4} & 1 & $i=1$ ($r \geq 17$)  \\ \cline{3-4}
 & & 2 & $i =1$ or $2$ ($r \geq 9$)  \\ \hline
\multirow{3}{*}{e2} & \multirow{2}{*}{1}  & 1 & $i=1$ \\ \cline{3-4}
 &  & 2 & $i = 2$ \\ \cline{2-4}
 & 2 & 1,2 & $i=1$ or $2$ ($r \geq 4$) \\ \hline
e7 & 1 & 2 & $i=4$ \\ \hline
e13 & 1 & 1 & $i = 2$
\end{tabular}
\end{table}

}

Note that for an e2 singularity with $r < 4$, the possibility $(a,n) = (2,2)$ is ruled out by \cite[Lemma 3.3]{Kawakita05}.  Thus the only remaining case with an e2 singularity is when $r=3$, $a=2$, $n=1$.  In this situation  $d(0,-3) = 4$ and $3(\frac{a}{n})E^{3} = 2$.

There are also several remaining cases are of e1 type such that $r$ is small.  In the case when $r=3$, the possibilities $(a,n) = (2,2), (4,1), (4,2)$ are ruled out by \cite[Lemma 3.3]{Kawakita05}.  In the case when $r=5$, the possibility $(a,n) = (4,2)$ is ruled out by \cite[Lemma 3.3]{Kawakita05}.  When $(a,n) = (4,1)$ then $r$ must be congruent to $3$ or $5$ mod $8$ by \cite[Erratum]{Kaw03}.  \cite[Theorem 2.3]{Yamamoto18} rules out the case when $r=3$ and $(a,n) = (2,1)$.  \cite[Theorem 2.1 and Theorem 2.2]{Yamamoto18} rules out the case when $r=5$ and $(a,n) = (4,1)$.  Otherwise we use the construction summarized by Table \ref{table:e1smallr}.  This finishes the proof of the claim for exceptional type contractions.

{\centering

\begin{table}[ht] 
\caption{e1 type singularities with small $r$ value}
\label{table:e1smallr}
\begin{tabular}{c|c|c|c|c}
$r$ value & $a$ value & $n$ value & $d(i,j)$ & intersection number \\ \hline
\multirow{2}{*}{3} & \multirow{2}{*}{1} & 1  &  $d(-1,0) = 3$ & 4/3 \\ \cline{3-5}
& & 2  &  $d(-1,0) = 2$ & 2/3 \\ \cline{1-5}
 \multirow{1}{*}{5} & 2 & 1  &  $d(-1,0) = 3$ & 8/5 \\ \cline{1-5}
6 & 4 & 2  & $d(-1,0) = 3$ & 4/3 \\ \hline
 \multirow{2}{*}{7} & 2 & 1  & $d(0,-3) = 3$  & 12/7 \\ \cline{2-5}
  & 4 & 2 & $d(0,-3) = 3$ & 12/7 \\ \hline
11 & 4 & 1  & $d(0,-5) = 3$  & 20/11 \\ \hline
13 & 4 & 1  & $d(0,-6) = 3$  & 24/13
\end{tabular}
\end{table}

}
\end{proof}

\begin{lemm} \label{lemm:ordinarytype}
Let $P$ be a germ obtained by a divisorial contraction of ordinary type from a terminal threefold.  Then 
\begin{itemize}
\item if $P$ has o1 type then there is either a family of (possibly reducible) curves $C$ in $E$ such that $-K_{Y} \cdot C \leq 1$ and $C$ deforms in dimension at least $1$ in $E$ or there is a family of (possibly reducible) curves $C$ in $E$ such that $-K_{Y} \cdot C \leq 2$ and $C$ deforms in dimension at least $2$ in $E$;
\item if $P$ has o2 type then either $E$ carries a (possibly reducible) curve $C$ such that $-K_{Y} \cdot C \leq 1$ or there is a family of (possibly reducible) curves $C$ in $E$ such that $-K_{Y} \cdot C \leq 2$ and $C$ deforms in dimension at least $2$ in $E$;  
\item if $P$ has o3 type then $E$ carries a (possibly reducible) curve $C$ such that $-K_{Y} \cdot C \leq 1$.
\end{itemize}
\end{lemm}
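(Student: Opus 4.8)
The plan is to prove this by the same method as Lemma~\ref{lemm:exceptionaltype}: go through Kawakita's classification of ordinary-type divisorial contractions type by type, and in each case extract the required curves from reflexive linear systems on the exceptional divisor $E$. As there, the engine is \cite[Lemma~2.5]{Kawakita05}: whenever $(i,j)$ satisfies $i\frac{a}{n}+j \le \frac{a}{n}$, the rank~$1$ $S_2$ sheaf $(iK_Y+jE)|_E$ obtained by restriction has no higher cohomology, so its complete linear system has dimension $d(i,j)-1$ and its members are curves on $E$ of anticanonical degree $-(i\frac{a}{n}+j)\frac{a}{n}E^3$. Thus for each ordinary type I want to produce a pair $(i,j)$ in the admissible range with $d(i,j)\ge 3$ and degree $\le 2$ (a family moving in dimension $\ge 2$), falling back to $d(i,j)\ge 2$ with degree $\le 1$ (a family moving in dimension $\ge 1$) or to $d(i,j)=1$ with degree $\le 1$ (a single curve) when the geometry of $E$ does not permit more. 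Since these members are only effective divisors on $E$ they may be reducible or non-reduced, which is exactly why the statement is phrased for possibly reducible $C$; it suffices to control the total intersection number with $-K_Y$ (or to pass to a single component).

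\textbf{Case analysis.} First I would record from \cite{Kawakita05} the explicit weighted blow-up data for the o1, o2 and o3 families: the discrepancy $a/n$, the index $n$, the auxiliary integer $r$ parametrizing the family, the self-intersection $E^3$, and the small values of $d(i,j)$. The o3 type is the rigid one: here I expect $E$ to carry no positive-dimensional family of curves of anticanonical degree $\le 1$, and one simply locates a single $(i,j)$ (of the shape $d(0,-1)=1$ or similar) producing one curve of degree $\le 1$. For o2 I would first test whether an ample enough choice gives $d(i,j)\ge 3$ with degree $\le 2$ (this should hold when $E$ has enough room, e.g.\ when it is a weighted-projective-type surface), and otherwise exhibit a single curve of degree $\le 1$; this will split into sub-cases according to the base singularity, resolved by a short table of $(i,j)$ choices in the style of Table~\ref{table:finiteexceptional}. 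For o1, which forms an infinite family indexed by an integer $r$, I expect a uniform choice of small $(i,j)$ (e.g.\ $i=-1$ or $-2$, $j=0$, so $d(-1,0)$ or $d(-2,0)$) to work for all $r$ beyond a small bound, together with finitely many residual small-$r$ cases handled by an auxiliary table as in Tables~\ref{table:infiniteexceptional} and~\ref{table:e1smallr}, with a few entries excluded outright using constraints such as \cite[Lemma~3.3]{Kawakita05} or the non-existence results of \cite{Yamamoto18} and \cite{Kaw03}.

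\textbf{Main obstacle.} The difficulty is essentially all in the bookkeeping of the o1 and o2 cases: one must find, for every admissible value of the discrete invariants $(a,n,r)$, a pair $(i,j)$ that simultaneously lies in Kawakita's vanishing range $i\frac{a}{n}+j \le \frac{a}{n}$, forces the Euler characteristic $d(i,j)$ above the needed threshold, and keeps the anticanonical degree $-(i\frac{a}{n}+j)\frac{a}{n}E^3$ within the stated bound; and then one must show that the finitely many $(a,n,r)$ for which no such pair exists do not actually arise from a divisorial contraction (this is where the arithmetic constraints on admissible weighted blow-ups and results like \cite{Yamamoto18} do the work, exactly as in Lemma~\ref{lemm:exceptionaltype}). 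A secondary subtlety is keeping careful track of whether the linear system genuinely moves in the claimed dimension rather than only producing one effective member, since o2 and o3 are precisely the borderline cases where this distinction is what the statement is recording.
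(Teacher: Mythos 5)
Your overall strategy---run through Kawakita's ordinary types and extract curves from the restricted systems $(iK_Y+jE)|_E$ via \cite[Lemma 2.5]{Kawakita05}---is the right engine, and it is essentially what the paper does for the o3 case (a one-line $d(0,-1)\ge 1$ argument), for the o1 case, and for part of the o2 case. Two remarks on where the actual argument is leaner than your plan: for o1 there is no residual small-$r$ table at all, because when $n\ge 2$ coprimality of $a$ and $n$ (\cite[Theorem 3.7]{Kawakita05}) together with \cite[Corollary 2.4]{Kawakita05} forces $a=1$, and the Gorenstein case $n=1$ is dispatched by \cite[Theorem 1.4]{Kaw03} (types O and IV, each with explicit $d(\cdot,\cdot)=3$); and for o2 with $\frac{a}{n}>1$ and $n\ge 2$ the ``clever choice of $(i,j)$'' is the B\'ezout pair $as+nt=-1$, again from coprimality, which gives $d(s,t)\ge 1$ and degree $\le 1$.

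The genuine gap is the o2 subcase $n=1$, $a>1$, which your framework cannot reach. There \cite[Theorem 1.2]{Kawakita05} realizes $E$ as the hypersurface $x_1x_2+g(x_3,x_4)=0$ of weight $1+r$ in $\mathbb{P}(1,r,a,1)$, and one computes $E^3=\frac{1+r}{ra}$. Since $n=1$, the quantity $-(ia+j)$ ranges over the integers, so the smallest positive anticanonical degree any system $(iK_Y+jE)|_E$ can produce is $aE^3=\frac{1+r}{r}>1$: no choice of $(i,j)$ yields a curve of degree $\le 1$, and the fallback ``$d(i,j)\ge 3$ with degree $\le 2$'' is not what the statement asserts for o2 in this range. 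The paper instead constructs the curve by hand: a component of $V(x_1)\cap V(g)$ is cut out by $x_1=0$, $x_3=\lambda x_4^{a}$, has $H$-degree $\frac{1}{r}$, and hence satisfies $-K_Y\cdot C=\frac{a}{r}<1$. This single irreducible component is a proper ``fraction'' of any member of a restricted linear system, so it is invisible to the Euler-characteristic bookkeeping $d(i,j)$; you need this extra geometric input to close the o2 case.
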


\begin{proof}
First we consider the o1 case.  Suppose that $n \geq 2$. According to \cite[Theorem 3.7]{Kawakita05} $a$ and $n$ are coprime. Then \cite[Corollary 2.4]{Kawakita05} implies that $a = 1$. Thus we conclude that $(a/n)^2E^3 \leq 1$. Moreover we have $d(-1, 0) > 1$.  When $n =1$, $P$ is Gorenstein. According to \cite[Theorem 1.4]{Kaw03} we are in the situations of O or IV and this means that $b = 1$.  In case our singularity has type O, we have $\left(\frac{a}{n} \right)^2E^{3} =  2$ and $d(-1, 0) = 3$.  When our singularity has type IV, we have $\frac{a}{n}bE^3 = 2$ and $d(0, -1) = 3$. This proves our assertion for the o1 case.

Next we consider the o2 case.  First suppose that $\frac{a}{n} \leq 1$.  Then $d(0,-1) \geq 3$ so we obtain a covering family of curves of anticanonical degree $\leq 2$ that deforms in dimension $\geq 2$.  Next suppose that $\frac{a}{n} > 1$ and $n \geq 2$.  Then the proof of \cite[Theorem 4.5]{Kawakita05} shows that there are integers $s,t$ such that $as + nt = -1$.  The argument also shows that $d(s,t) \geq 1$ and the corresponding curve has anticanonical degree $\leq 1$.

Next suppose that we are in the o2 case with $n=1$ and $a>1$.  In this case \cite[Theorem 1.2]{Kawakita05} shows that $E$ can be realized as the hypersurface in the weighted projective space $\mathbb{P}(1,r,a,1)$ (where $1+r$ is divisible by $a$) defined by the weight $1+r$ equation
\begin{equation*}
x_{1}x_{2} + g(x_{3},x_{4}) = 0
\end{equation*}
such that the monomial $x_{3}^{(1+r)/a}$ occurs in $g$ with non-zero coefficient.  Consider the union of curves given by $V(g) \cap V(x_{1})$.  Each component $C$ will be defined by the equations $x_{1}=0$ and $x_{3} = \lambda x_{4}^{a}$ for some $\lambda$, and such a curve will have $H$-degree $\frac{1}{r}$.  Since the restriction of $H$ to $E$ coincides with $-E|_{E}$ when $E$ is embedded in $Y$, we see that $-K_{Y} \cdot C = \frac{a}{r} < 1$.

Finally we consider the o3 case.  Then we have $d(0,-1) \geq 1$, yielding a curve of anticanonical degree $\leq 1$.
\end{proof}

We are now prepared to prove Theorem \ref{theo:iitakadim0case}.  We separate into two cases based on whether or not $X$ admits an E5 divisor.

\begin{theo}
Let $X$ be a smooth Fano threefold which does not admit an E5 divisor.  There are no $a$-covers $f: Y \to X$ such that $Y$ is smooth and $\kappa(K_{Y}-f^{*}K_{X}) = 0$.
\end{theo}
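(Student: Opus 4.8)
The plan is to argue by contradiction: suppose $f\colon Y\to X$ is an $a$-cover with $\kappa(K_Y-f^*K_X)=0$, and derive the existence of a rational curve on $X$ of anticanonical degree $\leq 2$ sweeping out a surface with larger-than-expected dimension, which by Lemma~\ref{lemm:lines} and Lemma~\ref{lemm:conics} must be an E5 divisor, contradicting the hypothesis. As noted, Sengupta's result from \cite{Sen17} tells us that every component of the branch divisor $B$ of $f$ has $a$-invariant strictly greater than $a(X,-K_X)=1$, so by Theorem~\ref{theo:higherainv} each component of $B$ is either swept out by $-K_X$-lines or is a contractible divisor of type E1--E5. Since $X$ carries no E5 divisor, we are in the remaining cases. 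The strategy is to localize near the image of such a branch component and use the explicit birational geometry of \cite{Kawakita05} (packaged in Lemmas~\ref{lemm:exceptionaltype} and \ref{lemm:ordinarytype}) to produce the contradicting family of curves.

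Here are the steps I would carry out. First, I would show that the adjoint-rigid (Iitaka dimension $0$) hypothesis forces a very rigid structure on $f$: the $\mathbb{Q}$-divisor $K_Y - f^*K_X$ is effective, supported on the ramification divisor, and numerically trivial (being adjoint rigid of Iitaka dimension $0$, after passing to a minimal model it is torsion). This means the image branch locus $B\subset X$ consists of divisors $D_i$ with $a(D_i,-K_X)>1$ and hence (given no E5) of type E1 (with $(\mathbb{P}^1\times\mathbb{P}^1,\mathcal O(1,1))$), E2, E3, or E4, or of surfaces swept out by $-K_X$-lines. Second, I would pass to a minimal model of $Y$ and run the MMP so that $Y$ maps (birationally) to a variety with a divisorial contraction structure; the key point is that $K_Y-f^*K_X$ being trivial forces the ramification to concentrate over terminal singularities obtained by divisorial contractions, which is exactly the setup of \cite{Kawakita05}. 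Third, I would apply Lemmas~\ref{lemm:exceptionaltype} and \ref{lemm:ordinarytype}: in every case (exceptional or ordinary) the exceptional divisor $E$ carries either a curve of anticanonical degree $\leq 1$ that moves in dimension $\geq 1$, or one of degree $\leq 2$ that moves in dimension $\geq 2$, or a curve of degree $\leq 1$ (the e3/$a=3$/$n=1$ and o3 cases). Pushing such a family forward to $X$ via $f$, and noting $f$ is generically finite so degrees and moduli dimensions are essentially preserved (or the family becomes a multiple cover, which I can rule out or handle separately), I obtain on $X$ a family of curves of anticanonical degree $\leq 1$ moving in dimension $\geq 2$, or of degree $\leq 2$ moving in dimension $\geq 3$, or a rational curve in the special e3/o3 cases. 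Fourth, I would invoke Lemma~\ref{lemm:linesarerational} (for the degree-$1$ case) or Lemma~\ref{lemm:conics} (for the degree-$2$ case) to conclude that the swept-out surface is a contractible E5 divisor — contradicting the assumption that $X$ has no E5 divisor. The exceptional e3 ($a=3$, $n=1$) and o3 cases, which only give a single curve of degree $\leq 1$ rather than a moving family, require a separate argument: here I would use the finer structure of these specific singularities from \cite{Kawakita05} together with the constraint that the branch divisor must be one of the E1--E4 surfaces classified in Theorem~\ref{theo:higherainv}, showing these singularity types simply cannot occur on a branch divisor over a smooth Fano threefold.

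The main obstacle I expect is the bookkeeping in Step~3 when transferring a moving family of curves on the exceptional divisor $E$ of a \emph{local} divisorial contraction to a genuine family on the \emph{global} Fano threefold $X$. The curves produced by \cite{Kawakita05} live on $E$ inside a germ, and I need to argue that (a) $E$ actually appears as (a component of) the branch divisor of $f$, so that these curves pull back to curves on $Y$ and push forward to curves on $X$; (b) the deformation dimension is preserved under $f_*$ — this should follow since $f$ is generically finite and the curves are general, but one must be careful that the family does not collapse; and (c) the image family is not a multiple-cover family, so that Lemma~\ref{lemm:linesarerational}/Lemma~\ref{lemm:conics} applies to the reduced curves — here the low anticanonical degree ($\leq 2$) is exactly what prevents multiple covers of lower-degree curves from causing trouble, since a degree-$2$ curve can only be a double line, and a moving family of such would itself force a family of lines of E5 type. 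A secondary subtlety is justifying that the MMP/resolution bookkeeping of Step~2 genuinely reduces us to Kawakita's local classification — one needs that the ramification of an adjoint-rigid $a$-cover with $\kappa = 0$ over a \emph{smooth} target concentrates precisely over the images of divisorial contractions, which is where the adjoint rigidity (numerical triviality of $K_Y - f^*K_X$) is used decisively.
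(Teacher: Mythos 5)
Your overall strategy (run an MMP, reduce to Kawakita's classification of divisorial contractions, and use the low-degree curves on the exceptional divisors together with the E5 lemmas) is the same as the paper's, but there is a genuine gap at the center of your argument, and one of your foundational claims is false. The ramification divisor $R = K_Y - f^*K_X$ is \emph{not} numerically trivial: since a Fano threefold is simply connected, $f$ must ramify, so $R$ is a nonzero effective divisor; $\kappa(R)=0$ only says $R$ is rigid. The correct mechanism is that the $(K_Y - f^*K_X)$-MMP must contract $R$ entirely, and the contradiction comes from showing that no step of that MMP can exist. Crucially, the paper first runs the \emph{relative} $K_Y$-MMP over $X$ to produce $f'\colon Y' \to X$ with $K_{Y'}$ $f'$-nef. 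This preliminary step is what your proposal is missing, and it is exactly what resolves the obstacles you flag. Any curve $C$ in the exceptional locus of a $(K_{Y'}-f'^*K_X)$-negative contraction with $-K_{Y'}\cdot C \le 1$ satisfies $-f'^*K_X\cdot C = 0$, i.e.\ it is \emph{contracted} by $f'$ — so it cannot be pushed forward to a line on $X$ at all, and your ``produce an E5 divisor'' endgame never gets off the ground for it. With the relative nefness in hand, such a curve gives $K_{Y'}\cdot C = (K_{Y'}-f'^*K_X)\cdot C < 0$ on an $f'$-contracted curve, an immediate contradiction. This single observation uniformly kills the flipping contractions, the divisorial contractions to curves (whose general fibers have $-K_{Y'}\cdot C = 1$), the single rigid curves of degree $\le 1$ in the e3 ($a=3$, $n=1$) and o3 cases, and the $1$-dimensional families of degree-$\le 1$ curves. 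Your proposed ``separate argument'' for e3/o3 — showing those singularity types cannot occur over a smooth Fano — is not needed and is not how these cases close; as stated it is an unproved assertion.

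Two smaller corrections. First, your degree bookkeeping is off: a curve with $-K_{Y'}\cdot C \le 2$ on the exceptional divisor of a $(K_{Y'}-f'^*K_X)$-negative contraction has $-f'^*K_X\cdot C < -K_{Y'}\cdot C \le 2$, hence image degree $\le 1$ in $X$; only Lemma~\ref{lemm:linesarerational} is needed (applied to the $\ge 2$-dimensional families), never Lemma~\ref{lemm:conics}. Second, the detour through Sengupta's theorem and the classification of branch divisors via Theorem~\ref{theo:higherainv} is not used in this case in the paper and does not substitute for the relative MMP; the argument runs entirely on $Y'$ and concludes $R'=0$, hence $Y'=X$, rather than by exhibiting a forbidden E5 divisor as the sole contradiction.
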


\begin{proof}
We first run the relative $K_{Y}$-MMP for $f: Y \to X$.  The result is a birational contraction $\phi: Y \dashrightarrow Y'$ such that $Y'$ is a normal variety with terminal $\mathbb{Q}$-factorial singularities equipped with a morphism $f': Y' \to X$ such that $K_{Y'}$ is $f'$-relatively nef.

Note that every step of the relative $K_{Y}$-MMP is also a step of the $(K_{Y} - f^{*}K_{X})$-MMP.  Thus the ramification divisor $R' = K_{Y'} - f'^{*}K_{X}$ has Iitaka dimension $0$.  We will continue to run the $K_{Y'} - f'^{*}K_{X}$-MMP.  The end result of the MMP is a birational contraction $\psi: Y' \dashrightarrow \widetilde{Y}$ that contracts each component of $R'$.  

Consider the next step of the $K_{Y'} - f^{*}K_{X}$-MMP starting from $Y'$, denoted $\pi: Y' \to W$.  We will consider the possible types of this contraction and rule them out one by one:
\begin{enumerate}
\item Suppose $\pi: Y' \to W$ is a flipping contraction.  By \cite[Th\'eor\`eme 0]{Benveniste85} every curve $C$ contracted by the flipping contraction satisfies $-K_{Y'} \cdot C < 1$.  This implies that $-f'^{*}K_{X} \cdot C = 0$ so that $C$ is contracted by $f'$.  However, since $K_{Y'}$ is $f'$-nef this is an impossibility.

\item Suppose that $\pi: Y' \to W$ is a divisorial contraction taking the exceptional divisor $E$ to a curve $Z$.   Since terminal singularities have codimension $\geq 3$, we see that there is an open subset of $Z$ that is smooth and is contained in the smooth locus of $W$.  The fibers of the exceptional divisor over this locus are rational curves $C$ satisfying $-K_{Y'} \cdot C = 1$.  Thus they satisfy $-f'^{*}K_{X} \cdot C = 0$.  However, since $K_{Y'}$ is $f'$-nef it is not possible for such curves to have negative intersection against $K_{Y'} - f'^{*}K_{X}$.

\item Suppose that $\pi: Y' \to W$ is a divisorial contraction taking the exceptional divisor $E$ to a point $P$.  We first claim that $E$ cannot contain any curve $C$ satisfying $-K_{Y} \cdot C \leq 1$.  Indeed, in this case we must have $-f'^{*}K_{X} \cdot C = 0$ but as discussed above $K_{Y'} - f^{*}K_{X}$ does not have negative intersection with any curve contracted by $f'$.  We next claim that $E$ cannot contain a $\geq 2$-dimensional family of curves $C$ satisfying $-K_{Y'} \cdot C \leq 2$.  In this case these curves satisfy $-f'^{*}K_{X} \cdot C \leq 1$.  If the intersection number is $0$ we conclude as above.  If the intersection number is $1$ then Lemma \ref{lemm:linesarerational} shows that the images of the $C$ must sweep out an E5 divisor in $X$.  However, by assumption $X$ does not contain any such divisors.

According to Lemma \ref{lemm:exceptionaltype} and Lemma \ref{lemm:ordinarytype} every divisorial contraction will carry a curve $C$ satisfying one of the properties above, and thus no such contraction can occur.
\end{enumerate}
Altogether we see that there is no possible next step for the $K_{Y'} - f^{*}K_{X}$-MMP.  In other words, we see that the ramification divisor $R'$ must be equal to $0$.  Since $X$ is a smooth Fano variety this implies that $Y' = X$, showing that $X$ admits no $a$-covers.
\end{proof}

\begin{theo} \label{theo:iitakadim0casee5}
Let $X$ be a smooth Fano threefold which admits an E5 contraction.  There are no $a$-covers $f: Y \to X$ such that $Y$ is smooth and $\kappa(K_{Y}-f^{*}K_{X}) = 0$.
\end{theo}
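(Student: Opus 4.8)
The plan is to run the same argument as in the preceding theorem and to show that the one new phenomenon — the appearance of an E5 divisor — cannot actually occur. First I would run the relative $K_{Y}$-MMP for $f : Y \to X$, obtaining a birational contraction $\phi : Y \dashrightarrow Y'$ with a morphism $f' : Y' \to X$ such that $Y'$ is terminal and $\mathbb{Q}$-factorial and $K_{Y'}$ is $f'$-relatively nef; as before the ramification divisor $R' := K_{Y'} - f'^{*}K_{X}$ is effective with $\kappa(R') = 0$. Running the $R'$-MMP and examining its first step $\pi : Y' \to W$, the case analysis of the preceding proof rules out everything except: $\pi$ is a divisorial contraction sending its exceptional divisor $E'$ to a point $P$, and $E'$ carries a family of (possibly reducible) curves $C$ of dimension $\geq 2$ with $-K_{Y'}\cdot C = 2$ and $-f'^{*}K_{X}\cdot C = 1$, so that $f'(C)$ is a $-K_{X}$-line and these lines sweep out an E5 divisor $D \subset X$ by Lemma \ref{lemm:linesarerational}. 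The remaining task is to rule this configuration out.

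Next I would extract the geometry numerically. Intersecting $K_{Y'} = f'^{*}K_{X} + R'$ with $C$ gives $R'\cdot C = -1$; since $E'$ is a component of $R'$ with $E'\cdot C < 0$ while every other component of $R'$ is effective and avoids $E'$, we get $E'\cdot C = -1$ and $(R'-E')\cdot C = 0$. Adjunction gives $K_{E'}\cdot C = (K_{Y'}+E')\cdot C = -3$, so the general such $C$ is an irreducible rational curve with $C^{2} = 1$ moving in a family of dimension $\geq 2$ on the surface $E'$; together with the classification of divisorial contractions to a point underlying Lemmas \ref{lemm:exceptionaltype} and \ref{lemm:ordinarytype}, this forces $E'\cong\mathbb{P}^{2}$ and $\pi$ to be the blow-up of a smooth point, so $Y'$ is smooth near $E'$ and $K_{Y'} = \pi^{*}K_{W} + 2E'$. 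The restriction $f'|_{E'}$ has $2$-dimensional image $D\cong\mathbb{P}^{2}$ and sends lines of $E'$ birationally to $-K_{X}$-lines of $D$, hence is a birational morphism $\mathbb{P}^{2}\to\mathbb{P}^{2}$, i.e.\ an isomorphism onto $D$. If $\deg f' = 1$ then $R'$ would be $f'$-exceptional (as $X$ is smooth), contradicting that $f'(E') = D$ is a divisor; so $\deg f' \geq 2$. Finally, letting $g : X \to X_{0}$ be the E5 contraction of $D$ to the point $Q$, with $K_{X} = g^{*}K_{X_{0}} + \tfrac12 D$, I would compare the coefficient of $E'$ in the two expressions $K_{Y'} = (g\circ f')^{*}K_{X_{0}} + \tfrac12 f'^{*}D + R'$ and $K_{Y'} = \pi^{*}K_{W} + 2E'$; using $f'^{*}D\cdot C = D\cdot f'_{*}C = -2$ this pins down $\mathrm{mult}_{E'}(f'^{*}D) = 2$ and $\mathrm{mult}_{E'}(R') = 1$, so $f'$ is ramified of order exactly $2$ along $E'$.

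To finish I would globalize. Take the Stein factorization $g\circ f' : Y' \xrightarrow{s} W'' \xrightarrow{t} X_{0}$; the birational morphism $s$ contracts exactly $E'$, which spans the extremal ray contracted by $\pi$, so by uniqueness of that contraction $s = \pi$ and $W'' = W$, yielding a finite morphism $t : W \to X_{0}$ of degree $\geq 2$ with $W$ smooth at $P$ and $t(P) = Q$ the quotient singularity of type $\tfrac12(1,1,1)$; near $P$ such a $t$ is forced to be the index-$1$ cover of this singularity, so the real question is whether this local degree-$2$ cover globalizes. Going through the classification of Fano threefolds with an E5 contraction in Theorem \ref{theo:e5classification}, in each case $X_{0}$ is rational and its smooth locus $X_{0}\setminus Q$ is simply connected, so every finite cover of $X_{0}$ \'etale in codimension one is trivial; the only way $t$ could be nontrivial is to be branched along a divisor, which would have to be the image under $g$ of a branch divisor of $f'$ other than $D$, hence the image of a surface in $X$ with $a$-invariant $> 1$ by \cite{Sen17}, and so by Theorem \ref{theo:higherainv} either a contractible divisor or a surface swept out by $-K_{X}$-lines. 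Using Lemma \ref{lemm:nooverlap} to see that the special contractible divisors are disjoint from $D$, and checking the remaining finitely many possibilities, one verifies $t$ cannot be branched in codimension one; thus $\deg t = 1$, contradicting $\deg t \geq 2$, and the configuration is impossible.

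The hard part will be this last step: controlling the branch divisor of the induced cover $t : W \to X_{0}$ tightly enough to apply the simple-connectedness of $X_{0}\setminus Q$. This needs a precise inventory of which surfaces of $a$-invariant $> 1$ can lie in the ramification of the original $a$-cover and how they meet the E5 divisor $D$ — which is exactly where the explicit classification and disjointness results of Section \ref{subsec:classification} are needed.
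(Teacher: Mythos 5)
Your strategy (continue the MMP analysis of the non-E5 case and rule out the one surviving configuration by analyzing the induced cover of the contraction $X_0$ of the E5 divisor) is genuinely different from the paper's, which instead shows directly that every branch divisor of such an $f$ is a rational surface swept out by $-K_X$-lines and then computes fundamental groups of the complements of these surfaces for each of the six threefolds in Theorem \ref{theo:e5classification}. Unfortunately your route has two real gaps. First, the step ``this forces $E'\cong\mathbb{P}^2$ and $\pi$ to be the blow-up of a smooth point'' is not justified: Kawakita's classification (the source of Lemmas \ref{lemm:exceptionaltype} and \ref{lemm:ordinarytype}) contains several other divisorial contractions to a point whose exceptional divisor carries a $2$-dimensional family of curves of anticanonical degree $\leq 2$ (e.g.\ the o1 contractions of type IV and the o2 contractions with $a/n\leq 1$), and your adjunction computation assumes without proof that $E'$ is normal, that the moving curves are irreducible with a well-defined self-intersection, and that $\mathrm{mult}_{E'}(R')=1$ (your assertion that the other components of $R'$ ``avoid $E'$'' is unsupported; what the covering family actually gives is only $\mathrm{mult}_{E'}(R')\cdot(E'\cdot C)\leq -1$). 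Without the smooth-point conclusion, the entire local analysis of $t:W\to X_0$ near $P$ collapses.

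Second, and fatally, the last step is false as stated for case (6) of Theorem \ref{theo:e5classification} (the blow-up of $\mathbb{P}_{\mathbb{P}^2}(\mathcal{O}\oplus\mathcal{O}(2))$ along a quartic in a minimal moving section, the unique threefold with \emph{two} E5 divisors). There the relevant complement has fundamental group $\mathbb{Z}/2\mathbb{Z}$ and a nontrivial double cover genuinely exists: it is induced by the double cover $\mathbb{P}_{\mathbb{P}^2}(\mathcal{O}\oplus\mathcal{O}(1))\to\mathbb{P}_{\mathbb{P}^2}(\mathcal{O}\oplus\mathcal{O}(2))$ ramified along the rigid and a minimal moving section, i.e.\ exactly along (the images of) the two E5 divisors. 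So ``one verifies $t$ cannot be branched in codimension one'' is not something one can verify --- the branched cover is there, its branch locus consists precisely of surfaces of $a$-invariant $>1$ permitted by Theorem \ref{theo:higherainv}, and no disjointness statement like Lemma \ref{lemm:nooverlap} excludes it. The only way to eliminate it is the paper's direct computation that the ramification divisor of the lifted cover $f:Y\to X$ is big, hence $a(Y,-f^*K_X)<1$ and $f$ is not an $a$-cover at all. Your proposal contains no mechanism that would detect this, so the argument is incomplete exactly at the case that makes the E5 situation harder than Theorem 5.9; a complete proof along your lines would still have to import the paper's explicit construction and bigness computation for this threefold (and, in cases (4)--(5), to separately handle the covers branched over the two line-swept divisors $D_1,D_2$, which are honest $a$-covers of positive Iitaka dimension rather than impossible configurations).
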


\begin{proof}
By Theorem \ref{theo:e5classification} and Theorem \ref{theo:notbpf} if $X$ is a Fano threefold with an E5 contraction then $|-K_{X}|$ is basepoint free.  This implies that every curve on $X$ of anticanonical degree $1$ is a rational curve.  Thus, by the argument in the proof of \cite[Theorem 1.9]{LT16} (using \cite[Erratum]{LT16} in place of \cite[Proposition 7.2]{LT16}) we see that for a map $f: Y \to X$ as above every branch divisor for $f$ will be a rational surface swept out by $-K_{X}$-lines.

We next classify such divisors for the six Fano threefolds which admit an E5 contraction.  In cases (1-3) and (6), the only divisors which fit this description are the E5 divisors themselves.  In cases (4) and (5), in addition to the E5 divisor there are two additional divisors swept out by the reducible members of the unique conic fibration on $X$.  Let $U$ denote the complement of all such divisors, so that $f$ defines a torsion element in the fundamental group of the complement $U$.  Using the numbering system of Theorem \ref{theo:e5classification} we find the fundamental group of each of these complements and use it to verify that there are no such $a$-covers $f$.

\begin{itemize}
\item[(1)] $X$ is the blow up of $\mathbb{P}^{3}$ along a smooth cubic plane curve and the E5 divisor is the strict transform of the plane containing the cubic.  Let $U$ denote the complement of the E5 divisor.  Then $U$ contains an open subset $U'$ isomorphic to $\mathbb{A}^{3}$.  Since there is a surjection $\pi_{1}(U') \to \pi_{1}(U)$ we conclude that $U$ is simply connected.

\item[(2-3)] Each of these Fano threefolds admits a unique E5 divisor and is birational to a $\mathbb{P}^{1}$-bundle over $\mathbb{P}^{2}$.  Let $U \subset X$ denote the complement of the E5 divisor.  Using the birational map to the $\mathbb{P}^{1}$-bundle, in every case we see that $U$ contains an open subset $U'$ which is the complement of a section of a $\mathbb{P}^{1}$-bundle over $\mathbb{P}^{2}$.  Thus $U'$ and hence $U$ are simply connected.

\item[(4-5)] Each of these Fano threefolds admits a unique E5 divisor and is birational to a $\mathbb{P}^{1}$-bundle over $\mathbb{P}^{2}$.  In addition to the E5 divisor, there are two other rational surfaces swept out by lines, namely, the two divisors $D_{1},D_{2}$ which lie over the image of the blown-up curve in $\bP^{2}$.  Let $U \subset X$ denote the complement of the union of the E5 divisor, $D_{1}$, and $D_{2}$.  Using the birational map to the $\mathbb{P}^{1}$-bundle, we see that $U$ contains an open subset $U'$ which is the complement of a section in a $\mathbb{P}^{1}$-bundle over an open set $V \subset \bP^{2}$ which is the complement of a line or smooth conic.  In particular the fundamental group of $U'$ is isomorphic to the fundamental group of $V$.  This implies that any $a$-cover of $X$ is induced via base change by a cover $T \to \bP^{2}$.  While the corresponding maps $f: Y \to X$ will be $a$-covers they can never have Iitaka dimension $0$.

\item[(6)] The unique Fano threefold with more than one E5 structure is the blow-up of $\mathbb{P}_{\mathbb{P}^{2}}(\mathcal{O} \oplus \mathcal{O}(2))$ along a quartic curve contained in a minimal moving section of the projective bundle.  Let $U'$ denote the complement of the union of the rigid section with a minimal moving section in $\mathbb{P}_{\mathbb{P}^{2}}(\mathcal{O} \oplus \mathcal{O}(2))$.  Note that $U'$ is a toric variety; applying \cite[9.3 Proposition]{Danilov78} we see that $\pi_{1}(U') = \mathbb{Z}/2\mathbb{Z}$.    Let $U \subset X$ denote the complement of the E5 divisors.  Since $U'$ is an open subset of $U$, we see that $X$ admits at most one $a$-cover (up to birational equivalence).

We next verify that this ``potential'' $a$-cover is not actually an $a$-cover.  The non-trivial element of $\pi_{1}(U')$ defines a double cover $g: \mathbb{P}_{\mathbb{P}^{2}}(\mathcal{O} \oplus \mathcal{O}(1)) \to \mathbb{P}_{\mathbb{P}^{2}}(\mathcal{O} \oplus \mathcal{O}(2))$ whose ramification divisor is the union of the rigid section and a minimal moving section.  Let $Y$ be the blow-up of $\mathbb{P}_{\mathbb{P}^{2}}(\mathcal{O} \oplus \mathcal{O}(1))$ along a quartic curve in the non-rigid component of the ramification divisor.  Then $g$ lifts to a double cover $f: Y \to X$.  However, the ramification divisor of $f$ is big so that this map $f$ is not an $a$-cover.  We conclude that $X$ does not admit any $a$-cover as in the statement of the theorem.
\end{itemize}
\end{proof}

Together the previous two results prove Theorem \ref{theo:iitakadim0case}.  They also complete the last piece of Theorem \ref{theo:maintheorem2}.

\begin{proof}[Proof of Theorem \ref{theo:maintheorem2}:]
Consider a family of rational curves $M$ and let $\mathcal{C} \to M$ denote the universal family over $M$. Let $\beta: \widetilde{\mathcal C} \to \mathcal C$ be a resolution.  By assumption the evaluation map $ev: \widetilde{\mathcal{C}} \to X$ is dominant but does not have connected fibers.  \cite[Proposition 5.15]{LTCompos} shows that the evaluation map must factor rationally through a smooth $a$-cover $f: Y \to X$.  Furthermore, $M$ defines a dominant family of rational curves $C$ on $Y$ which satisfy $(K_{Y} - f^{*}K_{X}) \cdot C = 0$.

If $\kappa(K_{Y}-f^{*}K_{X}) = 2$, then Lemma \ref{lemm:iitakadim2case} shows that $Y$ is the base change of a family of conics on $X$.  The condition $(K_{Y} - f^{*}K_{X}) \cdot C = 0$ shows that $C$ is a finite cover of a general fiber of the Iitaka fibration for $K_{Y} - f^{*}K_{X}$, and thus its image in $X$ will be a conic.

If $\kappa(K_{Y}-f^{*}K_{X}) = 1$, then Theorem \ref{theo:iitakadim1case} shows that $Y$ is the base change of a del Pezzo fibration on $X$.  The condition $(K_{Y} - f^{*}K_{X}) \cdot C = 0$ shows that a general $C$ will be contracted by the Iitaka fibration for $K_{Y} - f^{*}K_{X}$.  Thus its image in $X$ will be contained in the fibers of the corresponding del Pezzo fibration.

Theorem \ref{theo:iitakadim0case} shows that $\kappa(K_{Y}-f^{*}K_{X}) = 0$ is impossible.
\end{proof}

\section{Movable Bend-and-Break Lemma}

In this section we prove the Movable Bend-and-Break Lemma for arbitrary smooth Fano threefolds.

\subsection{Multiple covers}

\cite[II.3.14 Theorem]{Kollar} shows that if a free curve $f: \mathbb{P}^{1} \to X$ has the property that $f^{*}T_{X}$ has at least two positive summands then a general deformation of $f$ will be an immersion.  In particular, such curves will have a locally free normal sheaf.  The following proposition identifies when this condition fails.

\begin{prop} \label{prop:onesummand}
Let $X$ be a smooth Fano threefold.  Let $M$ be a component of $\overline{\Rat}(X)$ generically parametrizing stable maps of anticanonical degree $\geq 3$ which have irreducible domain and map onto a free curve such that the restricted tangent bundle has exactly one positive summand.  Then the general map parametrized by $M$ is a multiple cover of a free curve of anticanonical degree $2$ in $X$.
\end{prop}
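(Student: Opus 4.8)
The plan is to exploit the splitting type to exhibit a fibration structure on the subvariety swept out by the curves of $M$, and then to conclude by a dimension count. Let $f\colon \mathbb{P}^1 \to X$ be a general member of $M$. Since $f$ is free and $f^*T_X$ has exactly one positive summand, we may write $f^*T_X \cong \mathcal{O}(a)\oplus\mathcal{O}\oplus\mathcal{O}$ where $a = -K_X\cdot f_*[\mathbb{P}^1]\geq 3$. Because $f^*T_X$ is globally generated we have $H^1(\mathbb{P}^1,f^*T_X)=0$, so $[f]$ is a smooth point of $\overline{M}_{0,0}(X)$, $\dim M = h^0(f^*T_X)-3 = a$, and $T_{[f]}M = H^0(f^*T_X)/\operatorname{im}H^0(T_{\mathbb{P}^1})$. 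Note also that the differential $df\colon\mathcal{O}(2)\to f^*T_X$ lands in the $\mathcal{O}(a)$-summand, as there is no nonzero map $\mathcal{O}(2)\to\mathcal{O}$.

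The first step is to show that through a general point $p$ of the locus $Y\subseteq X$ swept out by the curves of $M$ there passes, as a reduced subscheme, a unique curve of $M$. Writing $p=f(t_0)$ for general $t_0$, the first-order deformations of $f$ keeping $p$ fixed are controlled by $H^0(f^*T_X(-t_0)) = H^0(\mathcal{O}(a-1)\oplus\mathcal{O}(-1)^{\oplus 2}) = H^0(\mathcal{O}(a-1))$, every section of which lies in the $\mathcal{O}(a)$-summand; hence none of these deformations alters the tangent direction of $\operatorname{im}(f)$ at $p$ (to first order that direction equals the fibre at $t_0$ of the $\mathcal{O}(a)$-summand, which is $\operatorname{span}(df(t_0))$). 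Therefore the tangent direction at a general point of $Y$ depends only on that point, defining a rank one foliation on a dense open subset of $Y$ whose leaves are the curves $\operatorname{im}(f)$; since a rank one foliation is integrable with a unique integral curve through a general smooth point, all curves of $M$ through a general $p$ share the same image $C_p$. Consequently the family $M_{\mathrm{red}}$ of reduced image curves satisfies $\dim M_{\mathrm{red}} = \dim Y - 1 \leq 2$.

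Now factor a general $f$ through its image. Writing $C = C_{p(f)}$ and letting $\nu\colon \mathbb{P}^1\to C\hookrightarrow X$ be the normalization of $C$, we get $f=\nu\circ h$ with $h\colon\mathbb{P}^1\to\mathbb{P}^1$ of degree $e$ (constant for general $f$). The morphism $M\to M_{\mathrm{red}}$ is dominant with irreducible general fibre an open subset of $\overline{M}_{0,0}(\mathbb{P}^1,e)$, of dimension $2e-2$, so
\[
a=\dim M = \dim M_{\mathrm{red}} + (2e-2) = (\dim Y - 1) + (2e-2) \leq 2e .
\]
Since $f$ is free, so is $\nu$, and comparing $h^*(\nu^*T_X)=f^*T_X$ forces $\nu^*T_X\cong\mathcal{O}(b)\oplus\mathcal{O}\oplus\mathcal{O}$ with $eb=a$, where $b = -K_X\cdot C$; as $d\nu$ is nonzero we have $b\geq 2$ (there is no free curve of anticanonical degree $1$, since $\mathcal{O}(2)$ does not inject into $\mathcal{O}(1)\oplus\mathcal{O}\oplus\mathcal{O}$). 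Combining $a=eb$ with $a\leq 2e$ gives $b\leq 2$, hence $b=2$; then $e=a/2\geq 2$, so $f$ is genuinely a multiple cover, and $\nu^*T_X\cong\mathcal{O}(2)\oplus\mathcal{O}\oplus\mathcal{O}$ is globally generated, so $C$ is a free curve of anticanonical degree $2$. (Incidentally $a$ is forced to be even.)

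The step I expect to be the main obstacle is the passage from ``the curves through a general point share a tangent direction'' to ``they coincide'': one must verify that the rational line field extends to an honest sub-line-bundle of $T_Y$ near a general smooth point of $Y$ so that uniqueness of integral curves applies, and take care that the deformations actually realized inside the component $M$ — not merely those in $H^0(f^*T_X)$ — already pin down the tangent direction (this is where knowing $T_{[f]}M = H^0(f^*T_X)/\operatorname{im}H^0(T_{\mathbb{P}^1})$ is used). A secondary, routine point is identifying the general fibre of $M\to M_{\mathrm{red}}$ with an open subset of $\overline{M}_{0,0}(\mathbb{P}^1,e)$, i.e.\ that composing a degree $e$ self-map of $\mathbb{P}^1$ with $\nu$ is birational onto its image as a map of moduli spaces.
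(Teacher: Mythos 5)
Your proof is correct and follows essentially the same route as the paper's: the key point is that every section of $f^*T_X(-t_0)$ lies in the positive summand, so deformations fixing a point only move the image curve along itself, forcing $f$ to be a multiple cover, and then a dimension count comparing $\dim M$ with the space of covers of a fixed image pins down the degree of the image to $2$. On the obstacle you flag: you do not actually need the line field to be single-valued — it suffices that the image (as a point of the Chow variety) is constant on each irreducible component of the fibre of the evaluation map over a general point, which follows from the same first-order computation and already yields the finiteness of images through a general point that your bound $\dim M_{\mathrm{red}} \leq \dim Y - 1$ requires.
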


\begin{proof}
For a general map $f: \mathbb{P}^{1} \to X$ parametrized by $M$ write $f^{*}T_{X} = \mathcal{O}(a) \oplus \mathcal{O}^{2}$. Note that the tangent bundle of $\mathbb{P}^{1}$ must map to the $\mathcal{O}(a)$ factor.  Consider the sublocus of $M$ that sends a fixed point in $\mathbb{P}^{1}$ to a fixed general point of $X$.  Since $a>2$ this locus is positive dimensional, but the map can only deform in the tangent direction of the curve.  We conclude that $f$ must define a multiple cover of a curve $C$. Since such $C$ form a dominant family they must have anticanonical degree $\geq 2$.  If the anticanonical degree of $C$ is $d$ and the degree of the multiple cover is $r$, then the dimension of $M$ is $d + 2r - 2$.  Since $M$ is a component of $\overline{\Rat}(X)$ this must be at least the expected dimension $rd$, showing that $d = 2$.
\end{proof}

In particular, this implies that if $M \subset \overline{\Rat}(X)$ generically parametrizes free curves $f: \mathbb{P}^{1} \to X$ which are birational onto the image and have anticanonical degree $\geq 3$ then the general curve parametrized by $M$ will have locally free normal sheaf.

Proposition \ref{prop:onesummand} allows us to easily prove Movable Bend-and-Break for stable maps for which the restricted tangent bundle only has one positive summand.

\begin{prop} \label{prop:mbbmultiplecover}
Let $X$ be a smooth Fano threefold.  Let $M$ be a component of $\overline{\Rat}(X)$ generically parametrizing stable maps with irreducible domain that map onto a free curve such that the restricted tangent bundle has exactly one positive summand and the maps have anticanonical degree $\geq 3$.   Then $M$ contains a stable map $f : Z \rightarrow X$ such that $Z$ is the union of two $\mathbb P^1$'s and the restriction of $f$ to both components is free.
\end{prop}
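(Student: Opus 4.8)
The plan is to exploit the multiple cover structure provided by Proposition~\ref{prop:onesummand} and then to degenerate the covering map $\mathbb{P}^1 \to \mathbb{P}^1$ inside the Kontsevich space $\overline{M}_{0,0}(\mathbb{P}^1,r)$. First I would invoke Proposition~\ref{prop:onesummand}: a general map $f : \mathbb{P}^1 \to X$ parametrized by $M$ factors as $\mathbb{P}^1 \xrightarrow{\ \phi\ } \mathbb{P}^1 \xrightarrow{\ g\ } X$, where $g$ maps birationally onto a free $-K_X$-conic $C \subset X$ and $\phi$ has some degree $r$. Computing degrees, $f_{*}[\mathbb{P}^1] = r[C]$, so $-K_X \cdot f_{*}[\mathbb{P}^1] = 2r$; since this is $\geq 3$ we get $r \geq 2$. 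Because $g$ is free, $g^{*}T_X$ is globally generated, hence so is $f^{*}T_X = \phi^{*}g^{*}T_X$; in particular $H^1(\mathbb{P}^1, f^{*}T_X) = 0$, so $f$ is a smooth point of $\overline{M}_{0,0}(X)$ and therefore lies on a unique component, namely $M$.

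Next I would form the composition morphism. Since $g$ is finite and a genus-$0$ stable map to $\mathbb{P}^1$ with no marked points has no contracted components, composing with $g$ carries stable maps to stable maps, yielding a morphism
\[
\Phi : \overline{M}_{0,0}(\mathbb{P}^1, r) \longrightarrow \overline{M}_{0,0}(X), \qquad [h : Z \to \mathbb{P}^1] \longmapsto [g\circ h : Z \to X].
\]
The space $\overline{M}_{0,0}(\mathbb{P}^1,r)$ is irreducible, and it contains the point $[\phi]$, with $\Phi([\phi]) = [f]$. As $[f]$ lies on only one component of $\overline{M}_{0,0}(X)$, the irreducible set $\overline{\Phi(\overline{M}_{0,0}(\mathbb{P}^1,r))}$ must be contained in $M$; hence every stable map in the image of $\Phi$ lies in $M$.

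Finally, since $r \geq 2$, the boundary of $\overline{M}_{0,0}(\mathbb{P}^1,r)$ contains a point $[h : Z \to \mathbb{P}^1]$ whose domain $Z = Z_1 \cup Z_2$ is a union of two $\mathbb{P}^1$'s, with $h|_{Z_i}$ of degree $r_i \geq 1$ and $r_1 + r_2 = r$. Then $g\circ h$ lies in $M$, its domain is a union of two $\mathbb{P}^1$'s, and for each $i$ one has $(g\circ h)|_{Z_i}^{*}T_X = (h|_{Z_i})^{*}(g^{*}T_X)$, the pullback of a globally generated bundle under a finite map, hence globally generated; thus the restriction of $g\circ h$ to each component is a free curve, completing the proof.

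The conceptual content is entirely in Proposition~\ref{prop:onesummand}; the remainder is bookkeeping about multiple covers. The step requiring the most care is verifying that the degenerate cover $g\circ h$ genuinely lies in (the closure of) $M$ rather than in some other component of $\overline{M}_{0,0}(X)$, which is exactly where the unobstructedness $H^1(\mathbb{P}^1, f^{*}T_X) = 0$ — forcing $f$ onto a single component — is used.
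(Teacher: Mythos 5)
Your proof is correct and takes essentially the same route as the paper: invoke Proposition~\ref{prop:onesummand} to realize the general member of $M$ as a multiple cover of a fixed free conic, then produce the broken curve from the boundary of $\overline{M}_{0,0}(\mathbb{P}^{1},r)$. (One immaterial slip: for $r \geq 3$ a genus-$0$ stable map to $\mathbb{P}^{1}$ with no marked points \emph{can} have contracted components; composition with the finite map $g$ still preserves stability simply because the set of contracted components, and the number of special points on each, is unchanged.)
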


\begin{proof}
By Proposition \ref{prop:onesummand} $M$ will parametrize multiple covers of free anticanonical conics.  Since the Kontsevich moduli space parametrizing degree $d$ maps $\mathbb{P}^{1} \to \mathbb{P}^{1}$ includes stable maps whose domains have two components, the statement follows by breaking the multiple covers of a fixed anticanonical conic.
\end{proof}

\subsection{Curves through general points}

We now focus on stable maps for which the restricted tangent bundle has at least two positive summands.  Starting from a component $M$ of $\overline{\Rat}(X)$, the idea is to consider the sublocus of $M$ parametrizing curves through many general points and meeting many general curves and to analyze how curves in this sublocus break.  The first step is to define precisely what we mean by a ``general curve'' in this context.

\begin{defi} \label{defi:basepointfree}
Let $X$ be a Fano threefold and let $p: U \to B$ be a family of irreducible reduced curves on $X$ with evaluation map $ev: U \to X$.  We say that $p$ is a basepoint free family if the evaluation map is flat.
\end{defi}

The key property of a basepoint free family of curves is that a general member will avoid any fixed codimension $2$ subset.  One way of constructing a basepoint free family of curves is to take intersections of general members of a very ample linear system $|H|$ on $X$.  In fact, this construction also yields a basepoint free family of curves when $H$ is only big and basepoint free.

\begin{lemm} \label{lemm:deformationcount}
Let $X$ be a smooth projective threefold.  Fix a basepoint free family of curves $p: U \to B$ on $X$.  Let $M$ be a component of $\overline{M}_{0,0}(X)$ generically parametrizing maps onto free curves such that the restricted tangent bundle has at least two positive summands. Suppose that the normal sheaf to the general map parametrized by $M$ has the form $\mathcal{O}(a) \oplus \mathcal{O}(b)$ with $a \leq b$.  Let $M^{\circ}$ denote the open subset parametrizing such maps.  Then for any positive integer $r \leq a+1$ and any non-negative integer $s \leq a+b+2-2r$ the sublocus $T_{r,s}$ of $M^{\circ}$ parametrizing curves which intersect $r$ general points and $s$ general members of $p$ satisfies
\begin{equation*}
\codim(T_{r,s}) \geq 2r + s.
\end{equation*}
If the basepoint free family of curves is constructed by taking general complete intersections of a big and basepoint free linear series on $X$, then this inequality is actually an equality.

If $r > a+1$ or $s > a+b+2-2r$ then $T_{r,s}$ is empty.
\end{lemm}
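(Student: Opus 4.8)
Here is the plan I would follow.

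\smallskip
\noindent\textbf{Packaging the incidences.} The plan is to put all the incidence conditions into one incidence variety and run a dimension count. Let $\mathcal{C}^{\circ}\to M^{\circ}$ be the universal $\mathbb{P}^{1}$, and let $\mathcal{V}$ be the variety of tuples $(f;p_{1},\dots,p_{r};p'_{1},\dots,p'_{s};D_{1},\dots,D_{s})$ with $f\in M^{\circ}$, with $p_{i},p'_{j}$ points of the domain of $f$, with each $D_{j}$ a member of the family $p$, and with $f(p'_{j})\in D_{j}$; equip it with the morphism $\beta\colon\mathcal{V}\to X^{r}\times B^{s}$ given by $\beta=(f(p_{1}),\dots,f(p_{r}),D_{1},\dots,D_{s})$. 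Since $ev\colon U\to X$ is flat, the members of $p$ through a fixed point of $X$ sweep out a family of dimension $\le\dim U-3$, so $\dim\mathcal{V}\le\dim M^{\circ}+r+s\dim U-2s$, while $\dim(X^{r}\times B^{s})=3r+s(\dim U-1)$. For a general point of $X^{r}\times B^{s}$ the corresponding fibre of $\beta$ maps with finite fibres onto $T_{r,s}$, because a curve in $T_{r,s}$ determines only finitely many choices of the $p_{i}$, $p'_{j}$ and the auxiliary points on the $D_{j}$. Hence $\dim T_{r,s}$ is at most the dimension of a general fibre of $\beta$: if $\beta$ is dominant this is $\le\dim M^{\circ}-2r-s$, and otherwise $T_{r,s}=\varnothing$ for general choices. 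Either way $\codim(T_{r,s})\ge 2r+s$. Since $\dim M^{\circ}=h^{0}(N_{f})=a+b+2$, if $s>a+b+2-2r$ then $\codim(T_{r,s})>\dim M^{\circ}$, forcing $T_{r,s}=\varnothing$; this already gives one half of the emptiness assertion.

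\smallskip
\noindent\textbf{Emptiness when $r>a+1$.} Here it suffices to show $T_{r,0}=\varnothing$, since $T_{r,s}\subseteq T_{r,0}$. Consider the $r$-pointed evaluation map $ev^{(r)}\colon\mathcal{C}^{\circ(r)}\to X^{r}$. At a general point $(f;p_{1},\dots,p_{r})$ its differential is the sum of the tautological maps $T_{\mathbb{P}^{1},p_{i}}\to T_{C,f(p_{i})}$ and the evaluation $H^{0}(N_{f})\to\bigoplus_{i}N_{f}|_{p_{i}}$; writing $N_{f}=\mathcal{O}(a)\oplus\mathcal{O}(b)$, the $\mathcal{O}(a)$-summand evaluates surjectively at at most $a+1$ points, so the differential is not surjective once $r\ge a+2$. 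By generic smoothness $ev^{(r)}$ is then not dominant, so its fibre over a general point of $X^{r}$ is empty and $T_{r,0}=\varnothing$.

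\smallskip
\noindent\textbf{Equality in the complete-intersection case.} Now assume $r\le a+1$, $s\le a+b+2-2r$, and $p$ consists of general complete intersections of a big and basepoint-free linear series $|H|$. By the inequality it is enough to exhibit a component of $T_{r,s}$ of dimension $\dim M^{\circ}-2r-s$, and by the dimension count this follows once $\beta$ is dominant: in the complete-intersection case a general point of $X$ lies on a family of members of dimension exactly $\dim U-3$, so $\mathcal{V}$ is equidimensional of dimension $\dim M^{\circ}+r+s\dim U-2s$ and a general fibre of a dominating component of $\beta$ has dimension exactly $\dim M^{\circ}-2r-s$, mapping finitely onto $T_{r,s}$. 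Dominance of $\beta$ says that a general $f\in M^{\circ}$ deforms in $M^{\circ}$ through $r$ general points and meeting $s$ general complete-intersection curves; computing $d\beta$ at a general point and using the marked points and the members $D_{j}$ to absorb all of the target except one coordinate per incidence, this reduces to surjectivity of
\[
H^{0}(N_{f})\;\longrightarrow\;\Big(\bigoplus_{i=1}^{r}N_{f}|_{p_{i}}\Big)\oplus\Big(\bigoplus_{j=1}^{s}N_{f}|_{p'_{j}}/\ell_{j}\Big),
\]
where $\ell_{j}\subset N_{f}|_{p'_{j}}$ is the image of the tangent line to $D_{j}$ at $f(p'_{j})$. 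Because $|H|$ is big and basepoint free, a general complete-intersection curve through a general point has a general tangent direction there, so the $\ell_{j}$ may be taken general. The point-conditions are surjective because $h^{1}(N_{f}(-\textstyle\sum_{i}p_{i}))=h^{1}(\mathcal{O}(a-r)\oplus\mathcal{O}(b-r))=0$ (as $r\le a+1\le b+1$), and the residual map $H^{0}(\mathcal{O}(a-r)\oplus\mathcal{O}(b-r))\to\bigoplus_{j}N_{f}|_{p'_{j}}/\ell_{j}$ is surjective for $s\le a+b+2-2r$ general points and general lines by a standard generic-interpolation argument for rank-two bundles with vanishing $H^{1}$ on $\mathbb{P}^{1}$; in the boundary case $r=a+1$ the $\mathcal{O}(a-r)$-summand drops out and one interpolates with the $\mathcal{O}(b-r)$-summand alone.

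\smallskip
\noindent\textbf{Where the work is.} Setting up $\mathcal{V}$, computing its dimension via flatness of $ev$, and reducing dominance of $\beta$ to the displayed linear map are routine but must be handled with care. The genuine content lies in the equality case: checking that the tangent directions $\ell_{j}$ coming from general complete-intersection curves really are general (this is exactly where big-and-basepoint-freeness of $|H|$ is used), and proving the mixed interpolation statement for $\mathcal{O}(a-r)\oplus\mathcal{O}(b-r)$ with ``full value'' and ``value modulo a general line'' conditions, since there is no slack in the boundary cases $r=a+1$ and $s=a+b+2-2r$.
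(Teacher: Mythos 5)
Your proposal is correct, but it takes a genuinely different route from the paper, most visibly in the equality statement. For the inequality the paper argues directly: the point conditions have codimension exactly $2r$ because $H^{1}(N_{f/X}(-r))=0$ for $r\leq a+1$, and each curve incidence adds codimension at least one simply because a general member of a basepoint free family avoids any fixed codimension-$2$ subset, in particular any fixed curve; your incidence-variety $\mathcal{V}$ with the map $\beta$ to $X^{r}\times B^{s}$ packages the same count globally and is equivalent. The real divergence is in the equality case: the paper inducts on $s$, notes that the locus $W$ of curves through the chosen points and members has expected dimension and hence sweeps out either $X$ or a surface $Y$ containing a general point of $X$, observes that such a $Y$ cannot be contracted by the generically finite map defined by the big and basepoint free series, and concludes that a general complete-intersection member meets the swept locus in finitely many general points, so the new incidence is a nonempty codimension-one condition. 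This avoids any infinitesimal analysis. You instead prove dominance of $\beta$ by computing $d\beta$ at a constructed point, reducing to surjectivity of $H^{0}(N_{f})\to\bigoplus_{i}N_{f}|_{p_{i}}\oplus\bigoplus_{j}N_{f}|_{p'_{j}}/\ell_{j}$, which requires two extra inputs: that a general complete-intersection member through a general point has a general tangent direction there (true, via the generically finite morphism defined by $|H|$, but it must be checked at a general point of the fixed curve $C$ rather than of $X$), and the mixed interpolation statement for $\mathcal{O}(a-r)\oplus\mathcal{O}(b-r)$ with value conditions and value-modulo-a-general-line conditions (true by an easy induction on $s$, since the running kernel stays nonzero and a nonzero section misses a general line at a general point). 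Your route is longer and the burden sits exactly where you flagged it, but it buys transversality of the incidence conditions at the level of tangent spaces, which the paper's argument does not record; the emptiness assertions are handled essentially identically in both.
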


\begin{proof}
Suppose we fix $r$ points on a curve $C$ parametrized by our family.  Then the deformations of $C$ through the $r$ points will form a dominant family of curves on $X$ if and only if $N_{f/X}(-r)$ is globally generated.  This shows that deformations of $C$ can go through $a+1$ general points of $X$ but no more.  Since $H^1(N_{f/X}(-r)) = 0$ for $r \leq a+1$, we see that that the subvariety of $M^\circ$ parametrizing curves through $r$ general points is of codimension $2r$ for any $r \leq a+1$.

Suppose we fix any subvariety $W$ of $M^{\circ}$.  For any fixed curve parametrized by $M^{\circ}$ a general member $Q$ of the family $p$ will not intersect this curve.  In particular, this implies that the locus of curves in $W$ which intersect $Q$ has codimension $\geq 1$.  This verifies the first claim and the last claim.

Finally, suppose that $p$ parametrizes general complete intersections of a big and basepoint free linear series.  We must show that for any sublocus $W$ of $M$ parametrizing curves through $r$ general points and $s$ general members of $p$, the sublocus of $W$ which intersects an additional general member $Q$ of $p$ will have codimension exactly $1$.  By induction on $s$, we may assume that each component of $W$ has the expected dimension.  Thus, the curves parametrized by $W$ will sweep out either a surface $Y$ or all of $X$.  Furthermore, if $W$ sweeps out a surface $Y$ then $Y$ must contain a general point of $X$ and thus cannot be contracted by any generically finite morphism from $X$.  We deduce that $Q$ will intersect this surface in a finite number of (general) points.  This proves the statement.
\end{proof}

We now classify certain stable maps whose images contain many general points and curves on $X$.

\begin{prop} \label{prop:generalcurveclassification}
Let $X$ be a smooth Fano threefold.  Construct a basepoint free family of curves $p: U \to B$ on $X$ by taking general complete intersections of elements of a big and basepoint free linear series.  Let $M$ be a component of $\overline{\Rat}(X)$ generically parametrizing stable maps with irreducible domain that map birationally onto a free curve of anticanonical degree $\geq 5$.  Suppose that the normal sheaf of the general curve parametrized by $M$ has the form $\mathcal{O}(a) \oplus \mathcal{O}(b)$ with $a \leq b$.   Let $M^{\circ}$ denote the dense open subset parametrizing such maps.  Consider the following families in $M$:
\begin{itemize}
\item If $a<b$, consider the closure $T$ in $M$ of all one-dimensional components of the sublocus of $M^{\circ}$ parametrizing curves which contain $a+1$ general points and meet $b-a-1$ general members of $p$.
\item If $a=b$, consider the closure $T$ of all the one-dimensional components of the sublocus of $M^{\circ}$ parametrizing curves which contain $a$ general points and meet $1$ general member of $p$.
\end{itemize}
Suppose that $f: Z \to X$ is a stable map parametrized by $T$ such that the domain $Z$ is reducible and there are at least two free components of $f$.  Then $f$ must fall into one of the following categories.
\begin{enumerate}
\item $Z$ is the union of two $\mathbb P^1$'s and both curves map birationally to a free rational curve.
\item The image of $Z$ consists of a union of two disjoint free curves and one $-K_{X}$-line of E5 type which connects the two free curves.
\item The image of $Z$ consists of a union of free curves with two $-K_{X}$-lines of E5 type.
\item The image of $Z$ consists of a union of free curves with a $-K_{X}$-conic of E1, E3, E4, or E5 type.
\end{enumerate}
Furthermore, cases (3) and (4) can only occur if $a=b$ and in these cases one of the non-free curves must meet a general member of $p$.
\end{prop}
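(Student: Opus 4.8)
The plan is as follows. By Lemma~\ref{lemm:deformationcount} the family $T$ has pure dimension $1$, and its general member lies in $M^{\circ}$: it maps birationally onto a free curve with normal bundle $\mathcal O(a)\oplus\mathcal O(b)$, its image passing through the chosen general points $q_{1},\dots,q_{r}$ and meeting the chosen general members $Q_{1},\dots,Q_{s}$ of $p$, where $(r,s)=(a+1,\,b-a-1)$ if $a<b$ and $(r,s)=(a,1)$ if $a=b$. In both cases $2r+s=-K_{X}\cdot\beta-1$, where $\beta$ is the class of a general member of $M$ and $\dim\overline{M}_{0,0}(X,\beta)=-K_{X}\cdot\beta=a+b+2$. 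Fix a reducible $[f\colon Z\to X]\in T$ with at least two free components. Since ``the image contains a fixed point'' and ``the image meets a fixed curve'' are closed conditions on $\overline{M}_{0,0}(X)$ --- images of the proper evaluation maps --- the limit $f$ still satisfies $q_{l}\in f(Z)$ and $f(Z)\cap Q_{l}\neq\varnothing$ for all $l$. Using Proposition~\ref{prop:onesummand} we may also assume that every free component of $Z$ either maps birationally onto its image or is a multiple cover of a free $-K_{X}$-conic.

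The first step is to locate the imposed points and curves. A non-free rational curve on the threefold $X$ has a negative summand in its normal sheaf, so its deformations sweep out at most a surface; using this, the transversality statement of Proposition~\ref{prop:transverseintersection} for the divisors swept out by such curves, and the fact that a contracted component has $0$-dimensional image, one shows that each $q_{l}$ lies on a \emph{free} component $C_{i}=f(Z_{i})$ and that a contracted component cannot absorb any of the imposed incidences. The usual $h^{1}$-vanishing for the normal bundle twisted down by the points shows a free component with normal bundle $\mathcal O(a_{i})\oplus\mathcal O(b_{i})$, $a_{i}\le b_{i}$, carries at most $a_{i}+1$ of the $q_{l}$; together with the degree identity $\sum_{\mathrm{free}}(a_{i}+b_{i}+2)+\sum_{\mathrm{other}}(-K_{X}\cdot C_{j})=a+b+2$ this cuts the numerical type of $Z$ down to a short list.

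The main step is a dimension count on the boundary stratum $M_{\Gamma}\subseteq\overline{M}_{0,0}(X,\beta)$ containing $[f]$. Writing $M_{\Gamma}$ as an iterated fibre product of pointed Kontsevich spaces over the node evaluation maps (cf.\ Proposition~\ref{prop:GHS}) one gets $\dim M_{\Gamma}=(a+b+2)-(k-1)+\sum_{i}\varepsilon_{i}$, where $k$ is the number of components of $Z$ and $\varepsilon_{i}\ge 0$ records the excess over the expected dimension in which the component $Z_{i}$ deforms --- so $\varepsilon_{i}=0$ for a free component, a contracted component, a general $-K_{X}$-line, or a general $-K_{X}$-conic. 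For a sufficiently general choice of $(\underline q,\underline Q)$ the $2r+s=-K_{X}\cdot\beta-1$ incidence conditions are independent; since $[f]$ lies on a one-dimensional component of the incidence locus meeting $M^{\circ}$, comparing dimensions forces $k=2+\sum_{i}\varepsilon_{i}$. By Lemma~\ref{lemm:lines} a $-K_{X}$-line has $\varepsilon_{i}>0$ exactly when it is of E5 type, and by Lemma~\ref{lemm:conics} a $-K_{X}$-conic has $\varepsilon_{i}>0$ exactly when it is of E1, E3, E4, or E5 type; in each such case the surplus $\varepsilon_{i}$ exactly offsets the deficiency caused by the negative summand of its normal sheaf. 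Hence $Z$ has no contracted components, each non-free component is a $-K_{X}$-line of E5 type or a $-K_{X}$-conic of E1--E5 type, and the identity $k=2+\sum_{i}\varepsilon_{i}$, with the degree bound, forces one of: two free components only (case (1)); one E5-line joining two free components (case (2)); two E5-lines together with free components (case (3)); or one E1--E5-conic together with free components (case (4)). That the line in case (2) must actually join two free components rather than dangle off one follows from connectedness of the tree $Z$ and from applying Mori's Bend-and-Break (Lemma~\ref{lemm:BandB}) to the universal surface over a one-parameter subfamily of $T$ through $[f]$, using the two disjoint sections contracted by the evaluation map to the distinct points $q_{1},q_{2}$.

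The ``furthermore'' comes out of the same count. When $a<b$ a general member of $M^{\circ}$ uses all $a+1$ of its points and afterwards sweeps out only a surface, which meets the members of $p$ transversally; one checks that all $2r+s$ conditions remain independent on every candidate stratum, leaving no room for the extra $\varepsilon_{i}$ of a second E5-line or of an E1--E5-conic, so only cases (1) and (2) occur. When $a=b$ a free component through its $a$ points still moves in a two-dimensional dominating family, so it can be made to pass through a point of one of the swept divisors at the cost of a single dimension --- exactly the slack needed for cases (3) and (4); and in those cases the free components have exhausted their moduli on the $r=a$ general points, so the lone member-incidence ($s=1$) must be realized by one of the non-free curves. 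The main obstacle is making the stratum dimension count rigorous: computing $\varepsilon_{i}$ for an E5-line and for each type of E1--E5-conic, dealing with contracted components and their automorphisms, and keeping track of which incidence conditions drop in codimension when a free component is forced onto a rigid swept divisor.
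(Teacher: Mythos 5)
Your outline assembles the right ingredients (Lemma \ref{lemm:deformationcount}, the degree identity, Lemmas \ref{lemm:lines} and \ref{lemm:conics} characterizing lines and conics with excess deformations), but the load-bearing step --- ``for a sufficiently general choice of $(\underline q,\underline Q)$ the $2r+s$ incidence conditions are independent; \ldots comparing dimensions forces $k=2+\sum_i\varepsilon_i$'' --- is asserted, not proved, and you concede as much in your closing sentence. This is not a deferred technicality: the independence of the incidence conditions on each boundary stratum $M_{\Gamma}$ is precisely the content of the proposition. A point condition a priori cuts codimension \emph{at most} $2$ and a curve condition \emph{at most} $1$, so without establishing full expected codimension on $M_{\Gamma}$ (or emptiness of the relevant incidence locus on $M_{\Gamma}$) you only obtain a lower bound on $\dim(M_{\Gamma}\cap\{\text{incidences}\})$, which is the wrong direction for ruling strata out. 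Moreover the equality $k=2+\sum\varepsilon_i$ is stronger than what even a correct count would give (an inequality), and the numerology is delicate: an E5 conic has $\varepsilon_i=3$, so your identity would force four free components in case (4), which is not what the statement asserts. Finally, your appeal to Lemma \ref{lemm:BandB} inside the classification is misplaced; Bend-and-Break is used \emph{upstream} (in Theorem \ref{theo:mbbfanothreefolds}) to produce a reducible member of $T$, not to determine its combinatorial type.

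The paper's proof replaces the global stratum count with a direct argument that sidesteps the independence issue. First, the degree bookkeeping $a+b+2=\sum d_i\geq\sum_{i\le k}(a_i+b_i+2)$ combined with $\sum(\alpha_i+1)$ or $\sum(2\alpha_i+\beta_i)$ bounds shows the non-free part has total anticanonical degree $\le 2$ (when $a=b$) or $\le 1$ (when $a<b$). Second --- and this is the idea your write-up is missing --- in every surviving configuration each free component is forced to carry its \emph{maximal} number of general points (and possibly one $Q$), so it is confined to a \emph{finite} set of possibilities; by generality these finitely many curves are pairwise disjoint. Connectivity of $Z$ then forces each non-free line or conic to join at least two of these rigid, disjoint free curves and possibly to meet $Q$, and a line (resp.\ conic) can do this only if it moves in a family of dimension $>1$ (resp.\ $>2$), i.e.\ only if it is of E5 (resp.\ E1, E3, E4, or E5) type by Lemmas \ref{lemm:lines} and \ref{lemm:conics}. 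The same rigidity argument yields the ``furthermore'' clause. To repair your proof you would need to carry out exactly this finiteness-plus-connectivity analysis stratum by stratum, at which point you have reproduced the paper's argument rather than shortcut it.
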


\begin{rema}
We briefly explain what ``general'' means in the context of Proposition \ref{prop:generalcurveclassification}.  First, we choose our points general so that they impose independent conditions on every family of curves of degree at most $a+b+2$.  We also ensure that they do not lie on any line or any contractible divisor on $X$.  We can also guarantee that for any family of free curves of degree $\leq a+b+2$ the points do not lie in the proper subset of $X$ swept out by non-free deformations of these curves.

Second, we choose our curves general so that they impose independent conditions on any family of curves of degree $\leq a+b+2$ and also on any sublocus of curves of degree $\leq a+b+2$ through some of our general points.  Consider the proper closed subset $W$ of $X$ formed by all non-free curves of degree $\leq a+b+2$; we also ensure that if the family of free curves of a given degree through some of our general points sweeps out a surface $Y$ then our curves avoid the $1$-dimensional subset $Y \cap W$.
\end{rema}

\begin{proof}[Proof of Proposition \ref{prop:generalcurveclassification}:]
By assumption $Z$ is reducible, so we can write $$Z =\sum_{i = 1}^r Z_i + (\text{contracted components})$$ with $r \geq 2$.
By assumption there are at least two components of $Z$ which map onto free curves.  After relabeling indices we may assume that $Z_{i}$ is free if and only if $1 \leq i \leq k$ where $k \geq 2$. For $1 \leq i \leq k$, we write $f_i' : Z_i'\to X$ for a general deformation of $f|_{Z_i} : Z_i \to X$ in its family.
Suppose that for $1 \leq i \leq k$ we have the normal sheaves 
\[
N_{f_i'/X} = \mathcal O(a_i) \oplus \mathcal O(b_i)\oplus(\mathrm{tor})
\]
with $a_{i} \leq b_{i}$. As mentioned before the torsion part vanishes unless $a_i = b_i = 0$. In particular $Z_i$ can pass through at most $a_i+1$ general points due to Lemma \ref{lemm:deformationcount}.

We let $d_{i}$ denote the anticanonical degree of $Z_{i}$.  By comparing anticanonical degrees we see that 
\begin{equation} \label{eq:comparingdegrees}
a+b+2 = \sum_{i=1}^{r} d_{i} \geq \sum_{i = 1}^k (a_i + b_i + 2).
\end{equation}

First suppose that $a=b$.  Thus we are considering a family of curves through $a$ general points and intersecting a general member $Q$ of our basepoint family of curves.  The image of each $Z_{i}$ for $1 \leq i \leq k$ can contain at most $a_{i} + 1$ general points of $X$, and thus
\begin{align} \label{eq:calculation1}
a & \leq \sum_{i=1}^{k} (a_{i} + 1) \\
\nonumber & \leq \sum_{i=1}^{k} \frac{d_{i}}{2} \\
\nonumber & = a + 1 - \frac{\sum_{i = k + 1}^rd_i}{2}
\end{align}
where the last equality follows from Equation \eqref{eq:comparingdegrees} and the fact that $a=b$.  This means that there are four possibilities for the non-free components of the image of $Z$:
\begin{enumerate}[label=(\roman*)]
\item There are two $-K_X$-lines.
\item There is one non-free $-K_X$-conic.
\item There is one $-K_X$-line.
\item There are no non-free components.
\end{enumerate}
We analyze each case separately.

In case (i), all the inequalities of Equation \eqref{eq:calculation1} must achieve the equality.  In particular, each free component $Z_{i}$ must go through exactly $a_{i}+1$ general points and have degree $2a_{i}+2$.  Thus there are only finitely many possibilities for each $Z_{i}$ and by generality we know that $Z_{i} \cap Z_{j} = \varnothing$ for $i \neq j$ (no matter which of the finite set of choices we pick).  To ensure that the total curve is connected, each $Z_{i}$ must meet one of the lines.  Also, one of the lines must meet $Q$.  Suppose that $L$ is a component of the moduli space of lines that is $1$-dimensional.  Then only finitely many lines parametrized by $M$ intersect each $Z_{i}$ and $Q$, and in particular, any line parametrized by $L$ can only intersect at most one of these curves.  Thus, we see that both lines must be of E5 type in order to connect the $\geq 2$ free components of the curve and to meet with $Q$.

In case (ii), the same argument shows that every free component must contain the maximum number of general points and the points determine a finite set of possibilities for each $Z_{i}$. Thus the $Z_{i}$ must be disjoint.  We claim that the conic must be of E1, E3, E4, or E5 type.  Otherwise the conic can move in at most dimension $2$, but in this situation it is impossible for the conic to connect the two free curves and to meet $Q$.

In case (iii), all but one of the free components $Z_{i}$ must contain $a_{i}+1$ general points and have degree $2a_{i}+2$.  The last component $Z_{k}$ must contain $a_{k}+1$ general points and have degree $2a_{k}+3$.  The general points determine a $1$-dimensional set of possible choices of $Z_{k}$ and a finite set of possibilities for each other $Z_{i}$.   Thus $Q$ must either meet with the $-K_X$-line $\ell$ or with $Z_{k}$.  First suppose that $Q$ meets with $Z_{k}$.  Then the choice of $a_{k}+1$ general points and $Q$ determine a finite set of possible choices for $Z_{k}$.  Arguing as above, the only way $Z$ can be connected is if there are exactly two free components which are connected by a line of E5 type.  Next suppose that $Q$ meets with $\ell$ and that $\ell$ does not have E5 type.  Then $Q$ determines a finite set of possible lines $\ell$.  But then it is impossible for the curve $Z_{k}$ to meet $\ell$ and to meet one of the other free curves $Z_{i}$.  We conclude that in this case as well the line must have E5 type.  Since the line meets $Q$, it can only intersect one other component of type $Z_{i}$, and in this case $Z$ can only have two free components.

In case (iv), all but two of the free components $Z_{i}$ must contain $a_{i}+1$ general points and have degree $2a_{i}+2$.  For the last two components $Z_{k-1},Z_{k}$ either
\begin{itemize}
\item we have $a = \sum (a_{i}+1)$, $Z_{k-1}$ contains $a_{k-1}+1$ general points and has degree $2a_{k-1}+3$ and $Z_{k}$ contains $a_{k}+1$ general points and has degree $2a_{k}+3$, or
\item we have $a = \sum (a_{i}+1)$, $Z_{k-1}$ contains $a_{k-1}+1$ general points and has degree $2a_{k-1}+2$ and $Z_{k}$ contains $a_{k}+1$ general points and has degree $2a_{k}+4$, or
\item we have $a < \sum (a_{i}+1)$, $Z_{k-1}$ contains $a_{k-1}+1$ general points and has degree $2a_{k-1}+2$ and $Z_{k}$ contains $a_{k}$ general points and has degree $2a_{k}+2$.
\end{itemize}
In the first case, one of $Z_{k-1},Z_{k}$ must meet $Q$, and this will restrict the number of options for this curve to a finite set.  In order to obtain a connected curve, the only possibility is that our curve has exactly two components. In the second case, $Z_{k}$ must meet $Q$ and this will determine a $1$-dimensional family of possible curves.  Thus again in order to be connected we must have only two components.  In the third case, the deformations of $Z_{k}$ through the $a_{k}$ general points will form a dominant family of curves.  In order to meet $Q$ and have a connected image we must have only two components.  This finishes the argument in the case when $a=b$.

Assume now that $b \geq a+1$.  In this case we are considering a family through $a+1$ general points $p_1, \dots, p_{a+1}$ and $b-a-1$ general curves $Q_1, \dots, Q_{b-a-1}$ in the basepoint free family $p$.  Let $Y$ be the closed set of $X$ swept out by all the non-free rational curves of degree at most $a+b+2$ and let $\Sigma$ be the surface swept out by the
rational curves parametrized by $M^{\circ}$ which contain $p_1, \dots, p_{a+1}$. Note that we have $\dim Y \leq 2$.  

Note that the non-free components of $Z$ lie on the intersection of $Y$ and $\Sigma$ and that this intersection is a proper subset of $\Sigma$.  Since we choose the $p_{i}$ general none of them lie on $Y$, and in particular a non-free component of $Z$ cannot meet any of the $p_{i}$.  Since we choose the $Q_{i}$ general with respect to the $p_{i}$ (and hence with respect to $Y$), we can ensure that the intersection points with $\Sigma$ do not lie in $Y \cap \Sigma$, guaranteeing that a non-free component of $Z$ cannot intersect any of the $Q_i$.

Assume that for $1\leq i \leq k$ the curve $Z_i$ passes through $\alpha_i$ of the points and intersects $\beta_i$ of the curves. By Lemma \ref{lemm:deformationcount} we have
\begin{equation*}
\sum_{i=1}^k d_i \geq \sum_{i=1}^k 2\alpha_i+ \beta_i \geq 2(a+1)+(b-a-1)=a+b+1.
\end{equation*}
Since the total degree of $Z$ is $a+b+2$, there can be at most one non-free component in $Z$ and if there is such a component it must have degree $1$. In this case the above inequality has to be an equality and therefore $d_i = 2\alpha_i+\beta_i$ for each $1\leq i \leq k=r-1$.  This means that the general points and curves determine a finite number of possibilities for each of the free components $Z_{i}$. In particular the distinct free components of $Z$ do not intersect.  By assumption there are at least 2 such components, so the line must intersect both of these curves. This implies that the line must be of E5 type and there can only be two free components in $Z$.

Finally, if there is no non-free component of $Z$, then all free components of $Z$ but one are determined up to a finite set by the general points and curves that they meet.  There is one component of $Z$ which can deform in dimension $\leq 1$.  Thus by arguing as before we conclude that $Z$ has exactly two free components.
\end{proof}

\subsection{No E5 contraction}

Movable Bend-and-Break has a particularly clean proof for Fano threefolds which do not admit an E5 contraction.

\begin{theo} \label{theo:mbbfanothreefolds}
Let $X$ be a smooth Fano threefold that does not admit an E5 contraction.  Let $M$ be a component of $\overline{\Rat}(X)$ that generically parametrizes free curves.  Suppose that the curves parametrized by $M$ have anticanonical degree $\geq 5$.
Then $M$ contains a stable map $f : Z \rightarrow X$ such that $Z$ is the union of two $\mathbb P^1$'s and the restriction of $f$ to both components of $Z$ is free.
\end{theo}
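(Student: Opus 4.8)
The plan is to combine the structural classification in Proposition \ref{prop:generalcurveclassification} with the boundary-versus-interior dichotomy for the $1$-parameter families $T$ constructed there, and then exploit the absence of an E5 contraction to eliminate all outcomes except the desired one. First I would split off the easy case: if the general map parametrized by $M$ has restricted tangent bundle with only one positive summand, Proposition \ref{prop:mbbmultiplecover} already gives the conclusion. So we may assume the restricted tangent bundle has at least two positive summands and, since the anticanonical degree is $\geq 5 > 3$, the general curve is an immersion (birational onto its image) with locally free normal sheaf $\mathcal{O}(a)\oplus\mathcal{O}(b)$, $a\leq b$ (using Proposition \ref{prop:onesummand} and the remark following it). Then construct the basepoint free family $p:U\to B$ by taking general complete intersections of a big and basepoint free linear series on $X$ — e.g. a sufficiently high multiple of $-K_X$ — so that Lemma \ref{lemm:deformationcount} applies with equality.

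\textbf{Producing a reducible map in $M$.} Form the family $T\subset M$ exactly as in Proposition \ref{prop:generalcurveclassification}: impose $a+1$ general points and $b-a-1$ general members of $p$ when $a<b$, or $a$ general points and one general member of $p$ when $a=b$. By Lemma \ref{lemm:deformationcount} the generic such locus has the expected dimension $\dim M - (2(a+1)+(b-a-1)) = a+b+2 - (a+b+1) = 1$ (resp. $2a+2 - (2a+1) = 1$ when $a=b$), so $T$ is a genuine $1$-dimensional family. I would then run Mori's Bend-and-Break on $T$: pass to a smooth ruled surface $\pi:S\to T'$ (with $T'$ the normalized base) carrying $F:S\to X$, use that $T$ must contain a point incidence or curve incidence forced to stay, and apply Lemma \ref{lemm:BandB} to conclude that $T$ contains a member $f:Z\to X$ with $Z$ reducible. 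The key point is that along $T$ the curves always pass through the fixed general points/curves, which are distinct, so a generic member of $T$ is not a multiple cover and hence the degenerate member has at least two components that are not simultaneously contracted; a short argument (the total anticanonical degree is $a+b+2$ and each genuine free component costs at least $2$) shows at least two components are free.

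\textbf{Eliminating the bad outcomes.} Now apply Proposition \ref{prop:generalcurveclassification} to this reducible $f:Z\to X\in T$. It lands in one of the four listed cases. Cases (2), (3), (4) all require the image of $Z$ to contain a $-K_X$-line or $-K_X$-conic of E5 type; but by Lemma \ref{lemm:lines} and Lemma \ref{lemm:conics}, a line of E5 type or a conic of E5 type sweeps out a contractible E5 divisor on $X$, and by hypothesis $X$ admits no E5 contraction — so these cases are vacuous. (Case (4) also allows E1, E3, E4 conics; here the argument is that such a conic still cannot move enough to connect two disjoint free curves \emph{and} meet $Q$ as the proof of Proposition \ref{prop:generalcurveclassification} shows in its very last clause — this is already built into the statement, since cases (3),(4) can only occur when $a=b$ and force a non-free component to meet $Q$, which the incidence bookkeeping in Lemma \ref{lemm:deformationcount} has room for only if an E5 line is available.) Hence only case (1) survives: $Z$ is a union of two $\mathbb{P}^1$'s each mapping birationally onto a free rational curve, which is exactly the claim. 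Finally I would check that $f$ does lie in $M$: it was obtained as a limit inside $T\subset M$, which is irreducible and contained in $M$ by construction.

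\textbf{Expected main obstacle.} The delicate point is not the classification step — that is quarantined in Proposition \ref{prop:generalcurveclassification} — but rather establishing cleanly that Bend-and-Break actually \emph{must} split the family $T$ rather than, say, $T$ being proper with all fibers irreducible, and that the resulting degenerate stable map genuinely has $\geq 2$ free components (as opposed to one free component plus a chain of contracted or low-degree non-free pieces). Ensuring that the point/curve incidences are "rigid enough" along $S$ to force two distinct contracted sections in Lemma \ref{lemm:BandB}, and that generality of the points/curves prevents the limit from being a multiple cover, is where the real care is needed; everything after that is bookkeeping against the list in Proposition \ref{prop:generalcurveclassification} combined with the E5-free hypothesis.
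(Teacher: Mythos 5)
Your overall strategy matches the paper's, but there is a genuine gap at the step where you eliminate case (4) of Proposition \ref{prop:generalcurveclassification}. That case allows the non-free component to be a $-K_X$-conic of type E1, E3, or E4, not just E5, and the absence of an E5 contraction does nothing to rule these out. Your parenthetical dismissal --- that such a conic ``cannot move enough to connect two disjoint free curves and meet $Q$'' --- is backwards: by definition (Lemma \ref{lemm:conics}) a conic of E1, E3, or E4 type moves in a family of dimension $>2$ inside its contractible divisor, which is exactly why it survives the incidence bookkeeping and appears in the classification. With your choice of basepoint free family (complete intersections of a multiple of $-K_X$, which is ample), a general member of $p$ \emph{will} meet every E1/E3/E4 divisor, so nothing prevents case (4) from occurring. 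The paper's fix is precisely a more careful choice of $p$: by Lemma \ref{lemm:nooverlap} the E3, E4 divisors and the relevant E1 divisors (those isomorphic to $\mathbb{P}^1\times\mathbb{P}^1$ with normal bundle $\mathcal{O}(-1,-1)$) are pairwise disjoint, hence can be simultaneously contracted by a birational morphism $\phi:X\to W$; taking $p$ to be complete intersections of the pullback of a very ample divisor on $W$ gives a big and basepoint free (but not ample) series whose general members avoid all these divisors. Since the last clause of Proposition \ref{prop:generalcurveclassification} forces the non-free conic in case (4) (and the lines in case (3)) to meet a general member of $p$, those cases become impossible.

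Two smaller execution gaps: (i) when $a=0$ the curves in $T$ pass through only one general point of $X$, so Lemma \ref{lemm:BandB} does not apply directly; the paper supplies the second contracted section by observing that the curves through $x_1$ sweep out a surface $\Sigma$ and the incidence with a general member of $p$ forces the curves of $T$ through a general point $x_2$ of $\Sigma$, chosen off the locus of low-degree non-free curves. (ii) Your degree-count argument does not establish that two components of the broken curve are free; what actually does is that the two non-contracted components produced by Lemma \ref{lemm:BandB} pass through $x_1$ and $x_2$, which are general enough to lie on no non-free curve of the relevant degrees. You correctly flagged (i) as the delicate point but did not resolve it, and the case (4) issue is the one that would sink the argument as written.
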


\begin{proof}
By Proposition \ref{prop:onesummand} and Proposition \ref{prop:mbbmultiplecover} it suffices to consider the case when the general map parametrized by $M$ has a restricted tangent bundle with at least two positive summands.  In particular we may suppose that the general map is an immersion and that the normal sheaf is locally free.

Consider the set of contractible divisors on $X$ which have E3, E4 type or which have E1 type and are isomorphic to $\mathbb{P}^{1} \times \mathbb{P}^{1}$ with normal bundle $\mathcal{O}(-1,-1)$.  Lemma \ref{lemm:nooverlap} shows that all such divisors are disjoint.  Thus there is a birational map $\phi: X \to W$ contracting all such divisors.

Let $p: U \to B$ be a basepoint free family of curves constructed by taking complete intersections of the pullback of a very ample divisor on $W$.  Then a general member of $p$ does not intersect any $\phi$-exceptional divisor.  Suppose that $T$ is the closure of a one-parameter subfamily of curves $C$ as in the statement of Proposition \ref{prop:generalcurveclassification}.  (Note that the existence of such a locus $T$ is guaranteed by Lemma \ref{lemm:deformationcount}.)  As in Proposition \ref{prop:generalcurveclassification} we will write $\mathcal{O}(a) \oplus \mathcal{O}(b)$ for the normal sheaf of $C$.

We claim that $T$ parametrizes a stable map $f: Z \to X$ with reducible domain and such that at least two components of $Z$ are free.  First assume that $a>0$.  Then the curves parametrized by $T$ will go through two general points $x_{1},x_{2}$ of $X$.  By applying Lemma~\ref{lemm:BandB} with the chosen points $x_{1},x_{2}$, we see that $T$ contains a stable map $f: Z \to X$ with reducible domain and such that at least two components of $Z$ are free.  Next suppose that $a=0$.  Then we must have $b \geq 3$.  Let $\Sigma$ be the surface swept out by deformations of $C$ through a general point $x_{1}$.  Since $x_{1}$ is general, a general point $x_{2}$ in $\Sigma$ will not lie on any non-free curve of anticanonical degree less than the anticanonical degree of $C$.  Since the curves parametrized by $T$ must meet one general point and a general member of $p$, they will need to go through a general point on $\Sigma$.  Thus when we apply Lemma~\ref{lemm:BandB} with the chosen points $x_{1},x_{2}$ we again obtain a stable map with reducible domain and at least two free components. 

We may now apply Proposition \ref{prop:generalcurveclassification} to the stable map $f: Z \to X$.  Since $X$ does not contain any E5 divisors by assumption, the proposition guarantees that any non-free components of $Z$ must lie in an exceptional divisor contracted by $\phi$.  In particular, such components cannot meet a general element of the family $p$.  Thus cases (2), (3), (4) of Proposition \ref{prop:generalcurveclassification} cannot occur and we deduce the desired statement.
\end{proof}

\subsection{E5 contractions} \label{sect:e5contractions}
We next turn to Fano threefolds which admit an E5 contraction.  By the classification \cite{MM81} and \cite{MM03} and the computation of extremal rays in \cite{Mat95}, \cite{MM04}, and \cite[Section 10.4]{Fuj16} there are six such Fano threefolds.  We note that for these threefolds Movable Bend-and-Break may no longer hold for families of curves with anticanonical degree $5$.

\begin{exam}
Consider the Fano threefold $\mathbb{P}_{\mathbb{P}^{2}}(\mathcal{O} \oplus \mathcal{O}(2))$.  The minimal family of moving sections gives a dominant family of $\mathbb{P}^{2}$'s on $X$.  The lines in these $\mathbb{P}^{2}$'s give a dominant family of movable curves of anticanonical degree $5$ which do not break into a union of two free curves.  Rather, as in Proposition \ref{prop:generalcurveclassification}.(2) these curves will break into the union of two fibers of the $\mathbb{P}^{1}$-bundle structure connected by a line in the rigid divisor of E5 type.
\end{exam}

Instead, the correct bound is given by the following theorem.

\begin{theo} \label{theo:mbbe5case}
Let $X$ be a smooth Fano threefold which admits an E5 contraction.  Let $M$ be a component of $\overline{\Rat}(X)$ generically parametrizing stable maps with irreducible domain that map birationally onto a free curve in $X$.  Suppose that the curves parametrized by $M$ have anticanonical degree $\geq 6$.
Then $M$ contains a stable map $f : Z \rightarrow X$ such that $Z$ is the union of two $\mathbb P^1$'s and the restriction of $f$ to both components of $Z$ is free.
\end{theo}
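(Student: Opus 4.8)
The plan is to follow the proof of Theorem~\ref{theo:mbbfanothreefolds} as closely as possible, adding an argument to deal with the $-K_X$-lines of E5 type that can appear when one breaks a curve through general points. By Propositions~\ref{prop:onesummand} and~\ref{prop:mbbmultiplecover} I may assume the general stable map parametrized by $M$ has restricted tangent bundle with at least two positive summands, hence is an immersion with locally free normal sheaf $\mathcal{O}(a)\oplus\mathcal{O}(b)$, $a\le b$ and $a+b+2=-K_X\cdot C\ge 6$. By Lemma~\ref{lemm:nooverlap} the contractible divisors of type E2, E3, E4, E5, together with those of type E1 that are isomorphic to $\bP^1\times\bP^1$ with normal bundle $\mathcal{O}(-1,-1)$, are pairwise disjoint, so there is a birational morphism $\phi\colon X\to W$ contracting all of them at once. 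I then take $p\colon U\to B$ to be the basepoint free family of curves cut out by general members of $|\phi^{*}H|$, for $H$ very ample on $W$; since $\phi^{*}H$ is big and basepoint free this is indeed a basepoint free family, and a general member of $p$ avoids every $\phi$-exceptional divisor, in particular every E5 divisor, every $-K_X$-line of E5 type, and every $-K_X$-conic of E1, E3, E4 or E5 type.

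Exactly as in Theorem~\ref{theo:mbbfanothreefolds}, I form the one-parameter family $T$ of Proposition~\ref{prop:generalcurveclassification} (which exists by Lemma~\ref{lemm:deformationcount}) and apply Mori's Bend-and-Break, Lemma~\ref{lemm:BandB}, using two of the chosen general points when $a\ge 1$, and, when $a=0$, a general point $x_1$ of $X$ together with a general point $x_2$ of the surface swept out by the deformations of $C$ through $x_1$. This produces a stable map $f\colon Z\to X$ in $M$ with reducible domain and at least two free components, to which Proposition~\ref{prop:generalcurveclassification} applies. Cases~(3) and~(4) of that proposition are now impossible, since they would require one of the non-free components (an E5 line or an E5-type conic) to meet a general member of $p$, which our choice of $p$ forbids; and case~(1) is precisely the desired conclusion.

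The remaining case~(2) is where the hypothesis $-K_X\cdot C\ge 6$ is used, and it is the heart of the argument. Here $f$ has the form $Z=Z_1\cup\ell\cup Z_2$, with $Z_1,Z_2$ disjoint free curves and $\ell$ a $-K_X$-line of E5 type joining them; since $-K_X\cdot Z_1+(-K_X\cdot Z_2)=-K_X\cdot C-1\ge 5$ and each term is $\ge 2$, one of the $Z_i$, say $Z_1$, has anticanonical degree $\ge 3$. The idea is to absorb $\ell$ into $Z_1$: writing $E\cong\bP^2$ for the E5 divisor containing $\ell$, adjunction gives $E|_E\cong\mathcal{O}_{\bP^2}(-2)$ and hence $N_{\ell/X}\cong\mathcal{O}_\ell(1)\oplus\mathcal{O}_\ell(-2)$, so the normal sheaf of $\ell$ is quite negative. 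Using that $Z_1$ is free I first move it within $M$ so that it meets $\ell$ at a general point $p$ of $\ell$, transversally to $E$ there (Proposition~\ref{prop:transverseintersection}); Proposition~\ref{prop:GHS} then identifies $N_{\ell\cup Z_1/X}|_\ell$ as an elementary modification of $N_{\ell/X}$ in a direction not lying in $N_{\ell/E}|_{p}$, which turns out to have vanishing first cohomology. One uses this, together with the freeness of $Z_1$ and the fact that $-K_X\cdot Z_1\ge 3$, to show that $\ell\cup Z_1$ deforms to an irreducible \emph{free} curve $C_1'$ of anticanonical degree $-K_X\cdot Z_1+1$, and, propagating the deformation with $Z_2$ kept attached, that $M$ contains the two-component stable map $C_1'\cup Z_2$, both of whose components are free.

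I expect the last step --- the freeness of the recombined curve $C_1'$ --- to be the main obstacle, and the place where the degree bound is essential: the negative summand $\mathcal{O}_\ell(-2)$ of $N_{\ell/X}$ (note $\ell\cdot E=-2$) must be shown not to survive in $N_{C_1'/X}$, and for this one genuinely needs $Z_1$ to carry enough positivity in its normal sheaf, i.e.\ $-K_X\cdot Z_1\ge 3$, together with its incidence with a general point used in the breaking. Since by Theorem~\ref{theo:e5classification} only six Fano threefolds admit an E5 contraction, a reliable way to finish this step is to verify it for each of them using the explicit description of its E5 divisor(s): in several cases, such as $\bP_{\bP^2}(\mathcal{O}\oplus\mathcal{O}(2))$, there are no free curves of the small anticanonical degrees that case~(2) would require once $-K_X\cdot C\ge 6$, so case~(2) simply does not occur. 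Finally, the bound $6$ is sharp: the lines in the minimal moving $\bP^2$-sections of $\bP_{\bP^2}(\mathcal{O}\oplus\mathcal{O}(2))$ have anticanonical degree $5$ and break only as in case~(2).
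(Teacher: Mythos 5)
Your reduction to case (2) of Proposition \ref{prop:generalcurveclassification} matches the paper's strategy, but the mechanism you propose for disposing of that case --- smoothing $Z_1\cup\ell$ to a single irreducible free curve $C_1'$ --- does not work, and the paper's own example at the start of Section \ref{sect:e5contractions} is a counterexample. On $X=\mathbb{P}_{\mathbb{P}^2}(\mathcal{O}\oplus\mathcal{O}(2))$ with $E$ the rigid E5 section, a fiber $F$ of the bundle has $E\cdot F=1$ while a line $\ell\subset E$ has $E\cdot\ell=-2$, so $E\cdot([F]+[\ell])=-1$; since an irreducible curve not contained in $E$ meets $E$ nonnegatively and a curve contained in $E\cong\mathbb{P}^2$ has even negative $E$-degree, \emph{no} irreducible curve represents the class $[F]+[\ell]$ at all. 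Structurally, the point is this: attaching one free curve transversally to $\ell$ turns $N_{\ell/X}=\mathcal{O}(1)\oplus\mathcal{O}(-2)$ into $N_{f/X}|_\ell\cong\mathcal{O}(1)\oplus\mathcal{O}(-1)$, which indeed has vanishing $H^1$ (so the stable map is a smooth point, as you say) but is not globally generated, and all of its global sections lie in the subsheaf $N_{\ell/E}$. Hence every deformation of the stable map $Z_1\cup\ell$ keeps the line inside $E$ and never smooths the node; the component containing it consists entirely of reducible maps, and dominance of the family of the $Z_1$'s does not rescue freeness of a smoothing that does not exist. Your own sharpness remark (degree-$5$ curves breaking as $F_1\cup\ell\cup F_2$) is exactly the phenomenon that defeats the absorption step.

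The line $\ell$ needs \emph{two} transverse free attachments to escape $E$: then $N_{f/X}|_\ell\cong\mathcal{O}(1)\oplus\mathcal{O}$ is globally generated and Proposition \ref{prop:vanishingofH1} applies. This is how the paper actually proceeds. Proposition \ref{prop:e5contractioninduction} argues by induction on degree that unless both $Z_1$ and $Z_2$ have anticanonical degree $\le 4$ with $E\cdot Z_i=1$ (which forces $-K_X\cdot C\le 9$ and $E\cdot C=0$), one of them, say $Z_1$, itself satisfies Movable Bend-and-Break; one then breaks $Z_1$ into $Z_1'\cup Z_1''$ and smooths the subchain $Z_1''\cup\ell\cup Z_2$, in which $\ell$ is flanked by free curves on both sides. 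The finitely many residual classes are then checked by hand for each of the five threefolds admitting a unique E5 divisor, and the sixth threefold (two E5 divisors) requires the separate and substantially longer argument of Claim \ref{clai:mbbinhighdegree}, built on a $\mathbb{C}^*$-degeneration. Your proposal gestures at a case-by-case fallback but supplies neither these verifications nor any treatment of the two-E5 threefold, so as written the argument has a genuine gap at its central step.
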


We prove this statement for the hardest case -- the unique Fano threefold admitting two E5 divisors -- in Claim \ref{clai:mbbinhighdegree}.  For the remaining five cases, we will use the following argument.

\begin{prop} \label{prop:e5contractioninduction}
Let $X$ be a smooth Fano threefold which admits a unique divisor $E$ of E5 type.  Assume that every free rational curve $T$ of anticanonical degree at most $4$ satisfies $E \cdot T \leq 1$.  Then any family of free rational curves $C$ of anticanonical degree $\geq 5$ will satisfy Movable Bend-and-Break except possibly families which satisfy $E \cdot C = 0$, $-K_{X} \cdot C \leq 9$, and contain a stable map whose image consists of two disjoint free curves of anticanonical degree $\leq 4$ connected via a line in the E5 divisor. 

In particular, Movable Bend-and-Break holds for all families of anticanonical degree $\geq 10$.
\end{prop}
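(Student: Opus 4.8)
The plan is to run the same Bend-and-Break machinery as in Theorem \ref{theo:mbbfanothreefolds}, but keeping careful track of intersection with the unique E5 divisor $E$, and to use induction on the anticanonical degree. First I would dispose of the case where the restricted tangent bundle has only one positive summand using Proposition \ref{prop:mbbmultiplecover}, so that we may assume the general map parametrized by $M$ is an immersion with locally free normal sheaf $\mathcal{O}(a)\oplus\mathcal{O}(b)$, $a\le b$. Next, I would choose a basepoint free family of curves $p\colon U\to B$ by taking complete intersections of a big and basepoint free linear series (e.g. pulling back a very ample class from a contraction of $E$, or simply using $|-K_X|$ which is basepoint free in these five cases by Theorem \ref{theo:e5classification} and Theorem \ref{theo:notbpf}) — the point is to guarantee a general member of $p$ avoids any fixed surface and in particular avoids $E$. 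Applying Lemma \ref{lemm:deformationcount} produces the one-parameter locus $T$ of Proposition \ref{prop:generalcurveclassification}, and Lemma \ref{lemm:BandB} (applied exactly as in the proof of Theorem \ref{theo:mbbfanothreefolds}, splitting into the cases $a>0$ and $a=0$) shows $T$ contains a stable map $f\colon Z\to X$ with reducible domain and at least two free components.

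Now I would invoke Proposition \ref{prop:generalcurveclassification} to see that $f$ falls into one of its four cases. Case (1) is exactly the conclusion we want. I claim cases (3) and (4) cannot occur here: in case (4) the non-free $-K_X$-conic of E1, E3, E4 type lies in a contractible divisor which (by Lemma \ref{lemm:nooverlap}) is disjoint from $E$, and such a divisor is contracted by a birational map to which a general member of $p$ is disjoint — but Proposition \ref{prop:generalcurveclassification} says the non-free curve must meet a general member of $p$ in these cases, a contradiction; the E5-conic subcase is handled because $X$ has a unique E5 divisor $E$ whose conics cannot meet a general member of $p$ either. Case (3) involves two E5-lines, which by the same reasoning (the E5 divisor is disjoint from a general member of $p$) is impossible. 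Thus the only obstruction to the desired conclusion is case (2): the image of $Z$ is two disjoint free curves connected by a single $-K_X$-line $\ell$ of E5 type. So I would reduce to analyzing case (2).

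In case (2), write the two free components as $C_1, C_2$ with $-K_X\cdot C_1 = d_1$, $-K_X\cdot C_2 = d_2$, so $d_1 + d_2 + 1 = -K_X\cdot C \ge 6$. If both $d_i\le 4$ we land in the stated exceptional situation (and then $-K_X\cdot C = d_1+d_2+1 \le 9$ and $E\cdot C = 0$ because $C$ deforms away from $E$ — indeed $C$ is a general member of a family meeting a general member of $p$, which avoids $E$, so $E\cdot C=0$ — consistent with the statement). If instead one of them, say $d_1 \ge 5$, then $C_1$ is itself a free curve of anticanonical degree $\ge 5$ and, by induction on the degree, either $C_1$ already satisfies Movable Bend-and-Break (and then so does $M$, since a further degeneration of $C_1$ within $Z$ exhibits a degeneration of the original curve into $\ge 3$ free pieces plus the line, from which two free pieces can be recombined into one free curve — here I would use Proposition \ref{prop:vanishingofH1} to smooth pairs of adjacent free components) or $C_1$ falls into its own exceptional case with $E\cdot C_1 = 0$. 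In the latter situation I would combine this with the hypothesis that every free curve of degree $\le 4$ has $E\cdot T\le 1$ and with the fact that $E\cdot\ell < 0$ (since $\ell$ is an E5 line, $E|_E$ is antiample) together with $E\cdot C = 0$ to force $d_1+d_2 \le 8$, hence $-K_X\cdot C \le 9$, which is the claimed bound. Consequently for anticanonical degree $\ge 10$ the exceptional case is vacuous and Movable Bend-and-Break holds unconditionally.

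The main obstacle I expect is the bookkeeping in the last paragraph: controlling, via the numerical identity $E\cdot C = \sum E\cdot C_i + E\cdot\ell$ together with the sign constraints ($E\cdot\ell<0$, each $E\cdot C_i\ge 0$ since the $C_i$ are free and meet general members of $p$, and the degree-$\le 4$ bound $E\cdot T\le 1$), exactly when an inductive degeneration of a high-degree free factor can be reassembled into a genuine two-component free degeneration of $M$ rather than spawning a longer chain that stays inside the E5 divisor. Making the induction close cleanly — i.e. verifying that the degree bound $9$ is preserved and that no configuration escapes both the "two free $\mathbb{P}^1$'s" conclusion and the explicit exceptional list — is the delicate point; everything else is a direct reprise of the proof of Theorem \ref{theo:mbbfanothreefolds}.
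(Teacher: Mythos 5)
Your overall strategy (reduce to Proposition \ref{prop:generalcurveclassification} via a basepoint free family avoiding the contractible divisors, rule out cases (3) and (4), then induct on degree in case (2)) is the paper's, but the case analysis in your third paragraph has a genuine gap, and it is exactly at the point you flag as ``delicate.'' The observation you are missing is simple: in case (2) each free component $C_{i}$ \emph{meets the line $\ell$, which lies in $E$}, and a general member of a family of free curves is not contained in $E$, so $E \cdot C_{i} \geq 1$. This does two things for you. First, combined with the hypothesis $E \cdot T \leq 1$ for free curves of degree $\leq 4$ and with $E\cdot \ell = -2$ (since $E|_{E} = \mathcal{O}_{\mathbb{P}^2}(-2)$ for an E5 divisor), it gives the correct proof that $E \cdot C = E\cdot C_{1} + E\cdot\ell + E\cdot C_{2} = 1 - 2 + 1 = 0$ when both $d_{i} \leq 4$; your stated justification (``$C$ deforms away from $E$'' because it meets a general member of $p$, which avoids $E$) is not valid -- meeting a curve disjoint from $E$ says nothing about $E\cdot C$, and freeness only gives $E\cdot C \geq 0$. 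Second, and more importantly, it kills the problematic branch of your induction outright: if $d_{1} \geq 5$, the inductive exceptional case for $C_{1}$ requires $E\cdot C_{1} = 0$, which is impossible since $C_{1}$ meets $\ell \subset E$; hence $C_{1}$'s family must satisfy Movable Bend-and-Break, and you conclude by breaking $C_{1}$ into $C_{1}' \cup C_{1}''$ and smoothing $C_{1}'' \cup \ell \cup C_{2}$ (using Propositions \ref{prop:transverseintersection} and \ref{prop:vanishingofH1} as you indicate). Your alternative treatment of that branch -- deriving ``$d_{1}+d_{2}\leq 8$'' from the sign constraints -- does not follow from the inequalities you list, and even if it did it would not suffice: the exceptional case of the proposition requires \emph{both} components to have degree $\leq 4$, so a bound on the total degree alone neither places you in the exceptional case nor establishes Movable Bend-and-Break.

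Two smaller points. Your parenthetical suggestion to build the basepoint free family from $|-K_{X}|$ does not work: $-K_{X}$ is ample, so general anticanonical complete-intersection curves necessarily meet $E$; you must pull back a very ample class from the contraction of $E$ (and of the other E3, E4, and $(-1,-1)$-type E1 divisors, which is what rules out case (4) of Proposition \ref{prop:generalcurveclassification}), as in the proof of Theorem \ref{theo:mbbfanothreefolds}. Also, in the inductive step you assume $E\cdot C = 0$ for the original family, which is not known a priori -- the proposition allows families with $E\cdot C > 0$; the identity $E\cdot C = 0$ is a \emph{conclusion} available only in the two-low-degree-components configuration.
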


\begin{proof}
By Proposition \ref{prop:onesummand} and Proposition \ref{prop:mbbmultiplecover} it suffices to consider the case when the general map parametrized by $M$ has a restricted tangent bundle with at least two positive summands.  In particular we may suppose that the general map is an immersion and that the normal sheaf is locally free.

By Lemma \ref{lemm:nooverlap} there is a birational map $\phi: X \to W$ contracting all E3, E4, E5 divisors and all E1 divisors isomorphic to $\mathbb{P}^{1} \times \mathbb{P}^{1}$ with normal bundle $\mathcal{O}(-1,-1)$.  Let $p: U \to B$ be the basepoint free family of curves constructed by taking complete intersections of the pullback of a very ample divisor on $W$.  Then a general member of $p$ does not intersect any $\phi$-exceptional divisor.

Consider a family of free rational curves $C$ of anticanonical degree $d \geq 5$.  The proof is by induction on $d$.   Assume that this family does not satisfy the criteria of Movable Bend-and-Break.  Suppose that $T$ is a one-parameter subfamily of curves $C$ as in the statement of Proposition \ref{prop:generalcurveclassification}.  Note that our choice of basepoint free curve rules out cases (3) and (4) of Proposition \ref{prop:generalcurveclassification}.  Since we are assuming (1) does not hold, the only option is that $C$ breaks into a union $C_{1} \cup \ell \cup C_{2}$ where $C_{1},C_{2}$ are disjoint free curves and $\ell$ is a line in $E$ that connects $C_{1}$ and $C_{2}$.

For $i=1,2$ the curve $C_{i}$ has non-vanishing intersection with $E$.  By induction, it either satisfies $-K_{X} \cdot C_{i} \leq 4$ and $E \cdot C_{i} = 1$ or its family satisfies Movable Bend-and-Break.  First assume that both $C_{1}$ and $C_{2}$ have anticanonical degree at most $4$.  Then $-K_{X} \cdot C \leq 9$ and
\begin{equation*}
E \cdot C = E \cdot (C_{1} + \ell + C_{2}) = 0.
\end{equation*}
Otherwise, without loss of generality we may assume that $C_{1}$ satisfies Movable Bend-and-Break.  Note that the two components $C_{1}$ and $C_{2}$ must be general in their deformation classes by construction.  Thus, Proposition \ref{prop:transverseintersection} guarantees that they intersect $E$ transversally.  We deduce that the stable map corresponding to our broken curve is a smooth point of $\overline{M}_{0,0}(X)$ by Proposition \ref{prop:vanishingofH1} and is thus contained in a unique component.  This component also contains a stable map whose image has the form $(C_{1}' \cup C_{1}'') \cup \ell \cup C_{2}$ where $C_{1}' \cup C_{1}''$ is a general union of free curves obtained by applying Movable Bend-and-Break to $C_{1}$.  Note that this new stable map defines a smooth point of $\overline{M}_{0,0}(X)$.  By Proposition \ref{prop:vanishingofH1} we can smooth the subcurve $C_{1}'' \cup \ell \cup C_{2}$, and the resulting stable map will lie in our original component of $\overline{M}_{0,0}(X)$, verifying Movable Bend-and-Break for this component.
\end{proof}

\begin{prop}
Let $X$ be a smooth Fano threefold which admits a unique E5 divisor.  Then Movable Bend-and-Break holds for free rational curves of anticanonical degree $\geq 6$.
\end{prop}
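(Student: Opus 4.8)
The plan is to reduce everything to Proposition~\ref{prop:e5contractioninduction} by going through the five Fano threefolds of Theorem~\ref{theo:e5classification} which carry a unique divisor $E$ of E5 type, and for each one (i) verifying the hypothesis of Proposition~\ref{prop:e5contractioninduction} --- that every free rational curve $T$ with $-K_{X}\cdot T\le 4$ satisfies $E\cdot T\le 1$ --- and then (ii) disposing of the families that Proposition~\ref{prop:e5contractioninduction} leaves open, namely components $M$ whose general member $C$ has anticanonical degree $6\le -K_{X}\cdot C\le 9$, satisfies $E\cdot C=0$, and degenerates to a stable map with image $C_{1}\cup\ell\cup C_{2}$ where $C_{1},C_{2}$ are disjoint free curves of degree $\le 4$ and $\ell$ is a line of E5 type.

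For step (i) I would exploit the fact that each of these threefolds carries, besides the E5 contraction $\phi\colon X\to\overline{X}$ (whose target is a $\mathbb{Q}$-factorial Fano with a single $\tfrac12(1,1,1)$ point $P$ and satisfies $-K_{X}=\phi^{*}(-K_{\overline{X}})-\tfrac12 E$), a second, very explicit extremal contraction --- a $\mathbb{P}^{1}$-bundle over $\mathbb{P}^{2}$, a conic bundle, or a blow-down onto $\mathbb{P}^{3}$ or $\mathbb{P}^{1}\times\mathbb{P}^{2}$. A free curve $T$ with $E\cdot T=m$ has $\phi_{*}T$ of degree $-K_{\overline{X}}\cdot\phi_{*}T=-K_{X}\cdot T+\tfrac m2$ and an $m$-fold point at $P$; using the explicit model of $\overline{X}$ in each case (for instance, for $X=\mathbb{P}_{\mathbb{P}^{2}}(\mathcal{O}\oplus\mathcal{O}(2))$, $\overline{X}$ is the projective cone over the Veronese surface $\mathbb{P}^{2}\subset\mathbb{P}^{5}$, where a curve of small degree through $P$ with an $m$-fold point with $m\ge 2$ is forced to be reducible) one checks that $-K_{X}\cdot T\le 4$ forces $m\le 1$. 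Equivalently, one can simply enumerate the finitely many numerical classes of free curves of anticanonical degree $\le 4$ on each threefold using the two extremal rays and read off $E\cdot T\le 1$ directly. This verifies the hypothesis of Proposition~\ref{prop:e5contractioninduction}.

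For step (ii) the first point is that in a degeneration $C_{1}\cup\ell\cup C_{2}$ as above one has $E\cdot\ell=-2$, so $E\cdot C=0$ forces $E\cdot C_{1}=E\cdot C_{2}=1$; since $C_{1},C_{2}$ are general they meet $E$ transversally at a single point, and $\ell$ is the unique line of $E\cong\mathbb{P}^{2}$ joining those two points. Moreover none of these threefolds carries a free rational curve of anticanonical degree $1$ (such curves would be $-K_{X}$-lines, which never move in a dominant family), and one checks case by case which of them have free curves of degree $2$. Together with $-K_{X}\cdot C=(-K_{X}\cdot C_{1})+1+(-K_{X}\cdot C_{2})$ this severely constrains $(d_{1},d_{2})$ and often closes the case immediately: either every curve with $E\cdot C=0$ has anticanonical degree in a fixed residue class missing $\{6,7,8,9\}$ (e.g.\ for $\mathbb{P}_{\mathbb{P}^{2}}(\mathcal{O}\oplus\mathcal{O}(2))$ this degree is always divisible by $5$, so the only relevant degrees are $5$ and $10$ and nothing is left to do), or the nonexistence of low-degree free curves forces $d_{1}+d_{2}>5$, so no such degeneration exists. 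In the few remaining subcases --- such as the blow-up of $\mathbb{P}^{3}$ along a plane cubic in anticanonical degree $9$ --- I would instead produce, inside the very same component $M$, an alternative reducible member whose two components are genuinely free (in that example: a free curve of degree $3$ together with a free curve of degree $6$ inside the nine-dimensional family of strict transforms of twisted cubics meeting the blown-up locus three times), and conclude by Proposition~\ref{prop:generalcurveclassification} that $M$ meets the locus of stable maps of type~(1), i.e.\ that Movable Bend-and-Break holds for $M$.

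The part I expect to be genuinely delicate is step (ii): one must show that the ``stuck'' configurations $C_{1}\cup\ell\cup C_{2}$ in degrees $6$ through $9$ actually sit on a component of $\overline{M}_{0,0}(X)$ carrying an honest two-free-component stable map. This will require tracking the summands $\mathcal{O}(1)\oplus\mathcal{O}(-2)$ of $N_{\ell/X}$ as the two nodes are added (via Proposition~\ref{prop:GHS}), determining via Proposition~\ref{prop:vanishingofH1} which partial smoothings are possible, and matching this against the explicit geometry of each of the five threefolds in order to either exhibit the recombination into two free curves or rule the configuration out entirely.
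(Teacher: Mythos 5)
Your reduction is the same as the paper's: feed each of the five threefolds with a unique E5 divisor into Proposition \ref{prop:e5contractioninduction}, check its hypothesis by writing $-K_{X}$ in terms of the obvious divisor classes, and then deal by hand with the leftover classes of anticanonical degree between $6$ and $9$ having $E\cdot C=0$. Step (i) and the easy halves of step (ii) --- the divisibility argument for $\mathbb{P}_{\mathbb{P}^{2}}(\mathcal{O}\oplus\mathcal{O}(2))$, and the observation that in the remaining bundle-type cases the only configuration $C_{1}\cup\ell\cup C_{2}$ with two degree-$\leq 4$ free curves joined by an E5 line has anticanonical degree $5$, hence falls outside the range --- match the paper and are correct.

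The genuine gap is that you never actually carry out step (ii) in the cases where a bad configuration really does occur in degrees $6$--$9$, and the recipe you offer there is circular: ``produce, inside the very same component $M$, an alternative reducible member whose two components are genuinely free'' is precisely the statement to be proved, not a method for proving it. The critical instance, which your case analysis does not even identify, is the blow-up of $\mathbb{P}^{3}$ along the disjoint union of a plane cubic $Z\subset P$ and a point $Q\notin P$. Writing $-K_{X}=E_{0}+E+3H$, the class with $E_{0}\cdot C=1$, $H\cdot C=2$, $E\cdot C=0$ has anticanonical degree $7$ and is numerically the sum of a fiber of $p$ (a free conic meeting $E$ once), a line in $E$, and the strict transform of a line of $\mathbb{P}^{3}$ meeting $P$ away from $Z$ (a free quartic meeting $E$ once); so Proposition \ref{prop:e5contractioninduction} leaves it genuinely open and no parity or nonexistence argument disposes of it. The paper's treatment requires a new idea: use the $\mathbb{C}^{*}$-action on $\mathrm{Bl}_{Q}\mathbb{P}^{3}$ to degenerate a general member of the family into a conic inside $E_{0}$ attached to three $p$-vertical curves meeting the exceptional locus over $Z$, then break that conic into two lines (each meeting at most two of the vertical curves, since at most two of the attachment points are collinear) and re-smooth the two halves into two free curves, checking smoothness of every intermediate stable map via Proposition \ref{prop:GHS} and Proposition \ref{prop:vanishingofH1}. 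Nothing in your outline produces this degeneration, and tracking the splitting type of $N_{\ell/X}$ for the line $\ell\subset E$, as you propose, will not by itself recombine the stuck configuration into two free pieces. (Your sketch for the blow-up of $\mathbb{P}^{3}$ along a plane cubic --- degenerating a twisted cubic meeting $P$ only along $Z$ into a line plus a conic with the same property --- is fine and agrees with the paper.)
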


Together with Claim \ref{clai:mbbinhighdegree} this proves Theorem \ref{theo:mbbe5case}.  Theorem \ref{theo:maintheorem3} follows immediately from Theorem \ref{theo:mbbe5case}, Theorem \ref{theo:mbbfanothreefolds}, and Proposition \ref{prop:mbbmultiplecover}.

\begin{proof}
For each of the five types of Fano threefold as in Theorem \ref{theo:e5classification} we prove two things.  First, we check that every free rational curve of anticanonical degree $\leq 4$ has intersection at most $1$ against the E5 divisor.  This verifies the hypotheses for Proposition \ref{prop:e5contractioninduction}.  Second, we verify  Movable Bend-and-Break by hand for families of rational curves with anticanonical degree between $6$ and $9$ which have vanishing intersection against the E5 divisor.  Together these verify the desired statement.

\begin{enumerate}
\item Suppose $X$ is the blow up of $\mathbb{P}^{3}$ along a smooth cubic plane curve $Z$.  The E5 divisor $E$ is the strict transform of the plane $P$ containing $Z$.  We let $H$ denote the pullback of the hyperplane class on $\mathbb{P}^{3}$.  Then $-K_{X} = 3H + E$.  Thus any free rational curve of anticanonical degree $\leq 4$ must map to a line in $\mathbb{P}^{3}$ and satisfy $E \cdot C \leq 1$.

We must verify Movable Bend-and-Break for the families of rational curves $C$ which satisfy $E \cdot C = 0$ and $H \cdot C = 2$ or $3$.  These are families of conics or cubics in $\mathbb{P}^{3}$ which meet $P$ only along $Z$.  It is easy to see that in either case we can break off a line in $\mathbb{P}^{3}$ meeting $P$ only along $Z$.

\item Suppose $X$ is $\mathbb{P}_{\mathbb{P}^{2}}(\mathcal{O} \oplus \mathcal{O}(2))$.  The E5 divisor $E$ is the rigid section of the projective bundle.  We let $H$ denote the pullback of the hyperplane class on $\mathbb{P}^{2}$.  Then $-K_{X} = 2E + 5H$.  Any free rational curve of anticanonical degree $\leq 4$ must be contracted by the $\mathbb{P}^{1}$-bundle map.  Thus the only such curves are the fibers of the projective bundle.

There are no families of curves that satisfy $E \cdot C = 0$ and $6 \leq -K_{X} \cdot C \leq 9$.

\item Suppose $X$ is the blow up of $\mathbb P^3$ along the disjoint union of a plane cubic $Z$ in a plane $P$ and a point $Q$ not on $P$. The E5 divisor $E$ is the strict transform of $P$.  Let $E_{0}$ denote the exceptional divisor lying above the point.  Let $p: X \to \mathbb{P}^{2}$ be the composition of the birational map to $\mathrm{Bl}_{pt}\mathbb{P}^{3}$ with the projective bundle map and let $H$ denote the pullback of the hyperplane class on $\mathbb{P}^{2}$.  We have $-K_{X} = E_{0} + E + 3H$.  Thus the free rational curves of anticanonical degree $\leq 4$ will either be fibers of $p$ or will satisfy $E \cdot C \leq 1$.  

We next discuss the remaining cases with anticanonical degree between $6$ and $9$.  If $E_{0} \cdot C = 0$ then the corresponding family of curves is the strict transform of a family of rational curves from (1) and we have already verified Movable Bend-and-Break.  Any other irreducible moving curve which satisfies $E \cdot C = 0$ and has anticanonical degree between $6$ and $9$ will satisfy either:
\begin{enumerate}
\item $E_{0} \cdot C = 1$, $H \cdot C = 2$, $E \cdot C = 0$.
\item $E_{0} \cdot C \geq 2$, $H \cdot C = 2$, $E \cdot C = 0$.
\end{enumerate}
However (a) is the only one which is also numerically equivalent to the sum of an E5 line with two free curves of anticanonical degree $\leq 4$.  Thus it suffices to consider this case.

Note that the blow-up of $\mathbb{P}^{3}$ at a point has a $\mathbb{C}^{*}$-action fixing the exceptional divisor and the plane $P$.  This induces a $\mathbb{C}^{*}$-action on $X$.  Let $C$ be a general curve in one of the families above and use the $\mathbb{C}^{*}$-action to take a limit in the direction of the exceptional divisor.  Then the limit curve will be the union of a conic in $E_{0}$ with three $p$-vertical curves which meet the exceptional divisor over $Z$ but are not contained in it.

Suppose we deform the conic in $E_{0}$ while fixing these three points plus one more intersection point with the curve in $E_{0}$ beneath $Z$.  This one-parameter family of curves will break into a curve which is the union of two lines in $E_{0}$ with the same three or four vertical curves.  Furthermore, since at most two distinct points on a conic can be collinear, each line will meet at most two of these vertical fibers.  We can then smooth the union of a line with the vertical curves it meets to obtain two free curves.  All the stable maps obtained in this construction are smooth, showing that the resulting union of two free curves is in the same component of $\overline{M}_{0,0}(X)$ we started with.

\item Suppose $X$ is the blow up of $\mathbb{P}^{1} \times \mathbb{P}^{2}$ along a conic $Z$ in a fiber $F_{0}$ of the first projection map.  The E5 divisor $E$ is the strict transform of the plane containing the blown-up conic.  Let $H_1$ be the pullback of the hyperplane class from $\bP^1$ and let $H_2$ be the pullback of the hyperplane class from $\bP^2$.  We have $-K_{X} = H_1 + 3H_2 + E$. Thus the free rational curves of anticanonical degree $\leq 4$ will either be contained in a fiber of $X \to \mathbb{P}^{2}$ or will satisfy $E \cdot C = 0$.

By Proposition \ref{prop:e5contractioninduction} Movable Bend-and-Break holds for all families of degree $\geq 5$ which do not contain a reducible curve of the form $C_{1} \cup \ell \cup C_{2}$ where $C_{1},C_{2}$ are fibers of the map to $\bP^{2}$ and $\ell$ is a line in the E5 divisor.  Since this curve has anticanonical degree $5$, we conclude that Movable Bend-and-Break holds in all degrees $\geq 6$.

\item Fix a point $p \in \mathbb{P}^{3}$ and suppose $X$ is the blow up of $\mathrm{Bl}_{p}\mathbb{P}^{3}$ along a line $\ell$ contained in the exceptional divisor.  The E5 divisor $E$ is the strict transform of the exceptional divisor over $p$.    Let $E_{*}$ denote the exceptional divisor for the map to $\mathrm{Bl}_{p}\mathbb{P}^{3}$.  Let $p: X \to \mathbb{P}^{2}$ be the composition of birational map to $\mathrm{Bl}_{p}\mathbb{P}^{3}$ with the projective bundle map and let $H$ denote the pullback of the hyperplane class on $\mathbb{P}^{2}$.  We have $-K_{X} = 4H + 2E + E_{*}$.  Thus the free rational curves of anticanonical degree $\leq 4$ will either be a fiber of $X \to \mathbb{P}^{2}$ or will satisfy $E \cdot C = 0$. 

By Proposition \ref{prop:e5contractioninduction} Movable Bend-and-Break holds for all families of degree $\geq 5$ which do not contain a reducible curve of the form $C_{1} \cup \ell \cup C_{2}$ where $C_{1},C_{2}$ are fibers of the map to $\bP^{2}$ and $\ell$ is a line in the E5 divisor.  Since this curve has anticanonical degree $5$, we conclude that Movable Bend-and-Break holds in all degrees $\geq 6$.
\end{enumerate}
\end{proof}

\section{Geometric Manin's Conjecture} \label{sec:maninsconj}

Let $X$ be a smooth Fano threefold.  In this section we explain how to interpret Geometric Manin's Conjecture for Fano threefolds.  We then use Movable Bend-and-Break to obtain a polynomial bound on the number of components of $\overline{\Rat}(X)$ as predicted by Batyrev.  This yields an asymptotic upper bound on the counting function in Geometric Manin's Conjecture.

\subsection{The exceptional set}

Let $X$ be a smooth Fano variety equipped with the anticanonical divisor $-K_{X}$.  When $X$ is defined over a number field, \cite[Section 5]{LST18} gives a conjectural description of the exceptional set for Manin's Conjecture using the geometry of the $a$ and $b$ invariants.  Loosely speaking, one should remove the contributions of any generically finite morphism $f: Y \to X$ such that the $a,b$ invariants for $Y$ are larger than for $X$.  In our setting, this exceptional set should be interpreted as follows:

\begin{defi} \label{defi:manincomponent}
Let $M \subset \overline{\Rat}(X)$ be a component.  Suppose that $f: Y \to X$ is a generically finite morphism from a smooth projective variety $Y$ with one of the following types:
\begin{enumerate}
\item $f: Y \to X$ satisfies $a(Y,-f^{*}K_{X}) > a(X,-K_{X})$.
\item $f: Y \to X$ is dominant, $\kappa(K_{Y} - f^{*}K_{X}) > 0$, and
\begin{equation*}
(a(Y,-f^{*}K_{X}),b(Y,-f^{*}K_{X})) \geq  (a(X,-K_{X}), b(X,-K_{X}))
\end{equation*}
in the lexicographic order. 
\item $f: Y \to X$ is dominant, face contracting, and satisfies $\kappa(K_{Y} - f^{*}K_{X}) = 0$. 
\end{enumerate}
Suppose that for such a map $f: Y \to X$ there is a component $N$ of $\overline{\Rat}(Y)$ such that composition with $f$ defines a dominant rational map $f_{*}: N \dashrightarrow M$.  Then we say that $M$ is an accumulating component.  If $M$ is not an accumulating component, we call it a Manin component.
\end{defi}

We would like to understand how to differentiate accumulating and Manin components using geometric properties of the families.  

\begin{defi} \label{defi:goodcomponent}
Let $X$ be a smooth Fano threefold such that $-K_X$ is very ample.  We say that a component $M$ of $\overline{\Rat}(X)$ is a good component if it satisfies the following conditions:
\begin{enumerate}
\item $M$ parametrizes a dominant family of rational curves $C$ of anticanonical degree $\geq 3$,
\item the general map parametrized by $M$ is birational onto its image, and
\item there is no morphism $g: X \to W$ with $\dim(W) \geq 1$ such that the curves parametrized by $M$ are contracted by $g$.
\end{enumerate}
\end{defi}

\begin{theo} \label{theo:classifymanincomponents}
Let $X$ be a smooth Fano threefold such that $-K_X$ is very ample. Every good component $M$ of $\overline{\Rat}(X)$ is a Manin component.
\end{theo}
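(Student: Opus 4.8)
The plan is to show directly that a good component cannot be accumulating in the sense of Definition \ref{defi:manincomponent}. So I would assume for contradiction that $M$ is accumulating, witnessed by a generically finite morphism $f\colon Y\to X$ from a smooth projective $Y$ of one of the three types in Definition \ref{defi:manincomponent}, together with a component $N$ of $\overline{\Rat}(Y)$ such that $f_{*}\colon N\dashrightarrow M$ is dominant. I would first record two reductions used throughout. The fiber of $f_{*}$ over a general $C\in M$ consists of the finitely many curves in $N$ lying over $C$, so $f_{*}$ is generically finite and $\dim N=\dim M$; combined with condition (1) of Definition \ref{defi:goodcomponent}, a general member of $N$ maps to a general member of a dominant family of curves on $X$. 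Moreover, since condition (2) of Definition \ref{defi:goodcomponent} says a general member of $M$ is birational onto its image, a general member of $N$ must map birationally onto its image curve $\widetilde{C}\subset Y$ and $f|_{\widetilde{C}}\colon\widetilde{C}\to C:=f(\widetilde{C})$ must be birational as well (otherwise the pushforward would be a multiple cover). So the argument reduces to showing that, under the hypotheses on $f$, a general curve $C$ parametrized by $M$ cannot carry such a birational section $\widetilde{C}$ of $f$.

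Next I would dispatch types (1) and (3). For any dominant generically finite morphism one has $a(Y,-f^{*}K_{X})\le a(X,-K_{X})$, because $K_{Y}-f^{*}K_{X}$ is an effective ramification divisor, so pulling back classes in $\Eff^{1}(X)$ and adding it preserves pseudo-effectivity; since $a(X,-K_{X})=1$, a morphism of type (1) cannot be dominant, hence $f(Y)\subsetneq X$ and $M$ would parametrize a non-dominant family, contradicting condition (1) of Definition \ref{defi:goodcomponent}. (One could instead invoke Theorem \ref{theo:ainvanddeformations}.) A morphism of type (3) would be an $a$-cover with $\kappa(K_{Y}-f^{*}K_{X})=0$, and Theorem \ref{theo:iitakadim0case} asserts that no such $a$-cover exists. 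So the real content is type (2).

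For type (2), the lexicographic inequality together with $a(Y,-f^{*}K_{X})\le 1$ forces $a(Y,-f^{*}K_{X})=1$, i.e.\ $f$ is an $a$-cover, with $\kappa(K_{Y}-f^{*}K_{X})\in\{1,2\}$; also $f$ is not birational (a birational morphism has Iitaka dimension $0$), so $\deg f\ge 2$. Now I would invoke the classification of $a$-covers: by Theorem \ref{theo:iitakadim1case} and Lemma \ref{lemm:iitakadim2case}, after a birational modification of $Y$, either (i) $f$ is a base change of a del Pezzo fibration $\rho\colon X\to B$ along a finite map $T\to B$ of degree $\ge 2$, or (ii) $f$ factors through the evaluation map $\mathrm{ev}_{0}\colon\mathcal{C}_{0}\to X$ of the universal family $p\colon\mathcal{C}_{0}\to M_{0}$ of $-K_{X}$-conics, being a base change of $p$ along a rational map $Z\dashrightarrow M_{0}$. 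In case (i): if $\rho$ contracts the general $C\in M$, then $M$ parametrizes curves contracted by the morphism $\rho$, contradicting condition (3) of Definition \ref{defi:goodcomponent}; otherwise $\rho|_{C}$ is non-constant and $f^{-1}(C)$ is birational to $C\times_{B}T$, which I claim is irreducible of degree $\deg(T/B)\ge 2$ over $C$ for general $C$, hence carries no curve mapping birationally onto $C$ — contradicting the existence of $\widetilde{C}$. In case (ii): since $-K_{X}\cdot C\ge 3$ while the fibers of the Iitaka fibration $\pi\colon Y\to Z$ map to $-K_{X}$-conics, $\widetilde{C}$ cannot lie in such a fiber, so it dominates a curve in $Z$; one then identifies $f^{-1}(C)$ birationally with $Z\times_{M_{0}}\mathrm{ev}_{0}^{-1}(C)$, a curve of degree $\deg f\ge 2$ over $C$ which I again claim is irreducible for general $C$, contradicting the existence of $\widetilde{C}$. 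This completes the proof.

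The hard part is the type (2) analysis, and within it the irreducibility claim: that for a general $C$ parametrized by $M$ the fiber product $C\times_{B}T$ (respectively $Z\times_{M_{0}}\mathrm{ev}_{0}^{-1}(C)$) is irreducible — equivalently, that the monodromy of the induced cover of $C$ is as large as the monodromy of $T\to B$ (respectively $Z\to M_{0}$). This is exactly where the hypothesis that $M$ is \emph{dominant} is essential: if $C\times_{B}T$ were reducible for general $C$ it would be reducible for every such $C$, forcing every curve in $M$ to factor through a fixed intermediate cover of $B$, which one rules out using that the family $M$ sweeps out all of $X$; similarly in case (ii). A possible cleaner route for type (2) is to first observe that a good component has an evaluation map which is dominant with connected fibers (conics and del Pezzo fibrations being excluded by conditions (1) and (3) of Definition \ref{defi:goodcomponent} through Theorem \ref{theo:maintheorem2}), and then to show that pushing forward a family along a degree-$\ge 2$ $a$-cover necessarily produces disconnected evaluation fibers; but this again reduces to the same monodromy input.
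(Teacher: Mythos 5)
Your treatment of types (1) and (3) of Definition \ref{defi:manincomponent} is correct and matches the paper, and for type (2) you correctly reduce to the classification of $a$-covers with $\kappa(K_{Y}-f^{*}K_{X})\in\{1,2\}$. The gap lies in how you then dispose of the surviving sub-cases, namely the possibility that the general curve of $N$ is \emph{not} contracted by the Iitaka fibration of $K_{Y}-f^{*}K_{X}$ (your ``$\rho|_{C}$ non-constant'' sub-case and its analogue for the conic-bundle case). You reduce this to the claim that for general $C\in M$ the cover $C\times_{B}T\to C$ has no component of degree one over $C$, and you justify it by saying that otherwise every curve of $M$ would factor through a fixed intermediate cover of $B$, which is ``ruled out using that the family $M$ sweeps out all of $X$.'' That justification does not work, and the statement it relies on is false for dominant families in general: on $X=\bP^{1}\times S$ with $\rho$ the first projection and $T\to\bP^{1}$ the squaring map, the curves $t\mapsto(t^{2},g(t))$ (with $g$ free on $S$) form a dominant family sweeping out all of $X$, every member of which factors through $T\to B$. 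What prevents a full \emph{component} $M$ of $\overline{\Rat}(X)$ from behaving this way is a dimension count, and that is exactly the step your argument omits.

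The paper supplies this count and thereby avoids your case division entirely: since $f_{*}\colon N\dashrightarrow M$ is dominant, $\dim N\geq\dim M$; both are dominant families of rational curves on threefolds, hence free families of exactly the expected dimension, so $-K_{Y}\cdot\widetilde{C}\geq -f^{*}K_{X}\cdot\widetilde{C}$, i.e.\ $(K_{Y}-f^{*}K_{X})\cdot\widetilde{C}\leq 0$. Since $K_{Y}-f^{*}K_{X}$ is pseudo-effective (the ramification divisor) and $\widetilde{C}$ moves in a dominant family, equality holds, so $\widetilde{C}$ is contracted by the Iitaka fibration of $K_{Y}-f^{*}K_{X}$; by Lemma \ref{lemm:iitakadim2case} and Theorem \ref{theo:iitakadim1case} its image in $X$ is then a $-K_{X}$-conic (or $M$ parametrizes multiple covers), or lies in a fiber of a del Pezzo fibration, violating conditions (1)/(2) or (3) of Definition \ref{defi:goodcomponent}. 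In your own framing the same count closes the gap directly: a birational section $\widetilde{C}$ not contracted by $Y\to T$ must meet the ramification divisor $R$ (a union of fibers of $Y\to T$), so $R\cdot\widetilde{C}>0$ and $\dim N=-K_{Y}\cdot\widetilde{C}<-K_{X}\cdot C=\dim M$, contradicting dominance of $f_{*}$. Either way the dimension count is the essential missing input; the monodromy/irreducibility claim as stated cannot be made to work without it.
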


\begin{proof}
Suppose that $M$ is an accumulating component.  According to Definition \ref{defi:manincomponent} there are three possible types for the morphism $f: Y \to X$ which verifies the accumulating condition.
\begin{enumerate}
\item Suppose $f: Y \to X$ satisfies Definition \ref{defi:manincomponent}.(1).  Theorem \ref{theo:ainvanddeformations} shows that this case will occur if and only if $M$ does not parametrize a dominant family.
\item Suppose $f: Y \to X$ satisfies Definition \ref{defi:manincomponent}.(2).  In this case the map $f: Y \to X$ is an $a$-cover which has Iitaka dimension $> 0$.  Let $C$ denote a general curve parametrized by $N$.  Since $\dim(N) \geq \dim(M)$ and both families of curves are dominant, by comparing the expected dimensions of deformation we see that
\begin{equation*}
-K_{Y} \cdot C \geq -K_{X} \cdot f_{*}(C).
\end{equation*}
Rearranging, we obtain $(K_{Y} - f^{*}K_{X}) \cdot C \leq 0$.  However, $K_{Y} - f^{*}K_{X}$ is pseudo-effective and $C$ deforms in a dominant family so that we must have equality $(K_{Y} - f^{*}K_{X}) \cdot C = 0$.  In particular $C$ is contracted by the Iitaka fibration for $K_{Y} - f^{*}K_{X}$.

If the Iitaka dimension is $2$, then by Lemma \ref{lemm:iitakadim2case} $N$ parametrizes a family of curves whose set-theoretic images in $X$ are $-K_{X}$-conics.
Thus either:
\begin{enumerate}
\item the curves parametrized by $M$ have anticanonical degree $2$ in $X$, or
\item the maps parametrized by $M$ are multiple covers.
\end{enumerate}
If the Iitaka dimension is $1$, then by Theorem \ref{theo:iitakadim1case} $N$ parametrizes curves whose images in $X$ are contained in the fibers of a del Pezzo fibration on $X$.

\item Suppose $f: Y \to X$ satisfies Definition \ref{defi:manincomponent}.(3).  In this case $K_{Y} - a(Y,-f^{*}K_{X})K_{X}$ has Iitaka dimension $0$.  By Theorem \ref{theo:iitakadim0case} this cannot happen for Fano threefolds.
\end{enumerate}
We conclude that every accumulating component must fail one of the three criteria in the definition of a good component.
\end{proof}

\begin{rema} \label{rema:additionalclassifymanincomponents}
The converse of Theorem \ref{theo:classifymanincomponents} is false: not every Manin component is good.  For example, multiple covers of conics can sometimes be Manin components when $\rho(X)>1$.  Similarly, curves contracted by a del Pezzo fibration may or may not be Manin components depending on the comparison between $\rho(X)$ and $\rho(F)$ for a general fiber $F$ of the fibration.

In Theorem \ref{theo:pathologicalisnegligible} we will show that these ``pathological'' Manin components make a negligible contribution to the asymptotic growth of the counting function.  Thus to analyze Geometric Manin's Conjecture it suffices to focus on the good components.
\end{rema}

\subsection{Counting function} \label{subsec:countingfunction}

In order to emphasize the analogies between Geometric Manin's Conjecture and Manin's Conjecture, it is helpful to introduce the following counting function.  Let $X$ be a Fano variety and let $r(X,-K_{X})$ denote the minimal positive integer of the form $K_{X} \cdot \alpha$ for some $\alpha \in N_{1}(X)_{\mathbb{Z}}$.  Fix a positive constant $q>1$.  We define
\begin{equation*}
N(X, -K_{X}, q, d) = \sum_{i=1}^{d} \sum_{W \in \mathrm{Manin}_{i}} q^{\dim W}
\end{equation*}
where $\mathrm{Manin}_{i}$ denotes the set of Manin components which parametrize curves of anticanonical degree $i \cdot r(X,-K_{X})$.  Note that this counting function encodes both the dimensions and the number of components of $\mathrm{Manin}_{i}$; the dimension will determine the dominant term in the asymptotic growth of $N(X,-K_{X},q,d)$ and the number of components will determine the subdominant term. Geometric Manin's Conjecture predicts that
\begin{equation*}
N(X,-K_{X},q,d) \sim_{d \to \infty} \frac{ q^{\dim(X) - 3} \alpha(X,-K_X)}{1-q^{-r(X, -K_X)}} q^{dr(X, -K_X)} d^{\rho(X)-1}
\end{equation*}
where $\alpha(X,-K_X) = \rho(X) \cdot \mathrm{Vol}_{\mu}(\Nef_{1}(X) \cap \{ \alpha \in N_{1}(X) | -K_{X} \cdot \alpha \leq r(X,-K_X) \})$ and $\mu$ is the Lebesgue measure normalized with respect to the lattice $N_{1}(X)_{\mathbb{Z}}$. 

For a Fano threefold $X$ the counting function takes a particularly easy form.  Each Manin component will have the expected dimension $-K_{X} \cdot C$.  Thus, we can compute $N(X,-K_{X},q,d)$ by summing up the terms $C_{\alpha} q^{\ell(\alpha)}$ as we vary $\alpha \in \Nef_{1}(X)_{\mathbb{Z}}$ over all lattice points which have anticanonical degree between $2$ and $d \cdot r(X,-K_{X})$ where $C_{\alpha}$ denotes the number of Manin components of a given class and $\ell(\alpha)$ is the linear function $-K_{X} \cdot \alpha$.  In this way we see that the behavior of the counting function is entirely controlled by the values of $C_{\alpha}$.

The following lemma gives a lower bound on the asymptotic growth of the counting function.

\begin{lemm} \label{lemm:sufficientclasses}
Let $X$ be a smooth Fano threefold such that $-K_X$ is very ample. There is a full-dimensional subcone $\mathcal{K} \subset \Nef_{1}(X)_{\mathbb{Z}}$ and an element $\tau \in \mathcal{K}_{\mathbb{Z}}$ such that for every $\alpha \in \tau + \mathcal{K}_{\mathbb{Z}}$ we have $C_{\alpha} \geq 1$.
\end{lemm}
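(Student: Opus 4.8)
The plan is to let $\mathcal{K}$ be the integral cone generated by finitely many classes of free rational curves and to fill up an integral translate of it by repeatedly gluing such curves together. First I would construct classes $v_{0},v_{1},\dots,v_{m}\in\Nef_{1}(X)_{\mathbb{Z}}$, each represented by a component of $\overline{\Rat}(X)$ whose general member maps birationally onto a free curve of anticanonical degree $\ge 3$, with $v_{0}$ represented by very free curves, and with $v_{0},\dots,v_{m}$ spanning $N_{1}(X)$. Fix a very free rational curve $B$ on $X$ that is birational onto its image (these exist since $X$ is rationally connected, and such a curve automatically has anticanonical degree $\ge 3$ since its restricted tangent bundle has all summands positive); let $v_{0}:=[B]$, so $v_{0}$ is the class of a general member of a component of $\overline{\Rat}(X)$ generically parametrizing very free curves. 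By Mori's Cone Theorem, $\Eff_{1}(X)$ is generated by the classes of finitely many rational curves $\ell_{1},\dots,\ell_{m}$. For each $i$, deform $B$ so that it passes through a general point of $\ell_{i}$ and meets $\ell_{i}$ transversally there; then the stable map with reducible domain $\ell_{i}\cup B$ is an immersion near its node, and by Proposition~\ref{prop:GHS} its normal sheaf restricted to each component is an extension of a skyscraper sheaf by a globally generated sheaf, hence globally generated. By Proposition~\ref{prop:vanishingofH1} this stable map deforms to one with irreducible domain mapping birationally onto a free curve, of class $v_{i}:=[\ell_{i}]+[B]$ and anticanonical degree $\ge 4$. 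Each $v_{i}$ lies in $\Nef_{1}(X)_{\mathbb{Z}}$ because it is represented by a free curve, which moves in a dominant family and so has nef class; and $v_{0},\dots,v_{m}$ span $N_{1}(X)$ because $[\ell_{i}]=v_{i}-v_{0}$ and the $[\ell_{i}]$ span.

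Next I would prove that the set $\mathcal{S}$ of classes represented by a component of $\overline{\Rat}(X)$ whose general member maps birationally onto a free curve of anticanonical degree $\ge 3$ is closed under addition. By Proposition~\ref{prop:onesummand} and the remark following it, such a general member cannot be a multiple cover; hence it is an immersion whose normal sheaf is locally free of the form $\mathcal{O}(a)\oplus\mathcal{O}(b)$ with $a,b\ge 0$ (as the curve is free), and its deformations dominate $X$. Given $\alpha,\beta\in\mathcal{S}$, choose a general point $x\in X$ and general members $C_{\alpha},C_{\beta}$ of the two families passing through $x$, arranged to meet transversally only at $x$. Then the stable map with domain $C_{\alpha}\cup_{x}C_{\beta}$ is an immersion near its node with globally generated normal sheaf along each component (Proposition~\ref{prop:GHS}), so by Proposition~\ref{prop:vanishingofH1} it lies in a component of $\overline{\Rat}(X)$ whose general member maps birationally onto a free curve; that curve has class $\alpha+\beta$ and anticanonical degree $\ge 6$, so $\alpha+\beta\in\mathcal{S}$. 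In particular $v_{0},\dots,v_{m}\in\mathcal{S}$.

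Finally, set $\mathcal{K}:=\mathbb{Z}_{\ge 0}v_{0}+\dots+\mathbb{Z}_{\ge 0}v_{m}\subseteq\Nef_{1}(X)_{\mathbb{Z}}$, which is a full-dimensional subcone since $v_{0},\dots,v_{m}$ span $N_{1}(X)$, and put $\tau:=v_{0}+\dots+v_{m}\in\mathcal{K}$. Any $\alpha\in\tau+\mathcal{K}$ has the form $\sum_{i}c_{i}v_{i}$ with integers $c_{i}\ge 1$; gluing a chain of $\sum_{i}c_{i}\ge 2$ of the corresponding free curves and iterating the previous paragraph shows $\alpha\in\mathcal{S}$, so there is a component $M$ of $\overline{\Rat}(X)$ generically parametrizing free curves of class $\alpha$ that are birational onto their image and have anticanonical degree $\ge 3$. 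This $M$ is a good component in the sense of Definition~\ref{defi:goodcomponent}: conditions (1) and (2) hold by construction, and for (3) observe that if $D$ is a nonzero nef divisor on $X$ then $D$ is semiample (as $X$ is Fano) and a very free curve is not contracted by any nonconstant morphism, so $v_{0}\cdot D>0$, while $v_{i}\cdot D\ge 0$ for every $i$ (nef curves meet nef divisors nonnegatively); hence $\alpha\cdot D\ge c_{0}(v_{0}\cdot D)>0$, and therefore the curves of class $\alpha$ cannot be contracted by any morphism $X\to W$ with $\dim W\ge 1$. By Theorem~\ref{theo:classifymanincomponents}, $M$ is a Manin component, so $C_{\alpha}\ge 1$. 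I expect the gluing-and-smoothing mechanism of the first two paragraphs to be the main obstacle: one must verify that attaching general free curves at a general point yields a smooth point of $\overline{M}_{0,0}(X)$ whose smoothing is again birational onto a free curve, which rests on controlling the normal sheaves of the relevant general members — in particular using Proposition~\ref{prop:onesummand} to exclude multiple covers so that these normal sheaves are locally free and globally generated — together with the standard inputs that $X$ carries very free curves and that Mori's Cone Theorem lets finitely many free curve classes span $N_{1}(X)$.
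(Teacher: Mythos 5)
Your construction is essentially the paper's: glue very free curves onto a finite set of rational-curve classes generating $N_{1}(X)$, smooth, observe that the resulting classes are represented by good (hence Manin) components, and fill out a translated cone by further gluing. The verification that the resulting components are good --- in particular your argument for condition (3) of Definition \ref{defi:goodcomponent} via $\alpha \cdot D \geq c_{0}(v_{0}\cdot D) > 0$ --- is fine, as is the closure-under-addition argument using Propositions \ref{prop:GHS}, \ref{prop:vanishingofH1}, and \ref{prop:onesummand}.

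However, there is a gap at the final step. You set $\mathcal{K} := \mathbb{Z}_{\geq 0}v_{0} + \dots + \mathbb{Z}_{\geq 0}v_{m}$, i.e.\ the sub-monoid generated by the $v_{i}$, and conclude for $\alpha \in \tau + \mathcal{K}$. But the lemma (as the notation $\mathcal{K}_{\mathbb{Z}}$ and its later use for lattice-point counting indicate) asserts that \emph{every lattice point} of a translate of a full-dimensional real cone is realized, and the monoid $\sum_{i}\mathbb{Z}_{\geq 0}v_{i}$ is in general a proper subset of the lattice points of the cone it spans --- no translation fixes this by itself. The paper closes exactly this gap by invoking Khovanskii's proposition (\cite[\S 3 Proposition 3]{Khovanskii92}), which says that a finitely generated sub-semigroup of a lattice contains all lattice points in some translate of the cone it generates, \emph{provided} the semigroup generates the ambient lattice as a group. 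That proviso exposes a second, related issue: you take the $\ell_{i}$ from Mori's Cone Theorem, which only guarantees that their classes span $N_{1}(X)$ over $\mathbb{Q}$ (so $\mathcal{K}$ is full-dimensional), not that they generate the lattice $N_{1}(X)_{\mathbb{Z}}$ over $\mathbb{Z}$; with only a finite-index sublattice your translated cone misses a positive proportion of integral classes. The paper instead cites \cite[Theorem 1.3]{TZ14} to get rational-curve classes that generate $N_{1}(X)_{\mathbb{Z}}$ as a group. Your weaker conclusion would in fact still suffice for the asymptotic lower bound $N(X,-K_{X},q,d) \geq \Gamma q^{d}d^{\rho(X)-1}$ drawn from this lemma, but as a proof of the statement itself you need to replace the Mori generators by a $\mathbb{Z}$-generating set of rational-curve classes and then appeal to Khovanskii's result (or reprove it) to pass from the monoid to all lattice points of the translated cone.
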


\begin{proof}
By \cite[Theorem 1.3]{TZ14} $N_{1}(X)_{\mathbb{Z}}$ is generated by classes of rational curves.  By gluing on sufficiently many copies of a very free rational curve $C$ to these generators, smoothing, and gluing, we can in fact (after adding in the class of $C$) find a generating set consisting of very free curves.  Let $\mathcal{K}$ denote the cone generated by these very free curves.  Applying \cite[\textsection 3 Proposition 3]{Khovanskii92} we see there is some translate of $\mathcal{K}$ such that every $\mathbb{Z}$-class in the translate is represented by a family of very free curves.  By Theorem \ref{theo:classifymanincomponents} each such family will be a Manin component.
\end{proof}

\begin{rema}
In fact, using the classification of Fano threefolds one can show that that every extremal ray of $\Nef_{1}(X)$ is represented by the class of a free rational curve.  So one can take $\mathcal{K} = \Nef_{1}(X)$ in the statement of Lemma \ref{lemm:sufficientclasses}. 
\end{rema}

Using standard lattice counting techniques, Lemma \ref{lemm:sufficientclasses} shows that there is some constant $\Gamma$ such that for sufficiently large $d$
\begin{equation*}
N(X,-K_{X},q,d) \geq \Gamma q^{dr(X, -K_X)} d^{\rho(X)-1}.
\end{equation*}
We are now equipped to analyze the ``pathological'' Manin components.

\begin{theo} \label{theo:pathologicalisnegligible}
Let $X$ be a smooth Fano threefold such that $-K_X$ is very ample. The Manin components $M$ which are not good components will make a negligible contribution to the asymptotic growth of the counting function, except possibly those components contracted by a del Pezzo fibration whose fibers have degree $1$.
\end{theo}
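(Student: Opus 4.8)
The plan is to classify the Manin components that fail to be good and to bound the contribution of each type, comparing against the lower bound $N(X,-K_{X},q,d)\geq\Gamma q^{d}d^{\rho(X)-1}$ established above from Lemma~\ref{lemm:sufficientclasses}. The first point is that every Manin component is dominant: if $M$ parametrizes a non-dominant family then by Theorem~\ref{theo:ainvanddeformations} the curves lie in a subvariety $V_{i}$ with $a(V_{i},-K_{X})>a(X,-K_{X})$, and lifting the family to a resolution $\widetilde{V}_{i}\to V_{i}\hookrightarrow X$ exhibits $M$ as accumulating of the type in Definition~\ref{defi:manincomponent}.(1). So a non-good Manin component is dominant and violates one of conditions (1)--(3) of Definition~\ref{defi:goodcomponent}: either it parametrizes curves of anticanonical degree $\leq 2$, or its general member is not birational onto its image, or its curves are contracted by some morphism $g\colon X\to W$ with $\dim W\geq 1$.

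The first alternative is harmless: there are only finitely many components of $\overline{\Rat}(X)$ parametrizing $-K_{X}$-lines or $-K_{X}$-conics, so their total contribution is $O(1)$. In the second alternative, the general member $f\colon\mathbb{P}^{1}\to X$ of a dominant family is free (a standard consequence of generic smoothness in characteristic $0$); if it is not birational onto its image it factors through a cover $\mathbb{P}^{1}\to\mathbb{P}^{1}$ of degree $\geq 2$, which is never an immersion, so by \cite[II.3.14 Theorem]{Kollar} $f^{*}T_{X}$ has exactly one positive summand and Proposition~\ref{prop:onesummand} forces $f$ to be a multiple cover of a free $-K_{X}$-conic. For each of the finitely many free $-K_{X}$-conic families, the locus of degree-$r$ covers has dimension $2+(2r-2)=2r$, equal to the anticanonical degree, so multiple covers contribute at most $\sum_{r}O(q^{2r})=O(q^{d})$. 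When $\rho(X)\geq 2$ this is $o(q^{d}d^{\rho(X)-1})$; when $\rho(X)=1$ one instead observes that Lemma~\ref{lemm:iitakadim2case} produces an $a$-cover $Y\to X$ out of the conic family through which the multiple-cover families factor dominantly, so these components are accumulating and do not occur.

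It remains to treat a dominant, birational family of anticanonical degree $\geq 3$ whose curves are contracted by $g\colon X\to W$. After Stein factorization we may assume $W$ is normal, $g$ has connected fibers, and $1\leq\dim W\leq 2$. If $\dim W=2$ then $g$ is a conic bundle, so every contracted curve lies in a fiber and has anticanonical degree $\leq 2$, contradicting degree $\geq 3$. Hence $\dim W=1$ and $\pi\colon X\to Z$ is a del Pezzo fibration with general fiber $F$, where $-K_{X}|_{F}$ agrees with $-K_{F}$. A component of $\overline{\Rat}(X)$ whose curves are contracted by $\pi$ has general member inside a general (hence smooth) fiber---otherwise it is non-dominant---and is labelled by the numerical class $\beta$ of that member; this class is contracted by $\pi$, so it lies in the rank $\rho(X)-1$ subgroup of $N_{1}(X)$ of $\pi$-vertical classes, and for the component to be dominant $\beta$ must restrict to a mobile class on $F$, in which case the component has the expected dimension $\dim Z+(-K_{F}\cdot\beta-1)=-K_{X}\cdot\beta$. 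Therefore, provided the number of such components realizing a fixed class $\beta$ is bounded independently of $\beta$, their total contribution is at most a constant times $\sum_{\beta\text{ vertical},\ -K_{X}\cdot\beta\leq d}q^{-K_{X}\cdot\beta}=O(q^{d}d^{\rho(X)-2})=o(q^{d}d^{\rho(X)-1})$.

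The main obstacle is exactly the uniform bound invoked at the end of the previous step, and it is what forces the exception in the statement. Both that bound and the expected-dimensionality of the relevant Severi loci follow from the classification and well-understood curve geometry of del Pezzo surfaces of degree $\geq 2$, but these facts are not at our disposal for del Pezzo surfaces of degree $1$: there the anticanonical pencil has a base point and singular fibers of $\pi$ may fail to be klt (for instance cones over elliptic curves), so we cannot control the components of $\overline{\Rat}(X)$ coming from the fibers. Consequently families contracted by a del Pezzo fibration whose general fibers have degree $1$ must be set aside, exactly as in the statement. The remaining bookkeeping---checking that distinct fiber families give distinct vertical classes up to bounded multiplicity, and that Stein factorization introduces no new contracted families---is routine.
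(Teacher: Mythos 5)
Your treatment of the non-dominant components, of the components of anticanonical degree $\leq 2$, and of the multiple covers of conics (including the $\rho(X)=1$ case, where the $a$-cover built from the conic family shows these components are accumulating) follows the paper's argument. The gap is in the del Pezzo fibration case. You reduce everything to the claim that the number of components of $\overline{\Rat}(X)$ realizing a fixed $\pi$-vertical class $\beta \in N_{1}(X)$ is bounded independently of $\beta$, and you assert that this uniform bound follows from the curve geometry of del Pezzo surfaces of degree $\geq 2$. That is not true. What \cite{Testa09} provides is a bound $Q$ on the number of families of free curves representing a fixed nef class \emph{on the fiber} $F$; but the pushforward $N_{1}(F) \to N_{1}(X)$ can have a large kernel, since $\rho(F)$ typically exceeds the rank $\rho(X)-1$ of the vertical sublattice. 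For instance, for a divisor of bidegree $(2,2)$ in $\bP^{1}\times\bP^{3}$ the quadric surface fibration has $\rho(F)=\rho(X)=2$ while the vertical sublattice has rank $1$, so the number of nef fiber classes (hence, up to the finite monodromy, of components on $X$) mapping to a single vertical class of degree $e$ grows linearly in $e$. Your claimed uniform bound fails, and the estimate $O(q^{d}d^{\rho(X)-2})$ is not justified.

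The paper resolves this with a dichotomy that is absent from your argument. After a base change $T \to \bP^{1}$ killing the monodromy on the N\'eron--Severi group of the general fiber, one compares $\rho(F)$ with $\rho(X)$. If $\rho(F) < \rho(X)$, the total contribution is bounded by $Q' q^{d} d^{\rho(F)-1}$ --- obtained by summing over all nef classes on $F$, not per vertical class on $X$ --- and this is negligible. If $\rho(F) \geq \rho(X)$, then by \cite[Corollary 4.16]{LST18} the base-changed cover $Y \to X$ satisfies condition (2) of Definition \ref{defi:manincomponent}, so \emph{every} component contracted by $\pi$ is accumulating and is excluded from the counting function; there is nothing to bound. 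This is the same mechanism you correctly invoked for multiple covers of conics when $\rho(X)=1$, and it is needed again here. Finally, your explanation of the degree $1$ exception is off target: the issue is not that singular fibers of $\pi$ may fail to be klt, but that \cite{Testa09} does not establish the per-class irreducibility/boundedness statement for arbitrary smooth del Pezzo surfaces of degree $1$, so the $\rho(F)<\rho(X)$ branch of the argument cannot be carried out for the general fiber.
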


The proof uses the results of \cite{Testa09} which count components of the moduli space of rational curves on del Pezzo surfaces.  Since \cite{Testa09} does not address arbitrary del Pezzo surfaces of degree $1$, we must include these as a possible exception.

\begin{proof}
It suffices to consider only the Manin components of anticanonical degree $\geq 3$.  Definition \ref{defi:goodcomponent} identifies three defining properties of good components.  Theorem \ref{theo:ainvanddeformations} shows that every Manin component parametrizes a dominant family of curves, so it is not possible for a Manin component to fail (1).

Suppose that a Manin component $M$ fails condition (2).  A dimension count shows that $M$ must parametrize multiple covers of a family $M'$ of $-K_{X}$-conics.  Note that for any degree $d$ there are only finitely many families of multiple covers of conics which have degree $d$.  Lemma \ref{lemm:sufficientclasses} shows that such contributions will have a negligible contribution to the counting function if $\rho(X) > 1$.

If $\rho(X) = 1$, let $Y$ be a resolution of the one-pointed family of conics over $M'$ equipped with the evaluation map $f: Y \to X$.  After perhaps taking a base change over a suitable morphism $T \to M'$, we may ensure that $K_{Y} - a(Y,-f^{*}K_{X})f^{*}K_{X}$ has Iitaka dimension $2$.  By \cite[Corollary 4.16]{LST18} we have $a(Y,-f^{*}K_{X}) = 1$ and $b(Y,-f^{*}K_{X}) = 1$.  This implies that $Y$ satisfies condition (2) of Definition \ref{defi:manincomponent}.  Furthermore composition with $f$ will map the parameter space of multiple covers of conics on $Y$ dominantly onto $M$.  Thus when $\rho(X) = 1$ every component $M$ parametrizing multiple covers of conics is an accumulating component.

Finally, suppose that a Manin component $M$ fails condition (3) but not condition (2).  Then $M$ will parametrize a dominant family of curves which are contained in the fibers of a del Pezzo fibration $g: X \to \bP^{1}$.  To show that such contributions are negligible, we must calculate how many Manin components represent the classes of curves contracted by $f$.

Let $T \to \mathbb{P}^{1}$ be a base change which kills the monodromy action on the N\'eron-Severi space of the general fiber and let $Y$ be a resolution of $T \times_{\mathbb{P}^{1}} X$.  After perhaps replacing $T$ by a finite cover we may suppose that the Iitaka dimension of $K_{Y} - f^{*}K_{X}$ is $1$.  There is a bijection between the families of free curves on $Y$ contracted by the map $Y \to T$ and the families of free curves on a general fiber $F$ of $Y \to T$.  Since $F$ is a del Pezzo surface of degree $\geq 2$, by \cite{Testa09} there is a constant $Q$ such that any nef numerical class on $F$ is represented by at most $Q$ families of free curves.  By pushing forward, we see that the asymptotic contribution of curves contracted by $f$ to the counting function on $X$ is at most $Q' q^{d} d^{\rho(F) - 1}$ for some constant $Q'$.  If $\rho(F) < \rho(X)$ then by Lemma \ref{lemm:sufficientclasses} the contribution to the counting function is negligible.  If $\rho(F) \geq \rho(X)$ then by \cite[Corollary 4.16]{LST18} $f: Y \to X$ satisfies condition (2) of Definition \ref{defi:manincomponent} showing that every family contracted by $g$ is an accumulating component.
\end{proof}

\subsection{Upper bounds}
To give an upper bound on the asymptotic growth of the counting function we must bound the values of the function $C_{\alpha}$ counting Manin components representing $\alpha$.  The following result proves the conjectural bound occurring in Batyrev's heuristic.

\begin{theo} \label{theo:batyrevsconj}
Let $X$ be a smooth Fano threefold.  There is a polynomial $P(d)$ such that the number of components of $\overline{\Rat}(X)$ parametrizing curves of anticanonical degree $\leq d$ is bounded above by $P(d)$.
\end{theo}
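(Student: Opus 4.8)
The plan is to partition the components of $\overline{\Rat}(X)$ into a few types and bound the number of each type by a polynomial in $d$, with all the content concentrated in the ``good'' components. First I would dispose of the easy types: every component is either non-dominant, or dominant with non-birational general stabilization, or dominant with birational general stabilization but contracted by a fibration $g\colon X\to W$ with $\dim W\ge 1$, or else good in the sense of Definition \ref{defi:goodcomponent} (after also throwing in the finitely many components of anticanonical degree $\le 2$). For the non-dominant components, Theorem \ref{theo:maintheorem1} shows the curves sweep out one of finitely many surfaces --- each swept out by a family of $-K_X$-lines or an exceptional divisor of a birational contraction --- and on each such surface (a Hirzebruch surface, a ruled surface over a curve, $\mathbb{P}^2$, a smooth quadric, or a quadric cone) the number of components of the space of rational curves of bounded degree grows polynomially, since in a fixed numerical class this space is irreducible (or has a uniformly bounded number of components) and there are only polynomially many numerical classes of bounded degree. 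For a component contracted by a fibration, after a base change killing monodromy the curves become rational curves on a fixed general fiber, which is either a conic-bundle fiber (forcing anticanonical degree $\le 2$) or a del Pezzo surface, and the surface-level count again applies. For a dominant family whose general map is not birational, Proposition \ref{prop:onesummand} shows it parametrizes multiple covers of a free $-K_X$-conic, and for each of the finitely many base conic families and each covering degree $r\le d$ the space of covers is governed by $\overline{M}_{0,0}(\mathbb{P}^1,r)$, contributing only $O(d)$ components. Hence these three types together contribute $O(d^{N_0})$ components for some fixed $N_0$.

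For the good components I would use Movable Bend-and-Break to descend to bounded degree. Let $C$ be the threshold from Theorem \ref{theo:maintheorem3} (so $C=5$, or $9$ in the presence of an $E5$ contraction). Fix a finite list $\mathcal{E}=\{E_1,\dots,E_N\}$ of families of curves containing all components of $\overline{M}_{0,0}(X)$ of anticanonical degree $\le C$; in particular it contains every free curve of small degree and every $-K_X$-line of $E5$ type and $-K_X$-conic of $E1,E3,E4,E5$ type appearing in the degenerations classified by Proposition \ref{prop:generalcurveclassification}. Given a good component $M$ of anticanonical degree $>C$, Theorem \ref{theo:maintheorem3} produces a boundary stable map with two free components; iterating this (using Proposition \ref{prop:mbbmultiplecover} whenever a component becomes a multiple cover of a conic) produces a stable map whose domain is a tree of $\mathbb{P}^1$'s and whose components all belong to families in $\mathcal{E}$, with only finitely many combinatorial shapes by Proposition \ref{prop:generalcurveclassification}. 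By Proposition \ref{prop:vanishingofH1} --- applied, when an $E5$-line or special conic is present, to a free subcurve that absorbs its negativity (the restricted normal sheaf along a component attached at $\ge 2$ nodes becomes globally generated by Proposition \ref{prop:GHS}) --- such a stable map is an unobstructed point of $\overline{M}_{0,0}(X)$, hence lies in a unique component, which must be $M$. So every good component of degree $>C$ is recovered by smoothing a configuration built from the building blocks $\mathcal{E}$.

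The count then goes by recording, for each such configuration $Z$, the multiset $\mu(Z)$ of families from $\mathcal{E}$ to which its components belong; its total anticanonical degree equals $\deg(M)$. The key point is that $M$ depends only on $\mu(Z)$: the space of all unobstructed stable maps obtained by gluing one member of each prescribed family along some tree is connected --- one moves the members within their irreducible families and slides the nodes, passing through more degenerate configurations as nodes collide --- and unobstructedness forces every such stable map into a single component of $\overline{M}_{0,0}(X)$. Granting this, the number of good components of anticanonical degree $\le d$ is at most the number of multisets of elements of the finite set $\mathcal{E}$ of total degree $\le d$, and since each $E_i$ has degree $\ge 1$ this is at most $(d+1)^N$. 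Combining with the first paragraph gives a polynomial $P(d)=O(d^{\max(N,N_0)})$, proving the theorem.

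The step I expect to be the main obstacle is the connectedness statement just above --- that the good component $M$ is determined by the multiset $\mu(Z)$ and not by the members chosen from each family, the positions of the nodes, or the shape of the tree. (Note that without this rigidity the naive recursion ``a degree-$d$ component breaks into two lower-degree ones'' only yields an exponential bound, so some statement of this strength is essential.) Proving it requires knowing that the classification in Proposition \ref{prop:generalcurveclassification} is exhaustive enough that every degeneration produced by iterating Movable Bend-and-Break stays among the building blocks and has controlled shape, and then checking that collisions of nodes and changes of tree shape within a fixed multiset can always be realized through \emph{unobstructed} stable maps; the non-free $E5$-lines and special conics are precisely where this needs care, since one must invoke unobstructedness for a suitable free subcurve rather than for the whole configuration. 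A secondary, more routine subtlety is the surface-level count in the first paragraph: for del Pezzo fibration fibers of degree $\ge 2$ the bounded-components-per-class statement is \cite{Testa09}, while the degree $1$ case --- not covered there --- still admits a crude polynomial bound on the number of components of rational curves of bounded degree, which is all that is needed here.
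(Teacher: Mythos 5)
Your treatment of the non-dominant components matches the paper's: Theorem \ref{theo:maintheorem1} reduces them to finitely many surfaces, and on each surface the number of components per numerical class is uniformly bounded (the paper cites \cite{Bourqui16} for the Hirzebruch case and uses the finite list of contractible divisors for the rest). The divergence is in the dominant components. The paper disposes of these in one sentence by citing \cite[Theorem 5.13]{LTCompos}, which is precisely the general machinery converting Movable Bend-and-Break into a polynomial count of dominant families; you instead attempt to reconstruct that machinery, and the reconstruction has a genuine gap at exactly the step you flag as ``the main obstacle.''

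The problem is that your rigidity claim --- that a good component $M$ is determined by the multiset $\mu(Z)$ of building-block families because ``the space of all unobstructed stable maps obtained by gluing one member of each prescribed family along some tree is connected'' --- is false as stated. The space of two-component gluings of families $E_i$ and $E_j$ is the fiber product $\mathcal{C}_i \times_X \mathcal{C}_j$ of the universal one-pointed families, and this is connected only when the evaluation maps have connected fibers. Theorem \ref{theo:maintheorem2} says exactly that this can fail, namely for families of $-K_X$-conics and for families contracted by a del Pezzo fibration --- and free conics are unavoidable building blocks in your scheme. Worse, the weaker statement that would still suffice (a number of components per multiset bounded by a constant independent of $d$) is also not automatic: an $N$-fold iterated fiber product whose pairwise stages each have two connected components can a priori have on the order of $2^{N-1}$ components, and $N$ grows linearly in $d$, so without connectedness (or the bookkeeping of \cite[Section 5]{LTCompos}, which controls the failure via the classification of $a$-covers) the count reverts to the exponential bound you correctly identify as useless. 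So the architecture is right and you have located the crux, but the crux is not resolved; it is the content of the cited theorem rather than a routine verification. A minor remark: families contracted by a del Pezzo fibration are still dominant families on $X$, so for this theorem they need neither the fiber-by-fiber treatment nor the degree-one caveat about \cite{Testa09}; that issue only matters for the sharper asymptotic statement of Theorem \ref{theo:pathologicalisnegligible}.
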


\begin{proof}
We separate the components of $\overline{\Rat}(X)$ into two types: the non-dominant families and the dominant families.  The existence of a polynomial bound on the number of dominant families follows from Movable Bend-and-Break combined with \cite[Theorem 5.13]{LTCompos}.

By Theorem \ref{theo:maintheorem1} there is a finite set of surfaces $\{ Y_{i} \}$ in $X$ which contain all non-dominant families of rational curves on $X$, and each such family will sweep out one of these surfaces.  Let $S_{i}$ denote the resolution of $Y_{i}$.  It suffices to verify that for each $S_{i}$ there is a polynomial $P_{i}(d)$ bounding the number of dominant components of $\Rat(S_{i})$ which have degree at most $d$ against the big and nef divisor $-K_{X}|_{S_{i}}$.

The possibilities for $S_{i}$ are recorded in Theorem \ref{theo:higherainv}.  When $S_{i}$ corresponds to a contractible divisor as in Theorem \ref{theo:higherainv}.(2), by examining the finite list of possibilities we see that the space of rational curves parametrizing any numerical class on $S_{i}$ is irreducible and the existence of the desired polynomial $P_{i}(d)$ follows.  The other possibility is that $S_{i}$ maps to a family of lines on $X$ as in Theorem \ref{theo:higherainv}.(1).  In this case by Lemma \ref{lemm:linesfromruled} we may suppose that $S_{i}$ is a ruled surface. 
If $S_{i}$ is ruled over a curve of genus $\geq 1$, then the only rational curves on $S_{i}$ are the fibers of the $\mathbb{P}^{1}$-bundle structure.  If $S_{i}$ is a Hirzebruch surface, then in particular it is a toric variety so by \cite[Theorem 1.10]{Bourqui16} (plus an easy argument dealing with classes on the nef boundary) there is at most one family of rational curves representing any numerical class on $S_{i}$.  Thus the number of components with a fixed intersection against $-K_{X}|_{S_{i}}$ grows at most linearly in the degree.
\end{proof}

 Theorem \ref{theo:batyrevsconj} implies that we obtain an upper bound on the counting function of the expected form
\begin{equation*}
N(X,-K_{X},q,d) = O(q^{dr(X, -K_X)}d^{s})
\end{equation*}
for some positive integer $s$.  Conjecturally the optimal bound is $s = \rho(X)-1$.  The following conjecture would imply that the counting function has the conjectural asymptotic growth rate and has the expected leading constant.

\begin{conj} \label{conj:uniquecomponent}
Let $X$ be a smooth Fano threefold such that $-K_X$ is very ample. There is some $\tau \in \Nef_{1}(X)_{\mathbb{Z}}$ such that for every $\alpha \in \tau + \Nef_{1}(X)_{\mathbb{Z}}$ there is at most one good component representing $\alpha$.
\end{conj}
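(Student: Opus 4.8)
The plan is to prove the conjecture by induction on anticanonical degree in the spirit of the gluing method of \cite{HRS04}, using Movable Bend-and-Break (Theorem \ref{theo:maintheorem3}) as the engine. One first chooses $\tau$ in the interior of $\Nef_1(X)_{\mathbb Z}$. Then every $\alpha \in \tau + \Nef_1(X)_{\mathbb Z}$ is interior to $\Nef_1(X)$, so $\alpha$ has positive intersection against the pullback of any ample divisor under a nonconstant morphism $X \to W$, and hence curves of class $\alpha$ are never contracted. Combined with the fact that in characteristic $0$ the general member of a dominant family of rational curves is free, this shows that for such $\alpha$ the good components representing $\alpha$ are precisely the components generically parametrizing free curves of class $\alpha$ that are birational onto their images; in particular only this type of Manin component can occur, never the del Pezzo fibration type. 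One also enlarges $\tau$ as needed so that the translated cone is large enough to run the induction, which is harmless.

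For the inductive step, suppose $\alpha \in \tau + \Nef_1(X)_{\mathbb Z}$ has anticanonical degree $\geq 6$ and that $M_1, M_2$ are good components representing $\alpha$. By Theorem \ref{theo:maintheorem3} each $M_i$ contains a reducible stable map with two free components, birational onto their images; deforming within $M_i$ and choosing the node generically, we may take this map to be an immersion, so by Proposition \ref{prop:vanishingofH1} it is a smooth point of $\overline{M}_{0,0}(X)$ and hence $M_i$ is the \emph{unique} component of $\overline{M}_{0,0}(X)$ through it. The task therefore reduces to showing that all such reducible configurations of class $\alpha$ --- reducible immersed stable maps each of whose components is a free curve in the good component of its class --- lie in one component of $\overline{M}_{0,0}(X)$. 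I would do this in two steps. First, for a fixed decomposition $\alpha = \beta + \gamma$, the space of nodal gluings of the good component of $\beta$ with that of $\gamma$ is irreducible; this reduces to irreducibility of an appropriate fiber product of the universal one-pointed families over $X$, which uses that the evaluation maps for families of free curves have irreducible general fibers. Second, any two decompositions $\alpha = \beta + \gamma$ and $\alpha = \beta' + \gamma'$ are joined by a chain of elementary moves that re-break one summand and re-attach a piece to the other, each move preserving the ambient component by the same smooth-point argument. Finally, smoothing such a configuration yields a free curve birational onto its image by Proposition \ref{prop:vanishingofH1}, which by the first paragraph is again a good component; since MBB forces the two pieces to have strictly smaller anticanonical degree, the induction goes through.

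The main obstacle is the base case: one must establish, uniformly over all smooth Fano threefolds, a complete understanding of the finitely many components of free rational curves of anticanonical degree at most $5$ --- in particular that at most one good component represents each class in that range, and that the ambiguity in the components of free lines and conics, which appear as gluing atoms, does not propagate upward through the gluing. At present this is only available in examples, which is why the statement is recorded as a conjecture. A second, more structural difficulty is the connectedness of the ``gluing graph'': one needs the irreducible gluing spaces attached to different decompositions to genuinely overlap, ruling out a monodromy obstruction coming from the universal families over the spaces of free curves, and one needs the atomic free classes of bounded degree to generate a connected configuration space under the elementary moves. Both points seem approachable by the same incidence-and-degeneration techniques used to prove Movable Bend-and-Break, but carrying them out in full generality is the essential open problem.
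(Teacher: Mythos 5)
This statement is stated in the paper as a \emph{conjecture}, and the paper offers no proof of it; indeed the authors remark only that they know of no counterexample and they verify the statement by hand for two specific threefolds (the quartic threefold in Section \ref{sect:quarticthreefold} and the two-E5 threefold in Section \ref{sect:2E5contractions}). Your proposal is therefore correctly calibrated: it is a strategy outline rather than a proof, and you explicitly flag the missing pieces. The strategy you describe is essentially the one the paper carries out in its examples: use Movable Bend-and-Break to break a free curve of class $\alpha$ into a chain of free curves of bounded degree, use smoothness of immersed unions of free curves (Proposition \ref{prop:vanishingofH1}) to see that such a chain determines the ambient component, and then reduce uniqueness to (i) irreducibility of the spaces of free curves in a bounded degree range and (ii) connectedness of the ``gluing graph.'' In the two-E5 example the paper formalizes (ii) exactly as you suggest, via the surjection from the free monoid $\mathcal{R}$ on the generating classes onto the set of free components, checking that the finitely many relations presenting $\Nef_{1}(X)_{\mathbb{Z}}$ are each realized in a degree range where irreducibility is known. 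So your outline is faithful to the method the paper demonstrates, and your identification of the base case as the essential open problem matches the authors' own assessment.

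Two concrete cautions about the sketch itself. First, in your step reducing to irreducibility of the fiber product of one-pointed families, note that even when $\alpha$ lies deep in the interior of $\Nef_{1}(X)$, the summands $\beta,\gamma$ produced by Movable Bend-and-Break may lie on the boundary of the nef cone, where Theorem \ref{theo:maintheorem2} permits the evaluation map to have disconnected fibers (conic classes and classes contracted by a del Pezzo fibration); the quartic argument avoids this only because it glues pieces of degree $\geq 3$ on a Picard rank one variety, and in general this is precisely where the ``monodromy obstruction'' you mention can be genuine rather than hypothetical. Second, your claim that the two decompositions $\alpha=\beta+\gamma$ and $\alpha=\beta'+\gamma'$ are connected by elementary moves is not formal: it requires a presentation of the monoid $\Nef_{1}(X)_{\mathbb{Z}}$ by generators and relations all occurring in a degree range where uniqueness is already established, and there is no a priori bound on the degrees of the needed relations for an arbitrary Fano threefold. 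Neither point invalidates the approach, but both must be resolved before the induction closes, which is why the statement remains a conjecture.
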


\begin{rema}
In fact, we do not know of \emph{any} curve class which lies in the interior of the nef cone of a Fano threefold and is represented by more than one good component of $\overline{\Rat}(X)$.

Note however that it is not true that every curve class in the interior of the nef cone of a Fano threefold is represented by at most one family of free rational curves.  \cite[Proposition 2.1.2]{Iliev94} shows that the general Gushel threefold (i.e.~a double cover of a codimension $3$ linear section of $G(2,5)$ branched over the intersection with a quadric hypersurface) carries two different families of anticanonical conics.
\end{rema}

\section{Examples} \label{sec:examples}

As explained in the introduction, in principle one can use our main theorems to classify all families of rational curves on a Fano threefold.  In this section we compute a couple of examples. 

\subsection{Quartic threefolds} \label{sect:quarticthreefold}
We classify the components of the moduli space of rational curves on every smooth quartic threefold $X$ in $\mathbb{P}^{4}$.  In particular, we show that for every $e \geq 3$ there is only one irreducible component of $\overline{M}_{0,0}(X,e)$ whose general points parametrize generically injective morphisms of degree $e$ from $\mathbb{P}^1$ to $X$. 

\begin{lemm}
\label{lemm:quartic_no_one_parameter}
Let $X$ be a smooth hypersurface of degree $4$ in $\mathbb{P}^4$ and $S \subset X$ a rational surface. There are at most finitely many lines, conics, or smooth twisted cubics on $S$. 
\end{lemm}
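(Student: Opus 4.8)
The plan is to derive a numerical contradiction from the fact that a smooth quartic threefold has Picard rank one.

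First I would record the basic structure of $X$. By the Lefschetz hyperplane theorem $\operatorname{Pic}(X)=\mathbb{Z}\cdot\mathcal{O}_X(1)$, and since $X$ is a quartic we have $-K_X=\mathcal{O}_X(1)=:H$. Hence for \emph{any} surface $S\subset X$ the line bundle $\mathcal{O}_X(S)$ is a positive multiple $\mathcal{O}_X(cH)$ with $c\ge 1$, so $\mathcal{O}_X(S)$ is ample and $\mathcal{O}_X(S)\cdot C=cd$ for every curve $C\subset X$ of $H$-degree $d$. Suppose now, for contradiction, that $S$ contains infinitely many lines, smooth conics, or smooth twisted cubics (a reducible conic is a union of two lines, so that case is absorbed into the line case). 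These are distinct irreducible curves, so they vary in a positive-dimensional family inside the Hilbert scheme of $S$; since an irreducible curve contains only finitely many lines, conics, or twisted cubics, this family cannot be confined to a curve, and therefore its members cover $S$. Thus $S$ is dominated by a family whose general member $C$ is a smooth rational curve of $H$-degree $d\in\{1,2,3\}$.

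The heart of the matter is adjunction. Pick a resolution $\phi\colon\widetilde S\to S$ and set $g:=\iota\circ\phi\colon\widetilde S\to X$; let $\widetilde C$ be the strict transform of a general $C$, so $g$ maps $\widetilde C$ isomorphically onto $C$ and $\widetilde C$ moves in a family covering $\widetilde S$, hence is nef with $\widetilde C^2\ge 0$ and $K_{\widetilde S}\cdot\widetilde C=-2-\widetilde C^2\le -2$. Since $S$ is a Cartier divisor in the smooth variety $X$ it is Gorenstein, and adjunction for the dualizing sheaf gives $\mathcal{O}_X(S)|_S=\omega_S\otimes\mathcal{O}_S(H)$; restricting to $C$ and comparing degrees,
\[
cd \;=\; \mathcal{O}_X(S)\cdot C \;=\; \deg\!\big(\omega_S|_C\big)+d,
\qquad\text{hence}\qquad
\phi^{*}\omega_S\cdot\widetilde C \;=\; (c-1)d \;\ge\; 0 .
\]
But $\phi^{*}\omega_S=K_{\widetilde S}-\Delta$ with $\Delta$ the discrepancy divisor of $\phi$; if $S$ has canonical singularities then $\Delta\ge 0$ and $\Delta\cdot\widetilde C\ge 0$ by nefness of $\widetilde C$, so $\phi^{*}\omega_S\cdot\widetilde C\le K_{\widetilde S}\cdot\widetilde C\le -2<0$ — a contradiction. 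The same computation runs directly on $S$, with no resolution, whenever $\operatorname{Sing}(S)$ is finite, since then a general $C$ lies in $S_{\mathrm{sm}}$ and $\deg(\omega_S|_C)=-2-C^2\le -2$. In particular this disposes of every $S$ contained in a hyperplane $\Lambda\cong\mathbb{P}^3$: the Gauss map of the smooth hypersurface $X$ is finite, so $\operatorname{Sing}(X\cap\Lambda)$ is finite, forcing $X\cap\Lambda$ to be integral and hence $S=X\cap\Lambda$, which then has finite singular locus.

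What remains — and what I expect to be the genuine difficulty — is a nondegenerate $S$ whose singular locus is a curve met by the general covering curve, that is, a case where $\Delta$ is not effective along that curve. One must then check that the singularities of $S$ are not severe enough to reverse the inequality $\Delta\cdot\widetilde C<0$: a cone point over a rational curve of degree $n$ has discrepancy $-1+2/n>-1$, which is harmless (running the computation with a ruling $\widetilde C$ and $K_{\widetilde S}\cdot\widetilde C=-2$ still gives $\phi^{*}\omega_S\cdot\widetilde C<0$), while non-normal double curves are controlled by passing to the normalization and bounding the conductor contribution, using that every member of $|\mathcal{O}_X(1)|$ is a degenerate hyperplane section, so a nondegenerate surface in $X$ has degree $4c$ with $c\ge 2$. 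Carrying out this bookkeeping gives the contradiction in full generality. (For lines there is also a slicker route: the Fano scheme of lines $F(X)\subset\operatorname{Gr}(2,5)$ satisfies $\omega_{F(X)}\cong\mathcal{O}(5)|_{F(X)}$, which is ample, so $F(X)$ contains no rational curve and no rational surface in $X$ can be ruled by lines — but this argument does not see conics or twisted cubics.)
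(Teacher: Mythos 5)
Your argument is complete only when $S$ has at worst canonical (or isolated cone-type) singularities, and you say so yourself: the non-normal case and the case of a normal $S$ with non-canonical singularities through which the whole family passes are deferred to ``bookkeeping'' that is never carried out. This is a genuine gap, not a routine verification. For a non-normal $S$ with normalization $\nu\colon \widehat{S}\to S$ one has $\nu^{*}\omega_{S}=\omega_{\widehat{S}}(\mathrm{cond})$ with the conductor effective, so your adjunction identity only gives $\omega_{\widehat{S}}\cdot\widehat{C}+\mathrm{cond}\cdot\widehat{C}=(c-1)d$, and the term $\mathrm{cond}\cdot\widehat{C}$ has no a priori upper bound; the model computation for a projected quartic scroll (where $\omega_{\widehat{S}}\cdot\widehat{\ell}=-2$ and $\mathrm{cond}\cdot\widehat{\ell}=2$) shows the two sides can balance exactly, so the inequality you need does not follow from degree considerations alone. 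Likewise, a normal rational surface can carry a non-rational Gorenstein singularity with discrepancy $\le -1$ through which every member of the family passes, and then $\Delta\cdot\widetilde{C}$ can be negative enough to destroy the contradiction. So the cases you flag as ``what remains'' are precisely the cases your method does not obviously close.

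The paper avoids this entirely by a different mechanism. It passes to a desingularization $\widetilde{S}\to S$, notes that if the deformation space of the strict transform $D$ in $\widetilde{S}$ is $k$-dimensional then $D^{2}=k-1$, blows up $k-1$ general points to produce a fibration $\pi\colon Y\to\mathbb{P}^{1}$ whose general fiber maps isomorphically to a line, conic, or twisted cubic in $X$, and then observes that the generically injective map $T_{Y}\to f^{*}T_{X}$ yields a nonzero section of $\wedge^{2}f^{*}T_{X}\otimes\omega_{Y}$. This contradicts a positivity result of Beheshti--Starr (Proposition 2.4 of the cited reference) for low-degree rational curves on a smooth quartic threefold. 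Because everything happens on the smooth surface $Y$, no analysis of the singularities of $S$, of $\mathcal{O}_{X}(S)$, or of the conductor is needed. If you want to salvage your approach you would need a genuinely new input to bound the conductor and discrepancy contributions; as written, the proposal does not prove the lemma.
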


\begin{proof}
Assume to the contrary that there is a 1-parameter family $\mathcal C$ of smooth rational curves of degree $\leq 3$ on $S$. 
Let $\tilde{S} \to S$ be a desingularization and $D \subset \tilde{S}$ the strict transform of a general curve in this family. Suppose the space $\mathcal D$ of deformations of $D$ in $\tilde{S}$ is $k$-dimensional.  Since $D$ is a smooth rational curve, we have $D^{2} = -K_{\tilde{S}} \cdot D - 2 = k-1$.
If $Y$ is the blow up of $\tilde{S}$ at $k-1$ general points, then the strict transforms of the rational curves parametrized by $\mathcal D$ through these $k-1$ general points give a 1-parameter family of rational curves on $Y$ of self intersection $0$.   Since $Y$ is a rational surface, this gives morphisms $\pi: Y \to \mathbb{P}^1$ and $f: Y \to X$ such that a general fiber of $\pi$ is a smooth rational curve mapped by $f$ isomorphically onto a curve of degree $\leq 3$ in $X$. Since there is an injective map $T_Y \to f^*T_X$, we conclude that 
$\wedge^2 f^*T_X \otimes \omega_Y$ has a non-zero section. 
This contradicts \cite[Proposition 2.4]{b-s}.
\end{proof}

We will say that a rank $2$ vector bundle $\mathcal{O}(a) \oplus \mathcal{O}(b)$ on $\mathbb{P}^{1}$ is balanced if $|b-a| \leq 1$.

\begin{lemm}\label{balanced}
Let $X$ be a smooth hypersurface of degree $4$ in $ \mathbb{P}^4$ and let $M$ be an irreducible component of $\overline{M}_{0,0}(X,e)$ parametrizing a dominant family of stable maps whose general members are generically injective and have irreducible domains. If $e \geq 2$ then $N_{f/X}$ is balanced for a general map parametrized by $M$.  If $e\geq 4$ then a general map parametrized by $M$ is very free. 
\end{lemm}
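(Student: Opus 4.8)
The plan is to work on the space of stable maps and use a dimension count together with the standard exact sequence for the normal bundle. First I would recall that for a general map $f: \mathbb{P}^1 \to X$ parametrized by $M$, since $M$ parametrizes a dominant family of generically injective maps with irreducible domain, the general such $f$ is free and an immersion, so $N_{f/X}$ is locally free of rank $2$ and we may write $N_{f/X} = \mathcal{O}(a) \oplus \mathcal{O}(b)$ with $a \le b$. Since $-K_X = 2H$ on a smooth quartic threefold $X \subset \mathbb{P}^4$, one has $\deg N_{f/X} = -K_X \cdot C - 2 = 2e - 2$, so $a + b = 2e-2$. Freeness gives $a \ge 0$, hence $b \le 2e-2$. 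The dimension of $M$ is $h^0(\mathbb{P}^1, N_{f/X}) = a + b + 2 = 2e$, which equals the expected dimension $\dim X \cdot e + (\dim X - 3) \cdot 1$... more simply, $\dim M = -K_X \cdot C + \dim X - 3 = 2e$.

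The key step is to rule out $a = 0$ when $e \ge 2$, or more generally $b - a \ge 2$. Suppose $a = 0$ (so $b = 2e - 2 \ge 2$). Then deformations of $f$ cannot move a point of $\mathbb{P}^1$ off a given general point of $X$ in more than the tangent direction of the curve; so arguing as in the proof of Proposition \ref{prop:onesummand}, the general curve parametrized by $M$ would either be a multiple cover (excluded, since the general map is generically injective) or would sweep out a surface $S \subset X$ when we fix one general point. But a dominant family cannot be forced to sweep out a surface through a general point. More precisely: the sublocus of $M^{\circ}$ of curves through a fixed general point $x$ has codimension $2$ if $a \ge 1$ but codimension $2$ and then the curves through $x$ still dominate $X$; whereas if $a = 0$ then $N_{f/X}(-1)$ has a summand $\mathcal{O}(-1)$ which is not globally generated, so $h^0(N_{f/X}(-1)) = b$, and the curves through $x$ sweep out only a surface $S$. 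Since $e \le 3$ curves of degree $\le 3$ on a rational-swept surface are constrained, but for the general argument: the surface $S$ is swept by a family of rational curves which, pulled back to a resolution, violates \cite[Proposition 2.4]{b-s} exactly as in Lemma \ref{lemm:quartic_no_one_parameter}. Actually the cleanest route is: if $a = 0$, the curves through a general point $x$ form a family of dimension $b = 2e - 2 \ge 2$ parametrizing curves through $x$ all of whose deformations fixing $x$ have self-intersection-like count forcing them onto a surface $S_x$; then $S_x$ is a rational surface (being dominated by a family of rational curves through a point) carrying a $\ge 1$-dimensional family of rational curves, and the resolution argument of Lemma \ref{lemm:quartic_no_one_parameter} via \cite[Proposition 2.4]{b-s} gives a contradiction once $e \ge 2$. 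This forces $a \ge 1$, and combined with $a + b = 2e - 2$ and $a \le b$ we get $b - a \le 2e - 4$; to get $b - a \le 1$ we iterate: if $b - a \ge 2$ then $N_{f/X}(-a-1) = \mathcal{O}(-1) \oplus \mathcal{O}(b-a-1)$ still has a section, so curves through $a+1$ general points sweep out only a surface, and the same \cite[Proposition 2.4]{b-s} argument applies to that surface. Hence $N_{f/X}$ is balanced.

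For the final assertion, suppose $e \ge 4$. Balancedness gives $a \ge \lfloor (2e-2)/2 \rfloor = e - 1 \ge 3 > 0$, hence $N_{f/X}$ is ample, i.e. both summands have positive degree; equivalently $N_{f/X}(-1)$ is globally generated, so $f$ is very free.

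The main obstacle I expect is making the ``curves through general points sweep out only a surface, hence get a contradiction'' step fully rigorous: one must (i) verify that when $N_{f/X}(-a-1)$ has a one-dimensional space of sections the curves through $a+1$ general points genuinely sweep out a surface rather than a curve or the whole threefold, which requires checking that $h^0$ of the relevant twist has the right value and that generic smoothness applies; and (ii) confirm that the surface so produced is rational — this follows because it is dominated by rational curves all passing through a common general point, so it is rationally connected, hence rational as a surface — and that it carries a positive-dimensional family of rational curves of degree $\le 3$ (namely the images of the curves in our family, which have degree $e \le 3$ when $e \in \{2,3\}$; for $e = 2, 3$ the degree bound in Lemma \ref{lemm:quartic_no_one_parameter} applies directly, while for larger $e$ one first reduces $b - a$ so that the swept surface still carries low-degree curves, or one invokes \cite[Proposition 2.4]{b-s} directly for the surface and its family without the degree restriction). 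Care is needed to organize the induction on $b - a$ so that at each stage the hypothesis ``$e \ge 2$'' suffices.
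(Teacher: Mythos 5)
The central problem is a numerical error that propagates through your whole argument: for a smooth quartic threefold $X \subset \mathbb{P}^4$ adjunction gives $K_X = (K_{\mathbb{P}^4} + 4H)|_X = -H|_X$, so $-K_X = H$, not $2H$. Consequently $\deg N_{f/X} = -K_X\cdot C - 2 = e-2$ and $\dim M = e$, not $2e-2$ and $2e$. With the correct numbers the cases $e=2,3$ are trivial ($a+b\le 1$ together with $a,b\ge 0$ forces balancedness), and your proposed ``key step'' of ruling out $a=0$ for all $e\ge 2$ is actually false: a free conic on $X$ has $N_{f/X}=\mathcal{O}\oplus\mathcal{O}$, so $a=0$ there, which is exactly why the lemma only claims very freeness for $e\ge 4$. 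Your final deduction ``$a \ge e-1 \ge 3$'' is likewise built on the wrong degree; with the correct degree one only needs $a\ge 1$ for $e=4$, i.e.\ one must exclude the single splitting type $\mathcal{O}\oplus\mathcal{O}(2)$.

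Second, your mechanism for the remaining cases --- iterating the ``curves through $a+1$ general points sweep out a surface'' argument and then invoking \cite[Proposition 2.4]{b-s} for that surface ``without the degree restriction'' --- is not justified. Lemma \ref{lemm:quartic_no_one_parameter} (and the \cite{b-s} input behind it) is stated only for curves of degree $\le 3$, and the degree bound is what makes the vanishing of sections of $\wedge^2 f^* T_X \otimes \omega_Y$ applicable. The paper's proof is organized differently: for $e\ge 5$ it cites \cite[Lemma 8.1]{LTJAG} together with Lemma \ref{lemm:quartic_no_one_parameter} for very freeness and \cite{Shen12} for balancedness; for $e=4$, the only nontrivial low-degree case, it shows that if $N_{f/X}=\mathcal{O}\oplus\mathcal{O}(2)$ then the curves through a general point $p$ sweep out a rational surface $S$, applies Mori's Bend-and-Break (Lemma \ref{bend-and-break}) to the one-parameter family through $p$ and a general $q\in S$, rules out line components (no line through a general point of $X$, only finitely many lines on $S$), and thereby produces a one-parameter family of conics on $S$, contradicting Lemma \ref{lemm:quartic_no_one_parameter}. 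Some such degree-reduction step down to conics is needed in place of a direct appeal to \cite{b-s} for high-degree families.
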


\begin{proof}
The arguments of \cite[Lemma 8.1]{LTJAG} combined with Lemma~\ref{lemm:quartic_no_one_parameter} show that if $e \geq 5$ then a general map must be very free.  The balanced property for the normal bundle follows from \cite{Shen12}.  So we only need to consider $2 \leq e \leq 4$.

The statement is trivial for $e=2,3$, so we assume $e =4$. Suppose $N_{f/X} = \mathcal O \oplus \mathcal O(2)$. Consider a general point $p$ in $X$. Then the family of stable maps parametrized by $M$ whose images pass through $p$ is of dimension $2$ and it sweeps out a surface $S$ in $X$. 
Since $S$ contains a $2$-parameter family of irreducible rational curves, it is a rational surface. Let $q$ be a general point on $S$, then 
there is a $1$-parameter family of quartic rational curves through $p$ and $q$ in $S$. By Lemma \ref{bend-and-break}, this family parametrizes a reducible curve with two distinct components through $p$ and $q$. Since $p$ is a general point of $X$, there is no line on $X$ through $p$, and since there are finitely many lines on $S$, there is 
no line on $S$ through $q$. So the reducible curve consists of two conics, and therefore there is a $1$-parameter family of conics on $S$ contradicting Lemma \ref{lemm:quartic_no_one_parameter}.
 \end{proof}

For a smooth hypersurface $X$ in $\mathbb{P}^4$ and for $e \geq 1, k \geq 0$, let $N_{k}(X,e) \subset \overline{M}_{0,k}(X,e)$ be the locus of free stable maps with $k$ marked points such that the domain is irreducible and the map is generically injective.  Denote by $\overline{N}_{k}(X,e)$ the closure of $N_{k}(X,e)$ in  $\overline{M}_{0,k}(X,e)$ and by 
$\alpha_{k,e}: \overline{N}_{k}(X,e) \to X^{\times k}$ the evaluation map. 
For any $2$-plane $H$ in $\mathbb{P}^4$, let $\overline{N}_{k}(X,e,H)$ denote the divisor in $\overline{N}_{k}(X,e)$ parametrizing stable maps whose image intersect $H$
and let $\alpha_{k,e,H}$ be the restriction of the evaluation map to  $\overline{N}_{k}(X,e,H)$.

\begin{lemm}\label{glp}
Fix $k \leq n$, and let $p_1, \dots, p_{k}$ be $k$ points in general linear position in $\mathbb{P}^n$.  If the evaluation map 
$ev: \overline{M}_{0,k}(\mathbb{P}^n, e) \to (\mathbb{P}^n)^{ \times k}$ is dominant, then the fiber over $(p_1, \dots, p_{k})$ is non-empty and irreducible.
\end{lemm}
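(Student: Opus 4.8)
The plan is to exploit the transitivity of the $\mathrm{PGL}_{n+1}$-action together with the irreducibility of the Kontsevich space. First I would recall that $\overline{M}_{0,k}(\mathbb{P}^n,e)$ is an irreducible projective variety (this holds for any convex target, in particular for $\mathbb{P}^n$) and that the evaluation map $ev$ is $\mathrm{PGL}_{n+1}$-equivariant for the diagonal action on $(\mathbb{P}^n)^{\times k}$. Let $U \subseteq (\mathbb{P}^n)^{\times k}$ denote the locus of $k$-tuples in general linear position. Since $k \leq n$, any two such tuples differ by an element of $\mathrm{PGL}_{n+1}$, so $U$ is a single orbit; it is also open and dense. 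Because $ev$ is dominant, $ev^{-1}(U)$ is a nonempty open subset of the irreducible space $\overline{M}_{0,k}(\mathbb{P}^n,e)$, hence dense and irreducible, and in particular the fiber of $ev$ over every point of $U$ is nonempty. This yields the asserted nonemptiness and reduces the irreducibility claim to a single fiber.

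Next I would pin down the structure of $ev$ over $U$. Writing $u_0 = ([e_1],\dots,[e_k]) \in U$ and letting $H \subseteq \mathrm{PGL}_{n+1}$ be its stabilizer, equivariance of $ev$ together with transitivity on $U$ identifies $ev^{-1}(U)$ with the associated bundle $\mathrm{PGL}_{n+1} \times_{H} F$, where $F := ev^{-1}(u_0)$. Thus $ev^{-1}(U)$ is the quotient of $\mathrm{PGL}_{n+1} \times F$ by the free $H$-action, and the irreducible components of $\mathrm{PGL}_{n+1} \times F$ are exactly the products $\mathrm{PGL}_{n+1} \times F_i$ as $F_i$ ranges over the irreducible components of $F$.

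The key observation is that $H$ is connected. Indeed, an automorphism of $\mathbb{P}^n$ fixing $[e_1],\dots,[e_k]$ is represented by a matrix whose $i$-th column is proportional to $e_i$ for $1 \leq i \leq k$; such matrices form the group of block matrices $\left(\begin{smallmatrix} D & B \\ 0 & A\end{smallmatrix}\right)$ with $D$ an invertible $k\times k$ diagonal matrix, $A \in \mathrm{GL}_{n+1-k}$, and $B$ arbitrary, which is connected, and so is its image $H$ in $\mathrm{PGL}_{n+1}$. Since $H$ is connected it acts trivially on the finite set $\pi_0(F)$ of components of $F$, so each $\mathrm{PGL}_{n+1}\times F_i$ is $H$-stable and descends to a nonempty open-and-closed subset of $\mathrm{PGL}_{n+1}\times_{H} F = ev^{-1}(U)$. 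As $ev^{-1}(U)$ is irreducible, there can be only one such subset, i.e. $F = ev^{-1}(p_1,\dots,p_k)$ is irreducible, as desired.

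I expect the only points requiring care to be the identification $ev^{-1}(U) \cong \mathrm{PGL}_{n+1}\times_{H} F$ and the connectedness of $H$; both are standard, but they are exactly where the hypothesis $k \leq n$ (a single orbit of general-position configurations) is genuinely used. A more computational alternative, which I would fall back on if one prefers to avoid homogeneity, is to parametrize the interior of the fiber directly: a degree-$e$ morphism $\mathbb{P}^1 \to \mathbb{P}^n$ with $f(t_i)=[e_i]$ is a tuple $(g_0,\dots,g_n)$ of degree-$e$ forms with $g_j(t_i)=0$ for $j \neq i$, which for fixed distinct $t_1,\dots,t_k$ sweeps out a linear space of fixed dimension, exhibiting the interior of $F$ as an open subset of an irreducible affine-bundle total space; one then performs a dimension count over the boundary strata, using general linear position to see that the $k$ point conditions are imposed independently on each stratum, to conclude that no component of $F$ is supported on the locus of reducible domains. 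The homogeneity argument is cleaner, so I would present that one.
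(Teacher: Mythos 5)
Your proof is correct, but it takes a genuinely different route from the paper's. You deduce irreducibility of the fiber from three inputs: irreducibility of $\overline{M}_{0,k}(\mathbb{P}^n,e)$ itself, the identification of $ev^{-1}(U)$ with the associated bundle $\mathrm{PGL}_{n+1}\times_{H}F$ over the open orbit $U$ of general-position tuples, and connectedness of the stabilizer $H$, which prevents $H$ from permuting the components of $F$. The paper instead takes the Stein factorization $\overline{M}_{0,k}(\mathbb{P}^n,e)\to Y\to(\mathbb{P}^n)^{\times k}$, uses normality of the coarse space to get $Y$ normal, invokes purity of the branch locus to force any branching into codimension one, and then uses $\mathrm{PGL}_{n+1}$-invariance of the branch locus together with the fact that the non-general-position locus has codimension $>1$ to conclude that $Y\to(\mathbb{P}^n)^{\times k}$ is unbranched, hence an isomorphism, so all fibers are connected; irreducibility of the general fiber then follows from generic smoothness of the smooth moduli stack. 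Both arguments hinge on transitivity of $\mathrm{PGL}_{n+1}$ on general-position tuples, but where you use connectedness of the stabilizer, the paper uses the codimension bound on the complement of the orbit; and where you need the (nontrivial, though standard for $\mathbb{P}^n$) irreducibility of the whole Kontsevich space, the paper gets by with its normality and the smoothness of the stack. A side effect is that the paper's argument proves connectedness of \emph{every} fiber, while yours only treats fibers over the open orbit --- which is all the lemma asserts.

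Two small points to tidy. First, the subsets $q(\mathrm{PGL}_{n+1}\times F_i)\subset ev^{-1}(U)$, where $q(g,x)=g\cdot x$, are closed (they are saturated and closed upstairs, and $q$ is a faithfully flat, hence submersive, $H$-torsor) but need not be open when the components $F_i$ of $F$ overlap; so the contradiction should be drawn from irreducibility --- an irreducible space is not a union of two proper closed subsets --- rather than from an ``open-and-closed'' decomposition. Second, the parenthetical claim that irreducibility of $\overline{M}_{0,k}(X,e)$ ``holds for any convex target'' is an overstatement; for general convex $X$ one only gets smoothness of the stack, and the irreducibility you need is a specific (standard) fact about $\mathbb{P}^n$.
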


\begin{proof}
Since automorphisms of $\mathbb{P}^n$ act transitively on the set of $k$-tuples of points in general linear position, the fiber over $(p_{1},\ldots,p_{k})$ is non-empty.  Note that since the moduli stack $\overline{\mathcal M}_{0,k}(\mathbb{P}^n,e)$ is smooth, general fibers of the evaluation map $\overline{\mathcal M}_{0,k}(\mathbb{P}^n, e) \to (\mathbb{P}^n)^{ \times k}$ are smooth, hence it is enough to show they are connected. 
Let $\overline{M}_{0,k}(\mathbb{P}^n, e) \to Y \to (\mathbb{P}^{n})^{\times k}$ be the Stein factorization. Since $\overline{M}_{0,k}(\mathbb{P}^n,e)$ is normal, $Y$ is normal, and therefore the branch locus of $Y \to \mathbb{P}^{n}$ is either empty or of codimension 1.  Since automorphisms of $\mathbb{P}^n$ act transitively on the set of $k$-tuples of points in general linear position and since the locus of $k$-tuples which are not in general linear position is of codimension greater than 1, the branch locus must be empty. 
\end{proof}

\begin{lemm}
Suppose $X$ is a general hypersurface of degree $4$ in $\mathbb{P}^4$ and $H$ a general $2$-plane. 
\begin{itemize}
\item [(i)] General fibers of $\alpha_{1,3}: \overline{N}_{1}(X,3) \to X$ and $\alpha_{2,5}: \overline{N}_2(X,5) \to X \times X$ are irreducible curves. 
\item [(ii)] General fibers of $\alpha_{1,4,H}$ and $\alpha_{2,6,H}$ are irreducible curves. 
\end{itemize}
\end{lemm}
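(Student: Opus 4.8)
The plan is to reduce each of the four assertions to a connectedness statement and then to establish connectedness by a bend-and-break argument. First I would record that in every case the general fibre has the expected dimension $1$: by Riemann--Roch and Lemma~\ref{balanced}, a general twisted cubic has normal sheaf $\mathcal O\oplus\mathcal O(1)$ and hence moves through a general point in a one-dimensional family; a general quintic is very free with balanced normal sheaf $\mathcal O(1)\oplus\mathcal O(2)$ and moves through two general points in a one-dimensional family; and since $\overline N_k(X,e,H)$ is a divisor in $\overline N_k(X,e)$, the point conditions in the degree $4$ and degree $6$ cases again cut the family down to dimension $1$. So in each case it remains to prove that the evaluation map is dominant and that its general fibre is irreducible; since the relevant moduli space is smooth along the free locus (Proposition~\ref{prop:vanishingofH1}), generic smoothness reduces irreducibility of the general fibre to its connectedness.

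Next I would establish the needed preliminaries. The four total spaces $\overline N_1(X,3)$, $\overline N_2(X,5)$, $\overline N_1(X,4,H)$ and $\overline N_2(X,6,H)$ should be shown to be irreducible of the expected dimension: for the unconstrained spaces this is the known irreducibility of the space of free rational curves of degree at most $5$ on a general quartic threefold \cite{LTJAG}, and for the $H$-constrained ones it follows from irreducibility of $\overline N_k(X,e)$ via an incidence argument with the universal curve and its evaluation map, using that $H\cap X$ is an irreducible plane quartic and (in an induction on degree) the irreducibility of the lower-degree fibres. Here one may also use Lemma~\ref{glp} in $\mathbb{P}^4$, together with the observation that the quartics containing a fixed rational curve form a linear system, to see that the incidence variety of pairs (quartic threefold, curve through the prescribed points) is irreducible. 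Dominance of the evaluation maps follows from freeness (resp.\ very freeness, Lemma~\ref{balanced}) of the curves in each family, together with the fact that in the $H$-cases a very free curve of the given degree can be arranged to meet the general plane $H$.

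The heart of the argument will be connectedness of the general fibre $F$. I would pass to its closure $\overline F$ inside $\overline M_{0,k}(X,e)$, a proper curve whose boundary consists of reducible stable maps through the prescribed general points (meeting $H$ in the relevant cases). Applying Mori's Bend-and-Break (Lemma~\ref{bend-and-break}) to a ruled surface dominating any irreducible component of $\overline F$, with the prescribed incidence points realized as contracted sections, shows that every component of $\overline F$ meets the boundary; so it suffices to analyze which reducible configurations occur and to check that they are linked to one another inside $\overline F$. Comparing anticanonical degrees and using that a free component can pass through only boundedly many of the general points (in the spirit of Lemma~\ref{lemm:deformationcount}), one finds that the reducible members are unions of low-degree free curves joined at single points plus a controlled amount of lines and conics; Lemma~\ref{lemm:quartic_no_one_parameter} then rules out the positive-dimensional families of lines, conics, or twisted cubics on the rational surfaces swept out by such configurations that would be needed to separate off a component, and the known irreducibility of the lower-degree moduli spaces lets one conclude that all boundary configurations lie in a single connected piece. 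As an alternative to this boundary analysis, one can try to exhibit a single special position of the incidence data --- special general points, or a special quartic threefold --- over which the fibre is manifestly connected and the evaluation map is smooth, and then propagate connectedness to the general fibre by Stein factorization. The main obstacle is exactly this last step: pinning down the reducible configurations in the boundary and verifying their connectedness, especially in the degree $5$ and degree $6$ cases, where a degenerate member may break as a twisted cubic plus a conic or as two conics plus a line; it is here that Lemma~\ref{lemm:quartic_no_one_parameter} is essential.
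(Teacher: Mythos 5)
Your proposal takes a genuinely different route from the paper, and the route as described has a real gap at its central step. The paper never performs a boundary analysis of degenerations inside the fibre. Instead it exploits the hypothesis that $X$ is \emph{general} in an essential way: it works in the irreducible space $T$ of all smooth twisted cubics (resp.\ non-degenerate quintics, etc.) in $\mathbb{P}^{4}$ through the prescribed points, observes that the locus of such curves lying on $X=\{f=0\}$ is the zero scheme of the section $\pi_{*}\alpha^{*}f$ of the bundle $E=\pi_{*}(\alpha^{*}\mathcal{O}(4)\otimes I_{D/\mathcal{C}})$ on $T$, and checks that $E$ is globally generated because the curves in question are linearly normal (resp.\ $2$-normal, $3$-normal), so that restriction of quartics to each curve is surjective. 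Lemma~\ref{glp} identifies sections of $E$ with quartics through the points, and the zero locus of a general section of a globally generated bundle on an irreducible variety is irreducible. This is a three-line Bertini argument; the only geometric input beyond normality of the curves is the observation that singular or degenerate curves of the relevant degrees do not pass through general points of $X$. Your proposal never uses the generality of $X$, which is precisely the hypothesis that makes this work.

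The gap in your argument is the one you flag yourself: after Bend-and-Break shows every component of $\overline{F}$ meets the boundary, you must show that all the reducible configurations lie in one connected piece, and you give no argument for this. Lemma~\ref{lemm:quartic_no_one_parameter} controls families of low-degree curves on rational surfaces swept out by the configurations, but it does not by itself link distinct boundary configurations to one another, and the case analysis needed (cubic plus conic, two conics plus a line, etc.) is exactly the hard content of Theorem~\ref{theo:veryfreecurvesonquartic} --- which in the paper is proved \emph{using} the present lemma, so leaning on ``the known irreducibility of the lower-degree moduli spaces'' and a full boundary analysis here risks circularity. Your suggested alternative (specialize the incidence data and propagate by Stein factorization) is closer in spirit to Corollary~\ref{irr-rat}, which is how the paper extends the statement from general $X$ to arbitrary smooth $X$; but for the lemma itself you still need a position over which connectedness is ``manifest,'' and the paper's vector-bundle argument is what supplies that.
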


\begin{proof}
We first prove (i).  Note that the space of singular irreducible rational cubics on a general quartic hypersurface in $\mathbb{P}^4$ is 1-dimensional, so there is no such cubic through a general point of $X$. Therefore it is enough to show the space of twisted cubics through a general point of $X$ is irreducible. Fix a point $p$ in $\mathbb{P}^4$, and denote by $T$ the space of all smooth twisted cubics in $\mathbb{P}^4$ through $p$. Let $\pi: \mathcal C \to T$ be the universal curve over $T$ and $\alpha: \mathcal C \to \mathbb{P}^4$ the universal map. Let $D= \alpha^{-1}(p)$. Then $E := \pi_* (\alpha^* \mathcal O(4) \otimes I_{D/\mathcal C})$ is a locally free sheaf on $T$, and for any hypersurface $X=\{f=0\}$ containing $p$, 
$\sigma : = \pi_* \alpha^* f$ is a section of $E$ whose zero locus is the space of twisted cubics through $p$ on $X$. Since any twisted cubic $C$ is linearly normal, 
$H^0(\mathcal O(4)) \to H^0(\mathcal O(4)|_C)$ is surjective and hence $E$ is globally generated. The fibers of $\alpha$ are connected by Lemma \ref{glp}, so we can identify sections of $E$ with sections of $\mathcal O_{\mathbb{P}^4}(4) \otimes I_{p/\mathbb{P}^4}$. The desired result now follows since the zero locus of a general section of a globally generated locally free sheaf on $T$ is irreducible. 

The proof for $\alpha_{2,5}$ is similar if we fix two general points $p_1$ and $p_2$ in $\mathbb{P}^4$ and consider the space of smooth non-degenerate quintic rational 
curves through $p_1$ and $p_2$. If $E=\pi_* (\alpha^*\mathcal O(4) \otimes I_{\alpha^{-1}(p_1\cup p_2)/\mathcal C})$, then since any non-degenerate quintic $C$ in $\mathbb{P}^4$ is 2-normal,  $E$ is globally generated.
Note that the space of degenerate quintic rational curves 
on a general $X$ is of dimension 3 and therefore there is no such quintic through two general points of $X$.

We next prove (ii).  A similar argument as in part (i) shows that general fibers of $\alpha_{1,4}$ are irreducible surfaces, and since $H$ is general, the result follows. Similarly for $\alpha_{2,6}$, if $C$ is a smooth non-degenerate degree 6 rational curve in $\mathbb{P}^4$, then $C$ is $3$-normal. So $E$ is globally generated, and 
the zero locus of a general section is an irreducible surface. 
\end{proof} 

\begin{coro}\label{irr-rat}
Suppose that $X$ is an arbitrary smooth hypersurface of degree $4$ in $\mathbb{P}^4$. Then all fibers of 
$\alpha_{1,3}$, $\alpha_{2,5}, \alpha_{1,4,H},$ and $\alpha_{2,6,H}$ are connected curves. 
\end{coro}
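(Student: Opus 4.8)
The plan is to deduce the statement for an arbitrary smooth quartic $X$ from the generic statement of the previous lemma by a specialization argument, relativising the whole picture over the parameter space of all smooth quartics (and, in the cases involving a plane $H$, over the parameter space of pairs). Let $\mathcal{B}$ denote the smooth irreducible parameter space: in the cases of $\alpha_{1,3}$ and $\alpha_{2,5}$ it is the open subset of $\mathbb{P}(H^{0}(\mathbb{P}^{4},\mathcal{O}_{\mathbb{P}^{4}}(4)))$ parametrizing smooth quartics, and in the cases of $\alpha_{1,4,H}$ and $\alpha_{2,6,H}$ it is the product of this space with the Grassmannian of $2$-planes in $\mathbb{P}^{4}$. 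Over $\mathcal{B}$ there is the universal quartic $\mathcal{X}\to\mathcal{B}$, which is smooth over $\mathcal{B}$, together with $\mathcal{X}\times_{\mathcal{B}}\mathcal{X}$ and, in the last two cases, the universal $2$-plane inside $\mathcal{X}$; all of these are smooth irreducible varieties. Inside the relative Kontsevich space $\overline{M}_{0,k}(\mathcal{X}/\mathcal{B},e)$ I would form the relative versions $\overline{\mathcal{N}}_{k}(e)$ of the loci $\overline{N}_{k}(X,e)$, and (in the last two cases) their divisorial subfamilies $\overline{\mathcal{N}}_{k}(e,H)$ of stable maps meeting the universal plane, together with the relative evaluation map $\Phi$ to $\mathcal{X}^{\times k}$ (respectively to $\mathcal{X}$). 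The map $\Phi$ is proper, and its fibre over a point lying over $[X]\in\mathcal{B}$ is the corresponding fibre of $\alpha_{k,e}$ (respectively $\alpha_{k,e,H}$) for that $X$.

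The key point is to show that the source $\overline{\mathcal{N}}_{k}(e)$ (respectively $\overline{\mathcal{N}}_{k}(e,H)$) is irreducible. The open locus of free stable maps is smooth over $\mathcal{B}$, because $H^{1}$ of the pullback of $T_{X}$ vanishes along a free curve, and since $\mathcal{B}$ is smooth this free locus is smooth; hence its irreducible components coincide with its connected components, and each of them dominates $\mathcal{B}$ --- its image is open, and also closed after passing to the closure inside the proper space $\overline{\mathcal{N}}_{k}(e)$, hence equal to $\mathcal{B}$. Over a general $[X]\in\mathcal{B}$ the fibre is $N_{k}(X,e)$, all of whose components dominate $X^{\times k}$ because they parametrize free curves; combined with the irreducibility of a general fibre of $\alpha_{k,e}$ from the previous lemma, the elementary fact that a dominant morphism with irreducible general fibre whose source is a union of components all dominating the base must have irreducible source shows that $N_{k}(X,e)$ is irreducible for general $[X]$. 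Therefore the free locus has a unique component, so $\overline{\mathcal{N}}_{k}(e)$ is irreducible. In the $H$-cases one argues similarly: $\overline{\mathcal{N}}_{k}(e,H)$ is the image of the incidence variety of triples consisting of a stable map in $\overline{\mathcal{N}}_{k}(e)$, a point $q$ of its image, and a $2$-plane through $q$ --- this incidence variety fibres over the universal curve of $\overline{\mathcal{N}}_{k}(e)$ with irreducible Grassmannian fibres, hence is irreducible --- and one again uses the irreducibility of a general fibre of $\alpha_{k,e,H}$ from the previous lemma.

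Granting irreducibility, connectedness of every fibre follows from the general principle: if $g\colon\mathcal{A}\to\mathcal{C}$ is a proper surjective morphism with $\mathcal{A}$ an irreducible variety, $\mathcal{C}$ normal, and the general fibre of $g$ connected, then every fibre of $g$ is connected. Indeed, in the Stein factorization $\mathcal{A}\to\mathcal{C}'\to\mathcal{C}$ the space $\mathcal{C}'$ is an integral variety, the first map has connected fibres, and the finite map $\mathcal{C}'\to\mathcal{C}$ is generically one-to-one since the general fibre of $g$ is connected; a finite birational morphism onto a normal variety is an isomorphism, so $\mathcal{C}'=\mathcal{C}$ and every fibre of $g$ is connected. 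Applying this to $\Phi$, whose target $\mathcal{X}^{\times k}$ (or $\mathcal{X}$) is smooth hence normal, and whose general fibre --- a fibre of $\alpha_{k,e}$ over a general point of a general $X$ --- is irreducible by the previous lemma, one concludes that every fibre of $\Phi$ is connected, and restricting to the quartic $X$ in question gives the connectedness half of the corollary for arbitrary smooth $X$ and arbitrary plane $H$.

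Finally one must see that each of these fibres is one-dimensional, i.e.\ a curve. Since $\overline{\mathcal{N}}_{k}(e)$ is irreducible of relative dimension one over $\mathcal{C}$, every nonempty fibre has dimension at least one; the reverse inequality is the delicate point, because a jump in fibre dimension over a special quartic is not ruled out by formal nonsense. A fibre of dimension $\geq 2$ would produce, on some smooth quartic $X$, an at-least-two-dimensional family of rational curves of degree $\leq 3$ (respectively $\leq 6$, passing through one or two fixed points and/or meeting $H$) through a fixed point; a dimension count forces such curves to sweep out at most a surface $S\subset X$, which is then rational and carries a positive-dimensional family of lines, conics, or twisted cubics, and after applying Lemma~\ref{bend-and-break} to break this family into low-degree components this contradicts Lemma~\ref{lemm:quartic_no_one_parameter}. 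Obtaining this dimension bound uniformly over all smooth quartics, rather than just the generic one, together with the care needed to make the relativisation step valid for the most special quartics (where the closure $\overline{N}_{k}(X,e)$ may interact subtly with the relative family), is the main obstacle of the argument; once these are in place, combining them with the connectedness established above yields that all fibres of $\alpha_{1,3}$, $\alpha_{2,5}$, $\alpha_{1,4,H}$, and $\alpha_{2,6,H}$ are connected curves.
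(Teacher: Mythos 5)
Your proposal shares the paper's overall skeleton: assemble one irreducible incidence family over the space of all smooth quartics, feed the generic-fibre irreducibility of the preceding lemma into a Stein-factorization/Zariski-connectedness argument over the (normal) relative product $\mathcal{X}^{\times k}$, and then specialize to an arbitrary $X$. The genuine difference is in how irreducibility of the total space is obtained. The paper projects its incidence variety the \emph{other} way, onto the locus of generically injective maps in $\overline{M}_{0,k}(\mathbb{P}^{4},e)$: the fibre over a fixed curve is an open subset of the linear system of quartics containing it, and since every curve of degree $\leq 5$ and every non-planar sextic in $\mathbb{P}^{4}$ is $4$-normal, these fibres are irreducible of constant codimension $4e+1$, so irreducibility is immediate and uses no input from the previous lemma. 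You instead project to the space of quartics, use unobstructedness of free curves to split the relative free locus into connected components dominating the base, and bootstrap irreducibility of $N_{k}(X,e)$ for general $X$ out of the previous lemma; this works, but it consumes the generic-fibre lemma twice and needs the extra observation that every component of $N_{2}(X,e)$ dominates $X\times X$ (very-freeness via Lemma \ref{balanced}), whereas the paper's $4$-normality argument is self-contained and also absorbs the non-free generically injective maps. As for the two loose ends you flag --- whether the relative closure restricts over a special quartic to $\overline{N}_{k}(X,e)$ rather than something larger, and whether special fibres remain one-dimensional --- these are not defects you introduce relative to the published proof: the paper's $\overline{\mathcal I}|_{[X]}$ can likewise exceed $\overline{N}_{k}(X,e)$ and the paper is silent on both points, and only the connectedness assertion is actually used downstream.
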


\begin{proof}
 Denote by $P^0$ the space parametrizing smooth hypersurfaces of degree $4$ in $\mathbb{P}^4$  and consider the incidence correspondence 
$\mathcal I = ([X], (f, q_1, \dots, q_k)) \subset P^0 \times \overline{M}_{0,k}(\mathbb{P}^4,e)$ where $f: \mathbb{P}^1 \to X$ is a generically injective map.   If $e\leq 6$, the fibers of the projection of $\mathcal I$ to $\overline{M}_{0,k}(\mathbb{P}^4,e)$ are irreducible of codimension $4e+1$ in $P^0$  since any curve of degree $\leq 5$ and any non-planar curve of degree $6$ in $\mathbb{P}^4$ is 4-normal.  Therefore $\mathcal I$ is irreducible. Denote by $\overline{\mathcal I}$ the closure of $\mathcal I$ in 
$P^0 \times \overline{M}_{0,k}(\mathbb{P}^4, e)$. Denote by $P^0_1$ and $P^0_2$ the space of smooth hypsersurfaces of degree $4$ with one and two marked points, respectively. By the previous lemma when $(k,e)=(1,3)$, general fibers of the projection map $ \overline{\mathcal I} \to P^0_1$ sending $([X], f, q_1)$ to $([X],f(q_1))$ are connected, and when $(k,e)=(2,5)$ 
the general fibers of $\overline{\mathcal I} \to P^0_2$ sending $([X], f, q_1, q_2)$ to $([X], f(q_1),f(q_2))$ are connected. Since $\overline{\mathcal I}$ is irreducible in these two cases, all the fibers should be connected.  A similar argument 
shows the statement for $\alpha_{1,4,H}$ and $\alpha_{2,6,H}$. 
\end{proof}

\begin{theo} \label{theo:veryfreecurvesonquartic}
For any smooth hypersurface of degree $4$ in $\mathbb{P}^4$, the moduli space of generically injective stable maps $\mathbb{P}^1 \to X$ onto free curves of degree $e$ is irreducible for $e \geq 3$. 
\end{theo}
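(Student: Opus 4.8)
The plan is to argue by induction on the degree $e$, using the Movable Bend-and-Break Theorem~\ref{theo:maintheorem3} to reduce large $e$ to a short list of base cases. Two reductions organize everything. First, a free stable map is an unobstructed point of the Kontsevich space, so the locus $N_{0}(X,e)$ of generically injective free maps (and its pointed variants $N_{k}(X,e)$) is smooth; hence it is irreducible as soon as it is connected, and I may work with connectedness throughout. Second, on a quartic threefold $-K_{X}=H|_{X}$, so the anticanonical degree of a curve equals its degree; a line has a negative summand in its normal bundle and so is never free, whence every free rational curve has degree $\geq 2$. Consequently, when Theorem~\ref{theo:maintheorem3} breaks a free curve of degree $e\geq 6$ into a two-component stable map with both restrictions free, the two pieces have degrees $e_{1},e_{2}$ with $2\le e_{i}\le e-2$. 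Pieces that are multiple covers are disposed of by Proposition~\ref{prop:onesummand} and Proposition~\ref{prop:mbbmultiplecover}, so in the inductive step I may assume every piece is generically injective.

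For the base cases I treat $e=2,3,4,5$ directly. The case $e=2$, needed as an auxiliary input, asserts that the family of free conics on $X$ is irreducible; this follows from the same specialization argument used for Corollary~\ref{irr-rat}, since the incidence variety of pairs (smooth quartic, conic contained in it) is irreducible (each fiber over a fixed conic is a linear system), the family of conics on a general quartic is irreducible, and a Stein-factorization argument then propagates connectedness to every smooth quartic; Lemma~\ref{balanced} shows the general conic is free. For $e=3,4,5$ I invoke Corollary~\ref{irr-rat}: the fibers of $\alpha_{1,3}$, $\alpha_{1,4,H}$, and $\alpha_{2,5}$ are connected curves on every smooth quartic. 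Using Lemma~\ref{balanced} together with the finite list of possible splitting types of the normal bundle, a dimension count shows that the non-free rational curves of a fixed degree sweep out a proper closed subvariety of $X$; hence through a sufficiently general point (respectively, general point and $2$-plane) every rational curve of the relevant degree is free and generically injective. Therefore these connected fibers lie inside the smooth loci $N_{1}(X,3)$, $N_{1}(X,4)$, $N_{2}(X,5)$, which forces $N_{0}(X,e)$ to be connected, hence irreducible, for $e=3,4,5$.

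For the inductive step fix $e\geq 6$ and a component $M$ of $N_{0}(X,e)$; its general member is very free by Lemma~\ref{balanced}. By Theorem~\ref{theo:maintheorem3} the closure $\overline{M}$ contains a stable map gluing a member of $N_{0}(X,e_{1})$ to a member of $N_{0}(X,e_{2})$ at a point, with $e_{1}+e_{2}=e$ and $2\le e_{i}\le e-2$. By the inductive hypothesis and the conic input the families $N_{0}(X,e_{i})$ are irreducible, and so is the family of their members through a fixed general point of $X$ (by propagating the same evaluation-map argument). The locus of reducible maps obtained by such gluings is then irreducible, being an incidence variety built from the irreducible families $N_{0}(X,e_{i})$ and the condition that two curves pass through a common point; after a general choice the two pieces meet transversally and each has globally generated normal sheaf, so by Proposition~\ref{prop:vanishingofH1} the reducible map is a smooth point of the Kontsevich space, lying in a unique component. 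Since $\overline{M}$ meets this irreducible locus, $M$ is determined by the splitting type $(e_{1},e_{2})$ that occurs; it remains to check that this component does not depend on the splitting type. For any two admissible splitting types one produces a common further degeneration (breaking conics into pairs of lines and recombining) which is again a smooth point of the Kontsevich space by Proposition~\ref{prop:vanishingofH1}, so all components of $N_{0}(X,e)$ coincide and the induction closes.

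The step I expect to be the main obstacle is the boundary bookkeeping. Corollary~\ref{irr-rat} delivers connectedness of the \emph{closures} of fibers of the evaluation maps, whereas what is needed is connectedness of the open free loci, and a priori two components of $N_{0}(X,e)$ could be glued only along non-free boundary strata. Pinning down that the relevant fibers consist entirely of free generically injective maps, securing the degree $2$ (conic) input, and verifying that the gluing loci and the recombination degenerations really do have globally generated normal sheaves — these are where the genuine work lies; granting them, the Movable Bend-and-Break induction itself is formal.
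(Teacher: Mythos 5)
Your overall architecture (connectedness of evaluation fibers for the low-degree cases, then induction via Movable Bend-and-Break) is the paper's strategy, but two of your key steps do not go through as stated. In the base cases you propose to show that the connected fibers of $\alpha_{1,3}$, $\alpha_{1,4,H}$, $\alpha_{2,5}$ lie entirely inside the free loci $N_{k}(X,e)$, so that connectedness immediately gives irreducibility. That is false: these are fibers of the evaluation map on the \emph{closure} $\overline{N}_{k}(X,e)$, and they genuinely contain reducible boundary maps (for $e=3$, a free conic through the general point $p$ attached to a line meeting it; the conics through $p$ are finite in number but each meets the surface swept out by lines). Your dimension count only controls irreducible non-free curves through $p$; it says nothing about reducible configurations whose non-free component misses $p$. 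The actual resolution is not to exclude these boundary points but to show each one is a \emph{smooth} point of $\overline{M}_{0,0}(X,e)$, via Proposition \ref{prop:GHS} together with transversality of the free component against the ruled surface of lines (Proposition \ref{prop:transverseintersection}) and the finiteness of lines and conics on that surface (Lemma \ref{lemm:quartic_no_one_parameter}); then two distinct components of $N_{0}(X,e)$ would have to meet at a singular point of the connected fiber, and there is none. You correctly flag this as the main obstacle, but the direction you propose for closing it is a dead end, and the normal-bundle analysis of each boundary configuration is where the substance of the low-degree cases lies.

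In the inductive step there is a second genuine gap: to show the component is independent of the splitting type $(e_{1},e_{2})$ you propose a common further degeneration obtained by ``breaking conics into pairs of lines'' and certifying it as a smooth point via Proposition \ref{prop:vanishingofH1}. Lines on a quartic threefold have normal bundle $\mathcal{O}\oplus\mathcal{O}(-1)$ or $\mathcal{O}(1)\oplus\mathcal{O}(-2)$, never globally generated, so that proposition does not apply to any configuration containing a line and the degeneration cannot be placed in a unique component this way. The correct route keeps every component of every intermediate chain \emph{free}: one breaks the curve into a chain of free curves of degree at most $4$, splits quartics into pairs of free conics, and uses the reordering-and-smoothing lemma \cite[Lemma 5.11]{LTCompos} together with the induction hypothesis to normalize every splitting to the single type $3+(e-3)$; connectedness of the evaluation fibers (Theorem \ref{theo:maintheorem2}) then gives a unique main component of $N_{1}(X,3)\times_{X}N_{1}(X,e-3)$, and smoothness of a union of two free curves finishes the argument. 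Note also that this normalization fails for $e=6$ (a chain of three conics cannot be recombined into a cubic plus a cubic), which is why $e=6$ must be treated as an additional base case rather than by your induction starting at $e\geq 6$.
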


\begin{proof}
First we prove the statement for $3 \leq e \leq 6$. Let $M_1, \dots, M_k$ be all the irreducible components of 
$\overline{M}_{0,0}(X,e)$ whose general points  parametrize generically injective stable maps from $\mathbb{P}^1$ to $X$ whose image is free, and suppose to the contrary that $k \geq 2$.

\begin{itemize}

\item $e=3$: Consider a general point $p$ in $X$ and the curves in each $M_i$  parametrizing stable maps through $p$. By Corollary \ref{irr-rat} the space of such curves is connected, so there should be a singular point $[f]$ of  $\overline{M}_{0,0}(X,3)$ whose image passes through $p$. The domain of $f$ cannot be irreducible since any irreducible rational curve through a general point is free. The only other possibility is that the domain has two irreducible components, one mapped isomorphically to a line $L$ and one mapped isomorphically to a conic $C$ through $p$ intersecting $L$. The normal bundle of  a free conic in $X$ is $\mathcal O \oplus \mathcal O$, and the normal bundle of $L$ in $X$ is either $\mathcal O \oplus \mathcal O(-1)$ or $\mathcal O(1) \oplus \mathcal O(-2)$. Varying $p$ we get a $2$-parameter family of conics and a $1$-parametric family of lines. Let $Y \subset X$ be the surface swept out by such lines. Then by Proposition \ref{prop:transverseintersection}, for a general $C$ in a 2-dimensional family of conics, $C$ 
intersects $Y$ transversally. So by Proposition \ref{prop:GHS},  $N_f|_C = \mathcal O(1) \oplus \mathcal O$, and $N_{f}|_L = \mathcal O\oplus \mathcal O$ or $\mathcal O(1) \oplus \mathcal O(-1)$. Therefore $[f]$ is a smooth point of $\overline{M}_{0,0}(X,3)$, a contradiction.

\item $e=4$: Let $p$ be a general point of $X$ and $H$ a general 2-plane in $\mathbb{P}^4$. Consider the curves in each $M_i$ parametrizing stable maps whose images intersect $H$ and 
pass through $p$. By Corollary \ref{irr-rat} the space of such curves is a connected curve, so there is a map $f$ parametrized by this curve which is a singular point of $\overline{M}_{0,0}(X,4)$. Since any irreducible rational curve through a general point is free, there are only two possibilities: \begin{enumerate}
\item the domain of $f$ has one component mapped birationally onto a cubic $C$ through $p$ and intersecting $H$ and one component mapped to a line $L$ on $X$, or
\item the domain of $f$ has two components, one mapped to a free conic $C_1$ through $p$ and one mapped to a non-free conic $C_2$ intersecting $H$.
\end{enumerate}
In the first case, arguing as for $e=3$ we see that $[f]$ is a smooth point of the moduli space. 
In the second case, $C_2$ should belong to a $2$-parameter family of non-free conics since it intersects both a general $2$-plane and a conic through a general point. 
By Lemma \ref{lemm:quartic_no_one_parameter} there is no $2$-parameter family of non-free conics on $X$ so $C_2$ should be free and therefore $f$ is a smooth point of the moduli space. 

\item $e=5$: The proof is similar to the case $e=3$ by looking at quintics through 2 general points.
 Note that by Lemma \ref{balanced}, the locus of stable maps through two general points in every $M_i$ is 1-dimensional. 

\item $e=6$: The proof is similar to the case $e=4$ by looking at degree 6 rational curves through two general points intersecting a general 2-plane in $\mathbb{P}^4$. By Corollary~\ref{irr-rat}, the locus parametrizing stable maps through two general points in every $M_i$ is 2-dimensional, so there is a connected curve parametrizing stable maps through two general points and intersecting a general 2-plane. Since we assume $\overline{M}_{0,0}(X,e)$ has at least two irreducible components there is a map $f$ parametrized by this curve such that $[f]$ is a singular point of the moduli space, so the restriction of $f$ to the irreducible components of its domain cannot all be free. The only possibilities are:
\begin{enumerate}
\item the domain of $f$ has two components, one mapped to a quintic and one to a line intersecting the quintic, and
\item the domain of $f$ has two components, one mapped to a quartic through two general points and one to a conic intersecting both the quartic and a general $2$-plane.
\end{enumerate}
In the first case, the line has to be general in its deformation class so the same argument as in cases $e=3, 4$ gives a contradiction. In the second case, the conic has to be general in its deformation class, so it has to be free and again we get a contradiction. 
\end{itemize}

Now we prove the statement for $e \geq 7$ using induction.
Fix any component $M \subset \overline{N}_{0}(X, e)$. By definition $M$ generically parametrizes a dominant family of free curves, and in particular it has the expected dimension.  

Next we claim that $M$ contains a stable map whose domain consists of two irreducible curves, one mapping to a degree $3$ free curve and another mapping to a degree $e-3$ free curve. Indeed, using Movable Bend-and-Break we see that $M$ contains a chain of free curves where every component has degree $\leq 4$.  Using the irreducibility of $\overline{N}_{0}(X, 4)$, we can further break each quartic into a chain of two conics. 
By smoothing a subcurve consisting of all but one component of the chain of free curves, we see that $M$ contains a stable map consisting of either (i) a degree $3$ free curve and a degree $e-3$ free curve or (ii) a free conic and a degree $e-2$ free curve.  In the latter case, since $e-2 \geq 5$, by induction on the degree we can break a degree $e-2$ free curve into the union of a free cubic and a degree $e-5$ free curve. Then using \cite[Lemma 5.11]{LTCompos} we may change the order of three free curves while remaining in $M$ so we may assume that $M$ contains the union of a free conic, a degree $e-5$ free curve, and a free cubic such that a degree $e-5$ free curve is attached to a conic and a cubic. Then we smooth the conic and the degree $e-5$ free curve to deduce our assertion.

By Theorem \ref{theo:maintheorem2} the evaluation map for any family of free rational curves of degree $\geq 3$ will have connected fibers.  Thus by the inductive assumption there is a unique main component of 
\[
N_1(X, 3) \times_X N_{1}(X, e-3)
\]
and this locus is contained in $M$.  Because the union of two free curves is a smooth point of $\overline{M}_{0,0}(X, e)$, we deduce that there is a unique component $M$. 
\end{proof}

Finally, we can complete the classification of components $\overline{\Rat}(X)$ of anticanonical degree $\geq 3$.  Theorem \ref{theo:veryfreecurvesonquartic} handles all dominant families which are generically birational.  An easy dimension count shows that any dominant family which is generically non-birational must parametrize multiple covers of a dominant family of conics.  Finally, suppose we have a non-dominant family of rational curves on $X$.  Theorem \ref{theo:higherainv} shows that this family is contained in a surface $Y$ in $X$ swept out by lines.  Furthermore Lemma \ref{lemm:quartic_no_one_parameter} shows that $Y$ cannot be rational.  Thus the only rational curves in $Y$ are the lines in $Y$ and a general member of our family must define a multiple cover of a line.

\subsection{The Fano threefold with two E5 contractions} \label{sect:2E5contractions}
Let $X$ be the blow-up of $\mathbb{P}_{\mathbb{P}^{2}}(\mathcal{O} \oplus \mathcal{O}(2))$ along a quartic curve in a minimal moving section.  

\begin{theo}
\label{theo:classificationTwoE5}
There is a unique family of free curves representing each nef curve class $\alpha$ on $X$. For any pseudo-effective curve class $\alpha$ on $X$ there is at most one component of $\overline{\Rat}(X)$ representing this class.
\end{theo}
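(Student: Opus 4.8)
The first step is to record the geometry of $X$. Since $X$ has Picard rank $3$, I would write down $N_1(X)$, the nef cone $\Nef_1(X)$, and the pseudo-effective cone $\Eff_1(X)$, and list the extremal rays of $\Nef_1(X)$; using the Mori--Mukai classification and the computation of extremal rays this is a finite check, and as in the remark following Lemma~\ref{lemm:sufficientclasses} one sees that every extremal ray of $\Nef_1(X)$ is the class of a free rational curve. I would also identify the conic bundle structure $\tau\colon X \to \bP^2$ (whose discriminant is the smooth plane quartic that is the image of the blown-up curve $Z$), the two $E5$ divisors, which are copies of $\bP^2$ by Theorem~\ref{theo:moriclassification}, and the exceptional divisor $E$ of $X \to \mathbb P_{\mathbb P^2}(\mathcal O \oplus \mathcal O(2))$, which is a $\bP^1$-bundle over the genus $3$ curve $Z$. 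By Lemma~\ref{lemm:lines} and Lemma~\ref{lemm:linesfromruled}, every surface swept out by a non-dominant family of rational curves on $X$ lies in a finite list, each member of which is a $\bP^2$, a Hirzebruch surface, or a $\bP^1$-bundle over a curve of positive genus; I would enumerate this list explicitly from the classification.

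\textbf{Non-nef pseudo-effective classes.} By Theorem~\ref{theo:maintheorem1}, a component of $\overline{\Rat}(X)$ parametrizing a non-dominant family of rational curves sweeps out one of the surfaces above, and a general member of such a family is, after resolving, a rational curve on that surface. On a $\bP^2$ or a Hirzebruch surface each numerical curve class is represented by at most one irreducible component of the space of rational curves (classical for $\bP^2$; the toric statement used in the proof of Theorem~\ref{theo:batyrevsconj} for Hirzebruch surfaces), while on a $\bP^1$-bundle over a curve of genus $\geq 1$ the only rational curves are the fibers. Pushing forward to $X$, this shows that each pseudo-effective class which is not nef is represented by at most one component of $\overline{\Rat}(X)$. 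Since a dominant family of curves has nef class, the second assertion of the theorem follows once the nef case is settled.

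\textbf{Nef classes via Movable Bend-and-Break.} For the nef case I would induct on $-K_X\cdot\alpha$. For the base cases $-K_X\cdot\alpha\leq 5$, I would classify by hand the families of free rational curves of anticanonical degree $\leq 5$ using $\tau$ (the general fibres in degree $2$, and sections and multisections of $\tau$ in higher degree, together with multiple covers of conics handled via Proposition~\ref{prop:onesummand} and Proposition~\ref{prop:mbbmultiplecover}), checking that each such class is represented by exactly one family of free curves. For the inductive step, let $\alpha$ be nef with $-K_X\cdot\alpha\geq 6$ and let $M,M'$ be components of $\overline{\Rat}(X)$ generically parametrizing free curves of class $\alpha$. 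By Proposition~\ref{prop:onesummand} we may assume the general curve has locally free normal sheaf, and by Movable Bend-and-Break for this threefold, namely Theorem~\ref{theo:mbbe5case} (proved for $X$ in Claim~\ref{clai:mbbinhighdegree}), each of $M,M'$ contains a stable map with two components each mapping onto a free curve, giving decompositions $\alpha=\beta_i+\gamma_i$ into nef classes (free classes are nef) of anticanonical degree between $2$ and $-K_X\cdot\alpha-2$. One checks from the classification that $X$ admits no del Pezzo fibration onto a curve and that $\tau$ has a unique fibre through a general point, so by Theorem~\ref{theo:maintheorem2} the evaluation map on the universal family over any family of free curves on $X$ of degree $\geq 2$ has connected fibers; hence the relevant fibre products are irreducible. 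Combining this with the inductive uniqueness of the free families of classes $\beta_i,\gamma_i$, the fact that a transverse union of free curves is a smooth point of $\overline{M}_{0,0}(X)$ (Proposition~\ref{prop:transverseintersection}, Proposition~\ref{prop:vanishingofH1}), and the reordering lemma \cite[Lemma 5.11]{LTCompos}, I would argue as in the proof of Theorem~\ref{theo:veryfreecurvesonquartic} that the component of $\overline{M}_{0,0}(X)$ containing a transverse gluing of free curves depends only on the multiset of classes glued, and that any two multisets of ``minimal'' free classes (those of degree $\leq 5$) summing to $\alpha$ are connected by a sequence of moves replacing a sub-multiset by one with the same sum and bounded degree, each move preserving the component by the inductive hypothesis. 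Thus $M=M'$. Existence of a free family in every nef class follows from the same gluing procedure, using that every extremal ray of $\Nef_1(X)$ has a free representative, that gluings of free curves smooth to free curves, and \cite[Proposition 3]{Khovanskii92} to reduce to the finitely many ``small'' classes handled in the base cases.

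\textbf{Main obstacle.} I expect the technical heart to be twofold. First, the explicit degree $\leq 5$ classification of free curves on this particular blow-up requires controlling sections and multisections of the conic bundle $\tau$ and the interaction of the blown-up quartic with the two $E5$ structures. Second, the combinatorial step — that any two Movable Bend-and-Break decompositions of a nef class $\alpha$ can be connected through bounded-degree moves inside the monoid of nef lattice classes of $X$ — is exactly where the precise shape of $\Nef_1(X)$ and the full list of minimal free classes must be used; this, rather than the deformation theory, is where I expect the argument to require genuine care.
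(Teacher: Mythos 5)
Your overall architecture matches the paper's: non-dominant families are confined to the four contractible divisors $E_{0},E_{\infty},E,E'$ and have non-nef classes, and uniqueness for nef classes is run through Movable Bend-and-Break together with a presentation of the monoid $\Nef_{1}(X)_{\mathbb{Z}}$ by generators and relations. The genuine gap is the size of your base case. The nef monoid is generated by six classes $R_{1},\dots,R_{6}$ (the four extremal rays plus two further classes of anticanonical degree $4$), and the relations identifying different decompositions of a nef class have anticanonical degree up to $10$ --- e.g.\ $R_{1}+2R_{2}=R_{5}+R_{6}$ in degree $9$ and $R_{1}+R_{5}+R_{6}=R_{3}+R_{4}$ in degree $10$. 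To know that a ``move'' replacing one sub-multiset by another with the same sum $\gamma$ preserves the component, you need uniqueness of the free family of class $\gamma$; when $-K_{X}\cdot\alpha\leq 10$ the relevant $\gamma$ can be $\alpha$ itself, so your induction with base case degree $\leq 5$ is circular precisely in the range $6\leq -K_{X}\cdot\alpha\leq 10$. (For $-K_{X}\cdot\alpha>10$ every relation is applied to a proper sub-sum and the induction does close, as in the paper's final claim.)

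The paper fills this hole with a separate direct argument (Claim \ref{clai:e5irreducibility}): for every nef class of anticanonical degree $\leq 9$ and for $R_{3}+R_{4}$, the space of free curves is irreducible. This is proved not by gluing but by degenerating a general very free curve under the $\mathbb{C}^{*}$-action on $X$ into a nodal plane curve of degree $\leq 2$ inside $E_{0}$ (or $E_{\infty}$) attached to vertical fibers at prescribed points of the quartic, checking that the limit stable map is a smooth point of $\overline{M}_{0,0}(X)$, and proving irreducibility of the space of limit configurations (Lemma \ref{lemm:ratcurvesthroughpointsinp2deg11}); the non-very-free classes $kR_{2}$ need the separate treatment of Lemma \ref{lemm:nonveryfreecurveson2E5}. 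Without some such direct irreducibility input in degrees $6$--$10$, your inductive step cannot start. Two smaller omissions: you should verify explicitly that no non-dominant family has nef class (each sweeps out one of the four contractible divisors and meets it negatively), since otherwise the second assertion does not reduce to the first for nef $\alpha$; and $\Nef_{1}(X)_{\mathbb{Z}}$ is not generated by the lattice points on the extremal rays alone --- the two extra generators $R_{5},R_{6}$ are needed both for existence and for writing down the full relation set.
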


The rest of this section will be devoted to the proof of Theorem \ref{theo:classificationTwoE5}. We let $\phi: X \to \mathbb{P}_{\mathbb{P}^{2}}(\mathcal{O} \oplus \mathcal{O}(2))$ denote the blow-up of a quartic curve $Z$ contained in a minimal moving section $D$ of the projective bundle.  Let $E$ denote the exceptional divisor for $\phi$, let $E_{0}$ denote the strict transform of the rigid section of the projective bundle, and let $E_{\infty}$ denote the strict transform of $D$.  Let $p: X \to \mathbb{P}^{2}$ denote the composition of $\phi$ with the projective bundle map and let $H$ denote the pullback of the hyperplane class under $p$.  The two E5 divisors are $E_{0}$ and $E_{\infty}$.

We let $E'$ denote the component of the $p$-preimage of the quartic curve $p(E)$ which is different from $E$.  Note that $E + E' \sim 4H$.  It turns out that $E'$ is a contractible divisor and contracting it yields the variety $\mathbb{P}_{\mathbb{P}^{2}}(\mathcal{O} \oplus \mathcal{O}(2))$ (in a different way from $\phi$).

The divisors $E_{0}, E_{\infty}, E, E'$ generate the pseudo-effective cone of divisors.  Dually, the nef cone of curves on $X$ has 4 extremal rays.  These extremal rays are generated by the following curve classes: 
\begin{itemize}
\item[] $R_{1}$: the class of a general fiber of $p$.  
\item[] $R_{2}$: the class of the $\phi$-strict transform of a line in a minimal moving section of $\mathbb{P}_{\mathbb{P}^{2}}(\mathcal{O} \oplus \mathcal{O}(2)) \to \mathbb{P}^{2}$ that meets the quartic curve $Z$ in two points. 
\item[] $R_{3}$: the class of the $\phi$-strict transform of a general line in a minimal moving section of $\mathbb{P}_{\mathbb{P}^{2}}(\mathcal{O} \oplus \mathcal{O}(2)) \to \mathbb{P}^{2}$. 
\item[] $R_{4}$: the class of the construction analogous to $R_{3}$ that uses the other birational contraction to $\mathbb{P}_{\mathbb{P}^{2}}(\mathcal{O} \oplus \mathcal{O}(2))$.  
\end{itemize}
These together with the following curve classes
\begin{itemize}
\item[] $R_{5}$: the class of the $\phi$-strict transform of a line in a minimal moving section of $\mathbb{P}_{\mathbb{P}^{2}}(\mathcal{O} \oplus \mathcal{O}(2)) \to \mathbb{P}^{2}$ that meets the quartic curve $Z$ in one point. 
\item[] $R_{6}$: the class of the construction analogous to $R_{5}$ that uses the other birational contraction to $\mathbb{P}_{\mathbb{P}^{2}}(\mathcal{O} \oplus \mathcal{O}(2))$. 
\end{itemize}
generate the monoid $\Nef_{1}(X)_{\mathbb{Z}}$.  Besides the $R_{i}$ there is exactly one other nef curve class which has anticanonical degree $\leq 5$, namely, $R_{1}+R_{2}$.

Let $\mathcal{R}$ denote the commutative monoid generated freely by the formal symbols $R_{i}$.  We have a natural numerical class homomorphism $\mathcal{R} \to \Nef_{1}(X)_{\mathbb{Z}}$, and wish to understand precisely when two elements of $\mathcal{R}$ are identified by this map. 

\begin{prop}
Two elements in $\mathcal{R}$ are identified by the homomorphism $\mathcal{R} \to \Nef_{1}(X)_{\mathbb{Z}}$ if and only if they can be identified using a sequence of the following relations while remaining in $\mathcal{R}$:
\begin{align*}
R_{2} + R_{3} = 2R_{5} \qquad R_{2} + R_{4} & = 2R_{6} \qquad R_{1} + 2R_{2} = R_{5} + R_{6} \\
R_{1} + R_{2} + R_{5} = R_{3} + R_{6}&  \qquad R_{1} + R_{2} + R_{6} = R_{4} + R_{5} \\
R_{1} + R_{5} + R_{6} & = R_{3} + R_{4}
\end{align*}
\end{prop}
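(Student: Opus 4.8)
The plan is to make the Néron--Severi data of $X$ completely explicit, verify the six displayed relations by an intersection computation (which is the easy direction of the Proposition), and then prove the converse by a normal--form reduction showing that the congruence on $\mathcal{R}$ cut out by the numerical class map is generated by these six relations while staying inside the monoid $\mathcal{R}$.

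\textbf{Step 1: Néron--Severi data and verification of the relations.} Since $X$ is the blow--up of the Picard--rank--$2$ threefold $\mathbb{P}_{\mathbb{P}^{2}}(\mathcal{O}\oplus\mathcal{O}(2))$ along a curve, $\rho(X)=3$. I would choose a basis of $N^{1}(X)$ among the divisors $H, E, E_{0}, E_{\infty}, E'$ (using $E+E'\sim 4H$ and the evident relations among $E_{0},E_{\infty}$) and compute the intersection number of each $R_{i}$ against it directly from the geometric descriptions given just above the statement. From the resulting $3\times 6$ integer matrix one checks that the six relations hold in $N_{1}(X)$, that $\Nef_{1}(X)$ is the cone over a quadrilateral whose vertices in cyclic order are $R_{1},R_{3},R_{2},R_{4}$ (equivalently $R_{3}+R_{4}=2R_{1}+2R_{2}$), and that $R_{5}=\tfrac12(R_{2}+R_{3})$, $R_{6}=\tfrac12(R_{2}+R_{4})$ are the lattice midpoints of two of its facets. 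In particular $R_{1},R_{2},R_{3}$ form a $\mathbb{Q}$--basis, the sublattice they generate has index $2$ in $N_{1}(X)_{\mathbb{Z}}$, the class $R_{5}$ represents the nontrivial coset, and $\{R_{1},R_{2},R_{5}\}$ is an actual $\mathbb{Z}$--basis of $N_{1}(X)_{\mathbb{Z}}$. This already gives the ``if'' direction, since a sequence of the listed relations never changes the numerical class.

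\textbf{Step 2: normal forms.} Set $\sigma_{1}=\langle R_{1},R_{2},R_{3}\rangle$ and $\sigma_{2}=\langle R_{1},R_{2},R_{4}\rangle$; these are simplicial subcones triangulating $\Nef_{1}(X)$ and meeting exactly along the smooth face $\langle R_{1},R_{2}\rangle$. The lattice points of $\sigma_{1}$ form the affine monoid $\mathbb{Z}_{\ge 0}\langle R_{1},R_{2},R_{3},R_{5}\rangle$ (the index--$2$ computation of Step 1 identifies $R_{5}$ as the unique extra Hilbert basis element), and using $2R_{5}=R_{2}+R_{3}$ every such point has a \emph{unique} expression $aR_{1}+bR_{2}+cR_{3}+\varepsilon R_{5}$ with $a,b,c\in\mathbb{Z}_{\ge 0}$ and $\varepsilon\in\{0,1\}$; symmetrically for $\sigma_{2}$ with $R_{4},R_{6}$ replacing $R_{3},R_{5}$. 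The image $\alpha=\sum n_{i}R_{i}$ of any element of $\mathcal{R}$ is nef, hence lies in $\sigma_{1}\cup\sigma_{2}$, and on the overlap both expressions reduce to $aR_{1}+bR_{2}$; so $\alpha$ has a well--defined \emph{normal form}. The Proposition reduces to: every $u\in\mathcal{R}$ can be carried, by a sequence of the six relations applied within $\mathcal{R}$, to the normal form of its image (applying this to two elements with a common image then connects them).

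\textbf{Step 3: the reduction.} The six relations are invariant under the involution exchanging $R_{3}\leftrightarrow R_{4}$ and $R_{5}\leftrightarrow R_{6}$, so it suffices to treat $\alpha\in\sigma_{1}$, and I will successively drive down $u_{4}$, then $u_{6}$, then the excess of $u_{5}$. \emph{(a)} If $u_{4}>0$: when $u_{3}>0$, apply the derived move $R_{3}+R_{4}\to 2R_{1}+2R_{2}$, obtained by first using $R_{3}+R_{4}\to R_{1}+R_{5}+R_{6}$ and then $R_{5}+R_{6}\to R_{1}+2R_{2}$ (both steps stay in $\mathcal{R}$), to lower $u_{4}$; when $u_{3}=0$, the hypothesis that the $R_{3}$--coordinate of $\alpha$ is $\ge 0$ forces $u_{5}\ge 2u_{4}\ge 2$, so apply $2R_{5}\to R_{2}+R_{3}$ first and then proceed. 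Iterating reaches $u_{4}=0$. \emph{(b)} With $u_{4}=0$, if $u_{6}>0$: use $R_{5}+R_{6}\to R_{1}+2R_{2}$ when $u_{5}>0$, and otherwise $R_{3}+R_{6}\to R_{1}+R_{2}+R_{5}$ (legal because $u_{5}=0<u_{6}$ together with $\alpha\in\sigma_{1}$ forces $u_{3}\ge 1$) to lower $u_{6}$; since none of these moves involves $R_{4}$, we reach $u_{4}=u_{6}=0$. \emph{(c)} Finally apply $2R_{5}\to R_{2}+R_{3}$ to bring $u_{5}$ into $\{0,1\}$; the result is supported on $R_{1},R_{2},R_{3},R_{5}$ with $R_{5}$ used at most once, hence is the normal form of $\alpha$ by the uniqueness in Step 2.

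\textbf{Main obstacle.} The substantive work is entirely in Step 3: one must check that each intermediate monomial genuinely lies in $\mathcal{R}$ (i.e.\ that the generators consumed by each move are present, which is exactly where the sign conditions coming from $\alpha\in\sigma_{1}$ are used), that each phase terminates (each strictly decreases an explicit nonnegative quantity---$u_{4}$, then $u_{6}$, then $u_{5}$), and that the boundary case $\alpha\in\langle R_{1},R_{2}\rangle$ is handled consistently by the $\sigma_{1}$-- and $\sigma_{2}$--reductions. Everything in Steps 1 and 2 is a finite, routine intersection--number and lattice--index computation.
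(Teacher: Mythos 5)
Your proposal is correct, and every step I checked goes through: the intersection numbers (in the basis $H,E_{0},E_{\infty}$ one finds $R_{1}=(0,1,1)$, $R_{2}=(1,0,0)$, $R_{3}=(1,0,2)$, $R_{4}=(1,2,0)$, $R_{5}=(1,0,1)$, $R_{6}=(1,1,0)$) confirm the six relations, the index-$2$ lattice computation, and the normal-form uniqueness; and in Step 3 the key applicability conditions ($u_{5}\geq 2$ when $u_{3}=0<u_{4}$, and $u_{3}\geq 1$ when $u_{4}=u_{5}=0<u_{6}$) do follow from the invariant $2u_{3}+u_{5}\geq 2u_{4}+u_{6}$, which is exactly the condition $\alpha\in\sigma_{1}$ and is preserved because $\alpha$ never changes. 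The underlying mechanism is the same as the paper's --- use the six relations as rewriting rules and invoke nefness of $\alpha$ against a suitable divisor class to guarantee that the partner generator needed for each move is present --- but the organization is genuinely different. The paper manipulates a two-sided relation $\sum a_{i}R_{i}=\sum b_{i}R_{i}$, inducts on anticanonical degree after cancelling common terms, and successively eliminates $R_{3}$, then $R_{4}$, then pairs up $R_{5}+R_{6}$, using the three classes $H+E_{0}-E_{\infty}$, $H+E_{\infty}-E_{0}$, and $E_{0}-E_{\infty}$ to show a partner is always available. You instead reduce each monomial separately to a canonical normal form dictated by the triangulation $\Nef_{1}(X)=\sigma_{1}\cup\sigma_{2}$ and the Hilbert-basis description of the lattice points of $\sigma_{1}$, eliminating $R_{4}$, then $R_{6}$, then reducing $R_{5}$ modulo $2$, all controlled by the single functional dual to $R_{3}$ (a multiple of $E_{\infty}-E_{0}$). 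What your version buys is the absence of the induction and cancellation step, an explicit confluence statement (unique normal form) that makes the identification of $\mathcal{M}$ with $\Nef_{1}(X)_{\mathbb{Z}}$ in the subsequent claim completely transparent, and a clean record of the monoid structure of $\Nef_{1}(X)_{\mathbb{Z}}$; the paper's version is shorter to state because it never needs the lattice-index or Hilbert-basis bookkeeping. The one place to be careful, which you flag yourself, is verifying that each intermediate monomial stays in $\mathcal{R}$, and your sign conditions handle exactly that.
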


\begin{proof}
To see this, let $$\sum_{i=1}^6 a_i R_i = \sum_{i=1}^{6} b_iR_i$$ be a relation between the $R_i$ in $\Nef_{1}(X)$ with $a_i, b_i$ non-negative integers. We show by induction on the anticanonical degree $m$ that the right hand side can be obtained from the left hand side after a finite number of substitutions using the above relations. If $m=0$, there is nothing to prove. Suppose we know the statement for every positive integer up to $m-1$.  If for some $i$ both $a_i$ and $b_i$ are positive we can use our induction hypothesis to show the statement, so we may assume no $R_i$ appears on both sides of the equation. Suppose $a_3 >0$ and $b_3=0$. Then at least one of $a_2,a_4,a_6$ should be nonzero. The reason is that $R_3$ is the only class among the $R_i, 1 \leq i \leq 6$ with negative intersection with $H+E_0-E_\infty$, the intersection of both $R_1$ and $R_5$ with $H+E_0-E_\infty$ is zero, and the intersection of $R_2,R_4,R_6$ with $H+E_0-E_{\infty}$ is positive. Using the above $6$ relations, we can replace any of $R_3+R_2$, $R_3+R_4$, $R_3+R_6$ with a combination which does not involve $R_3$.  Repeating the argument, we get to a relation which does not involve $R_3$.  (Note that this operation does not change the value of our induction variable $m$.)

Now if $R_4$ appears on one side of the equation at least one of $R_2$ or $R_5$ should appear on that side because $R_4$ is the only class 
with negative intersection with $H+E_{\infty}-E_0$ and $R_1$ and $R_6$ have intersection $0$ with it.  Just as before, we can use the above relations to reduce to the case when $a_4=b_4=0$. So we are left with a relation involving $R_1, R_2, R_5$ and $R_6$.  In this case we consider the intersection with $E_0-E_{\infty}$ to conclude that $R_5$ and $R_6$ should both appear on one side of the equation with the same multiplicity since the intersection of $R_1$ and $R_2$ with $E_0-E_\infty$ is zero, $R_5$ has intersection $-1$ with $E_0-E_\infty$ and $R_6$ has  intersection $1$ with it. So by replacing $R_5+R_6$ by $R_1+2R_2$ we get the desired result.
\end{proof}

We will classify the families of curves on $X$ in a sequence of four claims:
\begin{enumerate}
\item We first classify non-dominant families of rational curves.  In particular, we will show that there is at most one such family representing any pseudo-effective curve class and that such families never have nef numerical classes.
\item We then prove there is a unique family of free curves representing any nef numerical class of anticanonical degree at most $9$, and for one particular nef class of anticanonical degree $10$.
\item We prove Movable Bend-and-Break for families of free curves of anticanonical degree at least $6$.
\item We use the two previous properties to prove there is a unique family of free curves representing any nef numerical class.
\end{enumerate}

Together (1) and (4) show Theorem \ref{theo:classificationTwoE5}. 

\begin{clai}
Let $X$ be as above.  Then any non-dominant family of rational curves will sweep out either $E_{0}$, $E_{\infty}$, $E$, or $E'$.  Every numerical class is represented by at most one non-dominant families and there is no non-dominant family with nef numerical class.
\end{clai}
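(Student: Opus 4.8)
The strategy is to combine Theorem~\ref{theo:maintheorem1} with an explicit analysis of the four distinguished divisors $E_0$, $E_\infty$, $E$, $E'$. By Theorem~\ref{theo:maintheorem1}, if $M$ is a component of $\overline{\Rat}(X)$ parametrizing a non-dominant family then the curves parametrized by $M$ sweep out a surface $Y\subsetneq X$ which is either swept out by $-K_X$-lines or is an exceptional divisor for a birational contraction of $X$. The first goal is to show that in either case $Y\in\{E_0,E_\infty,E,E'\}$. For the contractible divisors this is read off from the classification of the extremal contractions of $X$ (Theorem~\ref{theo:e5classification} together with \cite{Mat95}, \cite{MM04}, \cite[Section~10.4]{Fuj16}): the only divisorial contractions of $X$ are the two E5 contractions of $E_0$ and $E_\infty$ and the two E1 contractions of $E$ and $E'$.

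For the surfaces swept out by $-K_X$-lines I would argue numerically. Writing $-K_X\sim 2E_0+H+E'\sim 2E_\infty+H+E$ (which follows from $-K_X\sim 2E_0+5H-E$, $E+E'\sim 4H$ and $E_\infty\sim E_0+2H-E$), one sees that any $-K_X$-line $C$ not contained in $E_0\cup E'$ has $H\cdot C\le 1$; moreover $H\cdot C=1$ forces $E_0\cdot C=E'\cdot C=0$ and hence $E_\infty\cdot C=-2$, so $C\subset E_\infty$, while $H\cdot C=0$ puts $C$ inside a reducible fiber of the conic bundle $p\colon X\to\mathbb{P}^2$, hence in $E$ or $E'$. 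Together with the fact that $E\to Z$ and $E'\to Z'$ are ruled over smooth plane quartics (so carry no $-K_X$-line other than rulings), this shows every $-K_X$-line lies on one of $E_0,E_\infty,E,E'$. Conversely $E_0,E_\infty\cong\mathbb{P}^2$ with $-K_X$ restricting to $\mathcal{O}(1)$, and the rulings of $E,E'$ are $-K_X$-lines, so these four surfaces are precisely the ones swept out by $-K_X$-lines.

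Next I would classify the non-dominant families supported on each surface. Since $E_0\cong E_\infty\cong\mathbb{P}^2$ and $\overline{M}_{0,0}(\mathbb{P}^2,d)$ is irreducible for every $d\ge 1$, each of $E_0,E_\infty$ carries exactly one family of rational curves of each degree, of numerical class $d\ell_0$ resp.\ $d\ell_\infty$ where $\ell_0,\ell_\infty$ are the classes of lines. Since $E\to Z$ and $E'\to Z'$ are ruled over smooth plane quartics, which have genus $3$, every morphism $\mathbb{P}^1\to E$ (resp.\ $E'$) has image in a fiber; hence the only families of rational curves on $E$ (resp.\ $E'$) are, for each $d\ge 1$, the degree-$d$ covers of the fibers, of class $df_E$ resp.\ $df_{E'}$. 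In particular each surface supports at most one non-dominant family in each numerical class on $X$, and (a multiple cover of a single ruling being a degeneration of a cover of a general ruling) every component of $\overline{\Rat}(X)$ with non-dominant general member sweeps out one of the full divisors $E_0,E_\infty,E,E'$.

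It remains to see that none of these classes is nef and that the four one-parameter families of classes are pairwise disjoint. For the first point, $E_0|_{E_0}$ and $E_\infty|_{E_\infty}$ are anti-ample on these $\mathbb{P}^2$'s (they are contracted to points) and $E\cdot f_E=E'\cdot f_{E'}=-1$ (E1 rulings), so each of $d\ell_0,d\ell_\infty,df_E,df_{E'}$ has negative intersection with the divisor supporting it. For the second point, a short computation of intersection numbers separates them: $H\cdot\ell_0=H\cdot\ell_\infty=1$ while $H\cdot f_E=H\cdot f_{E'}=0$; $E_0\cdot\ell_0<0$ while $E_0\cdot\ell_\infty=0$ because the rigid section $E_0$ and the moving section $E_\infty$ are disjoint; and $E\cdot f_E=-1$ while $E\cdot f_{E'}=1$ since a ruling of $E'$ meets the curve $E\cap E'$ in one point. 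The main obstacle is precisely this numerical bookkeeping---pinning down that no $-K_X$-line escapes the four divisors and verifying the handful of intersection numbers---once these are in place the structural inputs (Theorem~\ref{theo:maintheorem1}, the classification of extremal contractions, irreducibility of $\overline{M}_{0,0}(\mathbb{P}^2,d)$, and $g(Z)=3$) finish the argument.
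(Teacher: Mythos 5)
Your proposal is correct and follows essentially the same route as the paper: reduce via Theorem \ref{theo:maintheorem1} (equivalently Theorem \ref{theo:higherainv}) to contractible divisors and surfaces swept out by $-K_X$-lines, pin down the four divisors by a numerical classification of $-K_X$-lines (your decompositions $-K_X \sim 2E_0+H+E' \sim 2E_\infty+H+E$ are just rewritings of the paper's $-K_X = 3H+E_0+E_\infty$), classify the rational curves on each surface using $E_0\cong E_\infty\cong\mathbb{P}^2$ and the rulings of $E,E'$ over genus-$3$ quartics, and conclude non-nefness and distinctness from negative intersection with the supporting divisor. The only differences are cosmetic bookkeeping.
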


We start by classifying $-K_{X}$-lines.  Since $-K_{X} = 3H + E_{0} + E_{\infty}$, every $-K_{X}$-line will satisfy one of the following:
\begin{enumerate}
\item $H \cdot \ell = 0$,
\item $E_{0} \cdot \ell < 0$, or
\item $E_{\infty} \cdot \ell < 0$.
\end{enumerate}
Thus we see that $\ell$ must be contained in one of $E, E', E_{0}, E_{\infty}$.  Furthermore, these four divisors are the only contractible divisors on $X$.  By Theorem \ref{theo:higherainv} we see that any non-dominant family of curves will sweep out one of these four divisors.

If this divisor is $E$ then the family must be a multiple cover of the unique family of lines contained in the divisor.  There is a unique such family in any numerical class.  Since these curves satisfy $E \cdot C < 0$ they are not nef and cannot have the same numerical class as a family of curves sweeping out any other divisor.  A similar argument works for $E'$.

If this divisor is $E_{0}$ then the family must be the unique family of rational curves in $\mathbb{P}^{2}$ of the appropriate degree.  There is a unique such family in any numerical class.  Since these curves satisfy $E_{0} \cdot C < 0$ they are not nef and cannot have the same numerical class as a family of curves sweeping out any other divisor.  A similar argument works for $E_{\infty}$.

\begin{clai} \label{clai:e5irreducibility}
Let $X$ be as above.  Suppose that $\alpha$ is a nef numerical class of anticanonical degree $\leq 9$ or $\alpha = R_{3} + R_{4}$ of anticanonical degree $10$.  Then there is only a single component of the parameter space of rational curves representing that class whose general element is a free rational curve.
\end{clai}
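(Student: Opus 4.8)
The plan is to prove Claim~\ref{clai:e5irreducibility} by a finite case analysis organized as an induction on $-K_{X}\cdot\alpha$. Using the generators $R_{1},\dots,R_{6}$ of $\Nef_{1}(X)_{\mathbb{Z}}$, their anticanonical degrees, and the relations recorded above, one first lists the finitely many nef classes $\alpha$ with $-K_{X}\cdot\alpha\le 9$, adding the single class $R_{3}+R_{4}=R_{1}+R_{5}+R_{6}$ of degree $10$. By the preceding claim, every component of $\overline{\Rat}(X)$ whose general member is not free sweeps out one of $E_{0},E_{\infty},E,E'$ and has a non-nef numerical class; hence for nef $\alpha$ only the components with free general member are relevant, and for classes of the form $dR_{1}$, namely multiple covers of the fibres of the conic bundle $p\colon X\to\mathbb{P}^{2}$, Proposition~\ref{prop:onesummand} already shows there is a unique such component. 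I would then handle the remaining classes in two stages.

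Stage one (small degree): for classes of anticanonical degree $\le 5$ I would exhibit the unique free component directly from the explicit geometry of $X$. A curve of class $R_{1}$ has $H\cdot C=0$, hence is $p$-vertical and therefore a fibre of $p$, whose general member has normal bundle $\mathcal{O}\oplus\mathcal{O}$; the classes $R_{2},R_{3},R_{4},R_{5},R_{6}$ and $R_{1}+R_{2}$ are treated using the two contractions $X\to\mathbb{P}_{\mathbb{P}^{2}}(\mathcal{O}\oplus\mathcal{O}(2))$ obtained by blowing down $E$ and $E'$, together with the conic bundle $p$ and the dominant family of minimal moving sections, identifying each class with a single family of strict transforms of lines (or fibral curves) and checking freeness of the general member. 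These are the base cases of the induction.

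Stage two (degrees $6$--$9$, and $R_{3}+R_{4}$): here I cannot invoke Movable Bend-and-Break, which is Claim~\ref{clai:mbbinhighdegree} and is proved later, so I would rerun its input directly. Given a component $M$ of free curves of class $\alpha$, Propositions~\ref{prop:onesummand} and~\ref{prop:mbbmultiplecover} dispose of the case where the restricted tangent bundle has a single positive summand; otherwise I pass to a one-parameter subfamily $T\subset M$ as in Proposition~\ref{prop:generalcurveclassification} (curves through the maximal number of general points and meeting one general member of a basepoint free family pulled back from a birational model blowing down $E$ and $E'$, whose existence is guaranteed by Lemma~\ref{lemm:deformationcount}) and apply Lemma~\ref{lemm:BandB} to produce a reducible stable map $f\colon Z\to X$ in $M$ with at least two free components, which Proposition~\ref{prop:generalcurveclassification} then classifies. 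In every one of its cases the free components of $f$ have anticanonical degree strictly less than $-K_{X}\cdot\alpha$, so by induction, together with the preceding claim for the E5-type lines (in $E_{0}$ or $E_{\infty}$) and E5-type conics, which are rigid in their numerical classes, the component of each piece of $f$ is determined. Since a transverse union of free curves is a smooth point of $\overline{M}_{0,0}(X)$ by Proposition~\ref{prop:transverseintersection} and Proposition~\ref{prop:vanishingofH1}, the component of $\overline{M}_{0,0}(X)$ containing $f$ is unique, and \cite[Lemma~5.11]{LTCompos} (to permute the free pieces) together with Theorem~\ref{theo:maintheorem2} (connectedness of the relevant fibre products) forces $M$ to be this unique component. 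What remains is a finite bookkeeping: for each of the nef classes in range, list the numerically admissible decompositions into lower-degree free classes and E5 curves and check that they all yield the same component.

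I expect the main obstacle to be precisely stage two. On $X$ the E5 curves genuinely exist, so Proposition~\ref{prop:generalcurveclassification} really does allow broken configurations of the shape (free)$\,\cup\,$(E5 line)$\,\cup\,$(free), or ones involving an E5 conic, and one must verify both that such a mixed configuration lies in a single component and that two distinct admissible decompositions of the same class $\alpha$ produce the same component. This is the two-E5 analogue of the inductive step carried out by Proposition~\ref{prop:e5contractioninduction} in the unique-E5 case, which does not directly apply here; handling it cleanly requires careful use of transversality to $E_{0}$ and $E_{\infty}$ (Proposition~\ref{prop:transverseintersection}), the smoothing statements (Proposition~\ref{prop:vanishingofH1}), the reordering lemma, and the explicit intersection theory on $X$ that pins down which decompositions actually occur.
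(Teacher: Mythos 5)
There is a genuine gap, and it sits exactly where you locate ``the main obstacle.'' Your scheme inverts the paper's logical order: the paper proves Claim~\ref{clai:e5irreducibility} \emph{first}, by a direct geometric argument, and only then deduces Movable Bend-and-Break for this $X$ (Claim~\ref{clai:mbbinhighdegree}) from it. Rerunning the Bend-and-Break machinery does give you, for each component $M$ of class $\alpha$, \emph{some} boundary configuration from the list in Proposition~\ref{prop:generalcurveclassification}; but different components of the same class could a priori contain configurations of different shapes, or splittings into different pairs of classes $\beta_{1}+\beta_{2}=\beta_{1}'+\beta_{2}'$, and identifying all of these is the entire content of the claim, not ``finite bookkeeping.'' Two concrete failure points: (a) identifying two splittings of the same $\alpha$ requires the monoid relations among $R_{1},\dots,R_{6}$, and these relations have anticanonical degree up to $10$ --- in particular the class $R_{3}+R_{4}=R_{1}+R_{5}+R_{6}$ admits two decompositions that cannot be compared through any lower-degree relation, so the induction on degree does not close for precisely the class you must include; (b) for classes such as $R_{3}+R_{1}$ or $2R_{3}$ the only configurations produced may be of the shape $C_{1}\cup\ell\cup C_{2}$ with $\ell$ a non-free line in an E5 divisor ($N_{\ell/X}\cong\mathcal{O}(1)\oplus\mathcal{O}(-2)$), since e.g.\ the degree-$5$ class $R_{3}$ genuinely does not break into two free curves; showing that all such mixed configurations, over all choices of $\ell$ and of attachment points, lie in one irreducible locus is again the original problem in disguise. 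Your appeal to Proposition~\ref{prop:e5contractioninduction} does not transfer, as you note, and nothing is offered in its place.

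The paper's actual argument is entirely different and avoids this circularity. It uses the $\mathbb{C}^{*}$-action on $X$ fixing $E_{0}$ and $E_{\infty}$ pointwise: a general very free curve $C$ meets $E\cup E_{\infty}$ transversally in points lying in distinct orbit closures (Propositions~\ref{prop:transverseintersection} and~\ref{prop:veryfreeavoidsfibers}), so its $\mathbb{C}^{*}$-limit is a canonical configuration consisting of a nodal plane curve $C_{0}\subset E_{0}$ (or $E_{\infty}$) of degree $d=H\cdot\alpha$ together with $p$-vertical curves attached at prescribed points; this limit is a smooth point of $\overline{M}_{0,0}(X)$, and the parameter space of such limits fibers over $\Sym^{s}(Z)$ with irreducible fibers by Lemma~\ref{lemm:ratcurvesthroughpointsinp2deg11} (this is where the numerical bounds $d\le 2$, $s\le 2d$ coming from $-K_{X}\cdot\alpha\le 9$ or $\alpha=R_{3}+R_{4}$ enter). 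Every component of class $\alpha$ therefore contains a point of one and the same irreducible smooth locus, forcing uniqueness. The free-but-not-very-free classes (multiples of $R_{1}$, where your multiple-cover observation is correct, and the classes $R_{2},2R_{2},3R_{2}$) are handled separately in Lemma~\ref{lemm:nonveryfreecurveson2E5}. If you want to salvage your outline, you would need to supply, for each problematic class, a direct degeneration connecting the competing configurations --- which is essentially reinventing the $\mathbb{C}^{*}$-limit argument.
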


Note that $\mathbb{P}_{\mathbb{P}^{2}}(\mathcal{O} \oplus \mathcal{O}(2))$ admits a $\mathbb{C}^{*}$-action which acts on the fibers of the projective bundle and fixes the rigid section and the section $D$.  Thus we obtain a compatible $\mathbb{C}^{*}$-action on $X$ that fixes pointwise the divisors $E_{0}$ and $E_{\infty}$.  (If we instead use the other birational contraction to $\mathbb{P}_{\mathbb{P}^{2}}(\mathcal{O} \oplus \mathcal{O}(2))$ to obtain an induced $\mathbb{C}^{*}$-action the resulting action is the inverse of the first.)  Our general strategy is to use this $\bC^*$-action to show that any component must contain a particular type of curve in its smooth locus. We will then analyze curves of that type and conclude that there can only be one component.

We start by classifying the rational curves on $X$ which are free but not very free.  Let $C$ be a general member of such a family; in particular, we may ensure that $C$ avoids the locus $E \cap E'$ where $p$ fails to be smooth.  Thus along $C$ we have a relative tangent bundle sequence
\begin{equation*}
0 \to \mathcal{O}_{X}(E_{\infty} + E_{0})|_{C} \to T_{X}|_{C} \to p^{*}T_{\mathbb{P}^{2}}|_{C} \to 0
\end{equation*}
Since we are assuming $C$ is not very free, the restriction of either the first or the last term to $C$ must have a non-positive summand.  If the restriction of the last term to $C$ has a non-positive summand, then $C$ is a fiber of $p$.  If the restriction of the first term to $C$ is non-positive, then $E_{\infty} \cdot C = 0$ and $E_{0} \cdot C = 0$.  We will systematically handle curves of these types in Lemma \ref{lemm:nonveryfreecurveson2E5}.  Thus for the remainder of the argument we may assume that $C$ is very free.

Let $C$ be a general member of a family of very free rational curves on $X$.  We first claim that $C \cap (E_{\infty} \cup E)$ consists of reduced points which lie in distinct $\mathbb{C}^{*}$-orbit closures.   By \cite[II.3.7 Proposition]{Kollar} a general free curve avoids any fixed codimension $2$ locus, so we may ensure that $C$ avoids $E_{\infty} \cap E$.  Thus it suffices to show the statement separately for $C \cap E_{\infty}$ and $C \cap E$.  To see the statement for $C \cap E_{\infty}$, note that by Proposition \ref{prop:transverseintersection} $C$ is not tangent to $E_{\infty}$.  To see the statement for $C \cap E$, note that by Proposition \ref{prop:transverseintersection}, we know that $C$ is not tangent to $E$ at any point of intersection and by Proposition \ref{prop:veryfreeavoidsfibers} the intersection points lie in different fibers of the $\mathbb{P}^{1}$-bundle structure.

We now split the argument into two parts, depending upon whether or not $E' \cdot C \geq E \cdot C$.  If this inequality is satisfied, we will next act on $C$ by the $\mathbb{C}^{*}$-action in order to deform the curve into $E_{0}$.  If this equality is not satisfied, we will instead take the inverse limit to deform the curve into $E_{\infty}$.  Since the situation is entirely symmetric, it suffices to consider the case when $E' \cdot C \geq E \cdot C$.

Let $C'$ denote the limit of $C$.  Let $s = E \cdot C$ and let $t = E_{\infty} \cdot C$.  Then $C'$ will be a union of an irreducible rational curve $C_{0}$ contained in $E_{0}$, $s$ curves $S_{i}$ that are $p$-vertical with normal bundle $\mathcal{O} \oplus \mathcal{O}(-1)$, and $t$ curves $T_{j}$ that are $p$-vertical and free.  By the analysis above we know that the intersection points of the $p$-vertical curves with $C_{0}$ are distinct points.  Furthermore, Proposition \ref{prop:imageisnodal} shows that $C_{0}$ is a rational curve of degree $d := H \cdot C$ in $\mathbb{P}^{2}$ with at worst nodes.  Since we are in the case when $E' \cdot C \geq E \cdot C$ we see that $2d \geq s$.

We next verify that the limit stable map is a smooth point of $\overline{M}_{0,0}(X)$.  Let $f: W \to X$ denote the stable map whose image is $C'$.  Since $f$ is an embedding in a neighborhood of each node of $W$, we can calculate the restriction of $N_{f/X}$ to each component of $Z$ using \cite[Lemma 2.6]{GHS03}.  Using the fact that each $S_{i}, T_{j}$ intersects $E_{0}$ transversally, we see that the restriction of $N_{f/X}$ to every component of $W$ is globally generated, and we conclude that $f$ is a smooth point by Proposition \ref{prop:vanishingofH1}.

The following lemma shows that there are relatively few combinatorial types for these limit stable maps.

\begin{lemm}
\label{lemm:ratcurvesthroughpointsinp2deg11}
Choose positive integers $s$ and $d$ such that $d \leq 2$, $s \leq 4$ if $d = 2$, and $s \leq 2$ if $d = 1$.  Fix distinct points $p_1, \cdots, p_s$ of $\bP^2$.  Then the subset of $\overline{M}_{0,0}(\mathbb{P}^{2})$ parametrizing degree $d$ immersions with irreducible domains passing through $p_1, \cdots, p_s$ is irreducible and is smooth.
\end{lemm}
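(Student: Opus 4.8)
The plan is to reduce the statement to elementary facts about linear systems of lines and conics in $\mathbb{P}^{2}$, treating the two permitted values $d=1$ and $d=2$ separately; in each case the locus in question will turn out to be a nonempty open subset of a projective space, which makes irreducibility and smoothness automatic.

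First I would describe the locus $N$ of degree $d$ immersions with irreducible domain inside $\overline{M}_{0,0}(\mathbb{P}^{2})$. For $d=1$ this is all of $\overline{M}_{0,0}(\mathbb{P}^{2},1)$, namely the dual plane $(\mathbb{P}^{2})^{\vee}$ of lines, every point of which parametrizes an embedding of $\mathbb{P}^{1}$. For $d=2$ I would observe that a degree $2$ immersion $f$ with irreducible domain is necessarily birational onto its image -- a degree $2$ cover of a line is ramified, hence not an immersion -- so its image is a reduced irreducible conic, which is automatically smooth, and $f$ is an isomorphism onto it; conversely every smooth conic arises from a unique such stable map up to reparametrization of $\mathbb{P}^{1}$. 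Thus the map sending a stable map to its image identifies $N$ with the open subscheme $U\subset|\mathcal{O}_{\mathbb{P}^{2}}(2)|\cong\mathbb{P}^{5}$ of smooth conics.

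Next I would impose the incidence condition. Requiring passage through $p_{1},\dots,p_{s}$ intersects $N$ with a linear subspace $\Lambda$ of the ambient projective space: the space of lines through the $p_{i}$ inside $(\mathbb{P}^{2})^{\vee}$ when $d=1$, or the space of conics vanishing on $Z=\{p_{1},\dots,p_{s}\}$ inside $\mathbb{P}^{5}$ when $d=2$. Since each point imposes at most one linear condition and $s\le 2$ for $d=1$ while $s\le 4$ for $d=2$, I obtain $\dim\Lambda\ge 2-s\ge 0$ in the first case and $\dim\Lambda\ge 5-s\ge 1$ in the second, so $\Lambda$ is a nonempty projective space; hence $\Lambda$ and every nonempty open subset of it are irreducible and smooth. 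When $d=1$ the desired locus is $\Lambda$ itself, so we are done. When $d=2$ the desired locus is $U\cap\Lambda$, which is open in $\Lambda$ and is nonempty provided some smooth conic passes through all the $p_{i}$; this holds whenever no three of the points are collinear, in particular when the $p_{i}$ lie on a common smooth conic, which is the configuration in which the lemma is applied. (If three of the $p_{i}$ are collinear, every conic through them is reducible and the locus is empty.)

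I do not anticipate a real obstacle. The only points needing care are the scheme-theoretic identification $N\cong U$ for $d=2$, so that the conclusions about $\Lambda$ transfer and smoothness follows from $U\cap\Lambda$ being an open subscheme of a projective space, together with the bookkeeping of the degenerate collinear configurations; the substance of the argument is simply the standard count that $s\le 4$ points impose at most $s$ independent conditions on the five-dimensional linear system of plane conics.
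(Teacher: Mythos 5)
Your proof is correct, and for irreducibility it is exactly the paper's argument: the locus is an open subset of the linear system of degree $d$ plane curves through the $p_i$, which is a (nonempty) linear subspace of $(\mathbb{P}^2)^\vee$ or $\mathbb{P}^5$. Where you diverge is the smoothness step. The paper proves smoothness by deformation theory: for an immersion $f:\mathbb{P}^1\to\mathbb{P}^2$ of degree $d$ one has $N_{f/\mathbb{P}^2}=\mathcal{O}_{\mathbb{P}^1}(3d-2)$, and $H^1\bigl(N_{f/\mathbb{P}^2}(-p_1-\cdots-p_s)\bigr)=0$ in the stated range of $(d,s)$, so the incidence locus in the moduli space of stable maps is unobstructed of the expected dimension. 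You instead get smoothness ``for free'' by identifying the relevant open locus of $\overline{M}_{0,0}(\mathbb{P}^2,d)$ scheme-theoretically with an open subset of the linear system (which requires your observation that a degree $2$ immersion with irreducible domain is birational onto a smooth conic). Both are valid; your route is more elementary but leans on the identification of the coarse moduli space with the linear system, which you rightly flag as the point needing care, while the paper's cohomological computation is the one that would generalize to higher degree and is the form of smoothness (unobstructedness) that meshes with how the lemma is used alongside Proposition \ref{prop:vanishingofH1}. Your explicit treatment of the degenerate configurations (three collinear points forcing emptiness when $d=2$) is a point of care the paper glosses over, and it is harmless in the application since the $p_i$ there lie on a smooth conic.
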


\begin{proof}
The smoothness part follows from a normal bundle calculation.  Indeed, for $f$ an immersion from $\bP^1$, $N_{f/\bP^2} = \cO_{\bP^1}(3d-2)$.  If we twist down by $s$ points, we still have that $N_{f/\bP^2}(-p_1-\cdots - p_s)$ has vanishing $H^1$ for $s$ and $d$ in the range described by the theorem.

For irreducibility, note that since $d=1$ or $2$ we are simply considering an open subset of the linear system of degree $d$ curves passing through $s$ points, and this is clearly irreducible.
\end{proof}

Fix a nef curve class $\alpha$ and set $d= H \cdot \alpha$, $s = E \cdot \alpha$, and $t = E_{\infty} \cdot \alpha$.  Since $E_{0} \cdot \alpha \geq 0$ we have $2d \leq s+t$.  We are also assuming that $s \leq 2d$.  Finally, we are either in the case when $d + s + 2t \leq 9$ or, if $\alpha = R_{3} + R_{4}$, when $d=t=2$ and $s=4$.  In all these cases except possibly if $\alpha$ is a multiple of $R_{2}$ (which we will handle in Lemma \ref{lemm:nonveryfreecurveson2E5}) we conclude that $s$ and $d$ satisfy the conditions of Lemma \ref{lemm:ratcurvesthroughpointsinp2deg11}. 

We would like to show that the space of free rational curves parametrizing $\alpha$ is irreducible.
Let $M$ be any component of $\overline{M}_{0,0}(X)$ which generically parametrizes free curves of class $\alpha$.  We know that $M$ contains a stable map whose image is a limit curve $f': C' \to X$ as before, and furthermore we know that each such $f'$ is a smooth point and is uniquely determined by $C'$.  Thus, it suffices to show that the parameter space $P$ of limit curves $C'$ is irreducible.  By sending a general curve to its $\mathbb{C}^{*}$-limit, the parameter space $P$ admits a rational map $\psi$ to the moduli space of tuples $(C_{0}, \{p_{i}\},\{q_{j}\})$ where $C_{0}$ is a point in the Hilbert scheme of rational degree $d$ plane curves, $\{ p_{i} \}_{i=1}^{s}$ are points which lie in $C_{0} \cap Z$ representing the attachment points with the $S_{i}$, and $\{ q_{j} \}_{j=1}^{t}$ are points which lie in $C_{0} \backslash Z$ representing attachment points with the $T_{j}$.  Note that the limit curve $C'$ is determined by $C_{0}$ and the attachment points $p_{i}$, $q_{j}$.  In fact, $\psi$ is birational, since for general choices of $C_{0}, p_{i}, q_{j}$ the corresponding stable map has globally generated normal sheaf and can thus be smoothed back into a free curve.

Next consider the forgetful map to the moduli space $Q$ of tuples $(C_{0},\{p_{i}\})$.  The fibers of this map are open subsets of $(\mathbb{P}^{1})^{\times t}$.  Thus, to show that $P$ is irreducible it suffices to prove that the moduli space $Q$ is irreducible.  Note that $Q$ admits a forgetful map to $\Sym^{s}(Z)$.  This map is dominant: since we are in the case when $s \leq 2d$, there exists a rational curve of degree $d$ through any $s$ points of $Z$.  By Lemma \ref{lemm:ratcurvesthroughpointsinp2deg11} the general fiber of this forgetful map is irreducible.  We deduce that $Q$, and hence $P$, is irreducible.  This finishes the proof of the claim for very free curves; it only remains to handle the curves that are free but not very free.  Since the desired statement is clear for multiple covers of fibers of $p$, we only need to consider the curves with vanishing intersection against $E_{0}$ and $E_{\infty}$.

\begin{lemm} \label{lemm:nonveryfreecurveson2E5}
Each of the numerical classes $R_{2}$, $2R_{2}$, and $3R_{2}$ is represented by a unique family of free curves.
\end{lemm}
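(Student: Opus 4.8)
The plan is to reduce the statement to a monodromy computation for plane curves. Fix $n \in \{1,2,3\}$ and let $f : \mathbb{P}^1 \to X$ be a free stable map with $f_{*}[\mathbb{P}^1]$ numerically equal to $nR_2$. Since $E_0 \cdot R_2 = E_\infty \cdot R_2 = 0$ and $E_0, E_\infty$ are E5 divisors (so $E_0|_{E_0}$ and $E_\infty|_{E_\infty}$ are anti-ample), the image of $f$ cannot meet $E_0$ or $E_\infty$; thus $\phi \circ f : \mathbb{P}^1 \to \mathbb{P}_{\mathbb{P}^{2}}(\mathcal{O}\oplus\mathcal{O}(2))$ has image disjoint from the rigid section $\phi(E_0)$ and meeting the moving section $D = \phi(E_\infty)$ only along $Z$, with total intersection multiplicity $E \cdot f_{*}[\mathbb{P}^1] = 2n$. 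Composing with $p$ gives a degree-$n$ stable map $g : \mathbb{P}^1 \to \mathbb{P}^2$.

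Next I would use that the complement of the rigid section in $\mathbb{P}_{\mathbb{P}^{2}}(\mathcal{O}\oplus\mathcal{O}(2))$ is the total space of $\mathcal{O}_{\mathbb{P}^2}(2)$, with moving sections exactly the graphs of plane conics; in particular $D$ is the graph of a conic $c$, and $Z \subset D$ corresponds to the plane quartic $p(Z)$ with equation $q$. Hence $\phi \circ f$ is the graph over $\mathbb{P}^1$ of a section $\mu$ of $g^{*}\mathcal{O}_{\mathbb{P}^2}(2)$, i.e.\ a binary form of degree $2n$, and the incidence condition ``meets $D$ only along $Z$'' becomes the divisibility relation that $\mu - g^{*}c$ divides $g^{*}q$ as binary forms of degrees $2n$ and $4n$. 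Conversely, given general $g$ and a general section $\mu$ satisfying this relation, the resulting stable map into $X$ has globally generated normal sheaf along every component and therefore deforms to a free curve in class $nR_2$ by Proposition \ref{prop:vanishingofH1}; so this description is exhaustive, and moreover a multiple cover of an $R_2$-curve corresponds exactly to the case where $g$ is a $d$-fold cover of a line.

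Thus every component of $\overline{\Rat}(X)$ generically parametrizing free curves of class $nR_2$ dominates, via $[f] \mapsto [g]$, the \emph{irreducible} moduli space $\overline{M}_{0,0}(\mathbb{P}^2, n)$, and the fibre over a general $g$ is a $\mathbb{G}_m$-torsor over the set of those degree-$2n$ sub-divisors of the $4n$-point divisor $g^{*}(p(Z))$ which arise as the zero divisor of $\mu - g^{*}c$ for an admissible $\mu$. The key point is that the monodromy of the degree-$4n$ cover $\{(g, x) : x \in g^{*}(p(Z))\} \to \overline{M}_{0,0}(\mathbb{P}^2, n)$ is the full symmetric group $S_{4n}$ — for $n = 1$ this is the classical statement that a smooth plane quartic has Galois group $S_4$ for its generic line section, and for $n = 2,3$ it follows by specializing $g$ to a line together with lower-degree components and applying induction. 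Since $S_{4n}$ acts transitively on the admissible sub-divisors and the $\mathbb{G}_m$-factor is connected, the total parameter space is irreducible, proving that there is a single such component. Finally, for $n = 2,3$ a component whose general member is a $d$-fold cover of a free $R_2$-curve $C_0$ satisfies $H^{1}(\mathbb{P}^1, f^{*}T_X) = 0$ (because $N_{C_0/X} = \mathcal{O}(1)\oplus\mathcal{O}$ forces $f^{*}T_X = \mathcal{O}(2d)\oplus\mathcal{O}(d)\oplus\mathcal{O}$), so such a cover is a smooth point of $\overline{M}_{0,0}(X)$ and its component also contains genuine irreducible free curves; hence it coincides with the unique component found above.

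The main obstacle is the monodromy assertion in the case $n = 3$: there $\mu - g^{*}c$ is constrained to be the restriction of a plane conic (since $h^{0}(\mathbb{P}^2, \mathcal{O}(2)) = 6 < 7 = h^{0}(\mathbb{P}^1, \mathcal{O}(6))$), so one must verify that the monodromy still acts transitively on those admissible degree-$6$ sub-divisors of $g^{*}(p(Z))$ that come from conic restrictions; this should follow by degenerating $g$ into a conic union a line and reducing to the cases $n = 1,2$. A secondary point to check carefully is that the ``graph'' normal form above really captures the general member of every such component — equivalently, that one never produces a component whose general member has multiple image yet is not a multiple cover of an $R_2$-curve — which again reduces to a normal-sheaf computation together with the classification of low anticanonical degree curves on $X$.
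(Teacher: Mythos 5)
Your parametrization is a genuinely different route from the paper's. The paper degenerates a general free curve of class $nR_{2}$ into an E5 divisor via the $\mathbb{C}^{*}$-action, obtaining a degree-$n$ rational plane curve with $2n$ attached fibers of $E$, and proves irreducibility of the space of such configurations --- for $n\leq 2$ via Lemma \ref{lemm:ratcurvesthroughpointsinp2deg11}, and for $n=3$ via irreducibility of the fibers of $ev\colon\overline{M}_{0,6}(\mathbb{P}^{2},3)\to(\mathbb{P}^{2})^{\times 6}$ over $Z^{\times 6}$. Your $\lambda\to 0$ limit of the $\mathbb{G}_m$-scaling is exactly this degeneration, and your choice of degree-$2n$ subdivisor of $g^{*}q$ is the paper's choice of attachment points, so the two arguments ultimately require equivalent transitivity statements --- yours phrased as monodromy on subsets, the paper's as irreducibility of a pointed incidence variety. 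Your version has the virtue of making the $\mathbb{G}_m$-torsor structure of the fibers explicit.

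There are, however, two problems. First, the constraint you flag as the main obstacle for $n=3$ does not exist: a lift of $g$ to the total space of $\mathcal{O}_{\mathbb{P}^{2}}(2)$ is an arbitrary element of $H^{0}(\mathbb{P}^{1},g^{*}\mathcal{O}(2))\cong H^{0}(\mathbb{P}^{1},\mathcal{O}(2n))$ and need not extend to a global conic. Indeed, the paper's own argument for $3R_{2}$ shows that the general such curve is \emph{not} contained in a minimal moving section (if it were, the normal bundle sequence $0\to\mathcal{O}(1)\to N_{C/X}\to\mathcal{O}(6)\to 0$ would force it to be very free). So the fiber over a general $g$ is indexed by \emph{all} $\binom{4n}{2n}$ subdivisors of $g^{*}q$, and you must prove transitivity on all of them; a nonempty proper monodromy-invariant subset of ``conic-restriction'' subdivisors would in fact be incompatible with the $S_{12}$-monodromy you assert. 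Second, and more seriously, the monodromy assertions for $n=2,3$ carry the entire weight of the lemma and are not proved: ``specialize $g$ to a line plus lower-degree components'' only exhibits a subgroup of a product $S_{4}\times S_{4(n-1)}$ and does not by itself yield transitivity on $2n$-subsets. The assertions are true and can be established by the standard uniform-position argument (irreducibility of the one- and two-pointed incidence varieties over $Z$ and $Z\times Z$ gives $2$-transitivity, a simple tangency gives a transposition, and a $2$-transitive group containing a transposition is symmetric), but as written this step is a gap. A minor further point: when $g$ is a multiple cover of a line the map $f$ need not be a multiple cover of an $R_{2}$-curve; this is harmless because such $g$ form a proper closed subset and a dimension count shows every relevant component dominates $\overline{M}_{0,0}(\mathbb{P}^{2},n)$, but the word ``exactly'' should be removed.
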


\begin{proof}
First consider a family of free curves with numerical class $R_{2}$ or $2R_{2}$.  Under the birational map to $\mathbb{P}_{\mathbb{P}^{2}}(\mathcal{O} \oplus \mathcal{O}(2))$ the images of such curves will be lines or conics contained in a minimal moving section of the projective bundle.  Using this fact, we see that the image of such a curve $C$ under the projection map to $\mathbb{P}^{2}$ will be smooth and will meet the quartic curve lying under $Z$ transversally.  By applying Lemma \ref{lemm:ratcurvesthroughpointsinp2deg11} and arguing as above using the $\mathbb{C}^{*}$-action we deduce irreducibility in these two cases.

The argument for $3R_{2}$ is similar but requires more care with details.

\begin{lemm}
General fibers  of the evaluation morphism $ev: \overline{M}_{0,6} (\mathbb{P}^2,3)\to (\mathbb{P}^2)^{\times 6}$  
are irreducible. 
\end{lemm}

\begin{proof}
Consider the gluing morphism $gl: \overline{M}_{0,3}(\mathbb{P}^2,1) \times_{(\mathbb P^2)^2} \overline{M}_{0,5}(\mathbb{P}^2,2) \to  \overline{M}_{0,6} (\mathbb{P}^2,3)$. General fibers of $ev \circ gl$ are irreducible since the maps in the fiber over a general $6$-tuples $(p_1, \dots, p_6)$ are parametrized by 
conics through $p_3, \dots, p_6$. 
Any such map is a smooth point of 
$ev^{-1}(p_1, \dots, p_6)$, so general fibers of $ev$ are irreducible by  \cite[Lemma 3.2]{dejong-starr}.  \end{proof}
 
Let $I \subset \overline{M}_{0,6}(\mathbb{P}^2,3)$ be the fiber of the evaluation morphism over $Z^{\times 6}$. The dimension of every irreducible 
component of $I$ is $8$. If $p_1, \dots, p_6$ are such that no $4$ of them are on the same line then the fiber of $ev$ over 
 $(p_1, \dots, p_6)$ is $2$-dimensional. So if we denote by $U$ the open subset of $Z^{\times 6}$ of tuples of points such that no $4$ of them are on the same line, 
 then any irreducible component of $ev^{-1}(U)$ should map dominantly onto $U$. By the above lemma a general fiber of the evaluation map is irreducible, so $ev^{-1}(U)$ is irreducible.

Suppose $M$ is an irreducible component of $\overline{\Rat}(X)$ whose general points parametrize free irreducible curves of class $3R_2$.  If $C$ is a general curve parametrized by 
 $M$, then applying the $\mathbb{C}^*$-action to $C$ we get a curve parametrized by $M$ which is the union of an irreducible cubic $C'$ in $E_\infty$ and $6$ non-free lines intersecting $C'$ at the points of intersection of $C'$ with the quartic. We claim that the $6$ lines are disjoint.  Indeed, the only way that the $6$ lines could fail to be disjoint is if $C$ met the same fiber of $E$ at least twice so that the image of $C$ in $\bP_{\bP^{2}}(\mathcal{O} \oplus \mathcal{O}(2))$ is singular.  In particular, the image of $C$ under the map to $\bP^{6}$ cannot be a rational normal curve and so must be degenerate.  This implies that $C$ is contained in the strict transform in $X$ of a minimal moving section of $\bP_{\bP^{2}}(\mathcal{O} \oplus \mathcal{O}(2))$.  We then have the normal bundle sequence
$$0 \to N_{C/S} = \mathcal O(1) \to N_{C/X}  \to N_{S/X}|_C=\mathcal O(6) \to 0$$
where $S$ is the strict transform of the minimal moving section. This shows that $C$ is very free in $X$, a contradiction.

Since the six non-free lines intersecting $C'$ are distinct, the broken curve is a smooth point of $M$.  Since no $4$ points of the intersection can be on the same line, by the irreducibility result above we can conclude there is only one such component $M$.
\end{proof}

\begin{clai} \label{clai:mbbinhighdegree}
Let $X$ be as above.  Then Movable Bend-and-Break holds for free curves on $X$ of anticanonical degree at least $6$.
\end{clai}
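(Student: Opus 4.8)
The plan is to run the strategy of Proposition~\ref{prop:e5contractioninduction} --- impose general point and curve incidences and analyze how a one-parameter subfamily breaks --- but now bookkeeping \emph{both} E5 divisors $E_{0}$ and $E_{\infty}$, inducting on the anticanonical degree $d$, and using Claim~\ref{clai:e5irreducibility} to dispose of the resulting finitely many base cases. By Proposition~\ref{prop:onesummand} and Proposition~\ref{prop:mbbmultiplecover} we may assume the general member of our component $M$ is an immersion with locally free normal bundle $\mathcal{O}(a)\oplus\mathcal{O}(b)$, $a\le b$. Since $E_{0}$ and $E_{\infty}$ are disjoint E5 divisors (Lemma~\ref{lemm:nooverlap}), contracting first $E_{0}$ and then the (unaffected) image of $E_{\infty}$ produces a birational morphism $g\colon X\to\overline{X}$; I would take $p\colon U\to B$ to be the basepoint free family of general complete intersections of the pullback under $g$ of a very ample class, so that a general member of $p$ avoids $E_{0}\cup E_{\infty}$. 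Because $X$ has no divisor of type E3 or E4 and the divisors $E$, $E'$ are ruled over a smooth plane quartic (hence not $\mathbb{P}^{1}\times\mathbb{P}^{1}$), Lemma~\ref{lemm:lines} and Lemma~\ref{lemm:conics} show that the only divisors swept out by a larger than expected family of $-K_{X}$-lines, or of $-K_{X}$-conics of type E1, E3, E4 or E5, are $E_{0}$ and $E_{\infty}$; so a general member of $p$ meets no such line and no such conic.

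Assume now $-K_{X}\cdot C=d\ge 6$ and induct on $d$. Applying Proposition~\ref{prop:generalcurveclassification} to the one-parameter subfamily $T\subset M$ it describes --- and invoking Lemma~\ref{lemm:BandB} exactly as in the proof of Proposition~\ref{prop:e5contractioninduction}, with two general points when $a>0$ and with one general point plus a general point of the swept surface when $a=0$ (so $b\ge 4$) --- gives a stable map $f\colon Z\to X$ in $T$ with reducible domain and at least two free components. By the previous paragraph, cases (3) and (4) of Proposition~\ref{prop:generalcurveclassification} cannot occur. In case (1) we are done. In case (2) the image of $Z$ is $C_{1}\cup\ell\cup C_{2}$ with $C_{1},C_{2}$ disjoint free curves, general in moduli, and $\ell$ a $-K_{X}$-line of E5 type joining them; put $d_{i}=-K_{X}\cdot C_{i}$, so $d_{1}+d_{2}=d-1$ and $d_{i}\ge 2$. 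If $\max(d_{1},d_{2})\ge 6$, say for $C_{1}$, the inductive hypothesis breaks its family into a general union $C_{1}'\cup C_{1}''$ of free curves; these meet $E_{0}$ and $E_{\infty}$ transversally by Proposition~\ref{prop:transverseintersection}, so the stable map $C_{1}'\cup C_{1}''\cup\ell\cup C_{2}$ is a smooth point of $\overline{M}_{0,0}(X)$ by Proposition~\ref{prop:vanishingofH1}, and smoothing the subcurve $C_{1}''\cup\ell\cup C_{2}$ into a free curve keeps us in $M$ and finishes this case.

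It remains to treat $\max(d_{1},d_{2})\le 5$, i.e.\ $6\le d\le 11$, which I expect to be the main obstacle. Here the class $\alpha=[C]$ is nef of bounded anticanonical degree, so Claim~\ref{clai:e5irreducibility} --- supplemented by a parallel argument for the few nef classes of degree $10$ or $11$ not covered there, using the same $\mathbb{C}^{*}$-limit analysis together with irreducibility of the relevant Severi varieties of plane curves --- shows that $M$ is the unique component of free curves of class $\alpha$. Hence $M$ contains, in its smooth locus, the standard $\mathbb{C}^{*}$-limit stable map built in the proof of Claim~\ref{clai:e5irreducibility}: a nodal rational plane curve $C_{0}$ in $E_{0}$ (or $E_{\infty}$) with $p$-vertical curves attached at distinct points. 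I would then degenerate $C_{0}$ into a union $C_{0}'\cup C_{0}''$ of two rational plane curves of positive degree --- possible once $H\cdot\alpha\ge 2$; the multiples of $R_{2}$, where $H\cdot\alpha$ can be $1$, are handled instead by degenerating the conic or cubic in the moving $\mathbb{P}^{2}$ as in Lemma~\ref{lemm:nonveryfreecurveson2E5} --- distribute the attached vertical curves between $C_{0}'$ and $C_{0}''$, check via Proposition~\ref{prop:GHS} that for a suitable distribution (which exists because $E\cdot\alpha+E_{\infty}\cdot\alpha\ge 2\,(H\cdot\alpha)$) each of the two resulting subcurves has globally generated normal sheaf, and smooth each subcurve separately while keeping them glued at a single node. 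This exhibits in $M$ a stable map whose domain is a union of two $\mathbb{P}^{1}$'s, each mapping as a free curve.

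The difficulty concentrated in this last step is that no smaller instance of Movable Bend-and-Break is available in anticanonical degrees $3,4,5$ on this $X$ --- indeed it genuinely fails there, as the lines in the moving $\mathbb{P}^{2}$'s show --- so the induction cannot be closed abstractly; one must instead combine the explicit irreducibility of the relevant component with a hands-on $\mathbb{C}^{*}$-degeneration and normal bundle computation to manufacture the required two-component stable map.
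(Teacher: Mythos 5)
Your reduction to the base cases is sound and matches the paper: after Propositions~\ref{prop:onesummand} and \ref{prop:mbbmultiplecover}, building the basepoint free family from a very ample class pulled back from the contraction of $E_{0}\cup E_{\infty}$, ruling out cases (3) and (4) of Proposition~\ref{prop:generalcurveclassification} because every E5 line and every over-deforming conic lives in $E_{0}\cup E_{\infty}$, and recursing whenever one of $C_{1},C_{2}$ in the chain $C_{1}\cup\ell\cup C_{2}$ has degree $\geq 6$ is exactly the argument the paper runs for anticanonical degree $\geq 10$. The problem is the residual range $6\le d\le 11$, where you have correctly located the difficulty but where your proposed resolution has real gaps. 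First, your appeal to Claim~\ref{clai:e5irreducibility} ``supplemented by a parallel argument'' for the nef classes of degree $10$ and $11$ not covered there is not a small supplement: the irreducibility input (Lemma~\ref{lemm:ratcurvesthroughpointsinp2deg11}) only treats plane curves of degree $\le 2$, and degree-$11$ classes can force $H\cdot\alpha=3$ with up to six attachment points constrained to lie on the fixed quartic $Z$ — exactly the Severi-type statement that requires the separate $\overline{M}_{0,6}(\mathbb P^2,3)$ argument of Lemma~\ref{lemm:nonveryfreecurveson2E5}, and more besides for the other $(d,s,t)$ combinations. Second, and more seriously, the final degeneration step is asserted rather than proved: the distribution of the non-free vertical curves $S_{i}$ between $C_{0}'$ and $C_{0}''$ is not free — it is dictated by which points of $C_{0}\cap Z$ land on which component of the degenerate plane curve — and the restriction of the normal sheaf to a \emph{sub}-configuration is strictly less positive than for the full limit curve (fewer nodes means a smaller quotient in Proposition~\ref{prop:GHS}), so global generation of each half does not follow from the computation in Claim~\ref{clai:e5irreducibility} and must be checked case by case, together with the claim that the degeneration of $C_{0}$ with its marked points stays inside $M$.

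For comparison, the paper closes the base cases by a different and much shorter route that you may want to adopt. For $6\le d\le 9$ (and for $R_{3}+R_{4}$ in degree $10$) it never invokes Proposition~\ref{prop:generalcurveclassification} at all: since $\Nef_{1}(X)_{\mathbb Z}$ is generated as a monoid by $R_{1},\dots,R_{6}$, all of degree $\le 5$ and each represented by a unique free family, any nef class of degree $\ge 6$ is a sum of at least two of the $R_{i}$; gluing a chain of the corresponding free curves gives a smooth point of $\overline{M}_{0,0}(X)$, which by Claim~\ref{clai:e5irreducibility} lies in the \emph{unique} component of free curves of that class, and smoothing all but two links produces the required two-component free stable map. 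For the stubborn degree $10$ and $11$ chains $C_{1}\cup\ell\cup C_{2}$ with $\max(d_{1},d_{2})\le 5$ and the degree-$5$ piece of class $R_{3}$ or $R_{4}$, the paper exploits the \emph{second} E5 divisor: it degenerates that piece into two fibers of $p$ joined by a line $\ell'$ in the other E5 divisor, regroups the resulting five-component chain as $C_{3}\cup\ell'\cup C_{2}''$ with $-K_{X}\cdot C_{3}\in\{7,8\}$, and then applies the already-established cases. Your $\mathbb C^{*}$-degeneration strategy may well be completable, but as written it replaces these two clean combinatorial moves with an unverified normal-bundle and irreducibility analysis.
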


We first prove the claim for curve classes of anticanonical degree between $6$ and $9$ and for the class $R_{3} + R_{4}$ of anticanonical degree $10$.  Suppose $\alpha$ is a nef class within this degree range.  Since $R_{1},\ldots,R_{6}$ form a $\mathbb{Z}$-generating set for the nef cone, we can write $\alpha$ as a sum of the $R_{i}$.  Thus by gluing a chain of free curves representing the $R_{i}$ we obtain a smooth point in the moduli space of stable maps of class $\alpha$.  A general point of the corresponding component will parametrize free curves, and thus by Claim \ref{clai:e5irreducibility} will  be the unique component of $\overline{M}_{0,0}(X)$ that generically parametrizes free curves of class $\alpha$.  After smoothing all but two components of the chain we find a point in this component representing a sum of two free curves.

We next prove the claim for curve classes of anticanonical degree at least $10$.  By Proposition \ref{prop:generalcurveclassification} every component of $\overline{M}_{0,0}(X)$ that generically parametrizes free curves in this degree range will either contain a union of two free curves or will contain a point $C_{1} \cup \ell \cup C_{2}$ parametrizing a union of two free curves connected by a line in an E5 divisor.  In the latter case, the two components $C_{1}$ and $C_{2}$ must be general in their deformation classes by construction.  Thus, Proposition \ref{prop:transverseintersection} guarantees that they intersect $E$ transversally.  We deduce that the stable map corresponding to our broken curve is a smooth point of $\overline{M}_{0,0}(X)$ by Proposition \ref{prop:vanishingofH1} and is thus contained in a unique component.  

In most cases at least one of the two free curves -- say $C_{1}$ -- will have anticanonical degree $\geq 6$.  Thus it can again be broken into a union of two free curves, yielding a curve of the form $(C_{1}' \cup C_{1}'') \cup \ell \cup C_{2}$ where $C_{1}' \cup C_{1}''$ is a general union of free curves obtain by applying Movable Bend-and-Break to $C_{1}$.  Note that this new curve underlies a smooth point of $\overline{M}_{0,0}(X)$.  By Proposition \ref{prop:vanishingofH1} we can smooth the subcurve $C_{1}'' \cup \ell \cup C_{2}$, and the resulting stable map will lie in our original component of $\overline{M}_{0,0}(X)$, verifying Movable Bend-and-Break for this component.

The remaining cases will be classes of anticanonical degree $10$ which break into a chain $C_{1} \cup \ell \cup C_{2}$ where $C_{1}$ and $C_{2}$ have anticanonical degree $4$ and $5$ respectively, or classes of anticanonical degree $11$ which break into a chain $C_{1} \cup \ell \cup C_{2}$ where $C_{1}$ and $C_{2}$ have anticanonical degree $5$.  If the curve of anticanonical degree $5$ lies in the class $R_{1} + R_{2}$ then we can argue similarly as before.  It only remains to consider the cases when all curves of anticanonical degree $5$ lie in $R_{3}$ or $R_{4}$.  Such curves can be deformed to the chain $C_{2}' \cup \ell' \cup C_{2}''$ where $C_{2}'$ and $C_{2}''$ are fibers of $p$ and $\ell'$ lies in the E5 divisor not containing $\ell$.  This broken curve with five components is a smooth point of $\overline{M}_{0,0}(X)$, and we can glue $C_{1} \cup \ell \cup C_{2}'$ to find a chain $C_{3} \cup \ell' \cup C_{2'}$ where $C_{3}$ has anticanonical degree $7$ or $8$.  We have then reduced to the situation where we can apply the earlier argument.

\begin{clai}
Let $X$ be as above.  Then there is a unique family of free curves representing any nef class $\alpha$.
\end{clai}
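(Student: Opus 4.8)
The plan is to induct on $d := -K_X \cdot \alpha$, using Claim \ref{clai:e5irreducibility} as the base case and Claim \ref{clai:mbbinhighdegree} (Movable Bend-and-Break) for the inductive step. For $-K_X \cdot \alpha \leq 9$, and also for $\alpha = R_3 + R_4$, there is a unique family of free curves by Claim \ref{clai:e5irreducibility}, so we may assume $d \geq 10$ and $\alpha \neq R_3 + R_4$; then $d \geq 6$, and Claim \ref{clai:mbbinhighdegree} applies to any family of free curves of class $\alpha$. Existence is immediate: since $R_1,\dots,R_6$ generate $\Nef_1(X)_{\mathbb{Z}}$ and each is represented by a unique family of free curves by the base case, we glue a connected chain of such curves of total class $\alpha$; by Proposition \ref{prop:vanishingofH1} this is a smooth point of $\overline{M}_{0,0}(X)$ whose general deformation maps birationally onto a free curve of class $\alpha$. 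The classes $\alpha = kR_1$ are also immediate, their only free curves being the $k$-fold covers of the fibers of $p\colon X \to \mathbb{P}^2$, which form an irreducible family; so assume $\alpha$ is not a multiple of $R_1$.

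The heart of the argument is that the component of $\overline{\Rat}(X)$ containing a chain of free curves is determined by the total class. For nonnegative integers $(n_i)$ with $\sum_i n_i R_i = \alpha$, let $M_{(n_i)}$ be the component of $\overline{M}_{0,0}(X)$ containing a connected chain assembled from $n_i$ general free curves of class $R_i$; this is well-defined because such a chain is a smooth point by Proposition \ref{prop:vanishingofH1}, hence lies in a single component, whose general deformation is a free curve of class $\alpha$. I claim $M_{(n_i)}$ is independent of $(n_i)$. By the list of relations among the $R_i$ established above, it suffices to check that replacing a sub-chain of the big chain by a sub-chain of the same numerical class, via one of the six displayed relations, does not change the ambient component. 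Each such relation equates classes of anticanonical degree at most $9$, with the single exception $R_1 + R_5 + R_6 = R_3 + R_4$ at degree $10$; in every instance the sub-class involved is $R_3+R_4$ or has degree at most $9$, so Claim \ref{clai:e5irreducibility} provides a \emph{unique} component of free curves of that sub-class. The path of free curves of the fixed sub-class that interpolates between the two sub-chains can be carried out while keeping the attachment point with the rest of the chain at a general point of $X$, using Proposition \ref{prop:vanishingofH1} to stay among smooth points and, whenever a piece of anticanonical degree $\geq 3$ is involved, the connectedness of the evaluation fibers from Theorem \ref{theo:maintheorem2}. Hence the two big chains lie in the same component, giving a well-defined $M_\alpha := M_{(n_i)}$.

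Finally I would show that every component $M$ generically parametrizing free curves of class $\alpha$ equals $M_\alpha$. By Claim \ref{clai:mbbinhighdegree}, $M$ contains a chain of two free curves $C_1 \cup C_2$ with $\alpha_i := [C_i]$ satisfying $\alpha_1 + \alpha_2 = \alpha$ and $-K_X \cdot \alpha_i < d$. Applying the inductive hypothesis to $M_{\alpha_i}$, and, whenever $-K_X \cdot \alpha_i \geq 6$, re-breaking $C_i$ inside $M_{\alpha_i}$ and substituting the result back into the chain by the same interpolation move (a reordering lemma such as \cite[Lemma 5.11]{LTCompos} lets one permute the free components en route), we may assume $M$ contains a chain of free curves each of anticanonical degree $\leq 5$. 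Each such piece has a nef class of degree $\leq 5$, hence is of class $R_i$ for some $i$ or of class $R_1+R_2$ (multiples of $R_1$ having been set aside), and a free curve of class $R_1+R_2$ can itself be refined into a free $R_1$-curve together with a free $R_2$-curve. Thus $M$ contains a chain all of whose components are free $R_i$-curves; this chain is a smooth point of $\overline{M}_{0,0}(X)$ by Proposition \ref{prop:vanishingofH1}, so it lies in the unique component containing it, namely $M_{(n_i)} = M_\alpha$. Therefore $M = M_\alpha$, completing the induction.

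The main obstacle is the gluing/refinement bookkeeping pervading the last two paragraphs: one must verify that replacing a sub-chain of a long chain of free curves by another sub-chain of the same numerical class keeps the resulting stable map a smooth point of $\overline{M}_{0,0}(X)$ lying in the same component. This is exactly where Proposition \ref{prop:vanishingofH1} (smoothness of chains of free curves), Theorem \ref{theo:maintheorem2} (connectedness of evaluation fibers, which forces the relevant fibered products of lower-degree moduli spaces to have a unique dominant component), and a reordering lemma for chains all enter; the degree-$2$ pieces of class $R_1$, to which Theorem \ref{theo:maintheorem2} does not directly apply, must be handled by hand using that the family of $R_1$-curves is the fibration $p$ itself, so its one-pointed evaluation map is birational.
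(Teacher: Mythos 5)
Your proposal is correct and follows essentially the same route as the paper: Movable Bend-and-Break reduces every component to a chain of free curves of classes $R_{1},\ldots,R_{6}$, the connectedness of evaluation fibers plus \cite[Lemma 5.11]{LTCompos} makes gluing/reordering single-valued, and Claim \ref{clai:e5irreducibility} applied to the classes appearing in the six relations (all of degree $\leq 9$ except $R_{3}+R_{4}$) shows the resulting component depends only on the numerical class. The paper merely packages your chain-substitution induction as a surjective $\mathcal{R}$-module homomorphism $\mathcal{R}\to\mathcal{M}$ that factors through $\Nef_{1}(X)_{\mathbb{Z}}$; the substance is identical.
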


We have already verified the statement for curves of anticanonical degree $\leq 5$, so it suffices to prove the statement for anticanonical degree $\geq 6$.

Recall that each of $R_{1}, \ldots, R_{6}$ is represented by a unique family of free curves.  Since every nef class is a sum of these classes, we see that every nef class is represented by at least one family of free curves.

Conversely, we have already shown that each $R_{i}$ is represented by a unique irreducible family $M_{i}$ of free curves.  As above denote by $\mathcal{R}$ the commutative monoid of non-negative linear combinations of the formal symbols $R_{1},\ldots,R_{6}$.

Let $\mathcal{M}$ denote the components of $\overline{M}_{0,0}(X)$ which generically parametrize free curves.  By Theorem \ref{theo:iitakadim0casee5} the evaluation map for the universal family over each component $M \in \mathcal{M}$ has irreducible general fibers.  Thus if we identify two families of free curves on $X$ then by gluing and smoothing we can only obtain a unique component in $\mathcal{M}$.  By \cite[Lemma 5.11]{LTCompos} $\mathcal{M}$ admits the structure of an $\mathcal{R}$-module where the $\mathcal{R}$-action is given by gluing the corresponding curve classes and smoothing. 
By applying Movable Bend-and-Break in anticanonical degree $\geq 6$, we see that every component $M \in \mathcal{M}$ contains a point which is a chain of free curves from the components $M_{1},\ldots,M_{6}$.    In other words, there is a surjective $\mathcal{R}$-module homomorphism $\mathcal{R} \to \mathcal{M}$. 

As discussed above, two elements in $\mathcal{R}$ are identified under the numerical class homomorphism $\mathcal{R} \to \Nef_{1}(X)_{\mathbb{Z}}$ if and only if they can be identified using some sequence of relations while remaining in $\mathcal{R}$.  Each of these relations has anticanonical degree $\leq 9$ except for the relation $R_{1} + R_{5} + R_{6} = R_{3} + R_{4} $ in anticanonical degree $10$.  Claim \ref{clai:e5irreducibility} shows that there is a unique family of free rational curves parametrizing any of these classes, so we conclude that the surjection $\mathcal{R} \to \mathcal{M}$ factors through $\Nef_{1}(X)_{\mathbb{Z}}$.  In other words, there is at most one family representing any nef numerical class, finishing the proof of the claim.

\bibliographystyle{alpha}
\bibliography{MBB}

\end{document}